\numberwithin{equation}{section}
\theoremstyle{plain}
\newtheorem{theorem}[equation]{Theorem}
\newtheorem{proposition}[equation]{Proposition}
\newtheorem{corollary}[equation]{Corollary}
\newtheorem{lemma}[equation]{Lemma}
\theoremstyle{definition}
\newtheorem{defn}[equation]{Definition}
\newtheorem{example}[equation]{Example}
\theoremstyle{definition}
\newtheorem{ex}[equation]{Example}
\theoremstyle{remark}
\newtheorem{remark}[equation]{Remark}
\newtheorem{rem}[equation]{Remark}
\newcommand{\PP}{\mathbb{P}}
\newcommand{\N}{\mathbb{N}}
\newcommand{\Z}{\mathbb{Z}}
\newcommand{\Q}{\mathbb{Q}}
\newcommand{\C}{\kk}
\newcommand{\al}{\alpha}
\newcommand{\de}{\delta}
\newcommand{\la}{\lambda}
\newcommand{\F}{\mbox{\rm Frac\,}}
\newcommand{\Aut}{\mbox{\rm Aut\,}}
\newcommand{\PAut}{\mbox{\rm PAut\,}}
\newcommand{\GL}{\mbox{\rm GL\,}}
\newcommand{\Lie}{\mbox{\rm Lie\,}}
\newcommand{\td}{\mbox{\rm trdeg\,}}
\newcommand{\ff}{\footnote}
\newcommand{\DMO}{\DeclareMathOperator}
\newcommand{\beq}{\begin{equation}}
\newcommand{\eeq}{\end{equation}}
\newcommand{\mb}{\mathbb}
\newcommand{\mf}{\mathfrak}
\newcommand{\germ}{\mathfrak}
\newcommand{\kk}{{\Bbbk}}
\renewcommand{\AA}{\mb A}
\newcommand{\CC}{\kk}
\newcommand{\NN}{\mb N}
\newcommand{\QQ}{\mb Q}
\newcommand{\ZZ}{\mb Z}
\newcommand{\mc}{\mathcal}
\newcommand{\sC}{\mc C}
\newcommand{\del}{\partial}
\newcommand{\ssm}{\smallsetminus}
\newcommand{\ang}[1]{\langle #1 \rangle}
\DMO{\Ann}{Ann}
\DMO{\Pspec}{PSpec}
\DMO{\Spec}{Spec}
\DMO{\Kdim}{Kdim}
\DMO{\gr}{gr}
\DMO{\rank}{rank}
\DMO{\hilb}{hilb}
\DMO{\gldim}{gldim}
\DMO{\Span}{span}
\DMO{\Ext}{Ext}
\DMO{\im}{Im}
\DMO{\Der}{Der}
\DMO{\Proj}{Proj}
\DMO{\Frac}{Frac}
\DMO{\Sym}{Sym}
\newcommand{\st}{\left\vert\right.}
\renewcommand{\implies}{\Rightarrow}
\newcommand{\sO}{\mc O}
\newcommand{\PGL}{\mb{P}GL}
\title{A new family of Poisson algebras and their deformations}
\author{C. Lecoutre and S. J. Sierra}
\address{(Lecoutre) School of Mathematics, Statistics and Actuarial Science,
University of Kent, Canterbury CT2 7NF, UK.}
\email{C.Lecoutre@kent.ac.uk}
\address{(Sierra) School of Mathematics, Peter Guthrie Tait Road, King's Buildings,
University of Edinburgh, Edinburgh EH9 3FD, UK.}
\email{s.sierra@ed.ac.uk}
\thanks{The first author was partially supported by an LMS mobility grant.  The second author was  partially   supported by  EPSRC grant EP/M008460/1.}
\date{\today}
\keywords{Artin-Schelter regular, Calabi-Yau algebra, Poisson algebra, semiclassical limit}
\subjclass[2010]{Primary:  16S38; Secondary 14A22, 17B63, 16W70, 16S80}
\begin{document}

\begin{abstract}

Let $\kk$ be a field of characteristic zero. 
For any positive integer $n$ and any scalar $a\in\kk$, we construct a family of Artin-Schelter regular algebras $R(n,a)$, which are quantisations of Poisson structures on $\kk[x_0,\dots,x_n]$.  This generalises an example given by Pym when $n=3$.  
For a particular choice of the parameter $a$ we obtain new examples of Calabi-Yau algebras when $n\geq 4$. 
We also study the ring theoretic properties of the algebras $R(n,a)$.
We show that  the point modules of $R(n,a)$ are parameterised by a bouquet of rational normal curves in $\PP^{n}$, and that the prime spectrum of $R(n,a)$ is homeomorphic to the Poisson spectrum of its semiclassical limit. 
Moreover, we explicitly describe $\Spec R(n,a)$ as a union of commutative strata. 
\end{abstract}

\maketitle



\section{Introduction} \label{INTRO}

One of the fundamental notions of noncommutative algebraic geometry is that the noncommutative analogues of polynomial rings (or, alternatively, the coordinate rings of ``noncommutative projective spaces'') are algebras which are {\em Artin-Schelter regular} \cite{AS}.  
Constructing and, if possible, classifying, Artin-Schelter regular algebras of various global dimensions is thus one of the core problems in the subject.  
Within the class of quadratic Artin-Schelter regular algebras one also seeks to construct those which are {\em Calabi-Yau} in the sense of Ginzburg \cite{Ginzburg}.  
In general, the problem of classifying Artin-Schelter regular algebras (or Calabi-Yau algebras) is unsolved, even for global dimension 4.

In 2014, Brent Pym classified  4-dimensional Calabi-Yau algebras arising as deformation-quantisations of torus-invariant Poisson structures on $\AA^4_{\mb C}$.  He showed that there are six families of Calabi-Yau algebras which arise this way; the most interesting example has generators $x_0, x_1, x_2, x_3$ and relations
\beq\label{pym}
\begin{split}
[x_0, x_1] &  = 5 x_0^2 \\
[x_0,x_2] &= -\frac{45}{2} x_0^2 + 5 x_0 x_1 \\
[x_0, x_3] & = \frac{195}{2} x_0^2 -\frac{45}{2} x_0x_1 + 5 x_0 x_2\\
[x_1,x_2] & = -\frac{3}{2} x_0 x_1 + 3 x_0 x_2 + x_1^2\\
[x_1, x_3] & = 5 x_0 x_1 - 3 x_0 x_2 + 7 x_0 x_3 -\frac{5}{2} x_1^2 + x_1x_2 \\
[x_2, x_3] & = -\frac{77}{2} x_0 x_2 - \frac{77}{2} x_0 x_3+\frac{21}{2} x_1x_2+7x_1 x_3-3 x_2^2.
\end{split}
\eeq
In this paper we generalize Pym's example to arbitrary dimensions, and study the properties of the resulting algebras.
In particular, we obtain new examples of Calabi-Yau algebras in all global dimensions $\geq 5$.

Pym's example comes from an action of the two-dimensional solvable Lie algebra on $\mb{C}[x_0, x_1, x_2, x_3]$, inducing a Poisson bracket.
He shows, using a deformation formula of Coll, Gerstenhaber, and Giaquinto, that this Poisson bracket quantizes to give the algebra \eqref{pym}.
We generalize Pym's methods to construct a family of algebras $R(n,a)$, depending on a scalar $a$ and an integer $n \geq 1$.  
The $R(n,a) $ are  graded, quadratic, noetherian,  AS-regular domains of global dimension $n+1$ (Theorem~\ref{thm:ASreg}, Proposition~\ref{prop:relR}).
When $n=3$ and $a=-\frac{5}{4}$ we obtain Pym's algebra \eqref{pym}.
For another example, $R(1,a)$ is isomorphic to the {\em Jordan plane} $\mb{C}\ang{x,y}/(xy-yx-x^2)$ (if $a\neq 0$) or to the polynomial ring $\mb{C}[x,y]$ (if $a=0$).

For each $n$, there is one algebra $R(n,a)$ that is Calabi-Yau (Definition~\ref{def-CY}).  We have:
\begin{theorem}\label{ithm:CY}
{\rm (Corollary~\ref{cor:Nak2})}
The algebra $R(n,a)$ is Calabi-Yau if and only if  
\[ a = -\frac{ (n+2)(n-1)}{2(n+1)}.\]  
\end{theorem}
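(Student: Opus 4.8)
The plan is to reduce the Calabi--Yau property to the triviality of the Nakayama automorphism. Since $R(n,a)$ is a connected graded noetherian AS-regular algebra of global dimension $n+1$ (Theorem~\ref{thm:ASreg}), it is automatically skew Calabi--Yau and therefore carries a graded \emph{Nakayama automorphism} $\nu$, characterised by the usual rigidity isomorphism for $\Ext^{\bullet}_{R(n,a)^{e}}(R(n,a),R(n,a)^{e})$, where $R(n,a)^{e}=R(n,a)\otimes R(n,a)^{\mathrm{op}}$. Because $R(n,a)$ is connected graded its only units are the nonzero scalars, so $\nu$ is uniquely determined; and the standard criterion (see e.g.\ Reyes--Rogalski--Zhang) asserts that $R(n,a)$ is Calabi--Yau if and only if $\nu=\id$. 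The entire statement therefore reduces to computing $\nu$ and determining for which $a$ it equals the identity.

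To compute $\nu$ I would use that it is a \emph{graded} automorphism of an algebra generated in degree one, so it is completely determined by its restriction to the degree-one space $V=R(n,a)_1=\Span\{x_0,\dots,x_n\}$, that is, by an $(n+1)\times(n+1)$ matrix. There are two natural routes. The first is to read $\nu|_V$ off the top of the minimal free resolution of the trivial module $\kk$: the last differential is, up to transpose, the relation map, and the automorphism measuring the failure of the dual complex to be self-dual is exactly $\nu$. The second, and in practice cleaner, route is to realise $R(n,a)$ as an iterated Ore extension $\kk[x_0][x_1;\delta_1]\cdots[x_n;\delta_n]$ with each $\delta_i$ a homogeneous derivation read off from the defining relations (Proposition~\ref{prop:relR}), and then apply the known recursion for the Nakayama automorphism of an Ore extension, which expresses $\nu_B$ on $B=A[x;\delta]$ in terms of $\nu_A$ and the divergence of $\delta$ (concretely, the trace of its action on $V$). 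Either way one expects $\nu|_V$ to be upper-triangular with respect to the flag $x_0\prec x_1\prec\dots\prec x_n$, with explicit diagonal scalars and a strictly-upper part encoding the derivations $\delta_i$.

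Imposing $\nu=\id$ then splits into two conditions: the strictly-upper (unipotent) part must vanish, and each diagonal scalar must equal $1$. I expect the diagonal part to be governed by a single divergence quantity---the total trace of the derivation data accumulated along the Ore tower, which is also the statement that the semiclassical Poisson structure is unimodular---and to be an affine-linear function of $a$ for fixed $n$; setting it equal to $1$ yields one equation whose solution is $a=-\tfrac{(n+2)(n-1)}{2(n+1)}$, after which one checks that the unipotent part is automatically trivial at this value. The main obstacle is precisely the explicit computation of $\nu$ in the second step: one must propagate the Ore-extension recursion (equivalently, track the twist through the resolution) across all $n+1$ variables while controlling the cross-terms coming from the nonabelian relations, and only after this bookkeeping does the clean closed form for $a$ emerge. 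The specialisation $n=3$, giving $a=-\tfrac{5}{4}$, recovers Pym's algebra~\eqref{pym} and provides a useful consistency check that the computation has been carried out correctly.
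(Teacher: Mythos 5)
Your top-level reduction is exactly the paper's: $R(n,a)$ is connected graded and AS-regular, hence skew Calabi--Yau with a graded Nakayama automorphism $\nu$, and since the only units are scalars, the Calabi--Yau property is equivalent to $\nu=\id$; the entire content is then the computation of $\nu$. The paper does this in Theorem~\ref{thm:Nak1} (obtaining $\nu=\phi_c$ with $c=(n+1)a+\binom{n+1}{2}-1$) and Corollary~\ref{cor:Nak2} just sets $c=0$. However, the paper's computation does not go through an Ore tower: it first determines $\nu$ for $a=0$ by induction on $n$, using that $X_0$ is central in $R(n,0)$ so that $\nu$ descends to the Nakayama automorphism of $R(n,0)/\ang{X_0}\cong R(n-1,1)$ by \cite[Lemma~1.5]{RRZ}, and then lifts the answer uniquely via the rigidity statement of Lemma~\ref{lem:identity}; it then transports the result to general $a$ using the Zhang-twist formula $\nu=\mu\phi_a^{n+1}\xi_\lambda$ of \cite[Theorem~0.3]{RRZ} applied to $R(n,a)\cong{}^{\phi_a}R(n,0)$, with the unknown scalar $\lambda$ eliminated by the filtration argument of Lemma~\ref{lem:Nak0}.

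There are two concrete problems with your route as written. First, $R(n,a)$ is \emph{not} an iterated Ore extension by pure derivations: already in \eqref{com3} the commutator $[X_1,X_2]$ contains the term $(a+2)X_0\ast X_2$, so $[X_2,-]$ does not map the subalgebra generated by $X_0,X_1$ into itself. The correct structure, read off from \eqref{relabis}, is $R(j,a)=R(j-1,a)[X_j;\sigma_j,\delta_j]$ with $\sigma_j=\phi_{-z_j}$ (an automorphism of $R(j-1,a)$ by Corollary~\ref{cor:automs}) and $\delta_j$ a $\sigma_j$-derivation. This is not a cosmetic point: in the Nakayama recursion for Ore extensions the restriction of $\nu$ to the base is $\sigma_j^{-1}\circ\nu_{R(j-1,a)}$, so it is precisely the accumulated factors $\phi_{z_j}$ --- and not any divergence of the $\delta_j$ --- that produce $\phi_c$. (As a consistency check, $\phi_{z_2}\cdots\phi_{z_n}\circ\phi_{2a}=\phi_{(n+1)a+\binom{n+1}{2}-1}$ on the subalgebra $R(1,a)$, using that the Jordan plane has Nakayama automorphism $\phi_{2a}$.) Second, your prediction of where the constraint on $a$ arises is backwards: $\nu=\phi_c$ is \emph{unipotent} --- its diagonal with respect to the flag $X_0\prec\cdots\prec X_n$ consists entirely of $1$'s for every $a$ --- so there is no ``diagonal scalar $=1$'' equation to solve; the Calabi--Yau condition is exactly the vanishing of the strictly triangular part, i.e.\ $c=0$. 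Finally, even with the corrected Ore structure you would still have to determine $\nu(X_j)$ beyond its restriction to each base (the ``$\lambda x+b$'' part of the recursion), which is the genuinely delicate bookkeeping; the paper sidesteps this entirely with Lemmas~\ref{lem:Nak0} and~\ref{lem:identity}.
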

Example (\ref{pym}) is the algebra arising from Theorem~\ref{ithm:CY} when $n=3$.
The examples for $n \geq 4$ have not to our knowledge been studied before.

Each $R(n,a)$ induces a Poisson bracket $\{-,-\}_a$ on $\mb C[x_0, \dots, x_n]$ in the semiclassical limit.
Let $A(n,a)$ be the Poisson algebra $(\mb C[x_0, \dots, x_n], \{-,-\}_a)$. 
In such a deformation-quantisation context it is often expected that the algebra $R(n,a)$ and the Poisson algebra $A(n,a)$ share similarities. 
For instance we make a detailed study of the prime spectrum $\Spec R(n,a)$ of $R(n,a)$ and the Poisson prime spectrum $\Pspec A(n,a)$ of $A(n,a)$ and show that they are homeomorphic (Theorem~\ref{thm:spectra}).  
We further investigate the structure of this space and prove:
\begin{theorem}\label{ithm1}
{\rm (Corollary~\ref{cor:specsum})}
Let $n \geq 3$.  
The space $\Spec R(n,a) \cong \Pspec A(n,a)$ is a union of quasiprojective strata, and has dimension
\[ \begin{cases}
(n-2) & \mbox{if $a \not\in \QQ$  } \\
(n-1) & \mbox{if $a \in \QQ$.}
\end{cases}
\]
\end{theorem}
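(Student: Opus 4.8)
The plan is to reduce the statement to a dimension count on the Poisson side and then to a piece of integral linear algebra governed by $a$. By the homeomorphism $\Spec R(n,a) \cong \Pspec A(n,a)$ of Theorem~\ref{thm:spectra}, it suffices to analyse the Poisson prime spectrum of $A(n,a) = (\kk[x_0,\dots,x_n], \{-,-\}_a)$. The explicit description of this space as a union of commutative strata (established earlier) already shows that each stratum is the prime spectrum of a finitely generated commutative algebra, hence a quasiprojective variety; this gives the first assertion at once. It remains to compute the dimension, which I would take to be the largest of the dimensions of the strata.

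First I would organise the strata in the Poisson Goodearl--Letzter framework. The bracket $\{-,-\}_a$ is homogeneous and preserved by a torus $H$ acting rationally by Poisson automorphisms on $A(n,a)$, and I would check that there are only finitely many $H$-invariant Poisson primes. These index the strata, and for each such prime $J$ the corresponding stratum is homeomorphic to $\Spec$ of the Poisson centre of a localisation of $A(n,a)/J$; in particular its dimension is the transcendence degree over $\kk$ of the field of rational Poisson-central elements (the rational Casimirs) of $\Frac(A(n,a)/J)$. The dimension of the whole space is then the largest of these transcendence degrees, and I would argue that the maximum is attained at the generic stratum $J=0$, since passing to a nonzero $H$-invariant Poisson prime (these are generated by coordinate-type elements coming from the bouquet of rational normal curves) eliminates variables and so cannot enlarge the relevant solution space.

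The heart of the proof is therefore the computation of the rational Casimir field $Z$ of $K = \Frac(A(n,a))$. Writing the Poisson bivector as $E \wedge F$ for the two fundamental vector fields $E, F$ of the underlying solvable Lie algebra action, a rational function is Casimir precisely when it is a common invariant of $E$ and $F$; restricting to Laurent monomials $x_0^{c_0}\cdots x_n^{c_n}$, this becomes a system of linear \emph{resonance} conditions $\sum_i c_i \lambda_i = 0$ and $\sum_i c_i \mu_i = 0$ on the exponent vector, where the weights $\lambda_i, \mu_i$ are read off from $E$ and $F$. The decisive point is that the second weight depends on $a$ linearly, $\mu_i = \mu_i^{(0)} + a\,\mu_i^{(1)}$. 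When $a \in \QQ$ these are two rational conditions, which I would show are independent, so the integral solution space has rank $(n+1)-2 = n-1$; when $a \notin \QQ$ the condition $\sum_i c_i \mu_i = 0$ forces $\sum_i c_i \mu_i^{(0)} = 0$ and $\sum_i c_i \mu_i^{(1)} = 0$ separately, giving three independent conditions and rank $(n+1)-3 = n-2$. After checking that $Z$ is generated by these monomial Casimirs (for instance by an associated-graded or leading-term argument that rules out any further central rational function), this yields $\td_{\kk} Z = n-2$ for $a \notin \QQ$ and $n-1$ for $a \in \QQ$, as required.

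The main obstacle is precisely this last computation: one must pin down the weights $\lambda_i$ and $\mu_i^{(0)}, \mu_i^{(1)}$ explicitly from the structure of $R(n,a)$ and then prove the two non-degeneracy facts on which the dichotomy rests — that $\{\lambda, \mu^{(0)}, \mu^{(1)}\}$ are $\QQ$-linearly independent (so the irrational case really loses a dimension) while $\{\lambda, \mu\}$ remain independent for rational $a$ — together with the claim that no non-monomial Casimirs appear. This is where the arithmetic of $a$ is genuinely forced, and it is the only step that is sensitive to whether $a$ is rational.
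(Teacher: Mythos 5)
Your overall strategy --- reduce to $\Pspec A(n,a)$ via Theorem~\ref{thm:spectra}, stratify, and locate the rational/irrational dichotomy in an integral resonance condition whose weights depend affinely on $a$ --- is in the right spirit, and your final numerology agrees with the paper's (Proposition~\ref{prop:allgraded} and Lemma~\ref{lem1} are exactly your ``one condition versus two'' dichotomy). But the central computation as you set it up would fail. The derivation $\Delta = X_0\del_1 + \dots + X_{n-1}\del_n$ is locally nilpotent, not semisimple: it admits no system of weights $\lambda_i$ on the coordinates $X_i$, and the only Laurent monomials $X_0^{c_0}\cdots X_n^{c_n}$ it annihilates are the powers of $X_0$. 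So ``$\Delta$-invariance of a monomial'' is not a linear resonance condition $\sum_i c_i\lambda_i=0$, and your count of $(n+1)$ exponents minus $2$ or $3$ conditions gives the right answer only by accident. The missing step --- which the paper spends real effort on (Lemma~\ref{lem:four}, via van den Essen's slice argument applied to $X_1/X_0$; see Remark~\ref{rem:deleta} for why this is delicate without localising) --- is to first compute $(A[X_0^{-1}])^\Delta = \kk[X_0^{\pm1},Y_2,\dots,Y_n]$, a polynomial ring on $n$ explicit \emph{non-monomial} invariants, and only then does $\Gamma_a$-invariance become a genuine weight condition $c_0a+\sum_{i\geq 2}c_i(a+i)=0$ on $n$ exponents; this single condition splits into the two rational conditions $\sum c_i=0$ and $\sum ic_i=0$ exactly when $a\notin\QQ$, giving rank $n-2$ versus $n-1$.

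Your stratification also has problems. The only torus available is the one-dimensional grading torus $\kk^\times$, and for $n\geq 3$ there are infinitely many $\kk^\times$-invariant Poisson primes --- e.g.\ for $n=3$ the pencil of sextics $\ang{X_0^6F_{[\alpha:\beta]}}$, $[\alpha:\beta]\in\PP^1$, exhibited in Section~\ref{EXAMPLES} --- so the finiteness check you propose fails, these $H$-primes are certainly not all ``generated by coordinate-type elements'', and with infinitely many strata the identity ``dimension equals the maximum of the stratum dimensions'' is no longer automatic. Moreover, for $a\notin\QQ$ the Poisson centre of $A(n,a)[X_0^{-1}]$ is trivial while the open stratum is $(n-2)$-dimensional, so that stratum is not $\Spec$ of the Poisson centre of the localisation at $X_0$. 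The paper instead uses the finite stratification $\Spec_j R$ recording which of the iterated normal elements lie in $P$ (Proposition~\ref{prop:factX1} gives $\Spec_j R\cong\Spec_{j-1}R(n-1,a+1)$), identifies the open stratum with $\Spec R[X_0^{-1}]$, and analyses it through Jordan's correspondence for (Poisson--)Ore extensions together with the graded-prime Lemma~\ref{lem1}; this is the route you would need to follow, or else invoke the full Poisson Goodearl--Letzter machinery with localisation at \emph{all} homogeneous elements rather than just at $X_0$.
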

Moreover we show that $R(n,a)$ satisfies the Dixmier-Moeglin equivalence (Theorem~\ref{thm:DME}) and, using a transfer result, we prove that $A(n,a)$ satisfies the Poisson Dixmier-Moeglin equivalence (Theorem~\ref{pdme}).
Taking advantage of the fact that $\Spec R(n,a) \cong \Pspec A(n,a)$ we compute many examples of prime spectra of the $R(n,a)$. 

We investigate various ring theoretic properties of the rings $R(n,a)$ and $A(n,a)$. 
For $n \geq 2$ we show that $R(n,a) \cong R(n,b) $ if and only if $A(n,a)$ is Poisson isomorphic to $A(n,b)$ if and only if $a=b$ (Theorem~\ref{thm:isom}).
We compute the skewfield of fractions of $R(n,a)$ and show that it is isomorphic to a Weyl skewfield if and only if 
$a\in \QQ$ (Theorem~\ref{thm:skewfield}). 
We also show that the graded automorphism group of $R(n,a)$ is isomorphic to the graded Poisson automorphism group of $A(n,a)$.

Finally, we compute the point modules of the $R(n,a)$. 
We show:
\begin{theorem}\label{ithm2}
{\rm (Theorem~\ref{thm:pone})}
Let $n \geq 1$.
For any $a$, the point schemes of $R(n,a)$ are isomorphic.
The point modules of $R = R(n,a)$ are parameterised by the union $C_1 \cup \dots \cup C_{n}$, where $C_k $ is a rational normal curve of degree $k$ in $\PP(R_1^*) \cong \PP^{n}$.
\end{theorem}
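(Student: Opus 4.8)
The plan is to use the standard Artin--Tate--Van den Bergh description of point modules of a quadratic algebra. Writing $R = R(n,a) = T(R_1)/(W)$, where $W \subseteq R_1 \otimes R_1$ is the span of the quadratic relations of Proposition~\ref{prop:relR}, a point module is a cyclic graded module with one-dimensional pieces in each degree; choosing generators $m_i$ of $M_i$, left multiplication records functionals $\phi_i \in R_1^*$ through $v \cdot m_i = \phi_i(v)\, m_{i+1}$, and the relations force every consecutive pair $(p_i, p_{i+1})$, with $p_i = [\phi_i] \in \PP(R_1^*) \cong \PP^n$, to lie in the incidence scheme $\Gamma \subseteq \PP^n \times \PP^n$ cut out by the multilinearisations of the relations. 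Concretely, I would record the relations in a multiplication matrix $M(p)$ of linear forms on $\PP^n$, so that the admissible successors of a point $p$ are exactly $\PP(\ker M(p))$; the scheme of length-$2$ truncated point modules is then the rank-drop locus $\{p : \rank M(p) \le n\}$, and the point scheme is the sublocus of starting points admitting an infinite admissible sequence, equipped with the shift map $\sigma$ sending $p_0 \mapsto p_1$.

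First I would dispose of the dependence on $a$. I expect that a graded---hence linear on $R_1$---change of variables in $x_0, \dots, x_n$ carries $R(n,a)$ into a normal form in which the multilinearised equations, and so the pair $(\Gamma, \sigma)$, take a shape independent of $a$; this already yields that the point schemes for different $a$ are isomorphic, and reduces the computation to a single convenient value of the parameter. (Alternatively one can carry $a$ through the computation and observe that the resulting variety is abstractly the same for every $a$.) With $a$ fixed, the core of the argument is to determine the rank-drop locus of $M(p)$ and its successors. Here I anticipate that after the normalisation $M(p)$ has an (almost) Hankel, or catalecticant, shape, so that the rank drops precisely along loci where a $2 \times (k+1)$ catalecticant block degenerates; the resulting $2 \times 2$ minors are the classical quadrics $p_i p_{j+1} - p_{i+1} p_j$ cutting out the rational normal curve $C_k$ of degree $k$ in a coordinate $\PP^k \subseteq \PP^n$. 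The rank-drop locus should thus stratify by the number of vanishing leading coordinates of $p$, producing one such curve $C_k$ for each $k = 1, \dots, n$.

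It remains to assemble these components and to promote truncated point modules to genuine ones. I would show that a distinguished coordinate point is a common point of all the $C_k$, so that $C_1 \cup \dots \cup C_n$ is a bouquet, and that the shift $\sigma$ restricts to a unipotent automorphism of each $C_k \cong \PP^1$---a translation in a suitable affine parameter---exactly as already happens for the Jordan plane $R(1,a)$, where $C_1 = \PP^1$ and a direct computation gives $\sigma \colon z \mapsto z-1$. Since each $C_k$ is $\sigma$-stable, every point of the bouquet extends uniquely to an infinite admissible sequence, so the truncated point schemes stabilise and the point modules are parameterised exactly by $C_1 \cup \dots \cup C_n$; the isomorphism type is independent of $a$ by the normalisation above. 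The main obstacle I foresee is the geometric identification of the previous paragraph: proving that the degeneracy locus of $M(p)$ is \emph{precisely} this union of rational normal curves, with the correct reduced scheme structure and with no spurious points that fail to extend to full point modules, and checking that the stratification by vanishing leading coordinates matches the degrees $1, \dots, n$ and that the bouquet incidences are as claimed. Controlling the scheme structure, rather than merely the underlying set, and verifying the stabilisation of the truncated point schemes are where the calculation will demand the most care.
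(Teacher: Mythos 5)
Your overall strategy (multilinearise the relations of $R(n,a)$ itself and compute the degeneracy locus of the resulting matrix of linear forms) is genuinely different from the paper's, which never multilinearises $R$ directly: the paper localises at the normal element $X_0$, passes to the much simpler subalgebra $T=\kk[X_0,Y_2,\dots,Y_n][X_1;\delta]$ whose relations are of commutator type, computes the point scheme of $T$ there, identifies the $X_0$-torsionfree point modules of $R$ with point modules of the quotient $R/\ker\zeta\cong\kk[X,Y]^{(n)}$ (whose point scheme is the degree-$n$ rational normal curve), and handles the $X_0$-torsion ones by induction via $R/\ang{X_0}\cong R(n-1,a+1)$. Your route is not unreasonable in principle, but as written it has two genuine gaps. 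First, the proposed normalisation of $a$ by a graded linear change of variables cannot work: Theorem~\ref{thm:isom} shows that for $n\geq 2$ the algebras $R(n,a)$ and $R(n,a')$ are isomorphic only when $a=a'$, so there is no change of variables putting $R(n,a)$ into an $a$-independent normal form. The mechanism that actually makes the point schemes independent of $a$ is that the $R(n,a)$ are Zhang twists of one another (Theorem~\ref{thm:ONE}); a Zhang twist is an equivalence of graded module categories, not an algebra isomorphism, and it preserves the point scheme while altering the shift automorphism. Your fallback of carrying $a$ through the computation would avoid this, but it is a statement of intent rather than an argument.

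Second, the entire geometric content of the theorem --- that the rank-drop locus of the multilinearised matrix is exactly the bouquet $C_1\cup\dots\cup C_n$, with the right degrees, no extra components, and every point extending to a genuine point module --- is only anticipated (``I expect'', ``I anticipate a Hankel shape'') and never established. For the actual relations \eqref{rel}, with their coefficients $\binom{-z_i}{k}$, the $\binom{n+1}{2}\times(n+1)$ matrix is not visibly of catalecticant type, and extracting its degeneracy locus for general $n$ is precisely the computation the paper is structured to avoid; the only multilinearisation the paper performs is for $T$, where the matrix is sparse and the minors factor immediately as $X_0^{n}Y_k$. Likewise the stabilisation of the truncated point schemes, which you defer, is supplied in the paper by strong noetherianity (Corollary~\ref{cor:noeth}) together with the Artin--Zhang theorem, and the extension of truncated modules to honest point modules is controlled via the normality of $X_0$ and the passage to $T$. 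As it stands, the two hardest steps of your plan --- the $a$-independence and the identification of the degeneracy locus --- are respectively based on a false premise and left undone, so the proposal does not yet constitute a proof.
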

As with Pym's original example, these curves correspond to various nice (equivariant with respect to the appropriate group action) ways to embed $\PP^1$ in $\Sym^{n}(\PP^1) \cong \PP^{n}$.

The organisation of the paper is as follows.  In Section~\ref{NOTATION} we define the $R(n,a)$ and $A(n,a)$, and in Section~\ref{FIRSTPROPS} we prove that $A(n,a)$ is the associated graded ring of $R(n,a)$ and that $R(n,a)$ is an Artin-Schelter regular noetherian domain.
We prove Theorem~\ref{ithm:CY} in Section~\ref{AUTGROUP}, and also calculate the graded automorphism group of $R(n,a)$ and the graded Poisson automorphism group of $A(n,a)$.
We study when two $R(n,a)$ are isomorphic in Section~\ref{ISOM} and prove Theorem~\ref{ithm1} in Section~\ref{POINTS}.

In the final three sections we describe the (Poisson) prime and primitive ideals of $R(n,a)$ and $A(n,a)$.  
We prove Theorem~\ref{ithm2} in Section~\ref{PRIMES}.  
In Section~\ref{DME} we show that $R(n,a)$ satisfies the Dixmier-Moeglin equivalence describing primitive ideals, and that $A(n,a)$ satisfies the related Poisson Dixmier-Moeglin equivalence.  We also compute the fraction skewfield of $R(n,a)$.
Finally, in Section~\ref{EXAMPLES} we give many explicit examples of prime spectra.

\smallskip
\noindent{\bf Acknowledgements.}
We thank Michel Van den Bergh, Fran\c{c}ois Dumas, Gene Freudenburg, and Diane Maclagan for helpful comments.
We particularly thank Brent Pym for extremely useful discussion on a visit of the second author to Oxford in early 2016 (including suggesting that the algebras $R(n,a)$ are all twists of each other).
This project  began when the first author was supported by an LMS mobility grant to visit the University of Edinburgh, and we thank the LMS and the University of Edinburgh for their support.

We thank the anonymous referee for their helpful comments.


\section{Notation and definitions}\label{NOTATION}

Throughout we work over a field $\kk$ of characteristic zero (in the introduction for simplicity we worked over $\mb C$).

In this section we define the algebras $R(n,a)$ and the Poisson algebras $A(n,a)$ that are the subject of the paper, and describe how they arise from $\kk^\times$-invariant actions of the two-dimensional solvable Lie algebra on polynomial rings.

We begin by discussing such actions.  
Fix an integer $n\in\ZZ_{>0}$, and let $\Delta$ be the ``downward derivation''
\[ \Delta = X_0 \del_1 + \dots + X_{n-1} \del_n\]
(where $\del_i = \del/\del X_i$).  
For $a_0, \dots, a_n \in \CC$, let $\Gamma = \Gamma(a_0, \dots, a_n)$ be the weighted Euler operator
\[ \Gamma = \sum a_i X_i \del_i.\]
We are interested in when $\Delta$ and $\Gamma$ generate a copy of the two-dimensional solvable Lie algebra inside $\Der_\kk(\kk[X_0, \dots, X_n])$.
\begin{lemma}\label{lem:one}
 We have $\Delta \Gamma - \Gamma \Delta = \Delta$ if and only if $a_j = a_0 + j$ for $j \in \{0, \dots ,n\}$.
\end{lemma}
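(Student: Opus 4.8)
**The plan is to compute the commutator $[\Delta, \Gamma] = \Delta\Gamma - \Gamma\Delta$ directly as a derivation on $\kk[X_0, \dots, X_n]$, and then compare it term-by-term with $\Delta$.**

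The key observation is that both $\Delta$ and $\Gamma$ are first-order linear differential operators, so their commutator is again a derivation (the second-order terms cancel), and a derivation of a polynomial ring is determined by its action on the generators $X_0, \dots, X_n$. First I would record how each operator acts on a variable: $\Delta(X_i) = X_{i-1}$ for $i \geq 1$ and $\Delta(X_0) = 0$, while $\Gamma(X_i) = a_i X_i$. The cleanest way to organize the computation is to evaluate $[\Delta, \Gamma]$ on each generator $X_i$. Using the Leibniz-style identity for how a derivation composed with another acts—namely that for a derivation $D$ and the operator $\Gamma = \sum a_i X_i \del_i$ one has $[\Delta,\Gamma]$ acting on $X_i$ equal to $\Delta(\Gamma(X_i)) - \Gamma(\Delta(X_i))$—I would compute for $i \geq 1$:
\[
[\Delta,\Gamma](X_i) = \Delta(a_i X_i) - \Gamma(X_{i-1}) = a_i X_{i-1} - a_{i-1} X_{i-1} = (a_i - a_{i-1}) X_{i-1},
\]
and for $i = 0$ both terms vanish, giving $[\Delta,\Gamma](X_0) = 0$.

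Next I would compare this with $\Delta(X_i)$, which equals $X_{i-1}$ for $i \geq 1$ and $0$ for $i=0$. Since a derivation of $\kk[X_0,\dots,X_n]$ is determined by its values on the generators, the identity $[\Delta,\Gamma] = \Delta$ holds if and only if $(a_i - a_{i-1}) X_{i-1} = X_{i-1}$ for every $i \in \{1, \dots, n\}$, i.e. if and only if $a_i - a_{i-1} = 1$ for all such $i$. This recurrence is equivalent to $a_j = a_0 + j$ for all $j \in \{0, \dots, n\}$, which is exactly the claimed condition. The $i=0$ equation is automatically satisfied and imposes no constraint on $a_0$, consistent with $a_0$ remaining a free parameter.

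I do not anticipate a genuine obstacle here, as this is a direct verification; the only point requiring a little care is to justify cleanly that the commutator of two derivations is itself a derivation (so that checking the identity on generators suffices) and to handle the boundary index $i=0$ separately, since $\Delta$ kills $X_0$. One might alternatively prove it by writing $\Delta\Gamma$ and $\Gamma\Delta$ as explicit sums of operators $X_j \del_k$ and cancelling, but the generator-evaluation approach is shorter and less error-prone.
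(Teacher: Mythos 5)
Your proposal is correct and follows essentially the same route as the paper: the paper's proof is precisely the computation $(\Delta\Gamma-\Gamma\Delta)X_{i+1}=(a_{i+1}-a_i)X_i$, compared against $\Delta X_{i+1}=X_i$, which is your generator-evaluation argument up to an index shift. Your additional remarks (that the commutator of derivations is a derivation, and the separate treatment of $i=0$) are implicit in the paper's one-line proof and are sound.
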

\begin{proof}
 This follows from the computation 
\[ (\Delta \Gamma - \Gamma\Delta) X_{i+1} = (a_{i+1}-a_i) X_i,\]
so $\Delta \Gamma - \Gamma \Delta = \Delta$ if and only if $a_{i+1} = a_i +1$ for all $i$.
\end{proof}

The importance of Lemma~\ref{lem:one} is the following result of \cite{Pym}, based on the universal deformation formula of \cite{CGG}.
\begin{proposition}{\rm (\cite[Lemma~3.3.]{Pym})}
\label{prop:Pym}
Let $A$ be a commutative $\C$-algebra, and let $\Delta,\Gamma : A \to A$ be $\C$-derivations so that $\Delta$ is locally nilpotent and $[\Delta,\Gamma]=\Delta$.
For $k\in \NN$, define $\binom{\Gamma}{k}  =\frac{1}{k!} \Gamma \cdot (\Gamma-1) \cdots (\Gamma-(k-1))$.
Then
\[
f \ast g = \sum_{k=0}^\infty \hbar^k \Delta^k(f) \binom{\Gamma}{k} (g)
\]
defines an associative product on $A[\hbar]$ whose semiclassical limit as $\hbar \to 0$ is the Poisson bracket
\[\{f, g\} = \Delta(f) \Gamma(g) - \Gamma(f) \Delta(g).
\]
Further, evaluating at a particular $\hbar \in \C$ gives an associative product
\[ \ast_{\hbar}:  A \otimes_\C A \to A.\]
The rings $(A, \ast_{\hbar})$ are isomorphic for any $\hbar \neq 0$. 
\ff{We note that our sign convention in Proposition~\ref{prop:Pym} differs from that of Pym's original result.} 
\end{proposition}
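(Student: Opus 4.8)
The formula displayed here is the Coll--Gerstenhaber--Giaquinto universal deformation formula \cite{CGG} attached to the action of the two-dimensional solvable Lie algebra $\mathfrak{b}=\Span_{\kk}\{\Delta,\Gamma\}$ on $A$, and the plan is to verify its three assertions---well-definedness, associativity, and the semiclassical/rescaling statements---resting everything on a few elementary identities. Local nilpotence of $\Delta$ makes the sum $f\ast g=\sum_k\hbar^k\Delta^k(f)\binom{\Gamma}{k}(g)$ finite for each pair $(f,g)$, so $\ast$ is a well-defined $\kk$-bilinear map on $A[\hbar]$; since $\Delta(1)=\Gamma(1)=0$ the constant $1$ is a two-sided unit, and $\ast|_{\hbar=0}$ is the commutative product. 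The three structural identities I would record first are: from $[\Delta,\Gamma]=\Delta$ one gets $\Delta\Gamma=(\Gamma+1)\Delta$ and hence, by induction, the commutation rule $\Delta^k p(\Gamma)=p(\Gamma+k)\Delta^k$ for every polynomial $p$, in particular $\Delta^k\binom{\Gamma}{j}=\binom{\Gamma+k}{j}\Delta^k$; the Leibniz rule $\Delta^k(gh)=\sum_{p+q=k}\binom{k}{p}\Delta^p(g)\Delta^q(h)$ since $\Delta$ is a derivation; and, since $\Gamma$ is a derivation, the Chu--Vandermonde identity $\binom{u+v}{k}=\sum_i\binom{u}{i}\binom{v}{k-i}$ applied to the commuting operators $\Gamma\otimes1$ and $1\otimes\Gamma$, which gives $\binom{\Gamma}{k}(gh)=\sum_{i=0}^{k}\binom{\Gamma}{i}(g)\,\binom{\Gamma}{k-i}(h)$.

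Associativity is the crux. Writing $F=\sum_k\hbar^k\,\Delta^k\otimes\binom{\Gamma}{k}$ and $f\ast g=m\bigl(F(f\otimes g)\bigr)$ for the multiplication $m\colon A\otimes A\to A$, I would show that $F$ is a Drinfeld twist: it satisfies the $2$-cocycle identity $(F\otimes1)(\delta\otimes\id)(F)=(1\otimes F)(\id\otimes\delta)(F)$ with respect to the coproduct $\delta$ of $U(\mathfrak{b})$, under which $\Delta$ and $\Gamma$ are primitive. This is exactly the content of the CGG formula, and it forces $(f\ast g)\ast h=f\ast(g\ast h)$. The self-contained route is to expand both triple products, collect the coefficient of $\hbar^N$, use the Leibniz rule to distribute the outer powers of $\Delta$, use $\Delta^k\binom{\Gamma}{j}=\binom{\Gamma+k}{j}\Delta^k$ to move all $\Delta$'s to the left, and use the Chu--Vandermonde identity to split the $\binom{\Gamma}{\bullet}$'s across the tensor factors; the two sides then match after a binomial reindexing. \emph{This bookkeeping is the main obstacle:} both triple products unwind into multiple sums indexed by compositions of $N$, and the agreement ultimately reduces to a Vandermonde-type binomial identity, so the difficulty is organisational rather than conceptual. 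I would either carry this out directly or simply invoke \cite[Lemma~3.3]{Pym} and \cite{CGG}, taking care of the sign convention flagged in the footnote.

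The remaining assertions are short. Since $\binom{\Gamma}{0}=1$ and $\binom{\Gamma}{1}=\Gamma$, one has $f\ast g=fg+\hbar\,\Delta(f)\Gamma(g)+O(\hbar^2)$, whence $\hbar^{-1}(f\ast g-g\ast f)\to\Delta(f)\Gamma(g)-\Gamma(f)\Delta(g)=\{f,g\}$ as $\hbar\to0$, giving the semiclassical limit; that $\{-,-\}$ is then a Poisson bracket (antisymmetry, Leibniz, Jacobi) is automatic as the first-order commutator of an associative deformation of a commutative algebra. For a fixed nonzero $\hbar$, evaluating the finite sum gives a genuine product $\ast_{\hbar}$ on $A$. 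To obtain $(A,\ast_{\hbar})\cong(A,\ast_{\hbar'})$ for all nonzero $\hbar,\hbar'$, I would exhibit a $\kk$-algebra automorphism $\psi$ of $A$ with $\psi\Delta\psi^{-1}=\mu\Delta$ and $\psi\Gamma\psi^{-1}=\Gamma$ for a suitable scalar $\mu$: such a $\psi$ satisfies $\psi(f\ast_{\hbar}g)=\psi(f)\ast_{\mu\hbar}\psi(g)$, so varying $\mu$ reaches every nonzero value. In the polynomial-ring setting of this paper the grading automorphism $\psi_t\colon X_i\mapsto t^iX_i$ does the job, since it fixes $\Gamma$ and sends $\Delta\mapsto t^{-1}\Delta$.
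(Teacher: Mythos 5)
The paper offers no proof of this proposition at all: it is quoted directly from Pym's Lemma~3.3, which in turn rests on the Coll--Gerstenhaber--Giaquinto universal deformation formula, so there is no in-paper argument to compare against. Your reconstruction is the standard one, and the identities you isolate are exactly the right ones: $\Delta^k p(\Gamma)=p(\Gamma+k)\Delta^k$ (from $\Delta\Gamma=(\Gamma+1)\Delta$), the Leibniz rule for $\Delta^k$, and the coproduct identity $\binom{\Gamma}{k}(gh)=\sum_i\binom{\Gamma}{i}(g)\binom{\Gamma}{k-i}(h)$ --- valid because $\Gamma\circ m=m\circ(\Gamma\otimes 1+1\otimes\Gamma)$ and Chu--Vandermonde holds for commuting operators --- are precisely what make the two expansions of $(f\ast g)\ast h$ and $f\ast(g\ast h)$ match after reindexing. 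You defer the bookkeeping of the associativity check, which is where essentially all of the work in \cite{CGG} lives, but invoking that reference (or \cite{Pym}) for it is legitimate, and your packaging of $F=\sum_k\hbar^k\Delta^k\otimes\binom{\Gamma}{k}$ as a Drinfeld twist for $U(\mathfrak{b})$ is the conceptually correct one. The well-definedness, unitality, and semiclassical-limit paragraphs are all fine.

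The one genuine gap is in the final clause. The proposition is stated for an \emph{arbitrary} commutative $\C$-algebra $A$, but your proof that $(A,\ast_{\hbar})\cong(A,\ast_{\hbar'})$ for $\hbar,\hbar'\neq 0$ requires an algebra automorphism $\psi$ of $A$ with $\psi\Delta\psi^{-1}=\mu\Delta$ and $\psi\Gamma\psi^{-1}=\Gamma$, and you construct such a $\psi$ (the grading automorphism $X_i\mapsto t^iX_i$) only in the polynomial-ring situation of this paper. For general $(A,\Delta,\Gamma)$ satisfying just the stated hypotheses, nothing guarantees that $\Delta$ is conjugate to $\mu\Delta$ by an automorphism commuting with $\Gamma$; the naive candidate $\exp(\Gamma\log t)$ need not make sense since $\Gamma$ is not locally nilpotent. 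So either restrict the last sentence to settings where such a rescaling automorphism exists --- which, by your $\psi_t$ computation, covers every use made of the proposition in this paper --- or retain the citation to \cite{Pym} for that clause. Everything else checks out.
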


Thus let   $A = \C[X_0, \dots, X_n]$.
  For $a \in \CC$, define
\[ \Gamma_a = \sum_{i =0}^n (a+i) X_i \del_i.\]
Let $\ast_a:  A \otimes_\C A \to A$ be defined by
\beq \label{mult}
f \ast_a g = \sum_{k=0}^\infty \Delta^k(f) \binom{\Gamma_a}{k} (g).
\eeq
By Lemma~\ref{lem:one} and Proposition~\ref{prop:Pym}, this defines an associative multiplication on $A$.
Let $\{ -, -\}_a:  A \otimes_\kk A \to A$ be the associated Poisson bracket
\beq\label{Poisson}
\{f, g\}_a = \Delta(f) \Gamma_a(g) - \Gamma_a(f) \Delta(g).
\eeq

Going forward, we define $R(n, a)$ to be the associative algebra $ (A, \ast_a)$ and $A(n,a) $ to be the Poisson algebra $ (A, \{-,-\}_a)$. The goal of this paper is to study these two algebras. 

To end the section, we describe how the construction above relates to the canonical action of the standard Borel subgroup of $\PGL_2(\kk)$ on $\PP^1$.
Let
\[ G = \left\{ \begin{pmatrix} a & b \\  0 & c\end{pmatrix} \st ac \neq 0 \right\} / \ \kk^\times \]
be the standard Borel subgroup of $ \PGL_2(\kk)$.
Note that $G \cong \kk \rtimes \kk^\times$ and is the subgroup of $ \PGL_2(\kk)$ that fixes the point $\infty = [1:0] \in \PP^1$.

The group $G$ acts on $\Sym^n(\PP^1) \cong \PP^n$ and thus on the homogeneous coordinate ring $B(\PP^n, \sO(1)) \cong \kk[X_0, \dots, X_n]$.
Thus the two-dimensional solvable Lie algebra $\Lie G$ acts by derivations on $\kk[X_0, \dots, X_n]$ for all $n$.
All of these actions are induced by taking symmetric powers of the standard action on $\mb V = \kk \cdot\{X, Y\}$.

To see how this works explicitly, fix $a \in \kk$ and let $\Gamma_a = a X \del_X + (a+1) Y \del_Y$ and $\Delta = X \del_Y$ as above. 
Fix also  $n \geq 1$.
If we set $X_{j} = X^{n-j} Y^j/j!$ then $X_0, \dots, X_{n}$ form a basis for $\Sym^{n}(\mb V)$ with $\Delta(X_j) = X_{j-1}$ and $\Gamma_a(X_j) = ((n-j)a +j(a+1))X_j = (na + j) X_j$.
We see that $n$'th symmetric power of the bracket $\{-,-\}_a$  on $\kk[X,Y]$ induces $\{-,-\}_{na}$ on $\kk[X_0, \dots, X_{n}]$.

Consider the  surjection $\phi:  \kk[X_0, \dots, X_n] \twoheadrightarrow \kk[X_0, X_1]^{(n)}$ induced from the Veronese embedding of $\PP^1$ as a rational normal curve in $\PP^{n}$.  
The Veronese embedding is $G$-equivariant by construction, and from the discussion above we expect $\phi$ to be a Poisson homomorphism from $A(n, a) \to A(1,a/n)^{(n)}$.  
We will see in Section~\ref{POINTS} that this does happen, and furthermore that there is also a surjection $R(n,a) \to R(1, a/n)^{(n)}$.


\section{First properties}\label{FIRSTPROPS}
Fix $n\geq 1$ and $a\in\kk$, and let $A = A(n,a)$ and $R= R(n,a)$.  
In this section, we give an explicit presentation of  $R$.  
We show that $R$ is a noetherian Artin-Schelter regular domain of global dimension $n+1$.
Finally we study the localisations of $A$ and $R$ at the (Poisson) normal element $X_0$.

 In the sequel, we will suppress the subscript $a$ where it is clear from context, so will use $\ast$ to denote multiplication in $R$ and $\{ -, - \}$ to denote the Poisson bracket on $A$.
We use concatenation to denote the (commutative) multiplication in $A$.

Let $z_j = a+j$ and set $X_{-1}:=0$. Since $\binom{\Gamma_a}{\ell}(X_j)=\binom{a+j}{\ell}X_j$ we deduce from \eqref{mult} that, for all $0\leq i,j\leq n$, we have:
\beq \label{com}
X_i\ast X_j=\sum_{\ell=0}^{i}\binom{z_j}{\ell}X_{i-\ell}X_j
\eeq
and
\beq \label{pb}
\{X_i,X_j\}=z_j X_{i-1}X_j-z_iX_{j-1}X_i.
\eeq

We will use  two gradings on $A$:  the standard {\em degree grading} $d$ defined by $d(X_i) = 1$ for all $i$ and the {\em weight grading} $\epsilon$ defined by $\epsilon(X_i) = i$.  
For $k \in \NN$, let $A_k$ be the $d$-homogeneous component of $d$-degree $k$, and let $A^k$ be the $\epsilon$-homogeneous component of $\epsilon$-degree $k$.
Let $A^j_k = A^j \cap A_k$.
Note that we have
\beq \label{epsilondelta}
\Delta(A^j_k) \subseteq A^{j-1}_k,
\eeq
whereas $\Gamma_a$ preserves both $d$-and $\epsilon$-degree.  It follows immediately 
that $R$ is also $d$-graded.  Thus the Hilbert series of $R$ (with respect to $d$) is 
\beq \label{hilbR} \hilb R = \hilb A = \frac{1}{(1-t)^{n+1}}. \eeq
Observe also that the Poisson bracket on $A$ is $d$-homogeneous:  $\{A_k, A_\ell\} \subseteq A_{k+\ell}$.  
 
On the other hand, $\epsilon$ defines a filtration on $R$:

\begin{lemma}\label{lem:assgr}
Let $A =A(n,a)$ with Poisson bracket $\{-,-\} $ and let $R = R(n,a)$, with multiplication $\ast $.  
Let $R^{\leq k} := \bigoplus_{\ell \leq k} A^\ell$, considered as a subspace of $R$. 
Then $R^{\leq k} \ast R^{\leq \ell} \subseteq R^{\leq k+\ell}$, so $R$ is filtered by the $R^{\leq k}$.  The associated graded algebra of $R$ is naturally isomorphic as a graded algebra to $A$, and under this identification we have
\[ \{ \gr f, \gr g \} =\gr(f \ast g - g \ast f).\]
\end{lemma}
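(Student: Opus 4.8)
The plan is to work throughout with $\epsilon$-homogeneous elements and to read off everything directly from the multiplication formula \eqref{mult}, since both assertions are really statements about how $\ast$ interacts with the weight grading $\epsilon$. It suffices to treat $f \in A^j$ and $g \in A^m$, the general case following by linearity. First I would record the $\epsilon$-degree of each summand in \eqref{mult}. Because $\Gamma_a$ preserves $\epsilon$-degree, so does $\binom{\Gamma_a}{k}$, whence $\binom{\Gamma_a}{k}(g) \in A^m$; by \eqref{epsilondelta} we have $\Delta^k(f) \in A^{j-k}$. As $\epsilon$ is a grading on the commutative ring $A$, the $k$-th term $\Delta^k(f)\binom{\Gamma_a}{k}(g)$ lies in $A^{j+m-k}$. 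Summing over $k \geq 0$ gives $f \ast g \in \bigoplus_{\ell \leq j+m} A^\ell = R^{\leq j+m}$, which is exactly the filtration statement $R^{\leq k} \ast R^{\leq \ell} \subseteq R^{\leq k+\ell}$.

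Next I would isolate the top-degree behaviour. The unique contribution in $\epsilon$-degree $j+m$ comes from $k=0$ and equals the commutative product $fg$. Therefore the product induced on $\gr R = \bigoplus_k R^{\leq k}/R^{\leq k-1}$ sends the class of $f$ times the class of $g$ to the class of $fg$; under the vector-space identification $\gr_k R \cong A^k$ this says precisely that $\gr R$ is the commutative algebra $A$ with its $\epsilon$-grading, giving the asserted isomorphism $\gr R \cong A$ of graded algebras.

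For the Poisson identity I would compute the commutator $f \ast g - g \ast f$ term by term in $\epsilon$-degree. The $k=0$ contributions are $fg$ and $gf$, which cancel since $A$ is commutative; hence the commutator in fact lands in $R^{\leq j+m-1}$, so $[R^{\leq k}, R^{\leq \ell}] \subseteq R^{\leq k+\ell-1}$ and $\gr R$ carries a Poisson bracket in the standard way. The $k=1$ contributions are $\Delta(f)\Gamma_a(g)$ from $f \ast g$ and $\Delta(g)\Gamma_a(f)$ from $g \ast f$; by \eqref{Poisson} their difference is exactly $\{f,g\}$, which sits in $\epsilon$-degree $j+m-1$. All remaining terms ($k \geq 2$) lie in $\bigoplus_{\ell \leq j+m-2} A^\ell$. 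Thus $f \ast g - g \ast f = \{f,g\} + (\text{lower $\epsilon$-degree})$, so the image of $f \ast g - g \ast f$ in $\gr_{j+m-1} R \cong A^{j+m-1}$ is $\{f,g\} = \{\gr f, \gr g\}$, giving the claimed identity $\{\gr f, \gr g\} = \gr(f \ast g - g \ast f)$.

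The computations are routine degree bookkeeping, and local nilpotency of $\Delta$ is only needed to ensure the sums in \eqref{mult} are finite. The one point requiring care is conceptual rather than technical: one must check that the commutator genuinely drops $\epsilon$-degree by one, so that $\gr R$ is Poisson at all, and then match the canonically induced bracket on $\gr R$ with the given bracket $\{-,-\}_a$ on $A$. Both follow from the cancellation of the commutative $k=0$ terms and the explicit identification of the $k=1$ terms with \eqref{Poisson}, so I expect no genuine obstacle beyond keeping the weight degrees straight.
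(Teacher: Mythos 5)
Your proof is correct and follows essentially the same route as the paper: both arguments read the filtration, the identification $\gr R \cong A$, and the Poisson identity directly off the deformation formula \eqref{mult} together with the degree bookkeeping \eqref{epsilondelta}, isolating the $k=0$ term for the product and the $k=1$ term for the bracket. The only cosmetic difference is that you reduce to $\epsilon$-homogeneous elements at the outset while the paper decomposes general elements and tracks their top-degree pieces; the computations are identical.
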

\begin{proof}
As graded vector spaces, since $R=A$ certainly $\gr R = \bigoplus_k R^{\leq k}/R^{\leq k-1} \cong \bigoplus_k A^k = A$.  
Let $f = \sum_{i = 0}^k f_i, g = \sum_{i=0}^{\ell} g_i \in A$, where $f_i, g_i \in A^i$ and $f_k, g_\ell \neq 0$.  
From \eqref{mult} and \eqref{epsilondelta} we have that the $\epsilon$-degree $k+\ell$ components of both $f \ast g$ and $g \ast f$ are equal to 
$f_k g_\ell= g_{\ell} f_k $
so the multiplications on $\gr R$ and $A$ agree.

Finally,  $ \gr(f \ast g - g \ast f) $ lies in $\epsilon$-degree $k+\ell-1$ and is thus equal to 
\[  \Delta(f_k) \Gamma_a(g_\ell) - \Delta(g_\ell) \Gamma_a(f_k) =\{\gr f, \gr g\},\]
as needed.
\end{proof}

Recall that a $\kk$-algebra $R$ is {\em strongly noetherian} if $R \otimes_\kk C$ is noetherian for any commutative noetherian $\kk$-algebra $C$.  (For example, polynomial rings are strongly noetherian.)  We have:
\begin{corollary}\label{cor:noeth}
For any $n \in \ZZ_{>0}$ and $a\in\kk$, the ring $R(n,a)$ is a strongly noetherian domain.
\end{corollary}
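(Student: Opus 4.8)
The plan is to deduce both assertions from the standard lifting results for filtered algebras, applied to the $\epsilon$-filtration constructed in Lemma~\ref{lem:assgr}. By that lemma, $R$ is filtered by the subspaces $R^{\leq k}$, and this filtration is \emph{positive} ($R^{\leq k} = 0$ for $k < 0$) and \emph{exhaustive} ($\bigcup_k R^{\leq k} = R$, since every polynomial has finite $\epsilon$-degree), with associated graded $\gr R \cong A$ as graded algebras. As an associative algebra, $A$ is simply the polynomial ring $\kk[X_0, \dots, X_n]$, which is a noetherian domain. So the entire content of the corollary is to transfer these two properties from $\gr R$ back to $R$.

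First I would verify that $R$ is a domain using the standard principle that a ring carrying an exhaustive, separated filtration whose associated graded ring is a domain is itself a domain. Concretely, given nonzero $f, g \in R$ with exact $\epsilon$-filtration degrees $k$ and $\ell$, the fact that $\gr R \cong A$ is a domain forces $\gr(f)\,\gr(g) = \gr(fg) \neq 0$ in $\gr R$; hence $fg$ has filtration degree exactly $k+\ell$ and in particular $fg \neq 0$.

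For the strongly noetherian property, let $C$ be an arbitrary commutative noetherian $\kk$-algebra. I would extend the filtration to $R \otimes_\kk C$ by setting $(R \otimes_\kk C)^{\leq k} := R^{\leq k} \otimes_\kk C$, which is again a positive exhaustive filtration. Since $\kk$ is a field, $C$ is flat over $\kk$, so taking associated graded commutes with $-\otimes_\kk C$ and gives
\[
\gr(R \otimes_\kk C) \cong (\gr R) \otimes_\kk C \cong \kk[X_0, \dots, X_n] \otimes_\kk C \cong C[X_0, \dots, X_n],
\]
which is noetherian by the Hilbert basis theorem. Invoking the classical lifting result that a (positively) filtered ring with noetherian associated graded ring is noetherian, we conclude that $R \otimes_\kk C$ is noetherian. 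As $C$ was arbitrary, $R$ is strongly noetherian.

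The only point requiring care is confirming that the hypotheses of the lifting lemmas genuinely hold: that the filtration is positive and exhaustive, which is immediate from Lemma~\ref{lem:assgr}, and that forming the associated graded commutes with $-\otimes_\kk C$, which relies only on the flatness of $C$ over the field $\kk$. Granting these, there is no real obstacle, since the substantive work identifying $\gr R$ with $A$ has already been carried out.
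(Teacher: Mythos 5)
Your proof is correct and follows essentially the same route as the paper, which simply cites the standard filtered--graded lifting results (\cite[Proposition~1.6.6, Theorem~1.6.9]{MR}) applied to the $\epsilon$-filtration with $\gr R \cong A$ a commutative polynomial ring. You have merely spelled out the details that the paper leaves implicit, including the tensoring-with-$C$ step for the strongly noetherian property.
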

\begin{proof}
This follows by standard arguments (see \cite[Proposition~1.6.6, Theorem~1.6.9]{MR}) from the corresponding properties for $A(n,a)  = \gr R(n,a)$.
\end{proof}

The next result is useful because it allows us to use inductive arguments to establish properties of the $R(n,a)$ or $A(n,a)$.

\begin{proposition}\label{prop:factX1} 
Suppose that $n\geq 1$. 
Let $A = A(n,a)$ and let $R= R(n,a)$.
Then
\begin{enumerate}
\item $X_0$ is normal  in $R$, and $R/ \ang{X_0} \cong R(n-1, a+1)$.
\item $X_0$ is Poisson normal in $A$, and $A/\ang{X_0} $ is Poisson isomorphic to $A(n-1,a+1)$.
\end{enumerate}
\end{proposition}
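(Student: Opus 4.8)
The plan is to read everything off the explicit product formula \eqref{mult} and its Poisson shadow \eqref{pb}, exploiting the single observation that $\Delta(X_0)=0$. For part (1) I would first note that, since $\Delta(X_0)=0$, only the $k=0$ term of \eqref{mult} survives when $f=X_0$, giving
\[ X_0 \ast g = X_0\, g \]
for all $g$, where juxtaposition denotes the commutative product of $A$; hence $X_0 \ast R = X_0 A$. In the other direction, $\Gamma_a(X_0)=z_0 X_0 = a X_0$ yields $\binom{\Gamma_a}{k}(X_0)=\binom{a}{k}X_0$, so \eqref{mult} gives
\[ f \ast X_0 = \Bigl(\textstyle\sum_{k\geq 0}\binom{a}{k}\Delta^k(f)\Bigr) X_0 =: \sigma(f)\, X_0, \]
a finite sum because $\Delta$ is locally nilpotent.

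To establish normality I would argue that $\sigma$ is bijective. By \eqref{epsilondelta} the operator $\Delta$ strictly lowers the weight grading $\epsilon$ while preserving the degree grading $d$, so $\sigma = \id + (\text{strictly lower }\epsilon\text{-order})$ is unipotent on each finite-dimensional component $A_k$, hence a bijection of $A$. Since $X_0$ is a nonzerodivisor in the commutative polynomial ring $A$, this gives $R\ast X_0 = \sigma(A)X_0 = A X_0 = X_0 A = X_0 \ast R$, which is exactly the normality of $X_0$, and shows $\ang{X_0}=X_0 A$ as a set. For the quotient I would define $\psi\colon R(n-1,a+1)\to R/\ang{X_0}$ on generators by $Y_j \mapsto \overline{X_{j+1}}$, a linear bijection because both sides have the monomials in $X_1,\dots,X_n$ (resp.\ $Y_0,\dots,Y_{n-1}$) as a basis. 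To check it is an algebra map I would apply \eqref{com} with indices $i+1,j+1$,
\[ X_{i+1} \ast X_{j+1} = \sum_{\ell=0}^{i+1}\binom{a+j+1}{\ell}X_{i+1-\ell}X_{j+1}, \]
observe that the structure constant $z_{j+1}=a+j+1$ equals $(a+1)+j$, the corresponding constant for the parameter $a+1$, and note that the only term carrying a factor $X_0$ is $\ell=i+1$, which dies in the quotient; the surviving sum matches $\psi(Y_i \ast' Y_j)$ termwise.

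Part (2) runs in parallel and is shorter. Poisson normality is immediate from \eqref{pb}, since $\{X_0,X_j\} = -z_0 X_{j-1}X_0 = -a X_{j-1}X_0\in X_0 A$, and the Leibniz rule propagates this to all of $A$. The same index shift $Y_j\mapsto\overline{X_{j+1}}$ defines the candidate Poisson isomorphism $A(n-1,a+1)\to A/\ang{X_0}$, and \eqref{pb} with indices $i+1,j+1$ — again using $z_{i+1}=(a+1)+i$ together with the vanishing of the $X_0$-terms modulo $\ang{X_0}$ — shows the brackets agree. The one piece of bookkeeping to get exactly right, and the only genuine subtlety, is the parameter shift $a\mapsto a+1$: it is forced by the identity $z_{j+1}=(a+1)+j$, and one must confirm that precisely the terms containing $X_0$ are the ones killed in the quotient, so that what remains is genuinely the structure of $R(n-1,a+1)$ (resp.\ $A(n-1,a+1)$). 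The proof that $\sigma$ is bijective — equivalently, that the left and right ideals generated by $X_0$ coincide — is the step that uses the grading input \eqref{epsilondelta} rather than a bare computation, and is the main obstacle in part (1).
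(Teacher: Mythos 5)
Your proof is correct, and for part (1) it follows the same route as the paper --- reading normality and the quotient structure directly off the explicit product formula \eqref{com} and noting that the parameter shift is forced by $z_{j+1}=(a+1)+j$ --- except that where the paper simply declares $X_0\ast R=R\ast X_0=X_0A$ to be ``immediate from \eqref{com}'', you spell out the map $\sigma=\sum_k\binom{a}{k}\Delta^k$ with $f\ast X_0=\sigma(f)X_0$ and prove it is unipotent, hence bijective, on each $A_k$; this is a genuine improvement in explicitness, and it is worth noting that your $\sigma$ is exactly the map $\phi_a$ of Lemma~\ref{lem:phiinv} (extended from $A_1$ to $A$), which reappears in Lemma~\ref{lem:TWO} as conjugation by the normal element $X_0$. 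For part (2) your route differs from the paper's: you verify Poisson normality and the bracket identification by direct computation with \eqref{pb}, whereas the paper deduces (2) from (1) via Lemma~\ref{lem:assgr}, using that $A/\ang{X_0}$ is the associated graded of the $\epsilon$-filtration on $R/\ang{X_0}\cong R(n-1,a+1)$. Your computation is more elementary and self-contained; the paper's argument buys the statement for free once (1) is known and reinforces the filtration formalism used throughout. One small remark: both you and the paper only check the product (resp.\ bracket) on pairs of generators; the cleanest way to see this suffices is that $\Delta$ and $\Gamma_a$ preserve the ideal $X_0A$ (since $\Delta(X_0)=0$) and descend on $A/X_0A$ to the downward derivation and to $\Gamma_{a+1}$ in the variables $Y_j=\overline{X_{j+1}}$, so the \emph{full} formulas \eqref{mult} and \eqref{Poisson} for the quotient coincide with those defining $R(n-1,a+1)$ and $A(n-1,a+1)$.
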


\begin{proof}

$(1)$ That $X_0 \ast R = R \ast X_0 = X_0A $  is immediate from \eqref{com}, and so $X_0$ is normal.

Let $\pi:  R \to R/\ang{X_0}$ be the canonical map, and let  $Y_i:=\pi(X_{i+1})$ for all $0 \leq i < n$. 
Clearly $R/\ang{X_0} = R/X_0 \ast R $ may be identified with $ A/X_0A = A/\ang{X_0}$ as a graded vector space, and this and $R(n-1,a+1)$ are isomorphic as graded vector spaces.
From \eqref{com}, the multiplication $\ast$ on $R/\ang{X_0}$ satisfies
\[ Y_i \ast Y_j = \sum_{\ell = 0}^{i} \binom{a+j+1}{\ell} Y_{i-\ell} Y_j,\]
which is precisely the multiplication on $R(n-1,a+1)$.

Note that the isomorphism $R/\ang{X_0} \cong R(n-1,a+1)$ respects both the $d$-grading and the $\epsilon$-filtration on $R$.

For $(2)$, it is clear that $X_0$ is Poisson normal.  
Since $A/\ang{X_0}$ is the associated graded of the $\epsilon$-filtration on $R/\ang{X_0} \cong R(n-1,a+1)$, the remaining statement follows immediately from Lemma~\ref{lem:assgr}.
\end{proof}

We next prove that the algebras $R(n,a)$ have good homological properties.
Recall that an $\NN$-graded $\CC$-algebra $R$ with $R_0 = \CC$ is {\em Artin-Schelter regular} or {\em AS-regular} if:
\begin{enumerate}
\item $ \gldim R < \infty$;
\item $R$ has finite Gelfand-Kirillov-dimension;
\item $\Ext^i_R(\CC_R, R_R) \cong \begin{cases}  0 & \text{ if $i \neq \gldim R$} \\
{}_R \CC[\ell] & \text{ if $i = \gldim R$.}
\end{cases} $.
\end{enumerate}
(Here $\CC[\ell]$ means that the module is degree-shifted by some amount $\ell \in \ZZ$.)
The Artin-Schelter regular condition is a noncommutative analogue of the good properties of commutative polynomial rings \cite{AS}.  
Condition (3) above is called the {\em AS-Gorenstein} condition.

\begin{theorem}\label{thm:ASreg}
Fix $n\in\ZZ_{>0}$ and let $a \in \kk$.
The algebra $R= R(n,a)$ is Artin-Schelter regular of  global dimension $n+1$.
\end{theorem}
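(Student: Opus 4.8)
The plan is to prove Artin-Schelter regularity by induction on $n$, exploiting the normal element $X_0$ established in Proposition~\ref{prop:factX1}. The base case $n=1$ is handled directly: as noted in the introduction, $R(1,a)$ is either the Jordan plane or the commutative polynomial ring $\kk[x,y]$, both of which are classical examples of AS-regular algebras of global dimension $2$. For the inductive step I would assume $R(n-1,a+1)$ is AS-regular of global dimension $n$ and deduce the statement for $R=R(n,a)$.

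The key tool is the behaviour of AS-regularity under quotients by homogeneous normal nonzerodivisors. Since $X_0$ is normal in $R$ by Proposition~\ref{prop:factX1}(1), and $R$ is a domain by Corollary~\ref{cor:noeth}, the element $X_0$ is a homogeneous nonzerodivisor of $d$-degree $1$. Moreover Proposition~\ref{prop:factX1}(1) identifies the quotient: $R/\langle X_0\rangle \cong R(n-1,a+1)$. The standard lifting theorem for AS-regular algebras (in the form of Artin-Schelter, or the version appearing in Levasseur's work) states that if $R$ is a connected graded noetherian algebra and $x$ is a homogeneous normal nonzerodivisor with $R/\langle x\rangle$ AS-regular of global dimension $d$, then $R$ is AS-regular of global dimension $d+1$, provided $R$ itself has finite global dimension (or, equivalently in this setting, finite GK-dimension together with the lifting of the Gorenstein property). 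I would verify the hypotheses in order: $R$ is connected graded with $R_0=\kk$ by construction; $R$ is noetherian by Corollary~\ref{cor:noeth}; the Hilbert series $\hilb R = 1/(1-t)^{n+1}$ from \eqref{hilbR} shows $R$ has finite GK-dimension $n+1$; and $X_0$ is a homogeneous normal nonzerodivisor whose quotient is AS-regular of global dimension $n$ by the inductive hypothesis.

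Concretely, I would first establish that $\gldim R \leq (\gldim R/\langle X_0\rangle) + 1 = n+1$. This follows from the change-of-rings (Rees) argument for a normal nonzerodivisor: a finite free resolution of the trivial module over $R/\langle X_0\rangle$ lifts to a finite free resolution over $R$ after tensoring with the two-term Koszul-type complex $0 \to R(-1) \xrightarrow{X_0} R \to R/\langle X_0\rangle \to 0$, so the projective dimension of $\kk$ increases by exactly one. Then I would transport the AS-Gorenstein condition through the same short exact sequence: applying $\Ext^{\bullet}_R(-,R)$ and using the long exact sequence relating $\Ext^i_R(R/\langle X_0\rangle, R)$ to $\Ext^i_{R/\langle X_0\rangle}(\kk, R/\langle X_0\rangle)$ via the Rees-ring spectral sequence, one computes that $\Ext^i_R(\kk,R)$ vanishes except in degree $n+1$, where it is one-dimensional over $\kk$ and concentrated in a single internal degree, as required by condition (3).

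The main obstacle, and the step deserving the most care, is verifying the AS-Gorenstein condition (3) under the lift rather than merely bounding the global dimension; in particular one must ensure the normality of $X_0$ interacts correctly with the right-module structure when computing $\Ext^{n+1}_R(\kk,R)$, since for a genuinely noncommutative normal element the quotient resolution lifts only up to a twist by the automorphism $\sigma$ defined by $X_0 \ast r = \sigma(r) \ast X_0$. I would therefore record this automorphism explicitly from \eqref{com} and check that the induced degree shift and twist leave $\Ext^{n+1}_R(\kk,R)$ one-dimensional and of the correct graded degree, completing the induction.
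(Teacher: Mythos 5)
Your proposal is correct in outline and shares the paper's overall strategy --- induction on $n$ using the homogeneous normal nonzerodivisor $X_0$ and the identification $R/\ang{X_0}\cong R(n-1,a+1)$ from Proposition~\ref{prop:factX1}, with the Jordan plane and $\kk[x,y]$ as base cases --- but the two arguments diverge in how they extract the global dimension and the Gorenstein condition. The paper gets the upper bound $\gldim R\leq n+1$ not from the quotient but from the associated graded ring: by Lemma~\ref{lem:assgr} the $\epsilon$-filtration has $\gr R\cong A(n,a)$, so \cite[Corollary~7.6.18]{MR} applies; the quotient by $X_0$ and \cite[Theorem~7.3.5]{MR} are used only for the lower bound. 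For condition (3) the paper never computes $\Ext_R(\kk,R)$ directly: it carries the Auslander-Gorenstein and Cohen-Macaulay properties through the induction via \cite[Theorem~5.10]{Lev} and then deduces AS-Gorenstein from \cite[Theorem~6.3]{Lev} together with the Hilbert series \eqref{hilbR}. You instead run everything through the change-of-rings theory of the normal element: Rees's theorem gives $\on{pd}_R(\kk)=\on{pd}_{R/\ang{X_0}}(\kk)+1$, and the collapse of the spectral sequence $\Ext^p_{R/\ang{X_0}}\bigl(\kk,\Ext^q_R(R/\ang{X_0},R)\bigr)\Rightarrow\Ext^{p+q}_R(\kk,R)$, using that $\Ext^q_R(R/\ang{X_0},R)$ vanishes for $q\neq 1$ and is a degree-shifted $\sigma$-twist of $R/\ang{X_0}$ for $q=1$, transports the Gorenstein condition, the twist being harmless on the one-dimensional module $\kk$. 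This is a standard and complete alternative; the one point you should make explicit is that for a connected graded locally finite algebra $\gldim R=\on{pd}_R(\kk)$, since that is what allows the lifted resolution of the trivial module to bound the global dimension from above (the paper's use of the associated graded ring sidesteps this). What the paper's route buys is the Auslander-Gorenstein and Cohen-Macaulay properties themselves, which are not needed for the statement but are reused later (e.g.\ in the catenarity argument of Lemma~\ref{catenary}); what your route buys is independence from that machinery and an explicit identification of the Gorenstein degree shift.
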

\begin{proof}
We prove  by induction on $n$ that $R$ is AS-regular, Auslander-Gorenstein, and Cohen-Macaulay.  We do not give the definitions of Auslander-Gorenstein or Cohen-Macaulay; the unfamiliar reader may treat them as technical terms internal to this proof.
  
It is well-known that the Jordan plane $R(1,a)$ and the polynomial ring $R(1,0)$ satisfy all of the above properties, so that the base case $n=1$ is clear. Thus we may assume that $n > 1$.

By Lemma~\ref{lem:assgr} $A(n,a)$ is the associated graded ring of $R$ under the $\epsilon$-filtration.  Thus by \cite[Corollary~7.6.18]{MR}, $\gldim R \leq \gldim A(n,a) = n+1$.
By Proposition~\ref{prop:factX1} and  \cite[Theorem~7.3.5]{MR}, $\gldim R \geq \gldim R(n-1,a+1) +1$.
This last is  $n$ by induction.  Thus $\gldim R = n+1$.

By induction, $R(n-1,a+1)$ is Auslander-Gorenstein and Cohen-Macaulay. 
By \cite[Theorem~5.10]{Lev}, the same holds for $R$.
By \cite[Theorem~6.3]{Lev} and \eqref{hilbR}, $R$ is AS-Gorenstein and thus AS-regular.
\end{proof}

We now compute the relations for $R$.   We will use the Vandermonde identity:
 for all $a, b \in \kk$ and $k \in \NN$, we have
\beq \label{Vandermonde}
 \sum_{u=0}^k \binom{a}{u} \binom{b}{k-u} = \binom{a+b}{k}.
 \eeq
 We will also use the following lemma:
 
 \begin{lemma}\label{lem:phiinv}
 Let $V = A_1 = \Span_\kk (X_0, \dots, X_n)$.  For $b \in \kk$,
 define a linear map $\phi_b: V \to V$ by 
 \[ \phi_b(X_i) = \sum_{\ell =0}^{i} \binom{b}{\ell} X_{i-\ell}.\]
 Then  
$ \phi_a \phi_b = \phi_{a+b}$.  
\end{lemma}
\begin{proof}
By \eqref{Vandermonde} we have:
\begin{multline*}
\phi_a \phi_b(X_j)  = \phi_a \left(\sum_{i=0}^{j} \binom{b}{i} X_{j-i} \right)= \sum_{i=0}^{j} \binom{b}{i} \left( \sum_{\ell =0}^{j-i} \binom{a}{\ell} X_{j-i-\ell}\right) \\
 = \sum_{k=0}^{j} \left( \sum_{u =0}^k \binom{b}{u} \binom{a}{k-u} \right) X_{j-k} 
 = \sum_{k=0}^{j} \binom{a+b}{k} X_{j-k} = \phi_{a+b}(X_j).
 \end{multline*}  
 \end{proof}

The following lemma gives us quadratic relations that are satisfied in $R$.  We also give  an equation that  reverses the deformation formula \eqref{com} and thus allows us to obtain the commutative product from the noncommutative product $\ast$.

Recall that we set $z_j=a+j$. 

\begin{lemma}\label{lem:formulas}
For all $0\leq i,j\leq n$ we have
\begin{equation}
\label{star}
X_iX_j=X_{i}\ast X_{j}+\sum_{\ell=1}^{i}\binom{-z_j}{\ell}X_{i-\ell}\ast X_{j}=\sum_{\ell=0}^{i}\binom{-z_j}{\ell}X_{i-\ell}\ast X_{j}
\end{equation}
and
\begin{equation}
\label{rel}
X_i\ast X_j-X_j\ast X_i=\sum_{k=1}^{j}\binom{-z_i}{k}X_{j-k}\ast X_{i}-\sum_{\ell=1}^{i}\binom{-z_j}{\ell}X_{i-\ell}\ast X_{j}.
\end{equation}
\end{lemma}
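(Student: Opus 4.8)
The plan is to derive both identities directly from the defining multiplication formula \eqref{com} together with Lemma~\ref{lem:phiinv}, rather than by a brute-force manipulation of binomial sums. The key observation is that the map $\phi_b$ of Lemma~\ref{lem:phiinv} exactly encodes the deformation formula: by \eqref{com} we have $X_i \ast X_j = \phi_{z_j}(X_i)\, X_j$, where $z_j = a+j$ and the juxtaposition $\phi_{z_j}(X_i)\,X_j$ is the commutative product in $A$. Equation \eqref{star} should then be read as the assertion that applying $\phi_{-z_j}$ reverses this, i.e. $X_i X_j = \phi_{-z_j}(X_i) \ast X_j$, and the right-hand side of \eqref{star} is precisely $\phi_{-z_j}(X_i)$ multiplied by $X_j$ using $\ast$. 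So the real content of \eqref{star} is that $\phi_{-z_j}$ inverts the operation $X_i \mapsto X_i \ast X_j$, which follows from $\phi_{-z_j}\phi_{z_j} = \phi_0 = \id$, a special case of Lemma~\ref{lem:phiinv}.

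Concretely, first I would establish the clean reformulation $X_i \ast X_j = \phi_{z_j}(X_i)\,X_j$ (commutative product on the right) by comparing with \eqref{com}; this is immediate from the definition of $\phi_{z_j}$. Next, to prove \eqref{star}, I compute $\sum_{\ell=0}^{i}\binom{-z_j}{\ell} X_{i-\ell}\ast X_j$ by substituting the reformulation into each term: the $\ell$-th summand is $\binom{-z_j}{\ell}\,\phi_{z_j}(X_{i-\ell})\,X_j$, and summing over $\ell$ gives $\phi_{z_j}\!\big(\phi_{-z_j}(X_i)\big)\,X_j = \phi_0(X_i)\,X_j = X_i X_j$ by Lemma~\ref{lem:phiinv}. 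This yields the second equality in \eqref{star}; the first equality is just the separation of the $\ell=0$ term, noting $\binom{-z_j}{0}=1$ and $X_i \ast X_j$ being that term.

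Finally, \eqref{rel} follows by antisymmetrizing \eqref{star}. Writing \eqref{star} once in the stated form and once with $i,j$ interchanged, and using that $X_iX_j = X_jX_i$ in the commutative algebra $A$, I subtract the two expansions:
\[
X_i\ast X_j + \sum_{\ell=1}^{i}\binom{-z_j}{\ell}X_{i-\ell}\ast X_j
= X_j\ast X_i + \sum_{k=1}^{j}\binom{-z_i}{k}X_{j-k}\ast X_i,
\]
and rearranging gives exactly \eqref{rel}. The main point requiring care is the bookkeeping in the reformulation step and the correct tracking of the shifted parameter $z_j$ versus $-z_j$; the potential obstacle is ensuring that the finite truncation of the sums (the upper limits $i$ and $j$, reflecting that $\Delta^k X_i = 0$ for $k>i$) is respected when applying Lemma~\ref{lem:phiinv}, whose $\phi_b$ already incorporates this truncation. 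Since $\phi_b$ is defined with exactly these cutoffs, the composition identity applies verbatim and no separate vanishing argument is needed.
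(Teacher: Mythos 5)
Your proposal is correct and follows essentially the same route as the paper: both rest on the reformulation $X_i\ast X_j=\phi_{z_j}(X_i)\,X_j$, invert it via Lemma~\ref{lem:phiinv} (i.e.\ $\phi_{-z_j}\phi_{z_j}=\phi_0=\id$) to get \eqref{star}, and then obtain \eqref{rel} by applying \eqref{star} to $X_iX_j=X_jX_i$. The only cosmetic difference is that the paper writes $\phi_{z_j}^{-1}$ where you spell out the composition $\phi_{z_j}\circ\phi_{-z_j}$.
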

\begin{proof}
Fix $j$, and recall the definition of the linear map $\phi_b$ from Lemma~\ref{lem:phiinv}.  
Since for any $F \in R_1$ we have $F\ast X_j = \phi_{z_j}(F) X_j$, clearly 
$X_iX_j = \phi_{z_j}^{-1}(X_i) \ast X_j$. 
 By Lemma~\ref{lem:phiinv}, we immediately obtain \eqref{star}.

Applying  \eqref{star} to the equation $X_i X_j = X_j X_i$, we obtain \eqref{rel}.
\end{proof}

Note that relations (\ref{rel}) can be rewritten as:
\begin{equation}
\label{relabis}
\sum_{k=0}^{j}\binom{-z_i}{k}X_{j-k}\ast X_{i}=\sum_{\ell=0}^{i}\binom{-z_j}{\ell}X_{i-\ell}\ast X_{j}.
\end{equation}
We now have:
\begin{proposition}\label{prop:relR}
The relations in $R= R(n,a)$ are exactly the $\binom{n+1}{2}$ relations given by \eqref{rel} for $0 \leq i < j \leq n$.
\end{proposition}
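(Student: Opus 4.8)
The plan is to present $R$ as a quotient of a free algebra and then match Hilbert series. Let $T = \kk\langle X_0,\dots,X_n\rangle$ be the free associative algebra, let $I$ be the two-sided ideal generated by the $\binom{n+1}{2}$ elements obtained by moving everything to one side in \eqref{rel} for $0\le i<j\le n$, and set $S = T/I$. By Lemma~\ref{lem:formulas} each of these elements vanishes in $R$, so there is a surjection of $\NN$-graded $\kk$-algebras $\pi\colon S\twoheadrightarrow R$ that is an isomorphism in degrees $0$ and $1$. Since both algebras are graded and each $R_m$ is finite-dimensional, to prove $\pi$ is an isomorphism it suffices to show $\dim_\kk S_m\le\dim_\kk R_m$ for all $m$; by \eqref{hilbR} we have $\dim_\kk R_m=\binom{n+m}{m}$, the $m$-th coefficient of $(1-t)^{-(n+1)}$.

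To bound $\dim_\kk S_m$ I would produce an explicit spanning set of that size. Solving \eqref{rel} for the descending product gives, for $i<j$, the identity
\[
X_j\ast X_i = X_i\ast X_j-\sum_{k=1}^{j}\binom{-z_i}{k}X_{j-k}\ast X_i+\sum_{\ell=1}^{i}\binom{-z_j}{\ell}X_{i-\ell}\ast X_j
\]
in $S$, which rewrites a length-two word as a $\kk$-linear combination of length-two words. Using this substitution inside longer monomials, I claim that in $S$ every monomial is a $\kk$-linear combination of the ordered monomials $X_{i_1}\ast\cdots\ast X_{i_m}$ with $i_1\le\cdots\le i_m$. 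There are exactly $\binom{n+m}{m}$ such monomials in degree $m$ (non-decreasing sequences of length $m$ from an $(n+1)$-element set), so the claim yields $\dim_\kk S_m\le\binom{n+m}{m}$ and hence the proposition.

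The substance is a termination argument for this rewriting. To a word $w=X_{i_1}\ast\cdots\ast X_{i_m}$ I attach the pair $\mu(w)=\bigl(\epsilon(w),\operatorname{inv}(w)\bigr)$, where $\epsilon(w)=i_1+\cdots+i_m$ is the total weight and $\operatorname{inv}(w)=\#\{(s,t):s<t,\ i_s>i_t\}$ counts inversions, ordered lexicographically on $\NN^2$ (a well-founded order). In the identity above the leading term $X_i\ast X_j$ has the same weight as $X_j\ast X_i$ but exactly one fewer inversion, since replacing an adjacent descending pair $X_j,X_i$ by $X_i,X_j$ changes the relative order of no other pair; meanwhile each correction term $X_{j-k}\ast X_i$ (with $k\ge1$) and $X_{i-\ell}\ast X_j$ (with $\ell\ge1$) has strictly smaller weight. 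Consequently every monomial on the right-hand side has $\mu$ strictly smaller than that of $X_j\ast X_i$, and the same persists after embedding in a longer word, since the untouched factors contribute equally to the weight and to the inversion count on both sides. By well-founded induction on $\mu$, every monomial of $S$ equals a combination of monomials with no descents, i.e. of ordered monomials, proving the claim.

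Combining the two bounds gives $\dim_\kk S_m=\dim_\kk R_m$ for all $m$, so $\pi$ is an isomorphism and the relations \eqref{rel} generate all relations of $R$. I expect the only delicate point to be this termination/spanning step: one must choose the measure $\mu$ so that both the reordering term and the lower-order corrections strictly decrease it, which is exactly why pairing the weight $\epsilon$ with the inversion count, rather than either alone, is needed. It is worth emphasising that no confluence check is required: the matching lower bound $\dim_\kk S_m\ge\dim_\kk R_m$ coming from the surjection $\pi$ replaces the overlap-resolution step of the Diamond Lemma, so uniqueness of normal forms comes for free once the Hilbert series agree.
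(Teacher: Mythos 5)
Your argument is correct, and it takes a genuinely different route from the paper. The paper first notes by a dimension count that $\dim R_1\otimes R_1-\dim R_2=\binom{n+1}{2}$, so \eqref{rel} spans the quadratic relations, and then invokes heavy machinery to see that $R$ is defined by quadratic relations: by Theorem~\ref{thm:ASreg} $R$ is AS-regular with Hilbert series $(1-t)^{-(n+1)}$, hence a quantum $\PP^n$, hence Koszul by Shelton--Tingey, hence quadratic. You instead give a self-contained rewriting argument: the reduction $X_j\ast X_i\mapsto X_i\ast X_j+(\text{lower terms})$ strictly decreases the lexicographic pair $(\epsilon,\operatorname{inv})$, so the quotient $S$ of the free algebra by \eqref{rel} is spanned in degree $m$ by the $\binom{n+m}{m}$ ordered monomials, and the surjection $S\twoheadrightarrow R$ forces equality of Hilbert series. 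Your termination analysis is sound --- the weight drops strictly on every correction term and the inversion count drops by exactly one on the reordered leading term, with no interference from the ambient word --- and your observation that the matching lower bound from $\pi$ substitutes for the confluence check of the Diamond Lemma is exactly right. What your approach buys is independence from Theorem~\ref{thm:ASreg} and from \cite{SheltonTingey}, plus an explicit PBW basis of ordered monomials for $R$ as a byproduct; what the paper's approach buys is brevity given the regularity theorem already proved, together with the Koszul property of $R$, which is of independent interest. One small point you leave implicit: that the $\binom{n+1}{2}$ relations are linearly independent (so that they are ``exactly'' the quadratic relations); this follows immediately since their antisymmetric parts $X_i\ast X_j-X_j\ast X_i$ are independent, or from your own dimension count $\dim I_2=(n+1)^2-\binom{n+2}{2}=\binom{n+1}{2}$.
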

 \begin{proof}
 By comparing $\dim R_1 \otimes_\CC R_1$ and $\dim R_2$ it is clear that $R$ satisfies $\binom{n+1}{2} $ quadratic relations; since the relations in \eqref{rel} are linearly independent for $0 \leq i < j \leq n$ they are precisely the quadratic relations of $R$.
 
By Theorem~\ref{thm:ASreg} $R$ is AS-regular, and by \eqref{hilbR} it is what is referred to as a {\em quantum $\PP^{n}$} in \cite{SheltonTingey}.  Thus by \cite[Theorem~2.2]{SheltonTingey}, $R$ is Koszul and in particular is given by quadratic relations.  Thus the relations in \eqref{rel} are precisely the relations of $R$.
 \end{proof}
 
\begin{ex}
\label{exn=2}
 When $n=1$ we obtain:
\begin{align*}
\{X_0,X_1\}=-aX_0^2\qquad\text{and}\qquad X_0\ast X_1-X_1\ast X_0=-aX_0\ast X_0.
\end{align*}
For $a\neq0$ the algebra $R(1,a)$ is isomorphic to the well-known Jordan plane, and the Poisson algebra $A(1,a)$ is isomorphic to the \textit{Poisson-Jordan plane}: the polynomial algebra $\C[X_0,X_1]$ endowed with Poisson bracket $\{X_0,X_1\}=X_0^2$.
\end{ex}

\begin{ex}
\label{exn=3}
 When $n=2$ the algebra $R(2,a)$ is given by generators $X_0,X_1,X_2$ and relations:
\begin{align}
\label{com3}
\begin{split}
&X_0\ast X_1-X_1\ast X_0=-aX_0\ast X_0,\\
&X_0\ast X_2-X_2\ast X_0=-aX_0\ast X_1-\binom{a}{2}X_0\ast X_0,\\
&X_1\ast X_2-X_2\ast X_1=(a+2)X_0\ast X_2-(a+1)X_1\ast X_1+\binom{a+2}{2}X_0\ast X_1.
\end{split}
\end{align}
The Poisson bracket on $A(2,a)$ is given by:
\begin{align}
\label{pb3}
\begin{split}
&\{X_0,X_1\}=-aX_0^2,\\
&\{X_0,X_2\}=-aX_0X_1,\\
&\{X_1,X_2\}=(a+2)X_0X_2-(a+1)X_1^2.
\end{split}
\end{align}
\end{ex}

\begin{ex}
\label{exn=4}
When $n=3$ and $a=-5/4$ we obtain  an algebra isomorphic to Pym's example \eqref{pym}. 
The isomorphism sends $X_i\mapsto x_{i}/4^{i}$ for $i=0,\dots,3$.
\end{ex}

We now prove a technical result which gives equivalent conditions for an element to be normal (or Poisson normal), under mild conditions on $a$ and $n$. In particular we show that, up to multiplication by nonzero scalars, $X_0$ is the only homogeneous element of $d$-degree $1$ that is (Poisson) normal. 

\begin{proposition} \label{prop:pnn}
Assume either that $n\geq 2$ or that $n=1$ and $a\neq0$. Then: 

  (1a) $N$ is Poisson normal in $A = A(n,a)$ $\iff$ (1b) $N$ is normal in $R = R(n,a)$ $ \iff$ (1c) we have $\Delta(N)=0$ and $\Gamma_a(N)=uN$ for some  $u\in\C$.
  
(2a) $N$ is Poisson central in $A$ $\iff$ (2b) $N$ is central in $R$  $\iff$ (2c) we have $\Delta(N) = \Gamma_a(N) =0$.
\end{proposition}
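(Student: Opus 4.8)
The plan is to prove the two implications out of $(1c)$ together with the two converses $(1a)\Rightarrow(1c)$ and $(1b)\Rightarrow(1c)$, and then to deduce part (2) from part (1). The implications out of $(1c)$ are immediate computations. If $\Delta(N)=0$, the defining formula \eqref{mult} collapses to $N\ast f=Nf$, so left multiplication by $N$ is the commutative product; if moreover $\Gamma_a(N)=uN$, then $\binom{\Gamma_a}{k}(N)=\binom{u}{k}N$, whence $f\ast N=\phi(f)\,N$ with $\phi=\sum_{k\ge 0}\binom{u}{k}\Delta^k$. Since $\Delta$ is locally nilpotent, $\phi=\id+(\text{nilpotent})$ on each finite-dimensional $\Delta$-stable piece $A_d$, so $\phi$ is a linear bijection of $A$; thus $R\ast N=\phi(A)N=AN=NA=N\ast R$ and $N$ is normal, giving $(1c)\Rightarrow(1b)$. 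Feeding $\Delta(N)=0$ and $\Gamma_a(N)=uN$ into \eqref{Poisson} gives $\{N,f\}=-uN\Delta(f)\in(N)$, i.e.\ $(1c)\Rightarrow(1a)$. The same two computations with $u=0$ give $(2c)\Rightarrow(2a),(2b)$, since then $\binom{\Gamma_a}{k}(N)=0$ for $k\ge 1$ forces $f\ast N=fN=N\ast f$.

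For the converses the engine is a divisibility principle in the UFD $\kk[X_0,\dots,X_n]$: as every monomial has $\epsilon$-weight $\ge 0$, if $P\mid Q$ with $Q\ne 0$ then the top $\epsilon$-weight of $P$ is at most that of $Q$. Because $\Delta$ \emph{strictly} lowers $\epsilon$-weight, any relation placing $\Delta(N)$, or $X_0\Delta(N)$, inside $(N)$ forces $\Delta(N)=0$. I will first extract $\Delta(N)=0$ this way, and then obtain the eigenvector condition uniformly for $n\ge 2$: once $\Delta(N)=0$, taking $f=X_{i+1}$ in \eqref{Poisson} gives $\{N,X_{i+1}\}=-\Gamma_a(N)X_i\in(N)$ for $0\le i<n$; since $\gcd(X_0,X_1)=1$ this yields $N\mid\Gamma_a(N)$, and as $\Gamma_a$ preserves $d$-degree, comparing top $d$-degrees forces $\Gamma_a(N)=uN$. (The case $n=1$, $a\ne 0$ is a direct base case: $\Delta(N)=0$ gives $N\in\kk[X_0]$, and Poisson normality then forces $N$ to be a monomial.)

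Establishing $\Delta(N)=0$ is where the hypotheses split and where the real work lies. If $a\ne 0$, then $\Delta(X_0)=0$, $\Gamma_a(X_0)=aX_0$ give in $A$ the relation $\{N,X_0\}=aX_0\Delta(N)\in(N)$, so $N\mid X_0\Delta(N)$ and the weight principle forces $\Delta(N)=0$ outright; in $R$ the analogous computation reads $N\ast X_0-X_0\ast N=(\psi(N)-N)X_0$ with $\psi=\sum_k\binom{a}{k}\Delta^k$, and since every nonzero element of $R\ast N$ has top $\epsilon$-weight $\ge$ that of $N$ while $(\psi(N)-N)X_0$ has strictly smaller top weight, this element vanishes, i.e.\ $\psi(N)=N$; expanding by $\epsilon$-weight and inducting downwards then gives $\Delta(N)=0$. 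If $a=0$ the element $X_0$ is Poisson central and yields no information, so instead I use $C=X_0X_2-\tfrac12 X_1^2$, which satisfies $\Delta(C)=0$ and $\Gamma_0(C)=2C$ (this needs $n\ge 2$). Then $\{N,C\}=2C\Delta(N)\in(N)$, and I induct on $d$-degree: if the irreducible $C$ does not divide $N$, then $\bar C$ is a non-zero-divisor in $A/(N)$, so $\Delta(N)\in(N)$ and hence $\Delta(N)=0$; if $C\mid N$, write $N=CM$ and use $\{N,f\}=C\{M,f\}-2\Delta(f)N$ to see that $M$ is again Poisson normal of smaller degree, so $M$, and hence $N=CM$, satisfies $(1c)$ by induction.

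Finally, part (2) follows from part (1): a central element is normal, so $\Delta(N)=0$ and $\Gamma_a(N)=uN$; substituting $f=X_1$ into $\{N,X_1\}=-uNX_0=0$ (Poisson case), or into the relation $\phi(X_1)-X_1=uX_0$ forced by $N\ast f=f\ast N\Rightarrow\phi=\id$ (associative case), gives $u=0$. The step I expect to be the main obstacle is the case $a=0$, and in particular porting it to the noncommutative algebra $R$: the clean weight/divisibility and non-zero-divisor arguments live in the commutative Poisson algebra $A$, whereas in $R$ the ideal $R\ast N$ and the quotient $R/(R\ast N)$ lack these tools. I would handle the $R$-side $a=0$ case either by the parallel factorisation argument (using that $C$ is itself normal in $R$ with $C\ast f=Cf$) or, where that stalls, by transferring from the established $A$-statement through the associated-graded identification $\gr R\cong A$ of Lemma~\ref{lem:assgr} together with an induction on the $\epsilon$-filtration.
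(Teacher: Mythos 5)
Most of your argument is sound and in several places cleaner than the paper's: the implications out of (1c), the deduction of part (2), the $\epsilon$-weight divisibility principle for $(1a)\Rightarrow(1c)$, the $\gcd(X_0,X_1)$ trick for the eigenvalue condition, the factorisation induction for Poisson normality when $a=0$, and the commutator computation $N\ast X_0-X_0\ast N=(\psi(N)-N)X_0$ for $(1b)\Rightarrow(1c)$ when $a\neq 0$ all work. But the case you yourself flag as the obstacle --- $N$ normal in $R$ with $a=0$, $n\geq 2$ --- is a genuine gap, and neither of your two fallback strategies closes it as stated. The weight argument that succeeds for $X_0$ fails for $C=X_0X_2-\tfrac12X_1^2$ for a quantitative reason: the commutator $C\ast N-N\ast C=-(2\Delta(N)+\Delta^2(N))C$ has top $\epsilon$-weight at most $\ell+1$ (where $\ell$ is the top weight of $N$), because $\epsilon(C)=2$ exceeds the single unit of weight that $\Delta$ removes; since elements of $R\ast N$ only have top weight $\geq\ell$, no contradiction results. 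Your ``parallel factorisation'' route stalls at exactly this point, because in $R/\ang{N}$ you have no UFD structure with which to argue that $\bar C$ is a non-zero-divisor. The associated-graded transfer also falls short: normality of $N$ does give that $\gr N=N_\ell$ is Poisson normal in $A$, hence $\Delta(N_\ell)=0$, but this controls only the leading $\epsilon$-component of $N$, and $N-N_\ell$ is not normal, so there is no evident induction on the filtration.

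The missing ingredient --- which is how the paper handles this case --- is that $\ang{C}=C\ast R=R\ast C$ is a \emph{completely prime} ideal of $R$ (proved by observing that $C$ is $\epsilon$-homogeneous and normal, so the $\epsilon$-filtration descends to $R/\ang{C}$ with $\gr(R/\ang{C})\cong A/\ang{C}$ a domain). Granting this, one argues: after reducing to $N\notin\ang{C}$ by your factorisation step, normality gives $C\ast N=N\ast C'$ for some $C'$, complete primeness of $\ang{C}$ forces $C'\in\ang{C}$, and a $d$-degree comparison pins down $C'=wC$ for a scalar $w$; the identity $C\ast N=wN\ast C$, expanded and decomposed into $\epsilon$-homogeneous pieces, then yields $w=1$ and $\Delta(N_i)=0$ for all $i$ by a downward induction. (Equivalently, complete primeness lets you conclude from $(2\Delta(N)+\Delta^2(N))C=g\ast N\in\ang{C}$ that $g\in\ang{C}$, cancel $C$, and only then apply your weight principle to $2\Delta(N)+\Delta^2(N)\in\ang{N}$.) So your overall architecture is right, but you need to establish and invoke the complete primeness of $\ang{C}$ to finish the one case you left open.
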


\begin{proof}
Without loss of generality, $n > 0$.
The implications $(1c)\implies(1a),(1b)$ and $(2c) \implies (2a),(2b)$ are clear from \eqref{mult}. 
To prove the other implications, we first prove:

{\bf Claim: } 
There is an irreducible $G \in A$ with $\Delta(G) = 0$ and $\Gamma(G) = \lambda G$ for some $\lambda \neq 0$, and further such that $\ang{G} = G \ast R = R \ast G$ is a completely prime ideal of $R$.

To prove the claim, if $a \neq 0$ then we may take $G = X_0$, so $\lambda =a$. The last statement follows from the isomorphism $R/ \ang{X_0 }  \cong R(n-1, a+1)$.

 If $a = 0$ and $n \geq 2$ then let $G = X_0X_2 - \frac{1}{2} X_1^2$ and $\lambda = 2$.  We have $\Delta(G)=0$ and $\Gamma_a(G) = 2(a+1)G = \lambda G$. 
Note that $G$ is normal in $R$ and $\epsilon$-homogeneous and so the $\epsilon$-filtration on $R$ descends to $R/ \ang{G}$.  It is clear that $\gr( R/ \ang{G}) $ is isomorphic to $A/ \ang{G}$ which is a domain, so $G \ast R $ is completely prime.

We now prove  $(1a)\implies (1c)$.   Let $G, \lambda$ be as in the Claim. 
Suppose that $N$ is Poisson normal in $A$. Without loss of generality we can assume that $N\notin G A$  and $N \not\in X_0A$ since $G$ and $X_0$ are Poisson normal. 
Because $\{G,N\}=-\lambda G \Delta(N) \in NA$, there exists $G'\in A$ such that $G\Delta(N)=G'N$. Since $N\notin G A$  and $G$ is irreducible there exists $v\in A$ such that $G'=vG$ and we obtain $\Delta(N)=vN$ after dividing  by $G$.
Since $\Delta$ is $d$-homogeneous we must have $v\in A_0=\C$. But $\Delta$ is locally nilpotent so we must have $v=0$ and  $\Delta(N)=0$. 

There therefore exists $X_1'\in A$ such that:
\[X_1' N = \{X_1,N\}=X_0\Gamma_a(N).\]
Again since $N\notin  X_0 A$ we can write $X_1'=X_0u$ for some $u\in A$, and we have $\Gamma_a(N)=uN$. The derivation $\Gamma_a$ being $d$-homogeneous, we conclude as before that $u\in A_0=\C$ is a scalar.

We now prove $(1b)\implies(1c)$. Suppose that $N$ is normal in $R$. Without loss of generality we can assume that $N\notin X_0 \ast R$ and $N \not \in G \ast R$ since $X_0$ and $G$ are normal. 
By normality of $N$ there exists $G'$ such that $ G \ast N = N \ast G' $. 
Since $N\notin G \ast R$ we must have $G'\in G \ast R$ as $G \ast R$ is completely prime.
Moreover, using the equation $G \ast N = N \ast G'$ we deduce that $G'$ is $d$-homogeneous and $d(G') = d(G)$. So $G'=wG$ for a scalar $w$, and we conclude that $N$ must satisfy the equation $G \ast N = w N \ast G$. Using the definition of the product $\ast$ we obtain the following equation after dividing by $G$:
\begin{equation}
\label{eq22}
w \left( N+\lambda\Delta(N)+\frac{\lambda(\lambda-1)}{2}\Delta^2(N)+\cdots+\binom{\lambda}{k} \Delta^{k}(N) \right) = N
\end{equation}
for some $k\geq0$. We decompose $N=\sum_{i=0}^\ell N_i\in\bigoplus A^i$ into $\epsilon$-homogeneous  pieces. Since $\Delta(A^i)\subseteq A^{i-1}$ we deduce from equation \eqref{eq22} that $N_\ell=wN_\ell$, i.e. $w=1$ and $\Delta(N_\ell)=0$.                                                                                                                                                                                                                                                                                                                                                                                                                                                                    Finally we obtain by (a decreasing) induction that $\Delta(N_i)=0$ for all $i$, so that $N\in\ker\Delta$.

	By normality of $N$ there exists $F\in R$ such that $X_1\ast N= N \ast F$. 
Since $\Delta(N)=0$ we have
\[ NF = N \ast F = X_1 \ast N = X_1 N + X_0 \Gamma_a(N),\]
so $N$ divides $X_0 \Gamma_a(N) \in A$.  
As $N \not \in X_0\ast  R = X_0 A$ we have  that $N$ divides $\Gamma_a(N)$, so there exists $u \in A$ with $\Gamma_a(N) = uN$.
Comparing $d$-degrees we have $u \in A_0 = \CC$.

For the remaining equivalences note that if $N$ is either Poisson central in $A$ or central in $R$ then certainly $N$ is (Poisson) normal.  We have seen that  $\Delta(N) = 0$ and $\Gamma_a(N) = uN$ for some $u \in \CC$.  We then have either:
$ 0 = \{ X_1, N\} = uX_0 N $ (if $N$ is Poisson central) or $0 = X_1  \ast N - N \ast X_1 = u X_0 N$ (if $N$ is central).  In either case $u=0$.  
Thus (2a) $\implies$ (2c) and (2b) $ \implies $ (2c).
\end{proof}

\begin{remark} \label{rem:deleta} We note that Proposition~\ref{prop:pnn} does not make it easy to {\em find} the normal (or central) elements of $R$.  In particular, it is a famously difficult problem in symbolic dynamics to calculate $A^{\Delta} = \{ f \in A \st \Delta(f) = 0\}$; see \cite{Freudenburg}.  In fact, $A^{\Delta}$ is not known explicitly for $n \geq 9$.
\end{remark}

The rings $R(n,a)$ are complicated to study but they become much simpler after localisation at the normal element $X_0$. To end this section we study the localisations of $R=R(n,a)$ and $A=A(n,a)$ at the (Poisson) normal element $X_0$. Let $R^\circ := R[X_0^{-1}]$ and $A^\circ := A[X_0^{-1}]$. We note that $\Delta$ and $\Gamma_a$ extend to $A^\circ$, and that the Poisson bracket $\{-,-\}_a$ extends to $A^\circ$ and is still defined by \eqref{Poisson}.  
Likewise, the multiplication $\ast_a$ on $R^\circ$ is still defined by \eqref{mult}.
The $d$-grading extends to $A^\circ$ and to $R^\circ$.
The $\epsilon$-grading extends to $A^\circ$, and $\epsilon$ defines a filtration on $R^\circ$ with associated graded $A^\circ$, as with the non-localised rings.  

We further define a new grading  $A^\circ = \bigoplus_{e\in \kk} A^\circ(e)$ by defining $A^\circ(e)$ to be the $e$-eigenspace of $\Gamma_a$.  Notice that $A^\circ(e) \neq  0$ if and only if  $e  \in a \ZZ + \NN$.   Further note that if $F \in (A^\circ)^\epsilon_d$, then $F\in A^\circ(e)$ for $e = d a+\epsilon$.
It follows from \eqref{epsilondelta} that $A^\circ$ is also the associated graded ring of $R^\circ$ with respect to the $e$-filtration, and that the $\epsilon$- and $e$-filtrations on $R^\circ$ both induce the  Poisson bracket $\{-,-\}_a$ on $A^\circ$.

Let $B = (A^\circ)^\Delta$.   In contrast to $A^\Delta$, which is unknown in general, $B$ is easy to compute and is isomorphic to a polynomial ring in $n$ variables.
Explicitly, for $j \geq 1$ define 
\[ Y_j = \sum_{p = 0}^\infty \frac{(-X_1)^p}{p! X_0^p } \Delta^p(X_j) .\]
This is well-defined since $\Delta$ is  locally nilpotent.
Note that $Y_j \in (A^\circ)^{j}_1$; in particular, $Y_j$ is $e$-homogeneous, with $e(Y_j) = e(X_j) = z_j$.

\begin{lemma}\label{lem:four}
 We have that $A^\circ $ is freely generated by $X_1, Y_2, \dots, Y_n$ and $B$ is freely generated by $Y_2, \dots, Y_n$  as  commutative algebras over $\kk[X_0^{\pm 1}]$.
\end{lemma}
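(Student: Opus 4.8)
The plan is to establish the lemma in two stages: first that $X_1, Y_2, \dots, Y_n$ form a free polynomial generating set for $A^\circ$ over $\kk[X_0^{\pm 1}]$, and then that passing to the $\Delta$-invariants $B = (A^\circ)^\Delta$ simply deletes the single generator $X_1$. Throughout I use that $\Delta^p(X_j) = X_{j-p}$ (with $X_{<0}=0$), so that the defining sum for $Y_j$ is the finite expression
\[ Y_j = \sum_{p=0}^{j}\frac{(-X_1)^p}{p!\,X_0^p}\,X_{j-p}. \]

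For the first stage, the key observation is that the $p=0$ term is $X_j$, while every term with $p\geq 1$ lies in $\kk[X_0^{\pm 1}][X_1,\dots,X_{j-1}]$ (each such term is a monomial in $X_1$ and some $X_{j-p}$ with $j-p\leq j-1$). Hence $Y_j - X_j \in \kk[X_0^{\pm 1}][X_1,\dots,X_{j-1}]$, so the substitution $X_j \mapsto Y_j$ (fixing $X_1$ and $X_0^{\pm 1}$) is \emph{triangular} with respect to the index $j$. A triangular substitution of this form is invertible, since one solves for $X_j$ in terms of $X_1, Y_2,\dots,Y_j$ by induction on $j$; thus it is an automorphism of the $\kk[X_0^{\pm 1}]$-algebra $A^\circ = \kk[X_0^{\pm 1}][X_1,\dots,X_n]$. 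This proves that $A^\circ$ is freely generated by $X_1, Y_2,\dots,Y_n$.

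For the second stage I would first check that each $Y_j$ lies in $\ker\Delta$. Using $\Delta(X_1)=X_0$ and $\Delta(X_0)=0$, one computes $\Delta\!\big((-X_1)^p/(p!\,X_0^p)\big) = -(-X_1)^{p-1}/((p-1)!\,X_0^{p-1})$, and the Leibniz rule then produces a telescoping sum: abbreviating $c_p = (-X_1)^p/(p!\,X_0^p)$, the contribution $\Delta(c_p)\,\Delta^p(X_j) = -c_{p-1}\Delta^p(X_j)$ cancels the second Leibniz term $c_{p-1}\Delta^p(X_j)$ arising at index $p-1$, so $\Delta(Y_j)=0$. Together with $\Delta(X_0)=0$ this gives the inclusion $\kk[X_0^{\pm 1}][Y_2,\dots,Y_n]\subseteq B$. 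For the reverse inclusion I exploit the free presentation from the first stage: writing $B' = \kk[X_0^{\pm 1}][Y_2,\dots,Y_n]$, we have $A^\circ = B'[X_1]$ as a polynomial ring in $X_1$, and on it $\Delta$ acts as $X_0\,\partial/\partial X_1$, since it kills $B'$ and sends $X_1$ to $X_0 \in B'$. For $f = \sum_m c_m X_1^m$ with $c_m \in B'$ we get $\Delta(f) = X_0\sum_m m\,c_m X_1^{m-1}$; because $X_0$ is a unit and $\car\kk = 0$, the equation $\Delta(f)=0$ forces $c_m = 0$ for all $m\geq 1$, whence $f \in B'$. Thus $B = B'$, freely generated by $Y_2,\dots,Y_n$.

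The computations here are all routine; the only steps demanding genuine care are the telescoping verification that $\Delta(Y_j)=0$, and the final kernel computation, where the appeal to characteristic zero is essential — without it $\partial/\partial X_1$ would have a strictly larger kernel and the invariant ring could exceed $B'$.
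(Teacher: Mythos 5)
Your proof is correct. It differs from the paper's argument mainly in being self-contained: the paper obtains the generation of $B=(A^\circ)^\Delta$ by $X_0^{\pm1},Y_2,\dots,Y_n$ by citing van den Essen's algorithm for the kernel of a locally nilpotent derivation admitting a slice (here $\Delta(X_1/X_0)=1$), then deduces algebraic independence from the triangularity $X_i-Y_i\in\kk[X_0^{\pm1}][X_1,\dots,X_{i-1}]$ and gets free generation of $A^\circ$ from a Krull dimension count. You instead run the argument in the opposite order: you first use the same triangularity to exhibit $X_1,Y_2,\dots,Y_n$ as the image of the standard generators under an invertible triangular substitution, which gives the polynomial structure of $A^\circ$ directly, and then you compute $B=\ker\Delta$ by hand --- verifying $\Delta(Y_j)=0$ by the telescoping cancellation and observing that on $B'[X_1]$ the derivation $\Delta$ is exactly $X_0\,\partial/\partial X_1$, whose kernel in characteristic zero is $B'$. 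This in effect reproves the relevant special case of the slice theorem rather than quoting it, and it replaces the Krull dimension argument with the explicit inverse of the triangular change of variables; both of these are legitimate trades, and your version arguably localises the use of $\operatorname{char}\kk=0$ more transparently. (Incidentally, your triangularity statement $Y_j-X_j\in\kk[X_0^{\pm1}][X_1,\dots,X_{j-1}]$ is the correct one; the displayed containment in the paper omits $X_1$ from the list, which is evidently a typo.)
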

\begin{proof}
 That $B$ is generated by $X_0^{\pm 1}, Y_2, \dots, Y_n$  follows from \cite[Proposition 2.1]{vdEssen}, since $\Delta(X_1/X_0) = 1$.
If $i \geq 2$, then 
\beq\label{boff} X_i - Y_i \in \kk[X_0^{\pm 1} ,X_2, \dots, X_{i-1}]. \eeq
 This shows that the elements $X_0, X_1, Y_2, \dots, Y_n$ are algebraically independent, proving that $B$ is freely generated as claimed.

By \eqref{boff} and induction,  $A^\circ $ is generated over $B$ by $X_1$, and thus over $\kk[X_0^{\pm 1}]$ by $X_1, Y_2, \dots, Y_n$.    
Since $\Kdim A^\circ = n+1$, thus $A^\circ$ is freely generated as well.
\end{proof}

\begin{lemma} \label{lem:fourprime}  Let $\delta = X_0 \Gamma_a$, so $\delta \in \Der_\kk(B)$.
Define a map $\beta:  B[z] \to A^\circ$ by $\beta(\sum b_i z^i) = \sum b_i X_1^i$.
Then $\beta$ is a ring isomorphism $B[z; \delta] \to R^\circ$ and a Poisson isomorphism $(B[z],\{-. -\}_\delta) \to A^\circ$.  
\end{lemma}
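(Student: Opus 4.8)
The strategy is to build $\beta$ from the single structural fact that $\Delta$ annihilates $B = (A^\circ)^\Delta$, treating the ring and Poisson statements in parallel through the way $X_1$ interacts with $B$. First I would justify the assertion $\delta \in \Der_\kk(B)$: as $\Gamma_a$ is a derivation of $A^\circ$ and $X_0 \in A^\circ$, the operator $\delta = X_0\Gamma_a$ is a derivation of $A^\circ$, and for $b \in B$,
\[ \Delta(\delta(b)) = \Delta(X_0)\,\Gamma_a(b) + X_0\,\Delta(\Gamma_a(b)) = X_0\bigl(\Gamma_a(\Delta(b)) + \Delta(b)\bigr) = 0, \]
using $\Delta(X_0) = 0$ and $\Delta\Gamma_a - \Gamma_a\Delta = \Delta$ from Lemma~\ref{lem:one}; hence $\delta(B) \subseteq B$. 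Two further consequences of $\Delta(b) = 0$ are worth isolating at the outset: by \eqref{mult} we have $b \ast g = bg$ for every $g$, so $B$ is a commutative subring of $R^\circ$ on which $\ast$ restricts to ordinary multiplication; and by \eqref{Poisson}, $\{b_1, b_2\} = \Delta(b_1)\Gamma_a(b_2) - \Gamma_a(b_1)\Delta(b_2) = 0$, so $B$ is Poisson-commutative inside $A^\circ$.

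For the ring isomorphism, the key computation is the commutation of $X_1$ past $B$. For $b \in B$, formula \eqref{mult} with $\Delta^2(X_1) = \Delta(X_0) = 0$ and $\Delta(b) = 0$ yields
\[ X_1 \ast b = X_1 b + X_0\Gamma_a(b) \qquad\text{and}\qquad b \ast X_1 = b X_1, \]
so that $X_1 \ast b - b \ast X_1 = \delta(b)$. This is precisely the defining relation of the Ore extension $B[z;\delta]$. Combined with the ring embedding $B \hookrightarrow R^\circ$ noted above, the universal property of the Ore extension yields a ring homomorphism $B[z;\delta] \to R^\circ$ that is the identity on $B$ and sends $z \mapsto X_1$ (and hence $z^i \mapsto X_1^{\ast i}$). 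To prove bijectivity I would appeal to Lemma~\ref{lem:four}, which exhibits $A^\circ = R^\circ = B[X_1]$ as a free $B$-module with basis $\{X_1^i\}_{i \geq 0}$; since $X_1^{\ast i} = X_1^i + (\text{terms of lower }X_1\text{-degree with coefficients in }B)$, the images $\{X_1^{\ast i}\}$ of the basis $\{z^i\}$ again form a $B$-basis of $R^\circ$ by unitriangularity, so the map is a $B$-module, hence ring, isomorphism.

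For the Poisson isomorphism the parallel computation from \eqref{Poisson} gives, for $b \in B$,
\[ \{X_1, b\} = \Delta(X_1)\Gamma_a(b) - \Gamma_a(X_1)\Delta(b) = X_0\Gamma_a(b) = \delta(b), \]
matching the relation $\{z, b\}_\delta = \delta(b)$, while the Poisson-commutativity of $B$ matches $\{b_1, b_2\}_\delta = 0$. Here $\beta$ is the evident isomorphism of commutative algebras $B[z] \to B[X_1] = A^\circ$ sending $z \mapsto X_1$ and fixing $B$, which is exactly the map of the displayed formula. Since a Poisson bracket is a biderivation and is therefore determined by its values on a generating set via the Leibniz rule, the agreement of brackets on $B \cup \{z\}$ upgrades to a Poisson isomorphism.

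The only genuine subtlety, and the point I would treat most carefully, is that the two assertions are carried by maps that agree on $B$ and on $z$ but not on higher powers of $z$: the ring isomorphism necessarily sends $z^i \mapsto X_1^{\ast i}$, whereas the commutative/Poisson isomorphism of the displayed formula sends $z^i \mapsto X_1^i$ (and indeed $X_1 \ast X_1 = X_1^2 + (a+1)X_0X_1 \neq X_1^2$). Thus $\beta$ should be read as the map determined by $z \mapsto X_1$ and the identity on $B$ in each category; beyond the two one-line relations above, the real content of the lemma is exactly the bijectivity, which is supplied by the free-module description of $A^\circ$ in Lemma~\ref{lem:four}.
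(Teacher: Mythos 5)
Your proof is correct and follows essentially the same route as the paper's: both rest on the two computations $b \ast g = bg$ and $X_1 \ast b - b \ast X_1 = \delta(b)$ for $b \in B$, together with the freeness of $A^\circ$ over $B$ on the powers of $X_1$ coming from Lemma~\ref{lem:four}. The only packaging difference is that the paper obtains bijectivity and the Poisson statement in one stroke via the $e$-filtration and associated graded (Lemma~\ref{lem:assgr}) where you use unitriangularity and the biderivation-on-generators argument directly; your explicit resolution of the ambiguity in $\beta$ (whether $z^i$ goes to the commutative power $X_1^i$ or to $X_1^{\ast i}$) is a genuine point of care that the paper's proof glosses over.
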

\begin{proof}
Since  $B$ is an $e$-graded subring of $A^\circ$, therefore $\delta(B) \subseteq B$ and so $\delta \in \Der_\kk(B)$.

Let $\star$ be the multiplication in $B[z; \delta]$.
Note that  if we put $e(z) = a+1$ then $\beta$ is clearly $e$-graded as an isomorphism of vector spaces and $e$-filtered as a map from $B[z; \delta] \to R^\circ$.
Since $A^\circ$ is the associated graded of $R^\circ$ it suffices to prove that $\beta:B[z;\delta] \to R^\circ$ is a ring homomorphism.   
It is enough to check that $\beta(z \star g) = \beta(z) \ast_a g$ for $g \in B(e)$.  
But we have
\[ \beta(z) \ast_a g = X_1 \ast_a g = gX_1 + egX_0   = \beta(gz + \delta(g)) = \beta(z \star g)\]
as needed.
\end{proof}


\section{Graded automorphisms and the Nakayama automorphism} \label{AUTGROUP}

In this section we compute the graded automorphism group of $R(n,a)$ and determine its Nakayama automorphism. 
In particular we  prove Theorem~\ref{ithm:CY}.

We begin with the graded automorphism group of $R(n,a)$.
In order to prove that the maps $\phi_b$ defined in \ref{lem:phiinv} are well-defined automorphisms of $R(n,a)$ for any $a,b\in\kk$ we use the theory of Zhang twists. More specifically we prove that $R(n,a)$ and $R(n,b)$ are Zhang twists of each other for any $a,b\in\kk$. We now recall the definition of Zhang twist from \cite{Zhang}. If $S$ is an $\NN$-graded ring with multiplication $*$, and $\phi\in \Aut(S)$ is a graded automorphism of $S$, then the {\em Zhang twist} of $S$ by $\phi$ is written $S^{\phi}$.  As a graded vector space, $S^{\phi}$ is isomorphic to $S$.  The multiplication $\circ$ on $S^\phi$ is defined by $r \circ s= r* \phi^i(s)$ for all $r\in S^{\phi}_i = S_i$ and $s\in S^\phi_j = S_j$.  This is associative by \cite[Proposition~2.3]{Zhang}. 

In the terminology of \cite{Zhang} the family of maps $\{\phi^i\ |\ i\in\N\}$ is a twisting system of $S$.  We note that the definition in \cite{Zhang} is slightly more general than the one we give here, but we do not need this greater generality.

Also recall that we may define a {\em left-hand Zhang twist} of $S$ by $\phi$, which we write ${}^{\phi}S$.  The multiplication  $\circ'$ on ${}^{\phi}S$ is defined by $r \circ' s = \phi^j(r)* s$ for all $r \in {}^{\phi}S_i = S_i$ and $s\in {}^{\phi}S_j = S_j$.  This is associative by \cite[Proposition~4.2]{Zhang}. We have the following easy lemma.
\begin{lemma}\label{lem:LRtwist}
Let $S$ be an $\NN$-graded ring with multiplication $*$, and let $\phi \in \Aut(S)$ be a graded automorphism of $S$.  Then $S^{\phi} \cong {}^{\phi^{-1}}S$.
\end{lemma}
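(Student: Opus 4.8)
The plan is to exhibit an explicit graded isomorphism, exploiting the fact that both $S^{\phi}$ and ${}^{\phi^{-1}}S$ coincide with $S$ as graded vector spaces. Any degree-preserving linear map between them acts on each homogeneous piece $S_i$ by an invertible linear endomorphism, and since the two twists differ only by whether the twisting automorphism is applied to the right or the left tensor factor, the natural candidate is to let the map act on $S_i$ by a power of $\phi$ depending on $i$. So first I would define $\psi \colon S^{\phi} \to {}^{\phi^{-1}}S$ on homogeneous elements by $\psi(r) = \phi^{-i}(r)$ for $r \in S^{\phi}_i = S_i$, extended linearly. Because $\phi$ is a graded automorphism, each restriction $\phi^{-i}\colon S_i \to S_i$ is a linear isomorphism, so $\psi$ is a bijective, degree-preserving linear map, and $\psi(1)=1$ since $\phi^0 = \id$ on $S_0$.

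The key step is to check multiplicativity. Writing $\circ$ for the product on $S^{\phi}$ and $\circ''$ for the product on ${}^{\phi^{-1}}S$ (so that $r \circ'' s = \phi^{-j}(r) * s$ for $r \in S_i$, $s \in S_j$), I would take homogeneous $r \in S_i$, $s \in S_j$ and compute $r \circ s = r * \phi^{i}(s) \in S_{i+j}$, so that, using that $\phi$ is a ring homomorphism,
\[ \psi(r \circ s) = \phi^{-(i+j)}\bigl(r * \phi^{i}(s)\bigr) = \phi^{-(i+j)}(r) * \phi^{-j}(s). \]
On the other side,
\[ \psi(r) \circ'' \psi(s) = \phi^{-j}\bigl(\psi(r)\bigr) * \psi(s) = \phi^{-j}\bigl(\phi^{-i}(r)\bigr) * \phi^{-j}(s) = \phi^{-(i+j)}(r) * \phi^{-j}(s). \]
Since the two expressions agree, $\psi$ is a ring homomorphism, hence the desired graded isomorphism.

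I expect no real obstacle beyond identifying the correct map: the content of the lemma is entirely in the choice of the degree-dependent shift $\phi^{-i}$ on $S_i$. Once this choice is made, the verification is a one-line calculation resting only on the fact that $\phi$ is a ring homomorphism commuting with its own powers, and the exponents on both tensor factors then match automatically. The only point requiring a moment's care is the exponent bookkeeping — confirming that it is precisely the shift by $-i$ in degree $i$ (rather than $\phi^{i}$, or a uniform application of $\phi^{-(i+j)}$) that converts a right twist by $\phi$ into a left twist by $\phi^{-1}$.
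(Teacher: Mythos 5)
Your proposal is correct and is essentially identical to the paper's own proof: the paper defines the same map $\Phi(t)=\phi^{-k}(t)$ on $S^{\phi}_k$ and verifies multiplicativity by the same two-line computation. Nothing to add.
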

\begin{proof}
Define $\Phi:  S^\phi \to {}^{\phi^{-1}} S$ by $\Phi(t) = \phi^{-k}(t)$ for all $t \in S^{\phi}_k$.

We denote the multiplication on $S^\phi$ by $\circ$ and the multiplication on ${}^{\phi^{-1}}S$ by $\circ'$.  
Then for $r \in S_i, s\in S_j$,  we have
\[ \Phi(r \circ s) = \Phi(r * \phi^i(s)) = \phi^{-(i+j)}(r * \phi^i(s)) = \phi^{-i-j}(r) * \phi^{-j}(s)\]
since $\phi$ is an algebra automorphism of $S$.
Thus 
\[ \Phi(r) \circ' \Phi(s) =\phi^{-j}( \phi^{-i}(r)) * \phi^{-j}(s) = \Phi(r \circ s),\]
as needed.
\end{proof}

The first goal of this section is to prove the following theorem
\begin{theorem}\label{thm:ONE}
Fix $n \in \ZZ_{\geq 0}$.  For any $a,b \in \CC$, the rings $R(n,a)$ and $R(n,b)$ are right and left Zhang twists of each other.
\end{theorem}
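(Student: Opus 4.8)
The plan is to exhibit an explicit graded automorphism of $R(n,a)$ whose associated Zhang twist yields $R(n,b)$. The natural candidate is the family of maps $\phi_b$ from Lemma~\ref{lem:phiinv}, extended from $V = A_1$ to all of $R$ as a graded algebra endomorphism; the cocycle identity $\phi_a \phi_b = \phi_{a+b}$ already established there is exactly what makes $\{\phi_c^i\}$ behave like a twisting system. First I would single out the map $\phi := \phi_{b-a}$ (or some rescaling thereof) and verify that $\phi$ is a graded algebra \emph{automorphism} of $R(n,a)$. Since $\phi$ acts on the degree-one part $R_1 = V$ as an invertible linear map (invertibility is immediate from $\phi_c\phi_{-c} = \phi_0 = \id$), it extends uniquely to a graded vector space automorphism, and the content to check is that it respects the multiplication $\ast_a$, i.e. that $\phi(F \ast_a F') = \phi(F) \ast_a \phi(F')$ for $F, F' \in R_1$.

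Next I would compute the Zhang twist $R(n,a)^{\phi}$ and identify it with $R(n,b)$. Using the defining formula $F \circ F' = F \ast_a \phi^i(F')$ on $R_i \times R_j$ together with the closed form of $\phi_c$ on basis vectors, the plan is to show that the twisted product $\circ$ satisfies exactly the defining multiplication \eqref{com} of $R(n,b)$ with $z_j = b+j$ in place of $a+j$. Concretely, I expect the key identity to reduce, via the Vandermonde convolution \eqref{Vandermonde}, to a statement comparing $\sum_\ell \binom{a+j}{\ell}$-type coefficients twisted by $\phi_{b-a}$ against the $\binom{b+j}{\ell}$ coefficients. Because $\phi_{b-a}$ precisely shifts the Euler-type parameter by $b-a$, this convolution should collapse to the desired form. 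Having produced one right Zhang twist $R(n,a)^{\phi} \cong R(n,b)$, Lemma~\ref{lem:LRtwist} immediately gives the corresponding left twist ${}^{\phi^{-1}}R(n,a) \cong R(n,b)$; and reversing the roles of $a$ and $b$ (replacing $b-a$ by $a-b$) shows $R(n,b)$ is likewise a twist of $R(n,a)$, yielding the full symmetric statement.

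The main obstacle I anticipate is verifying that $\phi$ is genuinely an algebra automorphism of $R(n,a)$ rather than merely a linear isomorphism of $R_1$: the multiplication $\ast_a$ is defined by the infinite deformation formula \eqref{mult}, so one must control how $\phi$ interacts with $\Delta$ and $\binom{\Gamma_a}{k}$. A clean way around the brute-force check is to work at the level of the defining data: since $\phi_c$ is built from the same downward-shift combinatorics as $\Delta$, I would try to show $\phi_c$ intertwines the operators appropriately (for instance that $\phi_c$ commutes with $\Delta$ and conjugates $\Gamma_a$ to $\Gamma_a + c\,\Delta$ on $R_1$, or an analogous relation), so that the automorphism property follows formally from Proposition~\ref{prop:Pym} rather than from term-by-term manipulation of \eqref{com}. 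If that intertwining is cleaner than expected, the twist computation and the automorphism check can be folded into a single structural argument; if not, the fallback is the direct combinatorial verification on $R_1 \otimes R_1$ using \eqref{com} and \eqref{Vandermonde}, which is routine but lengthy.
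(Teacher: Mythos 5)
Your overall skeleton matches the paper's: twist by the maps $\phi_c$ of Lemma~\ref{lem:phiinv}, identify the twisted product with the multiplication of the target ring via the Chu--Vandermonde identity, and pass between left and right twists with Lemma~\ref{lem:LRtwist}. (One small sign point: with the conventions $S^{\phi}\cong{}^{\phi^{-1}}S$ and ${}^{\phi_c}R(n,a)\cong R(n,a+c)$, the right twist taking $R(n,a)$ to $R(n,b)$ uses $\phi_{a-b}$ rather than $\phi_{b-a}$; this is the inverse, not a rescaling, but it washes out.) Where you genuinely diverge from the paper is on the step you correctly flag as the main obstacle: showing that $\phi_c$ is an algebra automorphism of $R(n,a)$ for \emph{arbitrary} $c$. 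The paper never attacks this head-on. It first observes (Lemma~\ref{lem:TWO}) that for the special value $c=a$ the map $\phi_a$ is conjugation by the normal element $X_0$ --- since $X_j\ast X_0=\phi_a(X_j)X_0=X_0\ast\phi_a(X_j)$ --- so the automorphism property is free; it then proves $R(n,0)\cong R(n,a)^{\phi_a}$ by the Vandermonde computation, and bootstraps to general $\phi_b$ using a transfer lemma (Lemma~\ref{lem:FOUR}: a graded automorphism commuting with the twisting automorphism descends to the twist) together with $\phi_a\phi_b=\phi_b\phi_a$, always routing through $R(n,0)$. This buys a conceptual, computation-free proof of the automorphism property. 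Your first proposed route --- that $\phi_c$ commutes with $\Delta$ and conjugates $\Gamma_a$ to $\Gamma_a+c\Delta$ --- does not hold as stated: on $A_1$ one computes $\Gamma_a\phi_c-\phi_c\Gamma_a=-c\,\phi_{c-1}\Delta$, so $\phi_c^{-1}\Gamma_a\phi_c$ is $\Gamma_a$ minus $c$ times the operator $\phi_{-1}\Delta$, which is not a weighted Euler operator, and the hoped-for appeal to Proposition~\ref{prop:Pym} does not go through in that form. Your fallback --- direct verification on the quadratic relations \eqref{rel} using \eqref{com} and \eqref{Vandermonde} --- does work (the relations are preserved, and since $R$ is quadratic by Proposition~\ref{prop:relR} this suffices), but it is exactly the lengthy term-by-term manipulation the paper's $X_0$-conjugation trick is designed to avoid. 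So the proposal is viable, but only via its fallback; the intertwining shortcut needs to be replaced, most cleanly by the paper's observation that $\phi_a$ is inner-by-$X_0$ plus Lemma~\ref{lem:FOUR}.
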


We will prove this by constructing some explicit graded automorphisms of $R(n,a)$.

\begin{lemma}\label{lem:TWO}
For any $a \in \CC$, the map 
\[\phi_a(X_j) = \sum_{i = 0}^{j} \binom{a}{i} X_{j-i}\]
induces a $d$-graded automorphism  of $R(n,a)$, which we also denote $\phi_a$.
\end{lemma}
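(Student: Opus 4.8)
The plan is to show that $\phi_a$ is a bijective linear map on $R_1$ that extends to a well-defined graded algebra homomorphism, hence a graded automorphism. The first observation is that $\phi_a$ is invertible on the degree-one part $V = R_1$: by Lemma~\ref{lem:phiinv} we have $\phi_a \phi_{-a} = \phi_0 = \id$, so $\phi_a$ is a linear isomorphism of $V$ with inverse $\phi_{-a}$. The real content is that $\phi_a$ respects the relations, i.e.\ that the prescription $\phi_a(X_j) = \sum_{i=0}^j \binom{a}{i} X_{j-i}$ extends multiplicatively to an endomorphism of the whole quadratic algebra $R = R(n,a)$.

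First I would make the connection between $\phi_a$ and multiplication in $R$ explicit. From the proof of Lemma~\ref{lem:formulas}, for any $F \in R_1$ we have $F \ast X_j = \phi_{z_j}(F) X_j$, where $z_j = a+j$. The key computational identity I would establish is that $\phi_a$ intertwines the multiplications of $R(n,a)$ and $R(n,b)$ for appropriate shifts, in a way that exhibits one as a Zhang twist of the other. Concretely, I expect to verify that for generators $X_i, X_j$,
\[
\phi_a(X_i \ast_b X_j) = \phi_a(X_i) \ast_a \psi(X_j)
\]
for a suitable companion automorphism $\psi$ (a power of $\phi$), matching the twisting-system formula $r \circ s = r \ast \phi^i(s)$ recalled just before Theorem~\ref{thm:ONE}. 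Since the paper has set up Zhang twists precisely to make the $\phi_b$ well-defined, the natural route is to prove that $\phi_a: R(n,b) \to R(n, a+b)$ (or the appropriate indices) is exactly the twisting isomorphism, and then specialise.

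The cleanest self-contained argument, however, uses that $R$ is quadratic (Proposition~\ref{prop:relR}): it suffices to check that $\phi_a$, applied to each defining relation \eqref{rel}, lands in the span of the relations. Because all the relations are built from the operators $\phi_{z_j}$ via \eqref{star} and \eqref{relabis}, and $\phi_a$ commutes with these in the controlled manner given by the Vandermonde identity \eqref{Vandermonde} and Lemma~\ref{lem:phiinv}, the relations should transform into scalar combinations of relations. I would therefore reduce to the rewritten form \eqref{relabis}, apply $\phi_a$ to both sides, and use $\phi_a \phi_b = \phi_{a+b}$ to collapse the nested binomial sums. Having checked preservation of relations, $\phi_a$ extends to a graded algebra endomorphism, and invertibility on $R_1$ (again via $\phi_{-a}$) together with the quadratic/Koszul structure forces it to be an automorphism.

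The main obstacle I anticipate is the bookkeeping in verifying that $\phi_a$ preserves the relation space: the relations \eqref{rel} mix indices $i$ and $j$ through two binomial sums with parameters $-z_i$ and $-z_j = -(a+j)$, and applying $\phi_a$ shifts these parameters, so one must show the resulting double sums reassemble—via repeated use of \eqref{Vandermonde}—into a combination of the original relations rather than spurious new quadratic identities. Framing this through the Zhang-twist identity $F \ast X_j = \phi_{z_j}(F)X_j$ is what makes the cancellation transparent and is likely the intended shortcut, so I would lean on that rather than expanding the binomial sums by brute force.
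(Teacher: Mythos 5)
Your plan has a genuine gap at its central step. You correctly reduce the problem to showing that $\phi_a\otimes\phi_a$ preserves the span of the quadratic relations, but you never carry out that verification; you only list the tools (Vandermonde, $\phi_a\phi_b=\phi_{a+b}$) and explicitly flag the bookkeeping as an unresolved obstacle. Worse, the step as you phrase it --- ``apply $\phi_a$ to both sides of \eqref{relabis}'' --- is circular: \eqref{relabis} is an identity between products in $R_2$, and you cannot apply $\phi_a$ to a product multiplicatively before knowing it extends to an algebra map. The non-circular version must be done in $R_1\otimes R_1$, where one finds
$(\phi_a\otimes\phi_a)(r_{ij})=\phi_{a-z_i}(X_j)\otimes\phi_a(X_i)-\phi_{a-z_j}(X_i)\otimes\phi_a(X_j)$,
and after expanding $\phi_a(X_i)$, $\phi_a(X_j)$ in the second tensor factor the terms do not visibly reassemble into the relations $r_{i'j'}$; some further idea is needed. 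Your other suggested route, via the Zhang-twist isomorphisms, is also circular in the paper's logical order: Proposition~\ref{prop:THREE} and Corollary~\ref{cor:automs} are \emph{deduced from} this lemma.

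The paper's actual proof is one line and sidesteps all of this: by \eqref{com} with $X_0$ in the second slot, $X_j\ast X_0=\sum_{\ell}\binom{a}{\ell}X_{j-\ell}X_0=\phi_a(X_j)X_0=X_0\ast\phi_a(X_j)$, so $\phi_a$ is precisely the automorphism obtained by conjugating by the normal regular element $X_0$ (Proposition~\ref{prop:factX1}); being conjugation by a homogeneous element, it is automatically a $d$-graded algebra automorphism, with no relation-checking required. Note that this identity is exactly the $j=0$ case of the formula $F\ast X_j=\phi_{z_j}(F)X_j$ that you yourself single out as the key, so the short argument was within reach of your setup. Your observations that $\phi_{-a}$ inverts $\phi_a$ on $R_1$ (Lemma~\ref{lem:phiinv}) and that a graded endomorphism surjective in degree $1$ of a locally finite algebra generated in degree $1$ is bijective are both correct, but they only become useful once the extension to an endomorphism --- the part you left unproved --- is established.
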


\begin{proof}
Note that 
\[ X_j * X_0 = \phi_a(X_j) X_0= X_0 \phi_a(X_j) = X_0 * \phi_a(X_j).\]
Thus $\phi_a$ is simply the automorphism of $R(n,a)$ induced by conjugating by the normal element $X_0$.
 \end{proof}

\begin{proposition}\label{prop:THREE}
For any $a \in \CC$, let $\phi_a$ be the automorphism of $R(n,a)$ defined in Lemma~\ref{lem:TWO}.  
Then  $R(n,0) \cong R(n,a)^{\phi_a}$, under the map induced by sending $X_i \mapsto X_i$.
\end{proposition}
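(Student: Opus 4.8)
The plan is to realise the map $X_i \mapsto X_i$ as a morphism of quadratic algebras and then force it to be an isomorphism by a Hilbert series count. First I would record what the twisted multiplication $\circ$ on $R(n,a)^{\phi_a}$ does to generators: since each $X_i$ has $d$-degree $1$, the twisting system gives $X_i \circ X_j = X_i \ast \phi_a(X_j)$. By \cite{Zhang} a Zhang twist has the same underlying graded vector space and the same Hilbert series as the original algebra, and it preserves the property of being quadratic; combined with Proposition~\ref{prop:relR} this tells me that $R(n,a)^{\phi_a}$ is a quadratic algebra generated in degree $1$ with Hilbert series $(1-t)^{-(n+1)}$, the same as $R(n,0)$.

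Next I would declare $\Psi \colon R(n,0) \to R(n,a)^{\phi_a}$ on generators by $X_i \mapsto X_i$. Because both algebras are quadratic and generated in degree $1$, $\Psi$ extends to a graded algebra homomorphism as soon as the $\binom{n+1}{2}$ defining relations \eqref{rel} of $R(n,0)$ are annihilated by the twisted product $\circ$. It is therefore enough to verify, for $0 \le i < j \le n$, that the element of $V \otimes V$ (with $V = A_1$) representing the $R(n,0)$-relation lies in the relation space of $R(n,a)^{\phi_a}$.

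The heart of the argument, and the step I expect to be most delicate, is this relation check. The key simplification is the formula $F \ast X_m = \phi_{a+m}(F)\,X_m$ used in the proof of Lemma~\ref{lem:formulas}, which lets me rewrite any product $X_i \circ X_j = X_i \ast \phi_a(X_j)$ as an honest commutative polynomial: expanding $\phi_a(X_j) = \sum_m \binom{a}{j-m} X_m$ and applying this formula termwise gives $X_i \circ X_j = \sum_m \binom{a}{j-m}\,\phi_{a+m}(X_i)\,X_m$, the products on the right being the commutative ones in $A$. Since the relation \eqref{rel} for $a=0$ is nothing but the commutativity relation $X_iX_j = X_jX_i$ re-expressed through \eqref{star}, its image under $\circ$ becomes a purely commutative polynomial identity. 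Using Lemma~\ref{lem:phiinv} ($\phi_b \phi_c = \phi_{b+c}$) to collapse the composite shifts and the Vandermonde identity \eqref{Vandermonde} to resum the products $\binom{a}{\,\cdot\,}\binom{a+\cdot}{\,\cdot\,}$, I expect the two sides to cancel identically; the real work is the bookkeeping of indices (reindexing the double sums to compare the coefficient of each monomial $X_pX_q$), and I would confirm the pattern on $n=1,2$ before handling the general index juggling.

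Finally, granting the relation check, $\Psi$ is a well-defined graded algebra homomorphism that is surjective, since its image contains all the degree-$1$ generators of $R(n,a)^{\phi_a}$. As $R(n,0)$ and $R(n,a)^{\phi_a}$ have equal, finite-dimensional graded pieces in each degree, the surjective graded map $\Psi$ is a linear isomorphism in every degree, hence an isomorphism of graded algebras. This yields $R(n,0) \cong R(n,a)^{\phi_a}$ via $X_i \mapsto X_i$, as claimed.
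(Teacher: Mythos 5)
Your strategy does lead to a proof, but it takes a genuinely different and noticeably more laborious route than the paper's, and the one step you defer is exactly the step the paper's route avoids. The paper first uses Lemma~\ref{lem:LRtwist} together with $\phi_a^{-1}=\phi_{-a}$ (Lemma~\ref{lem:phiinv}) to replace the right twist $R(n,a)^{\phi_a}$ by the left twist ${}^{\phi_{-a}}R(n,a)$, whose product of generators is $X_i\circ X_j=\phi_{-a}(X_i)\ast X_j$; one application of \eqref{com} and Chu--Vandermonde collapses this to $\sum_{u}\binom{j}{u}X_{i-u}X_j$, which is \emph{exactly} the $R(n,0)$-product of $X_i$ and $X_j$. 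Since both algebras are quadratic on the same $R_1$ (Proposition~\ref{prop:relR}, and twisting preserves this), the two multiplication maps $R_1\otimes R_1\to R_2$ coinciding forces the relation spaces to coincide, and the proof is over --- no relation-by-relation check is needed. On your route the right-twisted product $X_i\circ X_j=\sum_m\binom{a}{j-m}\phi_{a+m}(X_i)X_m$ is \emph{not} the $R(n,0)$-product (already $X_0\circ X_1=X_0X_1+aX_0^2$ versus $X_0X_1$), so you are forced into verifying that each relation \eqref{rel} of $R(n,0)$ is killed by $\circ$; that verification is the entire mathematical content of the argument as you have set it up, and ``I expect the two sides to cancel identically'' leaves it undone. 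The identity does hold --- using $F\ast X_m=\phi_{a+m}(F)X_m$, Lemma~\ref{lem:phiinv} and \eqref{Vandermonde} one can check that both sides of the twisted form of \eqref{relabis} equal $\sum_m\binom{a}{i-m}\phi_{a+m-i}(X_j)X_m$ versus its $(i\leftrightarrow j)$ counterpart, and these agree --- so your proof can be completed, and your closing step (surjectivity onto an algebra generated in degree one plus equal Hilbert series) is correct and in fact slightly more careful than the paper's tacit conclusion. But converting to the left twist at the outset would make your entire third paragraph unnecessary.
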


\begin{proof}
By Lemma~\ref{lem:LRtwist}, and Lemma~\ref{lem:phiinv}, it suffices to prove that $R(n,0) \cong {}^{\phi_{-a} }R(n,a)$.
Let $\circ$ denote the multiplication on ${}^{\phi_{-a} }R(n,a)$.
Then for any $i$ and $j$, we have
\begin{align*}
X_i \circ X_j = \phi_{-a}(X_i) * X_j & = \sum_{\ell =0}^{i} \binom{-a}{\ell} X_{i-\ell} * X_j \\
 &  =  \sum_{\ell =0}^{i} \sum_{k=0}^{j-\ell } \binom{-a}{\ell} \binom{a+j}{k} X_{i-\ell - k} X_j \\
 & = \sum_{u =0}^{i} \sum_{v=0}^u \binom{-a}{v} \binom{a+j}{u-v} X_{i-u} X_j \\
 & = \sum_{u=0}^{i} \binom{j}{u} X_{i-u} X_j,
 \end{align*}
 where we have used the Chu-Vandermonde identity at the end.  
 This agrees with the formula for multiplication in $R(n,0)$.
 \end{proof}
 
 As a corollary of Proposition~\ref{prop:THREE} we obtain that $\phi_b$ induces an automorphism of $R(n,a)$ for any $a,b$.  For we have
 \begin{lemma}\label{lem:FOUR}
 Let $S$ be an $\NN$-graded ring and let $\phi$, $\psi$ be  graded automorphisms of $S$ such that $\phi \psi = \psi \phi$.
 Then $\psi$ is also an automorphism of $S^{\phi}$ and ${}^{\phi}S$.
 \end{lemma}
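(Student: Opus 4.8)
The plan is to verify directly that $\psi$ is a ring homomorphism with respect to the twisted multiplications, using the commutativity hypothesis $\phi\psi = \psi\phi$. Since $\psi$ is already a bijective graded linear map, it suffices to check that it respects the twisted products; bijectivity and the degree-grading are inherited automatically from $\psi$ being a graded automorphism of $S$.

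First I would treat the right-hand twist $S^\phi$. Let $\circ$ denote its multiplication, so that for $r \in S_i$ and $s \in S_j$ we have $r \circ s = r * \phi^i(s)$. Since $\psi$ is degree-preserving, $\psi(r) \in S_i$ and $\psi(s) \in S_j$, and I would compute
\[
\psi(r) \circ \psi(s) = \psi(r) * \phi^i(\psi(s)) = \psi(r) * \psi(\phi^i(s)),
\]
where the last equality uses $\phi^i \psi = \psi \phi^i$ (an immediate consequence of $\phi\psi = \psi\phi$). Now applying the fact that $\psi$ is an algebra automorphism of $(S, *)$,
\[
\psi(r) * \psi(\phi^i(s)) = \psi\bigl(r * \phi^i(s)\bigr) = \psi(r \circ s).
\]
Thus $\psi$ is a ring automorphism of $S^\phi$.

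The left-hand twist ${}^{\phi}S$ is entirely analogous: with multiplication $\circ'$ given by $r \circ' s = \phi^j(r) * s$ for $r \in S_i$, $s \in S_j$, the same computation gives
\[
\psi(r) \circ' \psi(s) = \phi^j(\psi(r)) * \psi(s) = \psi(\phi^j(r)) * \psi(s) = \psi\bigl(\phi^j(r) * s\bigr) = \psi(r \circ' s),
\]
again using $\phi^j \psi = \psi \phi^j$ and that $\psi$ is an automorphism of $(S,*)$. I do not expect any genuine obstacle here: the proof is a short verification, and the only place the hypothesis is used is in commuting $\psi$ past the twisting powers $\phi^i$ and $\phi^j$, which is exactly what $\phi\psi=\psi\phi$ supplies. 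The mildly delicate point worth stating explicitly is that $\psi$ remains bijective on the twisted rings because they share the underlying graded vector space $S$ and $\psi$ is unchanged as a linear map — so no separate inverse computation is needed.
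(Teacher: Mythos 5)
Your proof is correct and is essentially the same as the paper's: both verify $\psi(r\circ s)=\psi(r)\circ\psi(s)$ by commuting $\psi$ past $\phi^i$ using $\phi\psi=\psi\phi$ and the fact that $\psi$ is an automorphism of $(S,*)$. The only difference is that you write out the left-twist case explicitly, while the paper proves only the right-twist case and leaves the other as analogous.
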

 \begin{proof}
 We prove the lemma for $S^{\phi}$; note the statement makes sense because as a graded vector space $S = S^{\phi}$.
 Let $*$ denote the multiplication on $S$ and let $\circ$ denote the multiplication on $S^\phi$.
 We check that for any $r \in S_i$ and $s \in S_j$, we have
\[
 \psi (r \circ s) = \psi (r * \phi^i(s)) = \psi(r) * \psi \phi^i(s) = \psi(r) \ast \phi^i \psi(s) = \psi (r) \circ \psi(s),\]
as needed.
\end{proof}

 \begin{corollary}\label{cor:automs}
 Fix $n\in\ZZ_{>0}$.  For any $a,b\in \kk$, the action of $\phi_b$ on $R(n,a)$ induces an automorphism of $R(n,a)$.
 \end{corollary}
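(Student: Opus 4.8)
The plan is to exploit three facts already established: the maps $\{\phi_c : c \in \kk\}$ form an abelian group under composition with $\phi_c\phi_d=\phi_{c+d}$ (Lemma~\ref{lem:phiinv}); each $\phi_c$ is a genuine graded automorphism of the ring $R(n,c)$ carrying the \emph{matching} parameter, since it is conjugation by the normal element $X_0$ (Lemma~\ref{lem:TWO}); and the identity-on-generators map gives an isomorphism $\Theta_c\colon R(n,0)\to R(n,c)^{\phi_c}$ for every $c$ (Proposition~\ref{prop:THREE}). The obstacle the corollary must overcome is that $\phi_b$ is so far only known to be an automorphism of $R(n,b)$, whereas we want it on $R(n,a)$ for \emph{arbitrary} $a$; the strategy is to pass between these rings through their common twist $R(n,0)$ and then untwist.

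Concretely, I would first combine $\Theta_a$ and $\Theta_b$ into the isomorphism $\Lambda=\Theta_a\circ\Theta_b^{-1}\colon R(n,b)^{\phi_b}\to R(n,a)^{\phi_a}$, which is the identity on the generators $X_j$ because both $\Theta_a$ and $\Theta_b$ are. Next, Lemma~\ref{lem:TWO} gives $\phi_b\in\Aut(R(n,b))$, and since $\phi_b$ commutes with itself Lemma~\ref{lem:FOUR} promotes it to an automorphism of the twist $R(n,b)^{\phi_b}$. Transporting this automorphism across $\Lambda$, i.e.\ forming $\Lambda\,\phi_b\,\Lambda^{-1}$, yields an automorphism of $R(n,a)^{\phi_a}$ whose restriction to degree one is exactly the linear map $\phi_b$ (here one uses that $\Lambda$ and $\Lambda^{-1}$ are the identity on $R_1$). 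As $R(n,a)^{\phi_a}$ is connected graded and generated in degree one, producing an automorphism that restricts to $\phi_b$ on $R_1$ is precisely the assertion that $\phi_b$ induces an automorphism of $R(n,a)^{\phi_a}$.

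It then remains to untwist. A direct computation with the twist multiplication shows $(R(n,a)^{\phi_a})^{\phi_a^{-1}}=R(n,a)$ on the nose: twisting by $\phi_a$ and then by $\phi_a^{-1}$ replaces $r*s$ by $r*\phi_a^{\,i}(\phi_a^{-i}(s))=r*s$ for $r$ of degree $i$, recovering the original product. Since $\phi_a\in\Aut(R(n,a))$, Lemma~\ref{lem:FOUR} gives $\phi_a^{-1}\in\Aut(R(n,a)^{\phi_a})$, and this commutes with the automorphism $\phi_b$ of $R(n,a)^{\phi_a}$ just produced (Lemma~\ref{lem:phiinv}). Applying Lemma~\ref{lem:FOUR} a final time, with $S=R(n,a)^{\phi_a}$, $\phi=\phi_a^{-1}$ and $\psi=\phi_b$, shows that $\phi_b$ is an automorphism of $(R(n,a)^{\phi_a})^{\phi_a^{-1}}=R(n,a)$, which is the desired conclusion.

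The main obstacle is bookkeeping rather than any deep idea: one must track carefully that every isomorphism in play is the identity on degree one, so that the \emph{same} linear map $\phi_b$ is what gets transported at each stage, and one must invoke the principle that an automorphism of a connected graded algebra generated in degree one is determined by its action on the generators, so that ``restricts to $\phi_b$ on $R_1$'' genuinely certifies that $\phi_b$ extends to an automorphism. The only honest computation is the twist-of-a-twist identity $(S^{\phi})^{\phi^{-1}}=S$, which is immediate from the definition of the Zhang twist.
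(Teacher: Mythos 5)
Your proof is correct and follows essentially the same route as the paper's: both arguments rest on Lemma~\ref{lem:FOUR} (commuting graded automorphisms descend to Zhang twists) together with Proposition~\ref{prop:THREE}, passing from $R(n,b)$ through the common twist $R(n,0)$ to $R(n,a)$. The only difference is cosmetic --- you stay with right twists and invoke the un-twisting identity $(S^{\phi})^{\phi^{-1}}=S$, where the paper instead moves to the left twist ${}^{\phi_a}R(n,0)\cong R(n,a)$ --- and your observation that conjugating by the transport isomorphism fixes $\phi_b$ on degree one is sound since the $\phi_c$ commute.
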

 \begin{proof}
 That $\phi_b$ induces an automorphism of $R(n,0)$ for any $b$ follows from Lemma~\ref{lem:FOUR}, with $\phi = \psi = \phi_b$.
 We then apply Lemma~\ref{lem:FOUR} again, using the facts that $\phi_a \phi_b = \phi_{a+b} = \phi_b \phi_a$ and that $R(n,a) \cong {}^{\phi_a} R(n,0)$ proved in Proposition~\ref{prop:THREE}.
 \end{proof}
 
 \begin{corollary}\label{cor:FIVE}
 For any $a,b \in \kk$, we have ${}^{\phi_b} R(n,a) \cong R(n, a+b)$, under the map $X_i \mapsto X_i$.
 \end{corollary}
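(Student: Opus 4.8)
The plan is to verify directly that the assignment $X_i \mapsto X_i$ defines the stated isomorphism, mirroring the computation in the proof of Proposition~\ref{prop:THREE}. First note that by Corollary~\ref{cor:automs} the map $\phi_b$ is a graded automorphism of $R(n,a)$, so the left-hand Zhang twist ${}^{\phi_b}R(n,a)$ is defined; write $\circ$ for its multiplication. Since both ${}^{\phi_b}R(n,a)$ and $R(n,a+b)$ are connected graded algebras generated in degree $1$, with common degree-one space $A_1 = \Span_\kk(X_0,\dots,X_n)$, it suffices to show that the two multiplications agree on products of generators and then promote this to an algebra isomorphism.

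The heart of the argument is the computation of $X_i \circ X_j$. By definition of the left twist, $X_i \circ X_j = \phi_b(X_i)\ast_a X_j$. Expanding $\phi_b(X_i) = \sum_{\ell}\binom{b}{\ell}X_{i-\ell}$ and applying the product formula \eqref{com} to each $X_{i-\ell}\ast_a X_j$, I would collect terms according to the total drop $u$ in the index and invoke the Vandermonde identity \eqref{Vandermonde} in the form $\sum_{\ell}\binom{b}{\ell}\binom{a+j}{u-\ell} = \binom{a+b+j}{u}$. This yields
\[ X_i \circ X_j = \sum_{u=0}^{i}\binom{a+b+j}{u}X_{i-u}X_j, \]
which is exactly the formula \eqref{com} computing $X_i \ast_{a+b} X_j$ in $R(n,a+b)$. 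Hence the two products of generators coincide as elements of the common underlying vector space $A$.

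It remains to upgrade this equality of multiplication tables to a graded algebra isomorphism. Since $R(n,a+b)$ is quadratic with relations \eqref{rel} (Proposition~\ref{prop:relR}), and these relations are $\kk$-linear combinations of products $X_p \ast_{a+b} X_q$ that we have just shown equal the corresponding products $X_p \circ X_q$, the relations of $R(n,a+b)$ hold in ${}^{\phi_b}R(n,a)$. Thus $X_i \mapsto X_i$ extends to a graded algebra homomorphism $R(n,a+b)\to {}^{\phi_b}R(n,a)$, surjective because its image contains all degree-one elements and the target is generated in degree $1$. Finally, Zhang twists preserve Hilbert series, so $\hilb\, {}^{\phi_b}R(n,a) = \hilb R(n,a) = \hilb R(n,a+b)$ by \eqref{hilbR}, and a surjection of connected graded algebras with equal (finite) Hilbert series is an isomorphism.

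The computation is routine once the Vandermonde identity is in place, so I do not anticipate a serious obstacle there; the only point requiring care is the last paragraph, namely that agreement of the degree-one multiplication genuinely forces the algebras to be equal. This is not automatic for arbitrary graded algebras, and I would be sure to invoke both the quadraticity of $R(n,a+b)$ (to build the homomorphism from the degree-two data alone) and the Hilbert-series invariance of Zhang twists (to conclude the surjection is bijective). Alternatively, one can bypass the explicit recomputation by combining Proposition~\ref{prop:THREE} with the observation that iterated left twists compose, ${}^{\phi_b}\bigl({}^{\phi_a}R(n,0)\bigr) = {}^{\phi_b\phi_a}R(n,0) = {}^{\phi_{a+b}}R(n,0)$, using $\phi_b\phi_a = \phi_{a+b}$ from Lemma~\ref{lem:phiinv} and the fact that $\phi_b$ remains an automorphism after twisting by the commuting map $\phi_a$ (Lemma~\ref{lem:FOUR}); I would retain this as a cross-check.
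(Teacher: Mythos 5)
Your proof is correct and takes essentially the same route as the paper, which simply declares the corollary ``very similar to the proof of Proposition~\ref{prop:THREE}'' and leaves the Vandermonde computation $X_i\circ X_j=\phi_b(X_i)\ast_a X_j=\sum_u\binom{a+b+j}{u}X_{i-u}X_j$ to the reader. Your extra paragraph upgrading the agreement of degree-one products to a genuine isomorphism (via quadraticity from Proposition~\ref{prop:relR} and equality of Hilbert series) is a welcome piece of rigor that the paper glosses over, but it does not change the argument.
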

 \begin{proof}
 The proof is very similar to the proof of Proposition~\ref{prop:THREE} and is left to the reader.
 \end{proof}
 
 \begin{proof}[Proof of Theorem~\ref{thm:ONE}]
The result follows immediately from Corollary~\ref{cor:FIVE}, using Lemma~\ref{lem:LRtwist} to move from left to right twists.
 \end{proof}

We now show that up to composition with nonzero scalar multiplication, the only non trivial graded automorphisms of $R(n,a)$ are the maps $\phi_b$. We will need the following lemma.

\begin{lemma}\label{lem:identity}
Let $n \geq 2$. Suppose that $\psi \in \Aut R(n,a)$ is such that $\psi(X_0) = X_0$ and $\psi(X_j) = X_j + \alpha_j X_0$ with $\al_j\in\kk$ for $1 \leq j \leq n$.
Then $\psi $ is the identity.
\end{lemma}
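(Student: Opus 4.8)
The plan is to exploit that $\psi$, being an algebra homomorphism with $\psi(X_0)=X_0$, automatically preserves every defining relation \eqref{rel}, and then to read off constraints on the scalars $\alpha_j$ by comparing coefficients in $R_2$. Set $\alpha_0=0$, so that $\psi(X_j)=X_j+\alpha_j X_0$ for all $0\le j\le n$. For a relation $X_i\ast X_j-X_j\ast X_i=R_{ij}$ (with $R_{ij}$ the right-hand side of \eqref{rel}), applying $\psi$ to both sides and subtracting the original equality gives
\[
\bigl(\psi(X_i)\ast\psi(X_j)-\psi(X_j)\ast\psi(X_i)\bigr)-\bigl(X_i\ast X_j-X_j\ast X_i\bigr)=\psi(R_{ij})-R_{ij}
\]
in $R$. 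Because $\psi$ alters each generator only by a multiple of the fixed element $X_0$, both sides are short, explicit combinations of degree-two monomials; after rewriting the one non-normal term via $X_1\ast X_0=X_0\ast X_1+aX_0\ast X_0$, everything lies in the span of the linearly independent elements $X_0\ast X_0$ and $X_0\ast X_1$, so comparing coefficients yields scalar equations in the $\alpha_j$.

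First I would prove $\alpha_1=0$, which is the one step that genuinely needs two relations. Applying the identity above to \eqref{rel} with $(i,j)=(0,2)$ (cf.\ Example~\ref{exn=3}): since $\psi$ fixes $X_0$ the left-hand difference vanishes, while the right-hand difference is exactly $-a\alpha_1\,X_0\ast X_0$, so $a\alpha_1=0$. Applying it instead to \eqref{rel} with $(i,j)=(1,2)$ and reading off the coefficient of $X_0\ast X_1$ gives $(a+2)\alpha_1=0$. Since $a$ and $a+2$ cannot both vanish, we conclude $\alpha_1=0$. I expect this to be the main obstacle: a single relation only constrains $\alpha_1$ up to a factor $(a+c)$ that can itself vanish for special values of $a$, so one is forced to combine two relations whose bad loci ($a=0$ and $a=-2$) are disjoint in order to get a conclusion uniform in $a$.

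With $\alpha_1=0$ in hand, the remaining $\alpha_j$ fall out by a clean induction. For $2\le j\le n$ I apply $\psi$ to \eqref{rel} with $i=1$, namely
\[
X_1\ast X_j-X_j\ast X_1=\sum_{k=1}^{j}\binom{-(a+1)}{k}X_{j-k}\ast X_1+(a+j)X_0\ast X_j.
\]
Since $\psi(X_1)=X_1$, each difference $\psi(X_{j-k}\ast X_1)-X_{j-k}\ast X_1=\alpha_{j-k}X_0\ast X_1$ contributes only to $X_0\ast X_1$, so the coefficient of $X_0\ast X_0$ comes solely from the outer commutator on the left, namely $\psi([X_1,X_j])-[X_1,X_j]=\alpha_j[X_1,X_0]=a\alpha_j X_0\ast X_0$, and from the last summand on the right, $(a+j)\bigl(\psi(X_0\ast X_j)-X_0\ast X_j\bigr)=(a+j)\alpha_j X_0\ast X_0$. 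Equating $X_0\ast X_0$-coefficients gives $a\alpha_j=(a+j)\alpha_j$, i.e.\ $j\alpha_j=0$, whence $\alpha_j=0$. Since $\psi$ is determined by its action on $R_1$, this shows $\psi=\id$.
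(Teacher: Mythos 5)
Your proof is correct and follows essentially the same route as the paper's: both arguments kill $\alpha_1$ using the relations for the pairs $(0,2)$ and $(1,2)$, and then kill each remaining $\alpha_k$ by applying $\psi$ to the relation \eqref{rel} with $i=1$, $j=k$ and comparing coefficients of $X_0^2$ to get $k\alpha_k=0$. The only cosmetic difference is that you avoid the paper's case split on $a=0$ versus $a\neq 0$ by extracting the coefficient of $X_0\ast X_1$ from the $(1,2)$ relation, giving $(a+2)\alpha_1=0$ alongside $a\alpha_1=0$.
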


\begin{proof}
By applying $\psi$ to the second equation of \eqref{com3} we obtain
\[[X_0, X_2] = [X_0,X_2+\al_2X_0]=\psi([X_0,X_2])=[X_0,X_2]-a\al_1 X_0 \ast X_0.\]
Thus we have $\al_1=0$ when $a\neq0$. If $a=0$ we apply $\psi$ to the third equation of \eqref{com3} to get
\begin{multline*}
(X_1 + \alpha_1 X_0) \ast (X_2+\alpha_2 X_0) - (X_2 + \alpha_2 X_0) \ast (X_1 + \alpha_1 X_0)= [X_1,X_2]=\\
[X_1,X_2]-2\alpha_1 X_0 \ast X_1 + (2\alpha_2-\alpha_1^2+\alpha_1) X_0 \ast X_0.
 \end{multline*}
Again we must have $\alpha_1=0$. Now suppose that $\alpha_j = 0$ for $1 \leq j < k$. We apply the automorphism $\psi$ to the relation (\ref{rel}) with $i=1$ and $j=k$. After rearranging we get 
\begin{equation}
\label{1k}
X_1 \ast (X_k + \alpha_k X_0) - (a+k) X_0 \ast (X_k + \alpha_k X_0) = \\
(X_k + \alpha_k X_0) \ast X_1 + \sum_{j = 1}^{k} \binom{-a-1}{j} X_{k-j} \ast X_1.
\end{equation}
Thanks to equation (\ref{com}) we compare the coefficients of the $X_0^2$ term in both sides of the equality (\ref{1k}) and we obtain $ \alpha_k(a-(a+k)) = 0$. So $- k\alpha_k=0$ and $\alpha_k = 0$ since $k\neq0$.
\end{proof}

We can now determine the graded automorphism group $\Aut_{\gr}R(n,a)$ of $R(n,a)$.
For $\lambda \in \kk^\times$, let $\xi_\lambda$ be the automorphism that scales all $X_i$ by $\lambda$.

\begin{theorem}
\label{prop:gradaut}
Assume either that $n\geq 2$ or that $n=1$ and $a\neq0$. The $d$-graded automorphisms of $R(n,a)$ are 
of the form $\xi_\lambda \phi_c$, for some $\lambda \in \kk^\times$ and $c \in \kk$.
 In particular we have $\Aut_{{\gr}}R(n,a)\cong \kk^{\times} \times \kk$. 
\end{theorem}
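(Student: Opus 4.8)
The inclusion of $\{\xi_\lambda\phi_c : \lambda\in\kk^\times,\ c\in\kk\}$ into $\Aut_{\gr}R(n,a)$ is the routine half. The scaling $\xi_\lambda$ is visibly a $d$-graded automorphism (every relation is $d$-quadratic), $\phi_c$ is one by Corollary~\ref{cor:automs}, and the two commute; since $\phi_c\phi_{c'}=\phi_{c+c'}$ by Lemma~\ref{lem:phiinv}, the map $(\lambda,c)\mapsto\xi_\lambda\phi_c$ is an injective homomorphism $\kk^\times\times\kk\to\Aut_{\gr}R(n,a)$. The real content is the reverse inclusion, so let $\psi$ be an arbitrary $d$-graded automorphism.

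The first step is to normalise $\psi$ on $X_0$. Any automorphism carries degree-$1$ normal elements to degree-$1$ normal elements, and by Proposition~\ref{prop:pnn} these are exactly $\kk X_0$ (indeed $\ker\Delta\cap R_1=\kk X_0$ and $\Gamma_a(X_0)=aX_0$). Hence $\psi(X_0)=\mu X_0$ for some $\mu\in\kk^\times$, and after replacing $\psi$ by $\xi_{\mu^{-1}}\psi$ I may assume $\psi(X_0)=X_0$. The target is now to produce a single $c\in\kk$ for which $\phi_{-c}\psi$ has the special shape $X_0\mapsto X_0$, $X_j\mapsto X_j+\alpha_jX_0$ treated in Lemma~\ref{lem:identity}; that lemma then forces $\phi_{-c}\psi=\mathrm{id}$, so $\psi=\phi_c$, and combined with the first step this gives $\psi=\xi_\mu\phi_c$ as required.

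To reach that shape I would first exploit a rigidity coming from $X_0$. Because $\psi$ fixes $X_0$, it commutes with conjugation by $X_0$, which by Lemma~\ref{lem:TWO} is precisely $\phi_a$; thus $\psi\phi_a=\phi_a\psi$ as linear maps on $R_1$ (cancel the normal element $X_0$ from $\psi(X_j\ast X_0)=\psi(X_j)\ast X_0$). When $a\neq 0$ the operator $\phi_a-\id$ sends $X_j\mapsto aX_{j-1}+(\text{lower weight})$, hence is a \emph{regular} nilpotent on $R_1$, so its centraliser is $\kk[\phi_a]$; therefore $\psi|_{R_1}$ is a polynomial in $\phi_a$, and in particular is lower-unitriangular for the weight grading (diagonal $1$, since $\psi(X_0)=X_0$). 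Reading off the subleading coefficient $c$ from $\psi(X_1)=X_1+cX_0$ and passing to $\phi_{-c}\psi$, one then runs a coefficient-matching induction on the weight $j$: applying the automorphism to the relations \eqref{rel} for the pairs $(1,j)$ and comparing the coefficients of the normal-ordered monomials $X_{j-k}\ast X_0$ (exactly the bookkeeping carried out in the proof of Lemma~\ref{lem:identity}) forces all the intermediate-weight coefficients to vanish, leaving only the $X_0$-corrections that Lemma~\ref{lem:identity} permits. I expect this rigidity—showing that the entire lower-triangular part of $\psi$ is pinned down by the quadratic relations, so that one twist $\phi_c$ absorbs it all—to be the main obstacle.

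Two caveats complete the plan. The case $a=0$ must be handled separately: there $\Gamma_0(X_0)=0$, so $X_0$ is central, $\phi_a=\id$, and the relation $X_0\ast X_1-X_1\ast X_0=-aX_0^2$ is vacuous, giving no information on $\psi(X_1)$. One instead extracts the parameter $c$ and starts the induction from the first genuinely noncommutative relation, the $(1,2)$-relation of \eqref{com3}, mirroring the $a=0$ branch of Lemma~\ref{lem:identity}; this is why the hypothesis excludes $(n,a)=(1,0)$. Finally, one could replace the weight induction by an induction on $n$ through the quotient $R/\langle X_0\rangle\cong R(n-1,a+1)$ of Proposition~\ref{prop:factX1}: $\psi$ descends to the quotient, is $\xi_\lambda\phi_c$ there by the inductive hypothesis, and lifting plus one relation pins $\lambda=\mu$ and again reduces to Lemma~\ref{lem:identity}; the only wrinkle is that the quotient is the polynomial ring $R(1,0)$ precisely when $(n,a)=(2,-1)$, which then has to be checked by hand.
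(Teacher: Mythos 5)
Your proposal is correct, and it actually contains two routes. The ``alternative'' you sketch in your final caveat --- induct on $n$ through $R/\langle X_0\rangle\cong R(n-1,a+1)$, let $\psi$ descend to the quotient, pin the scalar $\lambda$ using one relation of \eqref{com3}, and reduce to Lemma~\ref{lem:identity}, with the exceptional quotient $R(1,0)$ at $(n,a)=(2,-1)$ checked by hand --- is essentially the paper's proof verbatim (the paper packages the exception as a secondary induction over the family $a=-(n-1)$ with base case $R(2,-1)$, and quotes Shirikov for the Jordan-plane base case $n=1$, $a\neq0$). Your primary route is genuinely different and, for $a\neq0$, works: since $\psi$ fixes $X_0$ it commutes with conjugation by $X_0$, which is $\phi_a$ on $R_1$ by Lemma~\ref{lem:TWO}; for $a\neq0$ the operator $\phi_a-\mathrm{id}$ is regular nilpotent on $R_1$, so $\psi|_{R_1}$ lies in $\kk[\phi_a]=\kk[S]$ ($S$ the weight-shift $X_j\mapsto X_{j-1}$), which yields for free both unitriangularity and the stronger rigidity that the coefficient of $X_{j-i}$ in $\psi(X_j)$ depends only on $i$. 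After twisting by $\phi_{-c}$ the relations \eqref{rel} for the pairs $(1,j)$ do kill the remaining intermediate coefficients (for instance the $(1,3)$-relation forces both the $X_{j-2}$- and $X_{j-3}$-coefficients to vanish, by comparing the $X_0X_1$ and $X_0^2$ terms), and Lemma~\ref{lem:identity} finishes. What this buys is a proof with no induction on $n$, no citation for the $n=1$ base case, and no exceptional family when $a\neq0$. Its one real weakness is the branch you flag: when $a=0$ and $n\geq2$ we have $\phi_0=\mathrm{id}$, the centraliser argument says nothing, and you have not justified why $\psi|_{R_1}$ should be triangular for the weight grading at all --- a priori $\psi(X_1)$ could involve $X_2,\dots,X_n$, so ``starting the induction from the $(1,2)$-relation'' is not yet a proof there. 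That gap is covered either by your fallback route or by observing that for $a=0$ the quotient $R(n-1,1)$ has nonzero parameter, so descent modulo the central element $X_0$ supplies the missing triangularity; with that patch the proposal as a whole is sound.
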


\begin{proof}
It follows from Corollary \ref{cor:automs} that the maps $\xi_\lambda \phi_c$ are indeed automorphisms of $R(n,a)$. 
Reciprocally, we proceed by induction on $n$. 
We first assume that $a\neq-(n-1)$. For $n=1$ (and $a\neq0$), the result follows from \cite[Theorem 3.1]{Shirikov}. 
We assume that the result is true for $R(n-1,b)$ with $b\neq-(n-2)$. 
Let $\psi$ be a $d$-graded automorphism of $R(n,a)$. 
Thanks to Proposition \ref{prop:pnn} the only normal elements of $R(n,a)$ with $d$-degree $1$ are the nonzero scalar multiples of $X_0$. 
Thus, up to composition with some $\xi_\lambda$, we have that $\psi(X_0)=X_0$. 
In particular $\psi$ induces a $d$-graded automorphism of $R(n,a)/\langle X_0\rangle\cong R(n-1,a+1)$. By the induction hypothesis we have $\psi(X_j)=\sum_{i=0}^{j-1}\lambda\binom{c}{i}X_{j-i}+\alpha_{j}X_0$ for any $j\geq 1$, where $\lambda\in\kk^{\times}$ and $c,\alpha_j\in\kk$. We obtain that $\lambda=1$ by applying $\phi$ to the first equation of \eqref{com3} when $a\neq0$, or to the third equation of \eqref{com3} when $a=0$. In particular we have $\psi=\phi_c$ modulo $X_0$, and by applying Lemma \ref{lem:identity} to $\psi\circ\phi_{-c}$ we conclude that $\psi=\phi_c$.


We finally deal with the case $a=-(n-1)$. 
We prove by induction on $n\geq2$ that the $d$-graded automorphisms of $R(n,-(n-1))$ are of the form $\xi_\lambda \phi_c$.
 We only prove the base case of the induction since the induction step of the previous induction will also  apply to that case.
Let $\psi$ be a $d$-graded automorphism of $R(2,-1)$. 
Then $\psi(X_0)=\lambda X_0$  for some nonzero $\lambda$, and by rescaling we may assume that $\lambda=1$.  
Further, $\psi$ induces a $d$-graded automorphism of the commutative polynomial ring $R(2,-1)/\langle X_0\rangle\cong R(1,0)\cong\kk[X_1,X_2]$. Therefore there exist $c,d,\alpha,\beta,\gamma,\delta$ such that $\psi(X_1)=\alpha X_2+\beta X_1 +c X_0$ and $\psi(X_2)=\gamma X_2+\delta X_1 +d X_0$, where $\alpha\delta-\beta\gamma\neq0$.

Applying $\psi$ to the relations \eqref{com3} we obtain that $\alpha=0$, $\beta=\gamma=1$, $\delta=c$ and $d=\binom{c}{2}$. This shows that $\psi=\phi_c$ and concludes the proof.
\end{proof}

One can prove a similar version of Proposition \ref{prop:gradaut} for the Poisson algebra $A(n,a)$. For any $c\in\kk$ the maps $\varphi_c:=\exp(c\Delta)$ are well-defined automorphisms of the commutative polynomial ring $A(n,a)$ since $\Delta$ is a locally nilpotent derivation. Moreover we observe that
\[\Delta(\{X_i,X_j\})=\{\Delta(X_i),X_j\}+\{X_i,\Delta(X_j)\}\]
for all $0\leq i,j\leq n$. This shows that $\Delta$ is a Poisson derivation of $A(n,a)$, and thus the maps $\varphi_c$ are Poisson automorphisms of $A(n,a)$. 

We state the following result without proof.
\begin{proposition}
\label{prop:pgradaut}
Assume either that $n\geq 2$ or that $a\neq0$. The $d$-graded Poisson automorphisms of $A(n,a)$ are 
of the form $\xi_\lambda \varphi_c$, for some $\lambda \in \kk^\times$ and $c \in \kk$.
 In particular the graded Poisson automorphism group $\PAut_{{\rm gr}}A(n,a)$ of $A(n,a)$ is isomorphic to $\kk^\times \times \kk$.
\end{proposition}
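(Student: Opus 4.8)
The plan is to mirror, essentially verbatim, the proof of Theorem~\ref{prop:gradaut}, replacing the $\ast$-commutators by the Poisson brackets \eqref{pb} and the automorphisms $\phi_c$ by the Poisson automorphisms $\varphi_c=\exp(c\Delta)$. One direction is already in hand: the paragraph preceding the proposition shows that each $\xi_\lambda\varphi_c$ is a $d$-graded Poisson automorphism, and since $\varphi_c\varphi_{c'}=\varphi_{c+c'}$ and each $\xi_\lambda$ commutes with $\Delta$ (hence with every $\varphi_c$), these maps form a subgroup isomorphic to $\kk^\times\times\kk$. It therefore remains to show that an arbitrary $d$-graded Poisson automorphism $\psi$ is of this form.

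For the converse I would argue by induction on $n$, exactly as in Theorem~\ref{prop:gradaut}. By Proposition~\ref{prop:pnn} the only Poisson normal elements of $d$-degree $1$ are the scalar multiples of $X_0$, so after composing with a suitable $\xi_\lambda$ we may assume $\psi(X_0)=X_0$. Then $\psi$ preserves the Poisson ideal $\ang{X_0}$ and descends to a $d$-graded Poisson automorphism of $A(n,a)/\ang{X_0}\cong A(n-1,a+1)$ by Proposition~\ref{prop:factX1}(2). Provided $a+1\neq -(n-2)$, the inductive hypothesis identifies the descended map with some $\xi_\mu\varphi_c$, so that $\psi(X_j)=\mu\sum_{k=0}^{j-1}\tfrac{c^k}{k!}X_{j-k}+\alpha_jX_0$ for $j\geq 1$ and unknown scalars $\alpha_j$. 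Applying $\psi$ to the bracket $\{X_0,X_1\}=-aX_0^2$ when $a\neq 0$, or to $\{X_1,X_2\}=(a+2)X_0X_2-(a+1)X_1^2$ when $a=0$, and comparing homogeneous components forces $\mu=1$. Finally I would establish a Poisson analogue of Lemma~\ref{lem:identity}: if $\psi$ fixes $X_0$ and $\psi(X_j)=X_j+\alpha_jX_0$ for all $j$, then $\psi=\id$. This is proved by the same induction on the index $j$, applying $\psi$ to $\{X_1,X_k\}=(a+k)X_0X_k-(a+1)X_{k-1}X_1$ and reading off the coefficient of $X_0^2$, which yields $-k\alpha_k=0$ and hence $\alpha_k=0$. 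Applying this analogue to $\varphi_{-c}\circ\psi$, which fixes $X_0$ and sends $X_j\mapsto X_j+\beta_jX_0$ with $\beta_j=\alpha_j-\tfrac{c^j}{j!}$, gives $\varphi_{-c}\circ\psi=\id$, i.e. $\psi=\varphi_c$, as required.

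The one place that needs separate treatment — and the main obstacle — is the value $a=-(n-1)$, exactly as in the associative proof. Here the reduction mod $X_0$ lands on the degenerate parameter of $A(n-1,\cdot)$, and iterating eventually reaches $A(1,0)$, the polynomial ring with \emph{trivial} Poisson bracket, whose graded Poisson automorphism group is all of $\GL_2$ rather than $\kk^\times\times\kk$; so the inductive hypothesis is not available in the form used above. I would therefore run a second induction on $n\geq 2$ for the family $A(n,-(n-1))$, whose inductive step is identical to the one above (the descent now lands on $A(n-1,-(n-2))$, covered by this same induction), and whose base case $A(2,-1)$ is handled by hand. Concretely, after normalising $\psi(X_0)=X_0$ one writes the induced automorphism of $A(2,-1)/\ang{X_0}\cong A(1,0)\cong\kk[X_1,X_2]$ as a general element of $\GL_2$ and substitutes into the three brackets \eqref{pb3} with $a=-1$; the first two brackets pin down four of the linear coefficients, and the last bracket forces the final coefficient to equal $c^2/2$, which is precisely the coefficient of $X_0$ in $\varphi_c(X_2)$. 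This confirms $\psi=\xi_\lambda\varphi_c$ and, incidentally, explains why the exponential coefficients $\tfrac{c^k}{k!}$, rather than the binomials $\binom{c}{k}$ of the associative case, are the ones that survive in the semiclassical limit.
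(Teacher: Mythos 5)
Your proof is correct and is exactly the argument the paper intends: the paper states Proposition~\ref{prop:pgradaut} without proof, indicating that one adapts the proof of Theorem~\ref{prop:gradaut}, and your adaptation (normalising $\psi(X_0)=X_0$ via Proposition~\ref{prop:pnn}, descending to $A(n-1,a+1)$, pinning down $\mu=1$, proving the Poisson analogue of Lemma~\ref{lem:identity} by extracting the $X_0^2$-coefficient from $\{X_1,X_k\}$, and running a separate induction for the degenerate family $a=-(n-1)$ with base case $A(2,-1)$) carries out that adaptation faithfully and correctly.
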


\begin{corollary}
Assume either that $n\geq 2$ or that $a\neq0$. Then 
\[\PAut_{{\rm gr}}A(n,a)\cong \kk^{\times}\times \kk \cong\Aut_{{\rm gr}}R(n,a).\]
When $n=1$ and $a=0$ we have
\[\PAut_{{\rm gr}}A(1,0)=\Aut_{{\rm gr}}R(1,0)=\Aut_{{\rm gr}}\kk[X,Y]\cong\GL_2(\kk).\]
\end{corollary}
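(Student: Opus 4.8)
The plan is to deduce the first isomorphism chain directly from the two preceding results, and then to treat the degenerate case $n=1$, $a=0$ by hand.

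First I would observe that the hypotheses of Theorem~\ref{prop:gradaut} and Proposition~\ref{prop:pgradaut} describe the same set of pairs $(n,a)$: the condition ``$n\geq 2$ or ($n=1$ and $a\neq 0$)'' and the condition ``$n\geq 2$ or $a\neq 0$'' both exclude only the single case $n=1$, $a=0$ (recall $n\in\ZZ_{>0}$, so ``not $n\geq 2$'' means $n=1$). Under this common hypothesis, Proposition~\ref{prop:pgradaut} gives $\PAut_{{\rm gr}}A(n,a)\cong\kk^\times\times\kk$ and Theorem~\ref{prop:gradaut} gives $\Aut_{{\rm gr}}R(n,a)\cong\kk^\times\times\kk$, so the displayed isomorphisms follow at once.

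For the excluded case $n=1$, $a=0$, I would first record that both algebras degenerate to the ordinary polynomial ring. Indeed, by Example~\ref{exn=2} the relation of $R(1,0)$ reads $X_0\ast X_1-X_1\ast X_0=-aX_0\ast X_0=0$, so $R(1,0)$ is commutative and equals $\kk[X_0,X_1]=\kk[X,Y]$; likewise the Poisson bracket of $A(1,0)$ is $\{X_0,X_1\}=-aX_0^2=0$, so $A(1,0)$ is $\kk[X,Y]$ with the zero bracket. Since $\deg X=\deg Y=1$, a graded automorphism of $\kk[X,Y]$ is determined by an invertible linear substitution on the degree-one part $\kk X+\kk Y$, and every such substitution extends to a graded algebra automorphism, so $\Aut_{{\rm gr}}\kk[X,Y]\cong\GL_2(\kk)$. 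Finally, because the Poisson bracket on $A(1,0)$ vanishes identically, every graded ring automorphism automatically preserves it, whence $\PAut_{{\rm gr}}A(1,0)=\Aut_{{\rm gr}}A(1,0)=\GL_2(\kk)$, and the same computation gives $\Aut_{{\rm gr}}R(1,0)=\GL_2(\kk)$.

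There is no serious obstacle here: the substantive work has already been carried out in Theorem~\ref{prop:gradaut} and Proposition~\ref{prop:pgradaut}, and the corollary is essentially a matter of matching hypotheses and disposing of the single degenerate pair. The only point requiring a moment's care is verifying that the two hypothesis sets coincide and that in the remaining case the (Poisson) structure is trivial, so that the graded automorphism group jumps to the full linear group $\GL_2(\kk)$ rather than the smaller group $\kk^\times\times\kk$ appearing in the generic case.
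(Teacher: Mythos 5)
Your proposal is correct and is essentially the argument the paper intends: the corollary is stated without proof precisely because it follows immediately by combining Theorem~\ref{prop:gradaut} with Proposition~\ref{prop:pgradaut} (whose hypotheses, as you note, coincide since $n\in\ZZ_{>0}$), and the degenerate case $n=1$, $a=0$ reduces to the commutative polynomial ring with zero bracket exactly as you describe.
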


\begin{rem}
We note that for certain values of $a$, there exist non-graded (Poisson) automorphisms. For instance it is well known that for any polynomial $P\in\kk[X_0]$ the map sending $X_0\mapsto X_0$ and $X_1\mapsto X_1+P$ defines an automorphism of the Jordan plane $R(1,a)$ and the Poisson-Jordan plane $A(1,a)$. When $n=2$ and $a=1/q$ for some $q\in\Z_{>0}$, the map $f$ defined by
\[f(X_0)=X_0,\quad f(X_1)=X_1+X_0^{q+1}, \quad f(X_2)=X_2+X_0^qX_1+X_0^{2q+1}\]
is a non-graded Poisson automorphism of $A(2,1/q)$.
\end{rem}

To conclude this section we calculate the Nakayama automorphism of $R(n,a)$ and prove, in particular, that $R(n, -\frac{1}{2} (n+2)(n-1)/(n+1))$
is $(n+1)$-Calabi-Yau for every $n > 0$. 
We first recall some definitions. Let $R$ be an a $\kk$-algebra, and let $R^e = R\otimes_\kk R^{op}$ be the {\em enveloping algebra} of $R$.
An $(R,R)$-bimodule $M$ can be considered as a left $R^e$-module by defining $r\otimes s \cdot m = rms$.

\begin{defn}\label{def-CY}
We say that $R$ is {\em skew Calabi-Yau } (or {\em skew CY}) if
\begin{itemize}
\item[(i)] $R$ is {\em homologically smooth}:  $R$ has a finite projective resolution as a left $R^e$-module such that each term is finitely generated;
\item[(ii)]  There are an algebra automorphism $\mu$ of $R$ and an integer $d$ such that
\[ 
\Ext^i_{R^e}(R, R^e) \cong \begin{cases} 0 & \text{if $i \neq 0$} \\
 {}^1 R^\mu & \text{ if $i=d$.}
 \end{cases}
 \]
 (Here ${}^1 R^\mu$ is the $R$-bimodule which is isomorphic to $R$ as a $\kk$-vector space and such that $r\cdot s \cdot t = r s \mu(t)$.)
\end{itemize} 
 If $R$ is skew CY, the automorphism $\mu$ is called the {\em Nakayama automorphism} of $R$.  If $\mu$ is inner, then $R$ is {\em Calabi-Yau} or {\em CY}.
 \end{defn}
 
 By \cite[Lemma~1.2]{RRZ}, any AS-regular connected graded algebra is skew CY. In particular, the algebras $R(n,a)$ are skew CY.
 
We will need the following lemma before calculating the Nakayama automorphism of $R(n,a)$.
\begin{lemma} \label{lem:Nak0}
Let $\mu$ be the Nakayama automorphism of $R= R(n,a)$ for any $n \geq 1$.
Then for all $0 \leq j \leq n$, we have $\mu(X_j) - X_j \in \Span(X_0, \dots, X_{j-1})$.
In particular $\mu(X_0) = X_0$.
\end{lemma}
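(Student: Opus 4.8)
The plan is to use the fact that $R=R(n,a)$ is $d$-graded and that the Nakayama automorphism $\mu$ is a graded automorphism, combined with the explicit description of $\Aut_{\gr} R(n,a)$ from Theorem~\ref{prop:gradaut}. By that result, every graded automorphism has the form $\xi_\lambda \phi_c$ for some $\lambda \in \kk^\times$ and $c \in \kk$. Since $\phi_c(X_j) = \sum_{i=0}^{j} \binom{c}{i} X_{j-i}$ is lower-triangular with respect to the weight ordering $X_0 < X_1 < \dots < X_n$ (it adds to $X_j$ only terms in $X_0, \dots, X_{j-1}$), I would be essentially done once I pin down $\lambda = 1$—that is, once I show $\mu$ fixes $X_0$ and does not rescale.

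Here is the issue: the cited automorphism theorem requires $n \geq 2$ (or $n=1, a\neq 0$), whereas the lemma is stated for all $n \geq 1$, so the case $n=1, a=0$ (where $R = \kk[X,Y]$ is commutative polynomial) must be handled separately—but there $\mu = \id$ trivially since the ring is commutative CY. For the main range, I want two inputs. First, I would argue that $\mu$ must fix $X_0$ up to scalar: since $X_0$ is (up to scalar) the unique normal element of $d$-degree $1$ by Proposition~\ref{prop:pnn}, and Nakayama automorphisms preserve normality, we get $\mu(X_0) = \lambda X_0$. Writing $\mu = \xi_\lambda \phi_c$ from Theorem~\ref{prop:gradaut}, the statement $\mu(X_j) - X_j \in \Span(X_0, \dots, X_{j-1})$ would follow once $\lambda = 1$, because then $\mu = \phi_c$ and $\phi_c(X_j) - X_j = \sum_{i=1}^{j}\binom{c}{i} X_{j-i}$ lies in the required span; in particular $\mu(X_0) = \phi_c(X_0) = X_0$.

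So the crux is to establish $\lambda = 1$, i.e. that $\mu$ does not scale. The standard tool here is the relationship between the Nakayama automorphism and the homological data of the algebra: for an AS-regular algebra the Nakayama automorphism acts on the top $\Ext$ group $\Ext^{n+1}_R(\kk, R) \cong \kk[\ell]$, and the way $\mu$ interacts with the minimal free resolution (or equivalently with the Koszul dual / the determinant of its action) forces a normalisation on $\lambda$. Concretely, I would compute the action of $\mu$ on the one-dimensional space $\Ext^{n+1}_R(\kk,R)$ in two ways, using that $\mu = \xi_\lambda \phi_c$ and that $\phi_c$ is unipotent (determinant $1$ on $R_1$), to read off a power of $\lambda$ that must equal $1$; since $\kk$ has characteristic zero this gives $\lambda = 1$.

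\textbf{The main obstacle} I expect is precisely this last step: carefully justifying that the scalar part of $\mu$ is trivial. Although Theorem~\ref{prop:gradaut} cuts $\Aut_{\gr} R$ down to the two-parameter group $\kk^\times \times \kk$, ruling out a nontrivial $\lambda$ requires genuinely homological input about how $\mu$ acts on the AS-Gorenstein data—one cannot deduce it from the ring structure alone. A cleaner route, which I would try first, is to invoke the general principle (e.g. from the theory in \cite{RRZ}) that the Nakayama automorphism's action is compatible with the grading and the trace/homological determinant in a way that already constrains $\lambda$; indeed the full identification of $\mu$ with a specific $\phi_c$ (and hence $\lambda=1$) is presumably what the subsequent corollary \texttt{cor:Nak2} will pin down, and for this lemma I need only the weaker triangularity statement. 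Thus it may suffice to observe that \emph{any} element of $\kk^\times \times \kk$ of the form $\xi_\lambda\phi_c$ satisfying $\mu(X_0)=X_0$ (forced by normality and the unipotence of $\phi_c$ on $X_0$) automatically has $\lambda = 1$, since $\xi_\lambda\phi_c(X_0) = \lambda X_0$, giving the result directly.
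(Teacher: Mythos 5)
There is a genuine gap, and it sits exactly where you locate the ``crux'': you never actually establish that the scalar $\lambda$ in $\mu = \xi_\lambda\phi_c$ equals $1$. Your final paragraph is circular: you propose to deduce $\lambda=1$ from $\mu(X_0)=X_0$, but $\mu(X_0)=X_0$ is precisely the statement $\lambda=1$ (normality of $X_0$ and Proposition~\ref{prop:pnn} only give $\mu(X_0)=\lambda X_0$, as you note earlier). Your fallback route via the homological determinant also does not close the gap: the identity $\operatorname{hdet}(\mu)=1$ from \cite{RRZ}, combined with $\operatorname{hdet}(\phi_c)=1$ for the unipotent part, yields at best $\lambda^{n+1}=1$, and characteristic zero does \emph{not} rule out nontrivial roots of unity, so this only constrains $\lambda$ to a finite group. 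Note also that the paper genuinely needs this lemma to kill the scalar ambiguity in \cite[Theorem~0.3]{RRZ} when proving Theorem~\ref{thm:Nak1}, so one cannot borrow the answer from there without circularity.

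The paper's proof is quite different and avoids the automorphism classification entirely. It uses the $\epsilon$-filtration: $A=\kk[X_0,\dots,X_n]$ is the associated graded of $R$, and $A^e$ is the associated graded of $R^e$, so by \cite[Proposition~3.1]{Bjork} the associated graded of $\Ext^{n+1}_{R^e}(R,R^e)={}^1R^{\mu}$ is a subquotient of $\Ext^{n+1}_{A^e}(A,A^e)={}^1A^{1}$, the Nakayama bimodule of the polynomial ring being trivial. This forces the $\epsilon$-leading term of $\mu(X_j)$ to be exactly $X_j$ (coefficient $1$, not just up to scalar), which simultaneously gives the triangularity $\mu(X_j)-X_j\in\Span(X_0,\dots,X_{j-1})$ and, for $j=0$, the equality $\mu(X_0)=X_0$. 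If you want to salvage your approach, you would need an input of this filtered--graded type (or some other mechanism that pins the leading coefficient to $1$ rather than to a root of unity); the classification $\Aut_{\gr}R(n,a)\cong\kk^\times\times\kk$ alone cannot do it.
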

\begin{proof}
The Nakayama automorphism of the commutative polynomial ring $A = \kk[X_0, \dots, X_n]$ is well-known to be trivial.
Since $A$ is the associated graded of $R$ with respect to the $\epsilon$-filtration and clearly $A^e \cong \kk[X_0, \dots, X_{2n+1}]$ is the associated graded of $R^e$, by \cite[Proposition~3.1]{Bjork}, we have that the associated graded of $\Ext^{n}_{R^e}(R, R^e) = {}^1 R^{\mu}$ is a subquotient of $\Ext^{n}_{A^e}(A, A^e) = {}^1 A^1$.  
This shows that the $\epsilon$-leading term of $\mu(X_j) $ must be $X_j$.
\end{proof}

\begin{theorem}\label{thm:Nak1}
For any $n \geq 1$, the Nakayama automorphism of $R(n,a)$ is $\phi_c$, where 
\[c= (n+1)a + \binom{n+1}{2}-1.\]
\end{theorem}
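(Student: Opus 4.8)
The plan is to show first that $\mu := \mu_{R(n,a)}$ must equal $\phi_c$ for some scalar $c$, and then to pin down $c$ by induction on $n$ using the normal element $X_0$.

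For the reduction, recall that the Nakayama automorphism of a connected graded algebra can be taken $d$-graded, and that by Lemma~\ref{lem:Nak0} we have $\mu(X_0) = X_0$. When $n \geq 2$, or $n=1$ and $a \neq 0$, Theorem~\ref{prop:gradaut} tells us that every $d$-graded automorphism has the form $\xi_\lambda \phi_c$; since $\xi_\lambda\phi_c(X_0) = \lambda X_0$ while $\mu(X_0)=X_0$, necessarily $\lambda = 1$ and $\mu = \phi_c$. The only excluded case is $n=1$, $a=0$, where $R(1,0)=\kk[X_0,X_1]$ is commutative with trivial Nakayama automorphism $=\phi_0$; this already agrees with $c = 2\cdot 0 + \binom{2}{2}-1 = 0$. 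Thus in all cases $\mu = \phi_c$, and the whole content of the theorem is the value of $c$. Since $\phi_c(X_1) = X_1 + cX_0$, one could in principle read off $c$ as the $X_0$-coefficient of $\mu(X_1)$; the difficulty is that we have no direct handle on $\mu(X_1)$, which is why I would argue inductively instead.

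The induction runs on $n$, with base case $n=1$: the commutative case is settled above, and for the Jordan plane $R(1,a)$ ($a\neq 0$) the Nakayama automorphism is known to be $X_0\mapsto X_0$, $X_1 \mapsto X_1 + 2aX_0$, i.e.\ $\phi_{2a}$, matching $c = 2a = 2a + \binom{2}{2}-1$. For the inductive step with $n \geq 2$, I would use Proposition~\ref{prop:factX1}: $X_0$ is a normal nonzerodivisor and $S := R/\ang{X_0} \cong R(n-1,a+1)$. Since $\mu = \phi_c$ fixes $X_0$, it preserves $\ang{X_0}$ and descends to an automorphism $\bar\mu$ of $S$; because $\phi_c(X_{i+1}) = \sum_{\ell=0}^{i+1}\binom{c}{\ell}X_{i+1-\ell}$ reduces modulo $X_0$ to $\sum_{\ell=0}^{i}\binom{c}{\ell}Y_{i-\ell}$, this induced map $\bar\mu$ is precisely the automorphism $\phi_c$ of $R(n-1,a+1)$. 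By Lemma~\ref{lem:TWO}, conjugation by $X_0$ is $\phi_a$ (with the appropriate inverse for the opposite side), so it corresponds to a shift of the $\phi$-parameter by $\pm a$. Applying the standard relation between the Nakayama automorphism of $R$ and that of a normal hyperplane section $S = R/\ang{X_0}$ — of the form $\mu_S = \bar\mu \circ \bar\nu^{\,\varepsilon}$, where $\nu$ is conjugation by $X_0$ and $\varepsilon = \pm 1$ — gives $\mu_S = \phi_{c\pm a}$ on $S$. Comparing with the inductive hypothesis $\mu_S = \mu_{R(n-1,a+1)} = \phi_{c'}$, where $c' = n(a+1)+\binom{n}{2}-1 = na + \binom{n+1}{2}-1$ (using the Pascal identity $\binom{n}{2}+n = \binom{n+1}{2}$), forces the shift to be exactly $+a$ and yields $c = c' + a = (n+1)a + \binom{n+1}{2}-1$, as claimed.

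The main obstacle is the hyperplane-section formula $\mu_S = \bar\mu\circ\bar\nu^{\,\varepsilon}$ invoked in the inductive step: one must either cite a clean statement from the literature or prove it directly, and, crucially, fix every convention — the side on which $X_0$ conjugates (giving $\nu = \phi_a$ versus $\phi_{-a}$) and whether $\bar\nu$ or $\bar\nu^{-1}$ appears — so that the net shift comes out as $+a$ rather than $-a$ or $\pm 3a$. The natural way to establish the formula is via the $(R,R)$-bimodule short exact sequence $0 \to {}^{\nu}R(-1) \xrightarrow{\,\cdot X_0\,} R \to S \to 0$ together with the long exact sequence for $\Ext_{R^e}(-,R^e)$, which identifies ${}^{1}R^{\mu}$, ${}^{1}S^{\mu_S}$, and the twist $\nu$; the degree shift $(-1)$ and the twist $\nu$ are exactly what produce the one-step change $c' \mapsto c'+a$. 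Once the sign is settled, the remaining verification is the routine binomial bookkeeping above, and Corollary~\ref{cor:Nak2} then follows by solving $c=0$, since $(n+1)a + \binom{n+1}{2}-1 = 0$ is equivalent to $a = -\tfrac{(n+2)(n-1)}{2(n+1)}$.
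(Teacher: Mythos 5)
Your reduction of $\mu$ to the form $\phi_c$ is sound (you route it through Theorem~\ref{prop:gradaut} where the paper uses Lemma~\ref{lem:Nak0} together with Lemma~\ref{lem:identity}, but either works), the base case $\phi_{2a}$ for the Jordan plane is correct, and the binomial bookkeeping $c' + a = (n+1)a + \binom{n+1}{2}-1$ is right. The problem is the inductive step. The hyperplane-section formula $\mu_S = \bar\mu\circ\bar\nu^{\,\varepsilon}$ for a normal degree-one element is indeed available (it is essentially the general form of \cite[Lemma~1.5]{RRZ}), but the exponent $\varepsilon$ \emph{is} the entire content of the computation: it decides between $c = c'+a$ and $c = c'-a$, i.e.\ between $(n+1)a+\binom{n+1}{2}-1$ and $(n-1)a+\binom{n+1}{2}-1$. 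Both candidates agree with the base case and both specialise correctly at $a=0$, so nothing in your argument rules one out; determining $\varepsilon$ by ``comparing with the inductive hypothesis'' is circular, since the inductive hypothesis only tells you $\mu_S = \phi_{c'}$ and the value of $c$ is exactly what you are trying to prove. You flag this yourself, but as written the step is a genuine gap: it can only be closed by carrying fixed conventions through the long exact sequence for $\Ext_{R^e}(-,R^e)$ applied to $0 \to {}^{\nu}R(-1)\to R \to S \to 0$, or by an independent computation of $\mu$ for a single $R(n,a)$ with $n\geq 2$ and $a \neq 0$.

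It is worth seeing how the paper sidesteps precisely this sign problem. It first proves the theorem for $R(n,0)$, where $X_0$ is \emph{central}: then the conjugation automorphism is trivial, \cite[Lemma~1.5]{RRZ} says the Nakayama automorphism simply descends to $R(n,0)/\ang{X_0}\cong R(n-1,1)$ with no correction, and there is no sign to fix. (The induction is still simultaneous in $a$: identifying the induced map on $R(n-1,1)$ uses the theorem for $n-1$ with $a=1$.) The general case is then obtained not from the quotient but from the Zhang-twist identity $R(n,a) = {}^{\phi_a}R(n,0)$ of Corollary~\ref{cor:FIVE} together with \cite[Theorem~0.3]{RRZ}, which gives $\nu = \mu\,\phi_a^{n+1}\xi_\lambda$ with the exponent $n+1$ forced by the cited theorem and $\lambda = 1$ forced by Lemma~\ref{lem:Nak0}. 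If you want to keep your direct induction on the normal element for general $a$, you must prove the twisted hyperplane formula with explicit conventions; otherwise the cleanest repair is to adopt the paper's two-step reduction.
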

\begin{proof}
We begin by calculating the Nakayama automorphism of $R(n,0)$.  
For $n=1$ we have $R(1,0) \cong \kk[X_0,X_1]$, so the Nakayama automorphism is the identity $\phi_0$; we have $0 = \binom{2}{2}-1$.
Suppose now that $n>1$ and we wish to calculate the Nakayama automorphism  $\mu$ of $R(n, 0)$.
Let $b= n+ \binom{n}{2}-1 = \binom{n+1}{2}-1$.
Since by Lemma~\ref{lem:Nak0} we have $\mu(X_0) = X_0$, thus $\mu $ induces an automorphism of $R(n, 0) / \ang{X_0 }  \cong R(n-1, 1)$.
By \cite[Lemma~1.5]{RRZ}, using the fact that $X_0$ is central, this induced automorphism is equal to the Nakayama automorphism of $R(n-1,1)$, which by induction on $n$ is $\phi_b$.
Thus modulo $X_0$, we have that $\mu =   \phi_b$.  
By applying Lemma~\ref{lem:identity} to $\psi = \mu \circ \phi_{-b}$ we see that $\mu = \phi_b$.

Let $\mu$ be the Nakayama automorphism of $R(n,0)$ and $\nu$ be the Nakayama automorphism of $R(n,a) = {}^{\phi_a} R(n,0)$.
By \cite[Theorem~0.3]{RRZ}, we have $\nu = \mu \phi_a^{n+1} \xi_\lambda$ for some $\lambda \in \kk^\times$.
By Lemma~\ref{lem:Nak0}, we must have $\lambda=1$.
Thus
\[ \nu = \mu \phi_a^{n+1} = \phi_{\binom{n+1}{2}-1 + (n+1)a},\]
as claimed.
\end{proof}

\begin{corollary}\label{cor:Nak2}
For any $n \geq 1$, 
the algebra $R(n,a)$ is Calabi-Yau if and only if  
\[ a = \frac{1-\binom{n+1}{2}}{n+1} = -\frac{ (n+2)(n-1)}{2(n+1)}.\]  
\end{corollary}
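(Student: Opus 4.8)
The plan is to read off the Calabi-Yau condition for $R = R(n,a)$ directly from the computation of its Nakayama automorphism in Theorem~\ref{thm:Nak1}. By that theorem the Nakayama automorphism $\mu$ of $R$ equals $\phi_c$ with $c = (n+1)a + \binom{n+1}{2} - 1$, and since the Nakayama automorphism is well-defined only up to composition with an inner automorphism, Definition~\ref{def-CY} says that $R$ is Calabi-Yau precisely when this representative $\phi_c$ is inner. So the entire statement reduces to deciding when $\phi_c$ is an inner automorphism of $R$.

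The key point I would isolate is that $R$ admits no nontrivial inner automorphisms at all. Indeed, $R$ is connected $\NN$-graded with $R_0 = \kk$, and by Corollary~\ref{cor:noeth} it is a domain. For such a ring every unit is a nonzero scalar: writing a unit $u = \sum_i u_i$ with nonzero top-degree term $u_m$ and its inverse $v = \sum_j v_j$ with nonzero top-degree term $v_p$, the product $uv = 1$ has top-degree component $u_m v_p$, which is nonzero because $R$ is a domain; as the grading is non-negative this forces $m = p = 0$, whence $u \in R_0^\times = \kk^\times$. Since scalars are central, the only inner automorphism of $R$ is the identity. Therefore $R$ is Calabi-Yau if and only if $\phi_c = \id$.

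It then remains to solve $\phi_c = \id$, which is immediate: since $\phi_c(X_1) = X_1 + \binom{c}{1} X_0 = X_1 + c X_0$, the equality $\phi_c = \id$ forces $c = 0$, and conversely $\phi_0 = \id$ because $\binom{0}{i} = 0$ for all $i \geq 1$. Setting $c = (n+1)a + \binom{n+1}{2} - 1 = 0$ gives $a = \bigl(1 - \binom{n+1}{2}\bigr)/(n+1)$, and expanding $\binom{n+1}{2} = n(n+1)/2$ yields $1 - \binom{n+1}{2} = -\frac{1}{2}(n+2)(n-1)$, so that $a = -\,(n+2)(n-1)/(2(n+1))$, as claimed.

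There is essentially no obstacle beyond the reduction itself; the one point worth stating carefully is that ``inner'' collapses to ``identity'' for a connected graded domain, which is exactly what makes the Calabi-Yau condition so rigid and singles out a unique value of $a$ for each $n$.
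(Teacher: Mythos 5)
Your proof is correct and is exactly the argument the paper intends (the corollary is stated without proof as an immediate consequence of Theorem~\ref{thm:Nak1}): since $R$ is a connected graded domain its only units are scalars, so the Nakayama automorphism $\phi_c$ is inner precisely when it is the identity, i.e.\ when $c=0$. Your verification that $\phi_c=\id$ forces $c=0$ via $\phi_c(X_1)=X_1+cX_0$, and the algebraic simplification of $a$, are both accurate.
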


\begin{example}\label{eg:pym}
When $n=3$, we have $-\frac{ (n+2)(n-1)}{2(n+1) }= -\frac{5}{4}$, so Pym's example~\ref{pym} is Calabi-Yau, using Example~\ref{exn=4}.
\end{example}


\section{Isomorphisms}\label{ISOM}
We have seen in Example \ref{exn=2} that we have $R(1,0) \cong R(1,a)$ $\iff$ $a=0$ and that $R(1,a) \cong R(1,b)$ for any nonzero $a,b \in \CC$.
A similar statement holds for the Poisson algebras $A(1,a)$.
In this section we analyse the isomorphism question for $R(n,a)$ and $A(n,a)$ where $n \geq 2$. Moreover we show that each $ R(n,a)$ is isomorphic to its opposite ring.

The main theorem of this section is the next result.  
\begin{theorem}\label{thm:isom}
Let $n \geq 2 $ and let $a, a' \in \CC$.
The following are equivalent:
\begin{enumerate}
\item $a=a'$;
\item $R(n,a) \cong R(n, a')$;
\item $A(n,a)$ is Poisson isomorphic to $A(n,a')$.
\end{enumerate}
\end{theorem}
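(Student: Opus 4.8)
The plan is to prove the cycle of implications $(3) \implies (1) \implies (2) \implies (3)$, where the only substantive work is in establishing $(3) \implies (1)$. The implications $(1) \implies (2)$ and $(1) \implies (3)$ are trivial, and I would extract a genuine implication by proving that a Poisson isomorphism forces $a = a'$ and, separately, that $a = a'$ yields both an algebra isomorphism and a Poisson isomorphism (the latter two being tautological once the parameters coincide). So the heart of the matter is to show that if $A(n,a)$ and $A(n,a')$ are Poisson isomorphic, then $a = a'$, together with the parallel statement for the associative algebras $R(n,a)$.

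\smallskip
The key idea is to find a Poisson-isomorphism invariant (respectively an algebra-isomorphism invariant) that depends nontrivially on $a$, and to read off $a$ from it. The most promising candidate is the Nakayama automorphism, which we have already computed in Theorem~\ref{thm:Nak1}: the Nakayama automorphism of $R(n,a)$ is $\phi_c$ with $c = (n+1)a + \binom{n+1}{2} - 1$. Since the Nakayama automorphism is an isomorphism invariant (up to conjugation by an inner automorphism and composition with a scalar automorphism $\xi_\lambda$), and since by Theorem~\ref{prop:gradaut} the graded automorphism group is abelian of the form $\kk^\times \times \kk$ with the $\phi_c$ forming the unipotent part, conjugation does nothing to the $\phi_c$-component and scalar automorphisms $\xi_\lambda$ commute with and do not alter the $\phi_c$. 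Thus the invariant $c \bmod (\text{the part killed by } \xi_\lambda)$ recovers $(n+1)a$, hence $a$, since we work in characteristic zero and $n+1 \neq 0$. First I would make precise that any isomorphism $R(n,a) \cong R(n,a')$ is necessarily graded after composing with a suitable automorphism — here I would use that the grading is intrinsic, e.g. because $X_0$ up to scalar is the unique degree-one normal element by Proposition~\ref{prop:pnn}, and more generally that the $d$-grading is recoverable from the algebra structure — and then compare Nakayama automorphisms via $c = c'$ to conclude $a = a'$.

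\smallskip
For the Poisson side, I would run the same argument using a Poisson analogue of the Nakayama automorphism (the modular derivation / modular automorphism of the Poisson structure), or alternatively transfer the statement through the semiclassical-limit relationship: since $A(n,a) = \gr R(n,a)$ by Lemma~\ref{lem:assgr}, a Poisson isomorphism $A(n,a) \cong A(n,a')$ should be matched against the associated-graded data, and the modular derivation of $A(n,a)$ is the semiclassical shadow of $\phi_c$, namely $c\,\Delta$ with the same $c = (n+1)a + \binom{n+1}{2}-1$. Comparing the modular derivations then forces $a = a'$ by the same characteristic-zero cancellation.

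\smallskip
The main obstacle I anticipate is the reduction to graded isomorphisms: a priori an abstract algebra isomorphism $R(n,a) \cong R(n,a')$ (or an abstract Poisson isomorphism) need not preserve the $d$-grading, and the clean Nakayama computation is only available for graded automorphisms and via graded invariants. I expect the fix to be that the $d$-grading is forced: the associated graded $A(n,a) = \gr R(n,a)$ is a polynomial ring, $R(n,a)$ is a connected $\NN$-graded domain generated in degree one, and its graded structure (in particular the degree-one space $R_1$ and the distinguished normal element $X_0$) is canonically determined, so any isomorphism can be rigidified to a graded one without changing the Nakayama invariant. Once that reduction is in place, the extraction of $a$ from the Nakayama automorphism (respectively the modular derivation) is immediate, and the remaining implications close the cycle.
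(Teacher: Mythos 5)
There is a genuine gap at the central step of your plan: extracting $a$ from the Nakayama automorphism. What is true is that an isomorphism $\psi\colon R(n,a)\to R(n,a')$ transports Nakayama automorphisms by conjugation, so $\phi_{c'}=\psi\phi_c\psi^{-1}$ with $c=(n+1)a+\binom{n+1}{2}-1$ and $c'$ defined analogously. Your claim that this forces $c=c'$ rests on "conjugation does nothing to the $\phi_c$-component", but the justification you offer (Theorem~\ref{prop:gradaut}: the graded automorphism group is abelian) only controls conjugation by automorphisms of a \emph{fixed} algebra; here $\psi$ identifies the automorphism groups of two \emph{different} algebras, and nothing forces that identification to preserve the parameter $c$. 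Concretely, the invariant fails already at the linear level: for any nonzero $c,c'$ the maps $\phi_c|_{R_1}$ and $\phi_{c'}|_{R_1}$ are both single unipotent Jordan blocks of size $n+1$, hence conjugate; and if one writes out $\psi\phi_c=\phi_{c'}\psi$ for a graded linear bijection with $\psi(X_0)=\lambda_0X_0'$, the resulting equations are solvable for \emph{every} pair of nonzero $c,c'$ (they merely force $\psi(X_j)\in\Span(X_0',\dots,X_j')$ with $X_j'$-coefficient $(c/c')^j\lambda_0$). So the Nakayama automorphism by itself only separates the Calabi--Yau member ($c=0$) from all the others; to conclude $c=c'$ one must also use that $\psi$ respects the defining relations \eqref{rel}, which is precisely the computation your approach was designed to avoid. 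The Poisson half inherits the same problem, compounded by the fact that the identification of the "modular derivation" with $c\Delta$ is nowhere established in the paper and would itself need proof (and a modular vector field is only canonical up to choices).

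For comparison, the paper's proof of $(3)\Rightarrow(1)$ uses Proposition~\ref{prop:pnn} to pin down $\alpha(X_0)$ as a scalar multiple of $X_0'$, deduces the intertwining relation $\Delta\alpha=(a/a')\,\alpha\Delta$, and then compares the $X_0'X_2'$-coefficients of the two sides of $\alpha(\{X_1,X_2\}_a)=\{\alpha(X_1),\alpha(X_2)\}_{a'}$ to obtain $(a+2)/a=(a'+2)/a'$. For $(2)\Rightarrow(1)$ it first replaces an arbitrary isomorphism by a graded one using the Bell--Zhang theorem \cite{BZ} --- note that your assertion that "the grading is recoverable from the algebra structure" is exactly what that nontrivial theorem supplies and cannot simply be asserted --- then shows the graded isomorphism respects the $\epsilon$-filtration and reduces to the Poisson case via Lemma~\ref{lem:assgr}, or inducts through $R/\ang{X_0}\cong R(n-1,a+1)$. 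Your observation that the Nakayama automorphism detects the unique Calabi--Yau value of $a$ is correct and is essentially the paper's own remark after the theorem statement, but it does not separate the remaining values of $a$ from one another.
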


We remark that Theorem~\ref{thm:isom} is not particularly surprising given Corollary~\ref{cor:Nak2}, since for each $n$ there is a unique $a$ with $R(n,a)$ Calabi-Yau.

\begin{proof}[Proof of Theorem~\ref{thm:isom}]
 $(1) \implies (2), (3)$ is trivial.
 
 $(3) \implies (1)$.  Let $A = A(n,a)$ and let $A' = A(n,a')$.
 To avoid any confusion, we will denote the generators of $A$ by $X_0, \dots, X_n$ as usual and denote the corresponding generators of $A'$ by $X_0', \dots, X_n'$.
We will let $\Delta$ also denote the downward derivation on $A'$, so $\Delta(X_i') = X_{i-1}'$.
 
 Suppose that there is a Poisson isomorphism $\alpha:  A \to A'$.
 For $0 \leq i \leq n$ set $ \alpha(X_i)= T_i = L_i + P_i $, where $L_i \in A'_1$ and $P_i \in \CC \oplus \bigoplus_{k \geq 2} A'_k$.
 	 Note that since $\alpha$ can be seen as an algebra automorphism of the polynomial ring $A$, the linear parts $L_i$ must be nonzero for all $0\leq i\leq n$ (using the fact that the Jacobian of $\alpha$ must be nonzero at the origin).
	 
If $a=0$ then since $X_0$ is Poisson central $T_0$ must be as well.
As the Poisson bracket respects the $d$-grading on $A'$, the degree 1 part $L_0$ must be Poisson central and by Proposition~\ref{prop:pnn} we have $L_0 \in \ker \Delta$.  
Thus $L_0 = \lambda_0 X_0'$ is Poisson central and so $a' =0$.  This shows that $a=0$ $\iff$ $a'=0$.

Suppose now that $aa' \neq 0$.
Now $T_0$ is Poisson normal and by Proposition~\ref{prop:pnn} we have $T_0 \in \ker \Delta$ and $T_0$ is a $\Gamma_{a'}$-eigenvector.  Thus $L_0 \in \ker \Delta$ and so $L_0 = \lambda_0 X_0'$ for some $0 \neq \lambda_0 \in \CC$.
Since $\Gamma_{a'}(L_0) = a' L_0$ we have $\Gamma_{a'}(T_0) = a' T_0$.

Let $c = a/a'\neq0$.
Applying $\alpha$ to the equation $ a \Delta(g) X_0 = \{ g, X_0\}_a$ we obtain
\[ 
a \alpha \Delta (g) T_0 = \{ \alpha(g), T_0\}_{a'} = a' \Delta \alpha(g) T_0\]
and so
\beq\label{eqboop}
\Delta \alpha = c \alpha \Delta
\eeq
as maps from $A \to A'$.
Since $\Delta$ respects the $d$-degree we have $\Delta(L_i ) = c L_{i-1}$ for $ 1 \leq i \leq n$.
Thus there are $\lambda_1, \lambda_2 \in \CC$ so that
\begin{align*}
L_0&= \la_0 X_0',\\
L_1&= c \la_0 X_1' + \la_1 X_0',\\
L_2&= c^2 \la_0 X_2' + c \la_1 X_1' + \la_2 X_0'.
\end{align*}

From \eqref{eqboop} we also conclude that $T_0, T_1 \in \im \Delta$ and so have no constant term. Considering the equality
\beq \label{popeye}
 (a+2) T_0 T_2 - (a+1) T_1^2 = \alpha( \{X_1,X_2 \}_a) = \{ T_1, T_2 \}_{a'}
 \eeq
then gives that $T_2$ has no constant term. Therefore the $X_0' X_2'$ term of the left-hand side of \eqref{popeye} is $(a+2) c^2 \lambda_1^2 X_0' X_2'$. On the other hand, the $X_0' X_2'$ term of the right-hand side is 
$ \Delta(c \lambda_0 X_1') \Gamma_{a'}(c^2 \lambda_0 X_2')= c^3 \lambda_0^2 (a'+2) X_0' X_2'$.
Thus 
$\frac{a+2}{a} = \frac{a'+2}{a'}$ and $a' = a$.

$(2) \implies (1)$.  
Let $R = R(n,a)$ and let $R' = R(n,a')$. 
As above, we denote the generators of $R'$ by $X_0', \dots, X_n'$.
If $R \cong R'$ then  by \cite[Theorem 0.1]{BZ} there exists a graded isomorphism $\alpha: R \to R'$.
Since $X_0$ is normal in $R_1$ its image $\alpha(X_0)$ is normal in $R_1'$ and $\alpha(X_0)\in\Span(X_0')$ thanks to Proposition~\ref{prop:pnn}.
We prove the result by induction on $n$. 

We first consider the case $n=2$. 
Since $X_0$ is central if and only if $a=0$ we deduce that $a = 0 \iff a' = 0$. Therefore we may assume that $a,a'\neq0$ in the following.
Since $\alpha(X_1)\in R_1'$ there exist scalars $\mu_0,\mu_1$ and $\mu_2$ not all zero such that $\alpha(X_1)=\mu_0X_0'+\mu_1X_1'+\mu_2X_2'$. We fix $\lambda_0\in\kk^{\times}$ such that $\alpha(X_0)=\lambda_0X_0'$. 
By applying $\alpha$ to the first equality in (\ref{com3}) we obtain
\[\lambda_0\mu_1(X_0'\ast X_1'-X_1'\ast X_0')+\lambda_0\mu_2(X_0'\ast X_2'-X_2'\ast X_0')=-a\lambda_0^2 X_0'\ast X_0'.\]
After simplification using the relations (\ref{com3}) we compare the coefficients of the $X_0'\ast X_1'$ term.
We get $-a'\lambda_0\mu_2=0$ which implies that $\mu_2=0$ since $a'\neq0$. 
In particular we have $\alpha(X_1) \in \Span(X_0', X_1')$ and $\alpha$ respects the $\epsilon$-filtrations on $R$ and $R'$.
By Lemma~\ref{lem:assgr} $\alpha$ induces a Poisson isomorphism between $A(2,a)$ and $A(2, a')$, so $a=a'$.  

Suppose now $n\geq3$ and the result true for $n-1$. Since $\alpha(X_0)\in\Span(X_0')$ there is an isomorphism $R(n-1,a+1) \cong R/\ang{X_0} \to R'/\ang{X_0' } \cong R(n-1,a'+1)$ and we have $a+1 = a'+1$.
%
%
%
\end{proof}

To end the section we prove that $R(n,a)$ and $R(n,a)^{\rm{op}}$ are isomorphic. This will be used in the next section.

\begin{theorem}
\label{antiiso}
For any $u\in\kk$, the map $\omega_u$ from $R(n,a)$ to $R(n,a)$ defined by 
\begin{equation*}
\label{antiisom}
\omega_u(X_i)=(-1)^{i}\sum_{\ell=0}^{i}\binom{i-u}{\ell}X_{i-\ell} = (-1)^{i} \phi_{i-u}(X_i)
\end{equation*}
is an anti-isomorphism. In particular $R(n,a)$ and $R(n,a)^{\rm{op}}$ are isomorphic for any $a\in\kk$ and any $n>0$.
\end{theorem}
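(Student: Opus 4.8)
The plan is to show that $\omega_u$ is an anti-homomorphism which is bijective on $R_1$; bijectivity on all of $R$ is then automatic, and an anti-isomorphism $R(n,a)\to R(n,a)$ is precisely an isomorphism $R(n,a)\to R(n,a)^{\rm op}$. Write $T_m:=\omega_u(X_m)$. Since $\omega_u(X_i)=(-1)^iX_i+(\text{terms in }X_0,\dots,X_{i-1})$, the map $\omega_u$ is $d$-graded and its matrix on $R_1$ is triangular with nonzero diagonal, hence invertible on $R_1$. As $R$ is generated in degree $1$ and each $R_k$ is finite-dimensional of dimension independent of any map (Proposition~\ref{prop:relR}, \eqref{hilbR}), any $d$-graded anti-homomorphism that is surjective on $R_1$ is bijective, i.e.\ an anti-isomorphism. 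Moreover, by Lemma~\ref{lem:phiinv} we have $\omega_u=\phi_{-u}\circ\omega_0$ on generators, and $\phi_{-u}\in\Aut R(n,a)$ by Corollary~\ref{cor:automs}; since the composite of an automorphism with an anti-homomorphism is again an anti-homomorphism, it suffices to treat $u=0$, where $T_m=(-1)^m\phi_m(X_m)$. I would nonetheless keep $u$ general, as the argument is uniform.

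It remains to verify the anti-homomorphism property. Because $R$ is quadratic with relations given exactly by \eqref{rel} (equivalently \eqref{relabis}) for $0\le i<j\le n$ (Proposition~\ref{prop:relR}, using that $R$ is Koszul via Theorem~\ref{thm:ASreg}), the assignment $X_i\mapsto T_i$ extends to an anti-homomorphism if and only if the $T_i$ satisfy, in $R$, the relation that $X_i\mapsto T_i$ must satisfy in $R^{\rm op}$, namely
\[
\sum_{k=0}^{j}\binom{-z_i}{k}\,T_i\ast T_{j-k}
=\sum_{\ell=0}^{i}\binom{-z_j}{\ell}\,T_j\ast T_{i-\ell}
\qquad(\star)
\]
for all $0\le i<j\le n$. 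This, rather than the commutator identity $\omega_u([X_i,X_j]_\ast)=[\omega_u(X_j),\omega_u(X_i)]_\ast$, is genuinely the condition to check: before $\omega_u$ is known to reverse products one may not apply it to the $\ast$-products appearing on the right of \eqref{rel}. As a sanity check, for $n=1$ and $(i,j)=(0,1)$ equation $(\star)$ reads $T_0\ast T_1-T_1\ast T_0=a\,T_0\ast T_0$, which holds since $T_0=X_0$, $T_1\in -X_1+\kk X_0$, and $[X_0,X_1]_\ast=-a\,X_0\ast X_0$ by Example~\ref{exn=2}.

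The main obstacle is the verification of $(\star)$, a finite but intricate manipulation of binomial coefficients. The tool that makes it tractable is the identity $F\ast X_j=\phi_{z_j}(F)\,X_j$ for $F\in R_1$ (from the proof of Lemma~\ref{lem:formulas}), together with \eqref{star}, which allow me to rewrite every $\ast$-product of generators back in the commutative basis $\{X_pX_q\}$ of $A_2$; since $A=\gr R$, checking $(\star)$ amounts to checking equality of the two sides after this expansion. Substituting $T_m=(-1)^m\sum_s\binom{m-u}{s}X_{m-s}$, collecting the coefficient of a fixed monomial $X_pX_q$, and repeatedly applying the Chu--Vandermonde identity \eqref{Vandermonde} to the resulting nested sums, one matches the coefficients on both sides; the dependence on $u$ should cancel throughout, consistently with the reduction to $u=0$ above.

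To control the size of this computation I would organize it by induction on $n$, reducing modulo $X_0$ via Proposition~\ref{prop:factX1}. Using $\omega_u(X_0)=X_0$ and $R(n,a)/\ang{X_0}\cong R(n-1,a+1)$ one checks on generators that the canonical map $\pi$ satisfies $\pi\circ\omega_u=\xi_{-1}\circ\omega_{u-1}\circ\pi$, where $\omega_{u-1}$ is the corresponding map for $R(n-1,a+1)$ and $\xi_{-1}$ is the scaling automorphism; consequently the induction hypothesis gives $(\star)$ for $i,j\ge 1$ modulo $\ang{X_0}$. This leaves only the relations with $i=0$ (where $T_0=X_0$ simplifies $(\star)$ considerably) and the $X_0$-components of the remaining relations to be checked directly, reducing the binomial identity to a manageable residual case. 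Establishing $(\star)$ completes the proof that $\omega_u$ is an anti-isomorphism, and taking any fixed $u$ yields $R(n,a)\cong R(n,a)^{\rm op}$.
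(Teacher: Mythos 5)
Your framework is sound: the condition $(\star)$ you isolate is exactly what must be verified (by Proposition~\ref{prop:relR} the relations \eqref{rel}, equivalently \eqref{relabis}, generate all relations, so $X_i\mapsto T_i$ extends to an anti-homomorphism iff the $T_i$ satisfy the reversed relations), the factorisation $\omega_u=\phi_{-u}\circ\omega_0$ is correct, and bijectivity does follow from triangularity on $R_1$ together with generation in degree one. The gap is that the heart of the proof --- the actual verification of $(\star)$ --- is never carried out. You describe it as ``a finite but intricate manipulation of binomial coefficients'' and assert that, after expanding in the commutative basis, ``one matches the coefficients on both sides'' with the $u$-dependence cancelling as it ``should''; but this is precisely the nontrivial content of the theorem, and nothing in the proposal shows that the nested Vandermonde sums close up. The proposed induction on $n$ does not remove the difficulty: it only yields $(\star)$ for $i,j\geq 1$ modulo $\ang{X_0}$, and the residual check (the $X_0$-components of those relations together with all relations with $i=0$, $j\geq 2$) is again a binomial identity that is merely asserted.

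The paper closes exactly this gap with a short structural argument that avoids coefficient-matching. It rewrites \eqref{relabis} as $\phi_{-z_i}(X_j)\ast X_i=\phi_{-z_j}(X_i)\ast X_j$, proves the composition law $\omega_u\phi_b=\omega_{u+b}$ by a single application of Vandermonde (Lemma~\ref{binomeq1}), and uses it to compute the two reversed images as $(-1)^{i+j}\phi_{i-u}(X_i)\ast\phi_{j-u+z_i}(X_j)$ and $(-1)^{i+j}\phi_{j-u}(X_j)\ast\phi_{i-u+z_j}(X_i)$; their equality is then obtained for free by applying the known automorphism $\phi_c$, with $c=a+i+j-u$, to the relation itself. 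To complete your argument you should either carry out your coefficient computation in full or adopt this two-step reduction, which turns the verification of $(\star)$ into one instance of Chu--Vandermonde plus an appeal to Corollary~\ref{cor:automs}.
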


We will need the following lemmas.
 
\begin{lemma}
\label{binomeq1}
For any $v\in\mathbb{N}$ and $a,m\in\kk$ we have
\begin{equation}
\label{eq11}
\sum_{k=0}^v (-1)^k\binom{a}{k}\binom{m-k}{v-k}=\binom{m-a}{v}.
\end{equation}
\end{lemma}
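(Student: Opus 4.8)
The plan is to prove the binomial identity \eqref{eq11} directly, treating it as a polynomial identity in the variables $a$ and $m$ for each fixed $v \in \NN$. The key observation is that both sides of \eqref{eq11} are polynomial functions of $a$ and $m$ (recall $\binom{x}{k} = \frac{1}{k!} x(x-1)\cdots(x-k+1)$ is a degree-$k$ polynomial in $x$), so it suffices to verify the identity on a Zariski-dense set of values, or equivalently to prove it for all nonnegative integers $a, m$ and then invoke the principle that a polynomial identity holding on $\NN \times \NN$ holds identically over $\kk$.

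First I would reduce to the integer case: fix $v$, and observe that the left-hand side $\sum_{k=0}^v (-1)^k \binom{a}{k}\binom{m-k}{v-k}$ and the right-hand side $\binom{m-a}{v}$ are both polynomials in $(a,m)$ of degree at most $v$ in each variable. Since these agree as polynomials if and only if they agree at infinitely many points in each variable, it is enough to establish \eqref{eq11} whenever $a$ and $m$ are nonnegative integers with $m \geq v$. For such integer values, I would give a combinatorial or generating-function argument. The cleanest route is generating functions: recall that $\sum_v \binom{m-k}{v-k} x^{v-k}$ relates to $(1+x)^{m-k}$, and $(-1)^k \binom{a}{k}$ are the coefficients of $(1-x)^a$ (or better, use $\binom{m-k}{v-k}$ as the coefficient of $x^{v-k}$ in $(1+x)^{m-k}$). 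Extracting the coefficient of $x^v$ in the product $(1+x)^{m}\cdot\big((1+x)^{-1}\big)^{a}\cdot(\text{sign bookkeeping})$ should collapse the sum to $\binom{m-a}{v}$, which is the coefficient of $x^v$ in $(1+x)^{m-a}$.

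Concretely, I would write $\binom{m-k}{v-k}$ as the coefficient of $x^{v-k}$ in $(1+x)^{m-k}$, so that
\[
\sum_{k=0}^v (-1)^k \binom{a}{k}\binom{m-k}{v-k}
= \sum_{k=0}^v (-1)^k \binom{a}{k} [x^{v-k}](1+x)^{m-k}
= [x^v] (1+x)^m \sum_{k=0}^v \binom{a}{k} \left(\frac{-x}{1+x}\right)^k.
\]
Since we only need the coefficient of $x^v$, I may freely extend the inner sum to all $k$, and the binomial theorem gives $\sum_k \binom{a}{k} y^k = (1+y)^a$ with $y = -x/(1+x)$, whence $(1+y)^a = (1/(1+x))^a = (1+x)^{-a}$. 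Therefore the whole expression equals $[x^v](1+x)^{m-a} = \binom{m-a}{v}$, as claimed.

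The main obstacle — and it is a mild one — is the legitimacy of the generating-function manipulation when $a$ is not a nonnegative integer, since $(1+x)^{-a}$ is then a genuine formal power series rather than a polynomial. I would sidestep this entirely by the polynomiality reduction in the first paragraph: once the identity is verified for all sufficiently large nonnegative integer values of $a$ and $m$ (where every series involved terminates or converges as a formal identity of polynomials truncated at degree $v$), the equality of the two degree-$\leq v$ polynomials follows automatically for all $a, m \in \kk$. Thus the only genuine content is the integer coefficient extraction above, and the transition to arbitrary $a, m \in \kk$ is a formal consequence of agreement on a Zariski-dense set.
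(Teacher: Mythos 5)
Your proof is correct, but it takes a genuinely different route from the paper's. The paper's proof is a two-line algebraic manipulation valid directly for all $a,m\in\kk$: it applies the upper-negation identity $\binom{u}{\ell}=(-1)^{\ell}\binom{\ell-u-1}{\ell}$ to rewrite $\binom{m-k}{v-k}$ as $(-1)^{v-k}\binom{v-m-1}{v-k}$, so that the alternating signs collapse to a global factor $(-1)^v$ and the sum becomes $(-1)^v\sum_{k=0}^{v}\binom{a}{k}\binom{v-m-1}{v-k}=(-1)^v\binom{a+v-m-1}{v}$ by the Chu--Vandermonde identity \eqref{Vandermonde} (which the paper states for arbitrary field elements); a second application of upper negation turns this into $\binom{m-a}{v}$. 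You instead reduce to nonnegative integer parameters by observing that both sides are polynomials in $(a,m)$ of degree at most $v$, and then compute the integer case by extracting the coefficient of $x^v$ from $(1+x)^{m-a}$ as a formal power series. Both arguments are sound, and your density argument (agreement on $S\times T$ with $S,T$ infinite forces equality of polynomials, using $\operatorname{char}\kk=0$) is the standard one. The paper's route is shorter and needs no reduction step, but it leans on Vandermonde for arbitrary $a,b\in\kk$ --- which is itself usually justified by exactly the polynomiality principle you invoke --- so your version is arguably more self-contained; the cost is the extra bookkeeping about where the formal-series manipulations are legitimate, which you handle correctly by confining them to integer exponents.
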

\begin{proof}
The proof is a  combination of the Chu-Vandermonde identity (\ref{Vandermonde}) and the identity
\begin{equation*}
\label{id11}
\binom{u}{\ell}=(-1)^{\ell}\binom{\ell-u-1}{\ell}
\end{equation*}
for any $\ell\in\mathbb{N}$ and $u\in\kk$.
\end{proof}


\begin{lemma}
For $a,u\in\kk$ we have
\begin{equation}
\label{soleil}
\omega_u \phi_a = \omega_{u+a}.
\end{equation}
\end{lemma}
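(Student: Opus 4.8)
The plan is to verify the operator identity \eqref{soleil} by evaluating both sides on each basis vector $X_j$ and comparing coefficients, with Lemma~\ref{binomeq1} serving as the combinatorial engine that collapses the resulting double sum. Since $\omega_u\phi_a$ and $\omega_{u+a}$ are linear maps on $\Span_\kk(X_0,\dots,X_n)$, it suffices to check their agreement on each $X_j$.

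First I would apply $\phi_a$ to $X_j$ and re-index, writing $\phi_a(X_j) = \sum_{k=0}^j \binom{a}{j-k} X_k$. Applying $\omega_u$ termwise, using $\omega_u(X_k) = (-1)^k\sum_{m=0}^k \binom{k-u}{m}X_{k-m}$, and reading off the coefficient of $X_s$ for $0 \le s \le j$ (so that $m = k-s$), I obtain
\[
\big[\omega_u\phi_a(X_j)\big]_{X_s} = \sum_{k=s}^j (-1)^k \binom{a}{j-k}\binom{k-u}{k-s}.
\]
On the other side, $\omega_{u+a}(X_j) = (-1)^j \sum_p \binom{j-u-a}{p} X_{j-p}$ has $X_s$-coefficient $(-1)^j\binom{j-u-a}{j-s}$. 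Thus the whole statement reduces to the scalar identity
\[
\sum_{k=s}^j (-1)^k \binom{a}{j-k}\binom{k-u}{k-s} = (-1)^j\binom{j-u-a}{j-s}
\]
for all $0 \le s \le j \le n$.

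Next I would perform the substitution $t = j-k$, so that $k$ running from $s$ to $j$ corresponds exactly to $t$ running from $0$ to $v := j-s$. This turns $\binom{a}{j-k}$ into $\binom{a}{t}$, the sign $(-1)^k$ into $(-1)^j(-1)^t$, and $\binom{k-u}{k-s}$ into $\binom{(j-u)-t}{v-t}$. Factoring out $(-1)^j$, the left-hand side becomes $(-1)^j \sum_{t=0}^v (-1)^t \binom{a}{t}\binom{(j-u)-t}{v-t}$, which by Lemma~\ref{binomeq1} (equation \eqref{eq11}, taking $m = j-u$) equals $(-1)^j\binom{(j-u)-a}{v} = (-1)^j\binom{j-u-a}{j-s}$, as required.

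The argument is essentially index bookkeeping, so the only real care is in tracking the summation ranges and the sign accounting so that the double sum lands precisely in the shape of \eqref{eq11}; once the substitution $t = j-k$ is made the conclusion is immediate, which is why I expect no serious obstacle here. As a sanity check, specialising $a = 0$ gives $\phi_0 = \id$ and recovers the trivial case $\omega_u = \omega_u$, consistent with the relation $\phi_a\phi_b = \phi_{a+b}$ of Lemma~\ref{lem:phiinv}.
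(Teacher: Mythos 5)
Your proof is correct and follows essentially the same route as the paper's: expand $\omega_u\phi_a(X_j)$ as a double sum, reorganise by the target basis vector, and collapse the inner sum via Lemma~\ref{binomeq1} with $m=j-u$. The only difference is cosmetic (you index by the target subscript and compare coefficients of $X_s$, while the paper substitutes $v=k+\ell$ directly in the double sum).
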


\begin{proof}
We have
\begin{align*}
\omega_u\phi_a(X_i)
&=\sum_{k=0}^{i}\binom{a}{k}\sum_{\ell=0}^{i-k}(-1)^{i-k}\binom{i-k-u}{\ell}X_{i-k-\ell}\\
&=(-1)^{i}\sum_{k=0}^{i}\sum_{\ell=0}^{i-k}(-1)^{-k}\binom{a}{k}\binom{i-k-u}{\ell}X_{i-k-\ell}\\
&=(-1)^{i}\sum_{v=0}^{i}\sum_{k=0}^{v}(-1)^{-k}\binom{a}{k}\binom{i-k-u}{v-k}X_{i-v}\\
&=(-1)^{i}\sum_{v=0}^{i}\binom{i-(u+a)}{v}X_{i-v}=w_{a+u}(X_i),
\end{align*} 
where we have set $v=k+\ell$ and then use Lemma \ref{binomeq1} with $m=i-u$. 
\end{proof}

\begin{proof}[Proof of Theorem~\ref{antiiso}]
Note that the relations (\ref{relabis}) in $R(n,a)$ can be rewritten as
\begin{equation}
\label{rela1}
\phi_{-z_i}(X_j)\ast X_i=\phi_{-z_j}(X_i)\ast X_j.
\end{equation}
for any $0\leq i,j\leq n$.
To see that $\omega_u$ is a well-defined anti-automorphism of $R(n,a)$ it is enough to apply $\omega_u$ to both side of (\ref{rela1}) and check that we obtain the same result. Using (\ref{soleil}) we have
\begin{align*}
\omega_u(\phi_{-z_i}(X_j)\ast X_i)&=\omega_u(X_i)\ast \omega_u\phi_{-z_i}(X_j)\\
&=\omega_u(X_i)\ast \omega_{u-z_i}(X_j)=(-1)^{i+j}\phi_{i-u}(X_i)\ast\phi_{j-u+z_i}(X_j)
\end{align*}
and similarly
\begin{align*}
\omega_u(\phi_{-z_j}(X_i)\ast X_j)=(-1)^{i+j}\phi_{j-u}(X_j)\ast\phi_{i-u+z_j}(X_j).
\end{align*}
For any $c\in\kk$ the map $\phi_c$ is an automorphism of $R(n,a)$, and by applying $\phi_c$ to (\ref{rela1}) we obtain that 
\[\phi_{c-z_i}(X_j)\ast \phi_{c}(X_i)=\phi_{c-z_j}(X_i)\ast \phi_{c}(X_j).\]
Letting $c=a+i+j-u=i-u+z_j=j-u+z_i$ we deduce that
\[\phi_{j-u}(X_j)\ast \phi_{i-u+z_j}(X_i)=\phi_{i-u}(X_i)\ast \phi_{j-u+z_i}(X_j).\]
This proves that $\omega_u(\phi_{-z_i}(X_j)\ast X_i)=\omega_u(\phi_{-z_j}(X_i)\ast X_j)$ and finishes the proof.
\end{proof}

 The opposite algebra $A(n,a)^{\rm{op}}$ of the Poisson algebra $A(n,a)$ is the same associative $\kk$-algebra endowed with the opposite Poisson bracket $\{-,-\}^{\rm{op}}:=-\{-,-\}$.
The analogue  of Theorem \ref{antiiso} for $A(n,a)$ follows easily from Lemma~\ref{lem:assgr}. It is straightforward to check that the map $\beta$ from $A(n,a)^{\rm{op}}$ to $A(n,a)$ sending $X_i$ to $(-1)^iX_i$ for all $i$ is a Poisson algebra isomorphism.





\section{Point modules} \label{POINTS}

Let $R$ be an $\NN$-graded ring.  A  (left or right) {\em point module} over $R$ is a cyclic graded module $M$ with $\operatorname{hilb}(M) = 1/(1-t)$.
In this section we study the point modules of the $R(n,a)$ and prove:

\begin{theorem}\label{thm:pone}
Let $R = R(n,a)$, where $n \geq 1$.  
If $M = R/J$ is a right or left point module, then $J$ is generated by $J_1$ as a right (respectively, left) ideal of $R$.
There is a projective scheme $X$ that parameterises both left and right point modules, and $X^{\rm red}$ is isomorphic to $n$ copies of $\PP^1$, where the $k$'th copy is embedded in $V(X_0, \dots, X_{n-k-1}) \subseteq\PP(R_1^*)$ as a rational normal curve of degree $k$.
\end{theorem}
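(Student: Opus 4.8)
The plan is to use the Artin--Tate--Van den Bergh theory of point modules, which applies because $R$ is a Koszul (Proposition~\ref{prop:relR}) strongly noetherian (Corollary~\ref{cor:noeth}) quadratic algebra. First I would recall that for a quadratic algebra $R = T(R_1)/\langle W\rangle$ with $W\subseteq R_1\otimes R_1$, a right point module is encoded by a sequence of points $p_0,p_1,\dots\in\PP(R_1^*)$ with each consecutive pair lying in the incidence correspondence $\Gamma\subseteq\PP(R_1^*)\times\PP(R_1^*)$ cut out by the multilinearisation of $W$; degree-one generation of the defining right ideal $J$ is then controlled by showing the forward continuation $p_i\mapsto p_{i+1}$ is everywhere unique, i.e.\ that the first projection $\Gamma\to\mathrm{pr}_1(\Gamma)$ is an isomorphism and the truncated point schemes stabilise. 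Using the anti-isomorphism $R\cong R^{\mathrm{op}}$ of Theorem~\ref{antiiso}, left point modules are interchanged with right ones, so a single scheme $X$ will parameterise both. Thus the real content is to compute $\mathrm{pr}_1(\Gamma)$ and prove unique continuation.

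Next I would multilinearise the relations \eqref{rela1}, $\phi_{-z_i}(X_j)\ast X_i=\phi_{-z_j}(X_i)\ast X_j$. Writing a point $[\alpha]\in\PP(R_1^*)$ via $\alpha_m=\alpha(X_m)$ and its putative successor as $[\beta]$, the relation indexed by $(i,j)$ becomes $\alpha(\phi_{-z_i}(X_j))\,\beta_i=\alpha(\phi_{-z_j}(X_i))\,\beta_j$. The key observation is that $\phi_b(X_0)=X_0$, so the relations with $i=0$ read $\alpha(\phi_{-a}(X_j))\,\beta_0=\alpha_0\,\beta_j$; hence on the open locus $\{\alpha_0\neq 0\}$ the successor is forced to be $[\beta]=[\alpha\circ\phi_{-a}]$. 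In other words the continuation there is the restriction of the \emph{linear} automorphism $\sigma$ of $\PP(R_1^*)$ dual to $\phi_{-a}$, which moreover fixes the coordinate $\alpha_0$ and so preserves both $\{\alpha_0=0\}$ and its complement. This immediately yields unique continuation (hence degree-one generation) away from $V(X_0)$, and reduces the remaining relations $(i,j)$ with $i\geq 1$ to explicit equations in $\alpha$ alone, cutting out $\mathrm{pr}_1(\Gamma)\cap\{\alpha_0\neq 0\}$.

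I would then assemble the point scheme by induction on $n$, splitting according to whether $p_0\in V(X_0)$. Since $X_0$ is normal (Proposition~\ref{prop:factX1}), a point module with $p_0\in V(X_0)$ has $X_0\in J_1$ and is therefore annihilated by $X_0$ (as $R X_0=X_0 R$), so it is exactly a point module of $R/\langle X_0\rangle\cong R(n-1,a+1)$; by the inductive hypothesis these are parameterised by $C_1,\dots,C_{n-1}$, and an index check shows the curve $C_k$ of $R(n-1,a+1)$ sits inside $V(X_0)\subseteq\PP(R_1^*)$ exactly as the curve $C_k$ of $R(n,a)$. For the complementary locus $\{\alpha_0\neq 0\}$ I would identify the variety cut out above with the degree-$n$ rational normal curve $C_n$; the cleanest route is the $G$-equivariant Veronese surjection $R(n,a)\twoheadrightarrow R(1,a/n)^{(n)}$ of Section~\ref{NOTATION}, along which the point scheme $\PP^1$ of (the Veronese of) the Jordan plane $R(1,a/n)$ pulls back to the image of the rational normal embedding $\PP^1\hookrightarrow\PP^n$, accounting for all point modules on which $X_0$ acts nontrivially. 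The base case $n=1$ is the Jordan plane, whose point scheme is $\PP^1=C_1$.

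The main obstacle will be the second half of the inductive step: proving that $\{\alpha_0\neq 0\}\cap\mathrm{pr}_1(\Gamma)$ is \emph{exactly} the degree-$n$ rational normal curve and that no further point modules arise, i.e.\ that the equations $(i,j)$ with $i\geq 1$ (equivalently, the kernel of the Veronese map) carve out nothing more. This is delicate because $\Gamma$ is not globally the graph of one automorphism of $\PP^n$: the curves $C_1,\dots,C_n$ meet at a common point, forming a bouquet, so I must verify that forward orbits never jump between components, control the dynamics at the common point, and confirm that the scheme structure is the claimed reduced one and that the truncated point schemes stabilise so that $J$ is indeed generated in degree one.
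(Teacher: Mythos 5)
Your outer structure matches the paper's: reduce left to right point modules via Theorem~\ref{antiiso}, split according to whether $X_0$ kills the module, handle the $V(X_0)$ stratum by induction through $R/\ang{X_0}\cong R(n-1,a+1)$, and realise the top curve $C_n$ via the surjection onto $R(1,a/n)^{(n)}$. Your observation that the $i=0$ relations force the successor point to be $[\alpha\circ\phi_{-a}]$ on $\{\alpha_0\neq 0\}$ is correct and is a genuine (and nice) alternative to the paper's reduction to $a=0$ by Zhang twist; it does give unique continuation and hence degree-one generation of $J$ on that locus.

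The genuine gap is the one you flag yourself and then do not close: showing that the open stratum $\{\alpha_0\neq0\}\cap\mathrm{pr}_1(\Gamma)$ is \emph{exactly} the degree-$n$ rational normal curve. The Veronese surjection $R(n,a)\twoheadrightarrow R(1,a/n)^{(n)}$ only shows that the rational normal curve is \emph{contained} in the point scheme; it does not show that every $X_0$-torsionfree point module factors through that quotient, and the kernel of that surjection is generated by elements ($Y_2,\dots,Y_n$, living in $R[X_0^{-1}]$) whose relation to the multilinearised quadratic equations of $R$ is not transparent. Substituting $\beta=\alpha\circ\phi_{-a}$ into the $\binom{n}{2}$ remaining relations and solving directly is exactly the computation that is hard to carry out for $R$ itself. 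The paper circumvents it by introducing the auxiliary algebra $T=\kk[X_0,Y_2,\dots,Y_n][X_1;\delta]\subseteq R[X_0^{-1}]$, whose quadratic relations are simple enough that the multilinearised minors can be evaluated explicitly (Propositions~\ref{prop:Tone} and \ref{prop:pfour} give $\det A_k=\pm kX_0^nY_k$), so an $X_0$-torsionfree $T$-point module must be killed by $Y_2,\dots,Y_n$; Proposition~\ref{prop:pthree} then transfers $X_0$-torsionfree point modules between $R$ and $T$ (via the inverse limit $N(x)=\varprojlim M(\sigma^{-n}(x))[n]$ and inverting $X_0$ on it), and Proposition~\ref{prop:pfive} identifies $R/\ker\zeta$ with $V^{(n)}$. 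Without either this detour through $T$ or an explicit resolution of your system of equations, the claim that ``no further point modules arise'' on the open stratum --- which is the real content of the theorem --- remains unproved.
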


Recall that there are normal elements $Y_2, \dots, Y_n$ in $R^\circ = R[X_0^{-1}]$.  Let $T $ be the subalgebra of $R^\circ$ generated by $X_0, X_1, Y_2, \dots, Y_n$.  
We will compute the point modules for $R$ by relating points of $R$ to those of $T$.

Let $C = \kk[X_0, Y_2, \dots, Y_n]$.   We know from Lemma~\ref{lem:fourprime} that $R^\circ = C[X_0^{-1}][X_1; \delta]$ where $\delta$ is the derivation $X_0 \Gamma_a$ of $C[X_0^{-1}]$.  The proof of that result also gives that $T = C[X_1; \delta]$.  We then have:
\begin{proposition}\label{prop:Tone}
\begin{enumerate}
\item $T$ has $\binom{n+1}{2}$ quadratic relations.  They are:
\[ \begin{array}{ccc} 
A: \quad & X_1X_0 - X_0 X_1 - a X_0^2 & \\ 
 A(k):  & X_1 Y_k -  Y_k X_1- (a+k) X_0 Y_k  \quad & \mbox{ for $2 \leq k \leq n$}\\
 B(k):  &  X_0 Y_k - Y_k X_0 & \mbox{ for $2 \leq k \leq n$}\\
 C(j,k): & Y_k Y_j - Y_j Y_k & \mbox{ for $2 \leq j < k \leq n$.}
 \end{array} \]
\item $T$ is Artin-Schelter regular.
\end{enumerate}
\qed
\end{proposition}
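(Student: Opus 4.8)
The plan is to establish both parts of Proposition~\ref{prop:Tone} by exploiting the iterated Ore extension structure $T = C[X_1; \delta]$, where $C = \kk[X_0, Y_2, \dots, Y_n]$ is a commutative polynomial ring in $n$ variables and $\delta = X_0 \Gamma_a$ is the derivation of $C$ given in Lemma~\ref{lem:fourprime}. First I would verify the list of relations directly. The relations $C(j,k)$ and $B(k)$ simply record that $X_0, Y_2, \dots, Y_n$ commute inside $C$, which is immediate. The relations $A$ and $A(k)$ come from the skew-polynomial structure: since $T = C[X_1;\delta]$, we have $X_1 g - g X_1 = \delta(g) = X_0\Gamma_a(g)$ for any $g \in C$. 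Recalling from the discussion before Lemma~\ref{lem:four} that $X_0, Y_2, \dots, Y_n$ are $e$-homogeneous with $e(X_0) = a$, $e(Y_k) = z_k = a+k$, and that $\Gamma_a$ acts as multiplication by the $e$-degree, I compute $\delta(X_0) = X_0 \cdot a \cdot X_0 = a X_0^2$ and $\delta(Y_k) = X_0 \cdot (a+k) Y_k = (a+k) X_0 Y_k$, which yield exactly the relations $A$ and $A(k)$.

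Next I would argue that these $\binom{n+1}{2}$ relations are \emph{exactly} the relations of $T$, i.e. that $T$ is the quadratic algebra they define. One clean way is a dimension count: there are $1 + (n-1) + (n-1) + \binom{n-1}{2} = \binom{n+1}{2}$ relations listed (one $A$, then $(n-1)$ each of $A(k)$ and $B(k)$, and $\binom{n-1}{2}$ of $C(j,k)$), matching the claimed total. Since $T = C[X_1;\delta]$ is an iterated Ore extension of a commutative polynomial ring, it has the same Hilbert series as the commutative polynomial ring $\kk[X_0, X_1, Y_2, \dots, Y_n]$, namely $1/(1-t)^{n+1}$. An algebra on $n+1$ degree-one generators with this Hilbert series is a quantum $\PP^n$ and has precisely $\binom{n+1}{2}$ quadratic relations, so the linearly independent relations I have exhibited must be all of them. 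I would confirm their linear independence, which is clear since each relation involves a distinct monomial pair among the generators.

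For part (2), that $T$ is Artin-Schelter regular, I would invoke the standard fact that AS-regularity is preserved under forming Ore extensions by a derivation. Concretely, $C$ is a commutative polynomial ring in $n$ variables, hence AS-regular of global dimension $n$; the skew polynomial extension $T = C[X_1;\delta]$ then has global dimension $n+1$, and AS-regularity of the base ring together with the connected graded structure transfers to the Ore extension. Alternatively, since I have already identified the Hilbert series as $1/(1-t)^{n+1}$ and the algebra as an iterated Ore extension, one can cite the results on AS-regularity of such extensions (the same type of transfer argument used in the proof of Theorem~\ref{thm:ASreg}, via \cite{MR} and \cite{Lev}).

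The main obstacle I anticipate is not the AS-regularity, which is a routine transfer, but rather making the identification of the relation list fully rigorous — specifically, ensuring that the $e$-homogeneity bookkeeping gives the stated coefficients $a$ and $(a+k)$ correctly and that the quadratic relations genuinely generate the relation ideal rather than merely being consequences of it. The cleanest resolution is the Hilbert series argument combined with the quantum $\PP^n$ characterisation, which sidesteps a direct Gröbner-basis verification; the subtlety is confirming that the Ore extension presentation forces \emph{exactly} these relations and no more, for which the dimension count $\binom{n+1}{2}$ is decisive.
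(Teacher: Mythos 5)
Your proof is correct. Part (1) is essentially the paper's argument made explicit: the authors dismiss it as ``a straightforward computation,'' and your computation of $\delta(X_0)=aX_0^2$ and $\delta(Y_k)=(a+k)X_0Y_k$ from the Ore presentation $T=C[X_1;\delta]$, together with the Hilbert-series count $\dim(T_1\otimes T_1)-\dim T_2=\binom{n+1}{2}$ and the linear independence of the listed relations, is exactly the right way to make it rigorous. (The aside about quantum $\PP^n$'s is unnecessary for the count -- only the Hilbert series $1/(1-t)^{n+1}$ of the Ore extension is needed -- but it does no harm.) For part (2) you take a genuinely different route from the paper. The paper filters $T$ by the weight grading $\epsilon$ so that $\gr T\cong C[X_1]$ is a commutative polynomial ring, deducing finite global dimension, and then obtains the AS-Gorenstein condition by observing that $Y_2,\dots,Y_n$ form a normal sequence with $T/(Y_2,\dots,Y_n)$ isomorphic to the Jordan plane, invoking Levasseur's lifting theorems (the same machinery used in Theorem~\ref{thm:ASreg}). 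You instead cite the transfer of AS-regularity along a graded Ore extension $C[X_1;\delta]$ of the AS-regular base $C$. Both are standard and both work; your route is shorter and avoids the normal-element bookkeeping, while the paper's route has the advantage of reusing only results ([Lev], [MR]) already cited elsewhere in the article rather than appealing to a separate Ore-extension regularity theorem. If you wanted to be fully self-contained you would need to supply a reference for that transfer result, but there is no gap in the logic.
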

\begin{proof}
$(1)$ is a straightforward computation.  For $(2)$, note that the weight grading $\epsilon$ can be used to define a filtration on $T$ whose associated graded ring is the commutative polynomial ring $C[X_1]$.  Thus  $T$ has finite global dimension.    Clearly $T$ has finite Gelfand-Kirillov dimension.  Note that $Y_2, \dots, Y_n$ are normal in $T$, and $T/(Y_2, \dots, Y_n)$ is isomorphic to the Jordan plane and is AS-regular. Thus $T$ is AS-Gorenstein by \cite[Corollary~5.10, Theorem~6.3]{Lev}.   
\end{proof}

By Corollary~\ref{cor:noeth} $R$ is strongly noetherian.  Thus by \cite[Theorem~E4.3]{AZ} there is a projective scheme $X$ that parameterises right $R$-point modules up to isomorphism.
For $x \in X$, let $M(x)$ be the corresponding point module.
By \cite[Proposition~10.2]{KRS}, there is  $\sigma \in \Aut(X)$ so that $M(x)[1]_{\geq 0} \cong M(\sigma(x))$ for all $x\in X$.

We also have:
\begin{lemma}\label{lem:ptwo}
If $M$ is an $R$-point module or a $T$-point module, then either $MX_0 =0$ or $M$ is $X_0$-torsionfree.
\end{lemma}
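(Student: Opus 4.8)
The plan is to exploit the fact that $X_0$ is a normal element in both $R$ and $T$ (the latter by relation $B(k)$ together with the Jordan-plane relation $A$, which make $X_0$ normal in $T$). For a point module $M$, each graded piece $M_i$ is one-dimensional, so right multiplication by $X_0$ sends $M_i$ to $M_{i+1}$, a one-dimensional space. First I would observe that, because $X_0$ is normal, the subspace $MX_0 = \bigoplus_i M_i X_0$ is a graded submodule of $M$: indeed for any homogeneous $r$ we have $(mX_0)r = m(X_0 r) = m(r' X_0) = (mr')X_0 \in MX_0$, using normality $X_0 r = r' X_0$ for some $r'$. Thus $MX_0$ is a graded right submodule.

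The key step is then to track where the annihilator can appear. Let $S = \{ i \in \NN : M_i X_0 = 0\}$, i.e. the set of degrees $i$ where the map $M_i \xrightarrow{\cdot X_0} M_{i+1}$ is zero. Since each $M_i$ is one-dimensional, $M_i X_0 = 0$ means the whole of $M_i$ is killed by $X_0$. I would show that $S$ is an ``up-closed'' set, i.e. if $M_i X_0 = 0$ then $M_j X_0 = 0$ for all $j \geq i$. This is where normality does the real work: if $m \in M_i$ satisfies $mX_0 = 0$, then for the generator situation I want to propagate this to $M_{i+1}$. Concretely, $M_{i+1} = M_i R_1$, so a spanning vector of $M_{i+1}$ has the form $mr$ with $r \in R_1$; then $(mr)X_0 = m(r X_0) = m(X_0 r'') = (mX_0)r'' = 0$ by normality (writing $r X_0 = X_0 r''$). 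Hence $M_{i+1} X_0 = 0$, and by induction $S$ is up-closed.

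The conclusion now follows: the dichotomy is that either $S = \varnothing$, in which case $\cdot X_0 : M_i \to M_{i+1}$ is injective (hence, being a nonzero map of one-dimensional spaces, an isomorphism) for every $i$ and so $M$ is $X_0$-torsionfree, or $S \neq \varnothing$. In the latter case let $i_0 = \min S$. I would argue $i_0 = 0$, for otherwise $M_{i_0 - 1} X_0$ is a nonzero one-dimensional subspace of $M_{i_0}$, hence equals all of $M_{i_0}$; but then $M_{i_0} X_0 = (M_{i_0-1}X_0)X_0$, and I want to derive $M_{i_0 - 1} X_0^2 \neq 0$ would force $i_0 - 1 \notin S$ consistent, while $M_{i_0} X_0 = 0$ gives $M_{i_0 - 1} X_0^2 = 0$; since $M$ is cyclic and generated in degree $0$, every $M_i = M_0 R_i$, and one checks this collision cannot occur unless $i_0 = 0$. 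Then $M_0 X_0 = 0$ and by up-closedness $M_i X_0 = 0$ for all $i$, i.e. $MX_0 = 0$.

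\emph{The main obstacle} I anticipate is the argument that $i_0 = 0$, i.e. that the torsion cannot ``switch on'' in positive degree. The clean way to handle this is to use cyclicity directly: since $M = M_0 R$ is generated in degree zero, $M_i X_0 = M_0 (R_i X_0) = M_0 (X_0 R_i)$ by normality (as $R_i X_0 = X_0 R_i$), which equals $(M_0 X_0) R_i$. Therefore $M_i X_0 = 0$ for some $i$ with $M_0 R_i \neq 0$ already forces $M_0 X_0 = 0$, since $M_0 X_0 = 0$ is equivalent to $(M_0 X_0)R_i = 0$ for the relevant $i$ (using that $M_{i+1} = M_0 R_{i+1} \neq 0$). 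This collapses the whole analysis to degree zero and removes the need for the delicate induction. The same argument applies verbatim to $T$-point modules, since $X_0$ is equally normal in $T$ by Proposition~\ref{prop:Tone}(1).
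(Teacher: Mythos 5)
Your proof is correct. The paper does not actually write out an argument for this lemma --- it records that the statement follows from normality of $X_0$ together with AS-regularity and the Hilbert series, and refers the reader to \cite[p.~728]{LBS} --- so your self-contained write-up is a legitimate filling-in of essentially the same idea: everything turns on the graded normality relation $R_iX_0=X_0R_i$ (and its analogue in $T$, which holds by the relations in Proposition~\ref{prop:Tone}). One caveat: the induction sketched in your third paragraph, attempting to force $i_0=0$ from up-closedness of $S$ alone, does not work as written --- $M_{i_0-1}X_0^2=0$ is perfectly consistent with $i_0-1\notin S$, so no ``collision'' arises --- but you correctly identify this as the obstacle and your final ``clean way'' supersedes it: since $M=M_0R$ and $R_iX_0=X_0R_i$, one has $M_iX_0=(M_0X_0)R_i$; if $M_0X_0\neq 0$ then $M_0X_0=M_1$ by one-dimensionality, and $M_1R_i=M_0R_1R_i=M_{i+1}\neq 0$ because $R$ (resp.\ $T$) is generated in degree one, so every map $\cdot X_0\colon M_i\to M_{i+1}$ is a nonzero, hence injective, map of one-dimensional spaces and $M$ is $X_0$-torsionfree; if instead $M_0X_0=0$ then $M_iX_0=(M_0X_0)R_i=0$ for all $i$. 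This collapses the dichotomy to degree zero exactly as you say, and the first two paragraphs of your argument, while correct, become unnecessary.
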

\begin{proof}
This is a  consequence of the facts that $R$ and $T$ are AS-regular with Hilbert series $1/(1-t)^{n+1}$ and $X_0$  is a normal element.
See \cite[p. 728]{LBS} for a summary of the argument.
\end{proof}

We now relate $X_0$-torsionfree point modules over $R$ to point modules over $T$.  

\begin{proposition}\label{prop:pthree}
Let $M$ be an $X_0$-torsionfree point module over $R$.  Then there is a unique $T$-action on $M$ that extends the action of $T \cap R$ and makes $M$ a $T$-point module.  

Similarly, if $L$ is an $X_0$-torsionfree point module over $T$ then there is a unique $R$-action on $L$ that extends the action of $T\cap R$ and makes $L$  also a point module over $R$.
\end{proposition}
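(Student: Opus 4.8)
The plan is to pass to the localisation $R^\circ = R[X_0^{-1}]$, inside which both $R$ and $T$ sit as graded subalgebras with $R[X_0^{-1}] = T[X_0^{-1}] = R^\circ$; the second equality holds because $T$ contains $X_0, X_1, Y_2, \dots, Y_n$, so $T[X_0^{-1}] = C[X_0^{-1}][X_1;\delta] = R^\circ$ by Lemma~\ref{lem:fourprime}. Let $M$ be an $X_0$-torsionfree point module over $R$. Since $\dim_\kk M_k = 1$ for $k \geq 0$ and $X_0$ acts injectively, multiplication by $X_0$ is an isomorphism $M_k \xrightarrow{\sim} M_{k+1}$ for every $k \geq 0$; consequently $\widehat{M} := M[X_0^{-1}]$ is a $\ZZ$-graded $R^\circ$-module with $\dim_\kk \widehat{M}_k = 1$ for all $k \in \ZZ$, and $M = \bigoplus_{k \geq 0}\widehat{M}_k$ is its non-negative part. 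Restricting the $R^\circ$-action to $T$ makes $\widehat{M}$ a $T$-module, so the problem reduces to showing that the subspace $M$ is stable under $T$; the statement for an $X_0$-torsionfree $T$-point module $L$ (where $\widehat{L} = L[X_0^{-1}]$ is again an $R^\circ$-module since $T[X_0^{-1}] = R^\circ$) is symmetrically that $L$ is stable under $R$.

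The key technical point is a denominator-clearing observation. Because $X_0$ is normal in $R$ and $R^\circ = R[X_0^{-1}]$, every element of $R^\circ$ can be written as $X_0^{-m}\ast s$ with $s \in R$ and $m \geq 0$. Applied to each generator $Y_j$ of $T$, this yields an integer $m_j \geq 0$ with $X_0^{m_j}\ast Y_j \in R$; as $X_0, Y_j \in T$ this element also lies in $T$, whence $X_0^{m_j}\ast Y_j \in T \cap R$. Symmetrically, using $T[X_0^{-1}] = R^\circ$ and the normality of $X_0$ in $T$, for each generator $X_j$ of $R$ there is an $m_j' \geq 0$ with $X_0^{m_j'}\ast X_j \in T\cap R$. (Concretely one may take $m_j = m_j' = j-1$, as is visible from the explicit formula $Y_j = \sum_{p}\tfrac{(-X_1)^p}{p!X_0^{p}}\Delta^p(X_j)$ together with the fact that left multiplication by $X_0$ is the commutative product, but only the existence of such exponents will be needed.)

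I would then establish stability of $M$ under $T$ as follows. Fix $j$ and $v \in M_k$, and set $N = X_0^{m_j}\ast Y_j \in T \cap R \subseteq R$. As $N$ is a genuine element of $R$ of $d$-degree $m_j+1$, we have $N v \in M_{k+m_j+1}$. On the other hand, associativity of the $R^\circ$-action on $\widehat{M}$ gives $N v = X_0^{m_j}(Y_j v)$, and $X_0^{m_j}\colon M_{k+1}\to M_{k+m_j+1}$ is an isomorphism; applying its inverse shows $Y_j v = X_0^{-m_j}(Nv) \in M_{k+1}$. Thus every $Y_j$ preserves $M$, and since $X_0, X_1 \in R$ manifestly do as well, the generators of $T$ preserve $M$, so $T\cdot M \subseteq M$. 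The resulting $T$-module $M$ is cyclic (indeed $X_0^k M_0 = M_k$) with Hilbert series $1/(1-t)$, hence a $T$-point module, and its $T$-action restricts on $T\cap R$ to the original action because both are restrictions of the $R^\circ$-action on $\widehat{M}$. The construction for an $X_0$-torsionfree $T$-point module $L$ is identical with the roles of $R$ and $T$ (and of $X_j$ and $Y_j$) interchanged, using $X_0^{m_j'}\ast X_j \in T\cap R$.

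Finally, uniqueness follows from the same denominator-clearing: any $T$-action on $M$ extending the $(T\cap R)$-action agrees with the constructed one on the generators $X_0, X_1 \in T\cap R$, and on each $Y_j$ it is forced, since $X_0^{m_j}\ast Y_j \in T\cap R$ determines $(X_0\text{-action})^{m_j}\circ(Y_j\text{-action})$ while the $X_0$-action is invertible on $M$; the analogous argument handles the $R$-action on $L$. I expect the main obstacle to be the reduction itself — checking that passing to $\widehat{M}$ is legitimate and, above all, that the non-negative part $M$ is genuinely recovered after multiplying by $Y_j$ (respectively $X_j$). This is precisely the point where normality of $X_0$, the equality of localisations $R[X_0^{-1}] = T[X_0^{-1}]$, and the fact that $X_0$ acts as a degree-shifting isomorphism on an $X_0$-torsionfree point module must all be combined.
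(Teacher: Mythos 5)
Your argument is correct and pivots on the same object as the paper's proof: a $\ZZ$-graded, $X_0$-torsionfree extension of $M$ with one-dimensional graded pieces, on which $R[X_0^{-1}]=T[X_0^{-1}]$ acts, so that the $R$- and $T$-actions can be exchanged by restriction to the non-negative part. The difference is in how that extension is built. The paper constructs it as $N(x)=\varprojlim M(\sigma^{-n}(x))[n]$ using the point scheme and its shift automorphism $\sigma$ (hence the strongly noetherian property together with \cite{AZ} and \cite{KRS}), and only afterwards observes that $X_0$ acts bijectively so that $N(x)$ becomes an $R^\circ$-module. You take the Ore localisation $\widehat{M}=M[X_0^{-1}]$ directly, which is more elementary and self-contained: it uses only that $X_0$ is normal and acts injectively on a graded module with one-dimensional pieces. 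Two small remarks. First, your denominator-clearing argument for the stability of $M=\widehat{M}_{\geq 0}$ under the $Y_j$ is correct but not actually needed: the generators $X_0,X_1,Y_2,\dots,Y_n$ of $T$ all have $d$-degree $1$ and $\widehat{M}_k=M_k$ for $k\geq 0$, so stability is automatic on degree grounds. Second, the denominator-clearing does earn its keep in the uniqueness step (where the paper says only that the action is ``clearly unique''), but there the $X_0$-action on $M$ is injective rather than invertible, which is all the cancellation requires; and in the right-module case one should first use $X_0\ast Y_j=Y_j\ast X_0$ in $T$ (Proposition~\ref{prop:Tone}) to place the known element $X_0^{m_j}\ast Y_j$ of $T\cap R$ on the correct side before cancelling $X_0^{m_j}$.
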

\begin{proof}
Let $x\in X$ and recall the isomorphism  $M(x)[1]_{\geq 0} \cong M(\sigma(x))$.
Equivalently, there are inclusions $M(x) \subseteq M(\sigma^{-1}(x))[1]$ for all $x\in X$.  
Note also that because $\sigma \in \Aut(X)$, the point $\sigma^{-1}(x)$ is the unique $y \in X$ so that $M(x) \subseteq  M(y)[1]$.  
Let 
\[
N(x) := \varprojlim M(\sigma^{-n}(x))[n].
\]
The module $N(x)$ is $\ZZ$-graded with $\dim N(x)_k = 1$ for all $k \in \ZZ$.
(In fact, $N(x)$ is the injective hull of $M(x)$ in the category of graded $R$-modules.)

Suppose now that $M(x)$ is $X_0$-torsionfree.  By Lemma~\ref{lem:ptwo} so is each $M(\sigma^{-n}(x))$, as $M(x) \subseteq M(\sigma^{-n}(x))[n]$.
Thus $N(x)$ is $X_0$-torsionfree.
We may thus choose a basis $\{n_k \st k \in \ZZ\}$ of $N(x)$ with $n_k \in N(x)_k$ and $n_k X_0 = n_{k+1}$.   
If we define $n_k X_0^{-1} = n_{k-1}$ for all $k \in \ZZ$, we  obtain an action of $R[X_0^{-1}]$ on $N(x)$, and thus an action of $T$ on $N(x)_{\geq 0} = M(x)$.
The action is clearly unique.

The proof that an $X_0$-torsionfree point module over $T$ has an induced $R$-action is similar.

If $K$ is an $X_0$-torsionfree point module over $T$ (or over $R$)  then $K$ is cyclic as a module over $ \kk[X_0]$.  Thus $K$ is also cyclic under the induced $R$-action (or $T$-action).
\end{proof}

We now compute the point modules of $T$, at least up to radical.  
Let  $V:= T/\ang{Y_2, \dots, Y_n}$. We have $V  \cong k \ang{X_0,X_1}/\ang{X_1X_0-X_0X_1-aX_0^2}$.  If $a \neq 0$, this ring is the 
 Jordan plane; in particular, the $V$ are isomorphic for any $a\neq 0$.  If $a=0$, then $V$ is a commutative polynomial ring.

\begin{proposition}\label{prop:pfour}
The reduced point scheme of $T$ is isomorphic to  $V(X_0) \cup V(Y_2, \dots, Y_n) \subseteq \PP(T_1^*)$.
In particular, an $X_0$-torsionfree point module over $T$ must be annihilated by $Y_2, \dots, Y_n$.

The point modules over $T$ that are annihilated by $Y_2, \dots, Y_n$ are parameterised by $\PP^1$.
\end{proposition}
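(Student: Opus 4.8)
The plan is to combine the dichotomy of Lemma~\ref{lem:ptwo} with a direct analysis of how the generators act on the graded pieces of a point module. Since $T$ is AS-regular with Hilbert series $1/(1-t)^{n+1}$, it is a quantum $\PP^n$ exactly as in the proof of Proposition~\ref{prop:relR}, and it is strongly noetherian: its $\epsilon$-associated graded ring is the polynomial ring $C[X_1]$, so the argument of Corollary~\ref{cor:noeth} applies. Hence, just as for $R$, by \cite[Theorem~E4.3]{AZ} the point modules of $T$ are parameterised by a projective subscheme $X\subseteq\PP(T_1^*)$, where a point module $M=\bigoplus_i\kk m_i$ is recorded by its \emph{first point} $p_0\in\PP(T_1^*)$, the class of the functional $T_1\to M_1$. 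I would identify $X^{\mathrm{red}}$ with this locus of first points and show it equals $V(X_0)\cup V(Y_2,\dots,Y_n)$, treating the two alternatives of Lemma~\ref{lem:ptwo} separately.

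For the $X_0$-torsion modules (those with $MX_0=0$), reading the relations of Proposition~\ref{prop:Tone} modulo $X_0$ collapses $A$ and $B(k)$ to nothing and leaves $[X_1,Y_k]=0$ and $[Y_j,Y_k]=0$, so $T/\ang{X_0}\cong\kk[X_1,Y_2,\dots,Y_n]$ is a commutative polynomial ring in $n$ variables. Its point modules are the diagonal ones, parameterised by $\PP^{n-1}=V(X_0)\subseteq\PP(T_1^*)$, and each lifts to $T$ (take $p_1=p_0$). This accounts for the $V(X_0)$ component.

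The crux is the $X_0$-torsionfree case, which simultaneously gives the ``in particular'' statement. Here $m_iX_0\neq 0$ for all $i$, so after rescaling I may assume $m_{i+1}=m_iX_0$ and write $m_iX_1=\beta_im_{i+1}$ and $m_iY_k=\gamma_{k,i}m_{i+1}$. Feeding these into the relations produces a clean recursion: relation $B(k)$ forces $\gamma_{k,i}$ to be independent of $i$, say $\gamma_{k,i}=\gamma_k$; relation $A$ forces $\beta_{i+1}=\beta_i-a$; and relation $A(k)$ then collapses to $\gamma_k\bigl((\beta_i-\beta_{i+1})-(a+k)\bigr)=-k\gamma_k=0$, whence $\gamma_k=0$ for every $k\geq 2$. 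Thus $MY_k=0$, proving that an $X_0$-torsionfree point module is annihilated by $Y_2,\dots,Y_n$.

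Finally, the point modules of $T$ annihilated by $Y_2,\dots,Y_n$ are exactly the point modules of $V=T/\ang{Y_2,\dots,Y_n}\cong\kk\ang{X_0,X_1}/\ang{X_1X_0-X_0X_1-aX_0^2}$, the Jordan plane (a polynomial ring when $a=0$). A one-line multilinearisation of its single relation shows every $(s_0:s_1)\in\PP^1$ occurs, with the continuation given by the parabolic shift $(s_0:s_1)\mapsto(s_0:s_1-as_0)$; so these point modules are parameterised by $\PP^1=V(Y_2,\dots,Y_n)$, which is the last sentence and the second component. Taking the union yields the claim, the two components meeting exactly in the point $(0:1:0:\cdots:0)$. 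I expect the only delicate point to be the bookkeeping that identifies the abstract parameterising scheme with the locus of first points and guarantees each first point is realised by a genuine (full, not merely length-$3$) point module; this is handled by realising the torsionfree modules over the known algebra $V$ and the torsion ones over $T/\ang{X_0}$, for which the full point modules are classical.
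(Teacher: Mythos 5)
Your proposal is correct, and the decisive step is carried out by a genuinely different computation than the paper's. The paper follows the Artin--Tate--Van den Bergh recipe: it multilinearises the $\binom{n+1}{2}$ relations of $T$ to get the truncated point scheme $X(2)\subseteq\PP(T_1^*)^{\times 2}$, records the relations in a $\binom{n+1}{2}\times(n+1)$ matrix, and shows that the projection to the first factor lies in the rank-drop locus by exhibiting, for each $k$, a minor with determinant $\pm k X_0^{n}Y_k$; the vanishing of these minors cuts out $V(X_0)\cup V(Y_2,\dots,Y_n)$ up to radical, and the ``in particular'' clause about $X_0$-torsionfree modules is then deduced from this scheme computation. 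You instead work directly with a full point module: choosing $m_{i+1}=m_iX_0$ in the torsionfree case and feeding the relations $A$, $A(k)$, $B(k)$ into the structure constants gives $\beta_{i+1}=\beta_i-a$, $\gamma_{k,i}\equiv\gamma_k$, and then $-k\gamma_k=0$, which kills the $Y_k$ immediately. This is more elementary (no determinants, no truncated schemes) and it proves the annihilation statement first, with the description of the locus falling out afterwards; note the factor $k$ in your recursion is exactly the factor $k$ in the paper's minor $\pm kX_0^nY_k$, so the two computations are seeing the same phenomenon. What you lose is any information about the truncated point scheme $X(2)$ itself, but none of that is used elsewhere. The remaining ingredients --- identifying $T/\ang{X_0}$ with a commutative polynomial ring for the torsion component, realising the second component via the Jordan plane $V$, and the bookkeeping that the abstract parameter scheme (which exists since $T$ is strongly noetherian) maps isomorphically, after reduction, onto the locus of first points because the modules supported there are determined by their degree-one data --- are handled the same way in both arguments, and your explicit flagging of that last point matches the level of detail the paper itself supplies.
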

\begin{proof}
The final statement comes directly from the isomorphism of $V$ with the Jordan plane (if $a \neq 0$) or with $\kk[X_0, X_1]$ (if $a=0$).

We multilinearise the relations of $T$ as in \cite{ATV}, to compute the scheme $X(2) \subseteq \PP(T_1^*)^{\times 2}$ parameterising {\em truncated} point modules over $T$ with Hilbert series $1+t+t^2$.   
Let the coordinates on $\PP(T_1^*)^{\times 2}$ be $X_0, X_1, Y_2, \dots, Y_n, X_0', X_1', Y_2', \dots, Y_n'$.
We thus obtain an $\binom{n+1}{2}\times (n+1)$ matrix $A$ with entries in $T_1$ so that $X(2)$ is defined by the equations
\[ A \begin{bmatrix} X_0' \\ X_1' \\ Y_2' \\ \vdots \\ Y_n' \end{bmatrix} = 0.\]
The rows of $A$ are given by:
\[A: \quad \begin{bmatrix}   X_1-aX_0 & -X_0 & 0 \dots \end{bmatrix} \]
\[ A(k): \quad \begin{bmatrix}   0 & -Y_k  & \dots & X_1 - (a+k) X_0 & \dots  \\
					& & & (k+1) & \end{bmatrix}\]
\[ B(k):  \quad \begin{bmatrix}   -Y_k & 0 & \dots & X_0 & \dots  \\
					& & & (k+1) & \end{bmatrix}\]					
\[ C(j,k):  \quad \begin{bmatrix} 0 & \dots & Y_k & \dots &-Y_j & \dots \\				
                                                        	& & (j+1) & & (k+1) \end{bmatrix}.\]
(Here we use $(\ell)$ to indicate the column of an entry.)	

Let $X'$ be the projection of $X(2)$ onto the first coordinate. It is standard that $X'$ is defined by the locus where $\rank(A) < n+1$.    
Consider the minor $A_k$ of $A$ given by rows $A, A(k),$ and $B(2)-B(n)$.  In columns $j \geq 2, j \neq k$ the only nonzero entry of $A_k$ is the $(j+1,j+1)$ entry $X_0$.  Thus
\[ \det A_k = \pm X_0^{n-2} \begin{vmatrix} X_1 -a X_0 & -X_0 & 0 \\
   								0 & -Y_k & X_1 - (a+k)X_0 \\
								-Y_k & 0 & X_0
								\end{vmatrix}
								= \pm k X_0^{n}   Y_k .\]
Since $k\neq 0$ we have that $X'$ is a closed subscheme of $ V(X_1^{n} Y_2, \dots, X_1^{n} Y_n) $.

Let $Y$ be the reduced point scheme of $T$.  Since $T$ is strongly noetherian, for some $N$ we have $Y \subseteq \PP(T_1^*)^{\times N}$.  Let $X$ be the projection of $Y$ to the first factor; we have $X \subseteq X'$.  
To prove that this projection induces an isomorphism $Y \cong V(X_0) \cup V(Y_2, \dots Y_n)$ it suffices to prove that each point of $V(X_0) \cup V(Y_2, \dots Y_n)$ corresponds to a point module.  
That is, if $W$ is a codimension 1 subspace of $T_1$ with either $X_0 \in W$ or $Y_2, \dots Y_n \in W$ we must show that $T/WT$ is a point module.
If $X_0 \in W$ then $T/WT$ is isomorphic to the right module over $T/X_0T$ defined by factoring out the image of $W$.  Since $T/X_0T $ is commutative any codimension 1 subspace of $(T/X_0 T)_1$ defines a point module, so $T/WT $ is a point module over $T$.

Now suppose that $Y_2, \dots, Y_n \in W$.  Then $T/WT$ is isomorphic to the right module over $V$ given by factoring out the image of $W$.  As $V$ is  isomorphic either to the Jordan plane or to $\kk[X_0, X_1]$, we see likewise that any codimension 1 subspace of $T_1$ that contains $Y_2, \dots, Y_n$ defines a point module.
\end{proof}

The natural map $T\to V$ induces a graded homomorphism $\zeta:  T[X_0^{-1}] \to D$, where $D = Q_{gr}(V) = V[ v^{-1} : v \in V \mbox{ is nonzero and homogeneous}]$.
Let $u = X_1 X_0^{-1}$ and let $t = X_0$.  It is well-known that $D \cong \kk(u)[t, t^{-1}; \sigma]$, where $\sigma(u) = u-a$.  To check this we verify that we have $X_1 X_0 = ut^2 = t(u+a)t = X_0(X_1+aX_0)$.  

Our next result computes $\zeta(R) \subseteq \kk(u)[t; \sigma]$.  To prove Theorem~\ref{thm:pone}, the case $a=0$ is the only one needed, but we give the general result because it is of independent interest.

\begin{proposition}\label{prop:pfive}
We have  $\zeta(R)\cong V^{(n)}$, the $n$-th Veronese of $V$.
\end{proposition}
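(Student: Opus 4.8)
The plan is to compute the image $\zeta(R)$ explicitly inside $D = \kk(u)[t^{\pm 1};\sigma]$, where $t = X_0$, $u = X_1 X_0^{-1}$ and $\sigma(u) = u-a$ (so that $t\,p(u) = p(u-a)\,t$ for any polynomial $p$), and then recognise this subalgebra as a twisted form of $V^{(n)}$. Since $\zeta$ is a $d$-graded algebra map with $\zeta(Y_k)=0$ for $k \geq 2$ and $\zeta(X_0)=t$, its image lies in $V[X_0^{-1}] \subseteq D$; as every degree-one element of $V[X_0^{-1}]$ has the shape $p(u)\,t$ with $p \in \kk[u]$, we may write $\zeta(X_j) = f_j(u)\,t$ for polynomials $f_j$, with $f_0 = 1$ and $f_1 = u$. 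The first task is to pin down the $f_j$.

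To do this I would apply $\zeta$ to the relations \eqref{rel}. Taking $i=0$ turns $X_0\ast X_j - X_j \ast X_0 = \sum_{k=1}^{j}\binom{-a}{k}X_{j-k}\ast X_0$ into the functional recursion
\[ f_j(u-a) - f_j(u) = \sum_{k=1}^{j}\binom{-a}{k} f_{j-k}(u), \]
after collecting the coefficient of $t^2$ and using $t\,p(u)=p(u-a)\,t$. Since the operator $f \mapsto f(u-a)-f(u)$ drops degree by one when $a \neq 0$, an easy induction gives $\deg f_j = j$ with leading coefficient $1/j!$. When $a=0$ this recursion degenerates, so there I would instead apply $\zeta$ to \eqref{rel} with $i=1$; the resulting recursion $j\,f_j = u\sum_{k=1}^{j}(-1)^{k+1} f_{j-k}$ again yields $\deg f_j = j$ with leading coefficient $1/j!$. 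In either case $f_0,\dots,f_n$ have distinct degrees $0,1,\dots,n$, hence form a basis of $W := \{\,p(u)\,t : \deg_u p \le n\,\}$. As $R$ is generated in degree one, this gives $\zeta(R) = \langle W\rangle$, the subalgebra of $D$ generated by $W$.

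It remains to identify $\langle W\rangle$ with $V^{(n)}$. The key observation is that the graded quotient ring of $V^{(n)}$ is $\kk(u')[s^{\pm 1};\sigma^n]$ with $s$ the degree-one Veronese variable and $\sigma^n(u')=u'-na$, whereas $W$ generates the "same" algebra but with twist $\sigma$, a shift by $a$ rather than by $na$. The rescaling $u' = n u$ reconciles the two twists: the assignment $t \mapsto s$, $u \mapsto u'/n$ defines a graded algebra isomorphism $\Phi : \kk(u)[t^{\pm 1};\sigma] \to \kk(u')[s^{\pm 1};\sigma^n]$, since $\Phi(tu) = s\,(u'/n) = (u'/n - a)\,s = \Phi((u-a)t)$. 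Because $\Phi$ carries $W=\{p(u)\,t\}$ onto $\{q(u')\,s : \deg q \le n\} = V_n$, the degree-one piece of $V^{(n)}$, and since $V^{(n)}$ is generated in degree one (the Veronese of the AS-regular algebra $V$, as $V_n\cdot V_n = V_{2n}$), we conclude $\zeta(R) = \langle W\rangle \cong \langle V_n\rangle = V^{(n)}$, as claimed.

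The main obstacle is the bookkeeping of the second paragraph: extracting the correct functional recursion for the $f_j$ from \eqref{rel} while respecting the noncommutativity $t\,p(u)=p(u-a)\,t$, and treating the degenerate case $a=0$ separately through the $i=1$ relation. By contrast, the conceptual heart of the argument — the rescaling $u \mapsto u'/n$ matching the twist $\sigma$ of $\zeta(R)$ with the twist $\sigma^n$ of $V^{(n)}$ — is short once the image has been computed, and it is precisely this factor of $n$ that explains why the proposition produces $V^{(n)} = R(1,a)^{(n)}$ rather than the naively expected $R(1,a/n)^{(n)}$.
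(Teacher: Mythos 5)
Your proof is correct, but both halves of it differ from the paper's argument. For the computation of $\zeta(X_j)=f_j(u)t$, the paper does not use the quadratic relations at all: it proves $\zeta(X_k)t^{-1}\in\kk[u]$ with leading term $u^k/k!$ by writing $X_0^kX_k = X_0^kY_k+\sum_p(\pm)X_0^{k-p}X_{k-p}X_1^p/p!$, killing the $Y_k$ term, and controlling the discrepancy between the commutative and $\ast$-products via a subsidiary lemma (Lemma~\ref{lem:psix}); your route --- applying $\zeta$ to \eqref{rel} with $i=0$ (and $i=1$ when $a=0$) to get difference/algebraic recursions that pin down $\deg f_j=j$ and the leading coefficient $1/j!$ --- bypasses that bookkeeping entirely, and the recursions you state are the correct specialisations of \eqref{rel}. (Note the $i=0$ recursion only determines $f_j$ up to an additive constant, but degree and leading coefficient are all you need to conclude $\zeta(R_1)=W$.) For the identification of $\langle W\rangle$, the paper views $W$ as $H^0(\PP^1,\sO(n\infty))\cdot t$ and quotes the theory of twisted homogeneous coordinate rings, identifying $\zeta(R)$ with $B(\PP^1,\sO(1),\sigma_{a/n})^{(n)}$ and then citing \cite[Example~4.13]{Rogalski}; your explicit graded isomorphism $u\mapsto u'/n$, $t\mapsto s$ matching the twist $\sigma$ of $\langle W\rangle$ with the twist $\sigma^n$ of $Q_{\mathrm{gr}}(V^{(n)})$ is a self-contained substitute, and the generation-in-degree-one of $V^{(n)}$ that you invoke is automatic since $V$ is generated in degree one. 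One small caveat on your closing remark: there is no real tension between $R(1,a)^{(n)}$ and $R(1,a/n)^{(n)}$ --- the paper itself describes the resulting surjection as $R(n,a)\to R(1,a/n)^{(n)}$ --- since for $a\neq 0$ both base rings are Jordan planes and hence isomorphic (and both equal $\kk[X_0,X_1]$ when $a=0$); the factor of $n$ you isolate is exactly why the two descriptions coexist, not a correction to one of them.
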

\begin{proof}
Recall that $R=R(n,a)$, where $n \geq 1$.  Note also that the subring of $R $ generated by $X_0, \dots, X_k$ is isomorphic to $R(k,a)$ for all $1 \leq k \leq n$.  We will abuse notation and write $R(1,a)\subseteq R(2,a) \subseteq \dots \subseteq R(n,a)$, and will use this to  prove the theorem by induction on $n$.  Likewise, we write $A(1,a) \subseteq A(n,a)$.  

We will show, for all $k$, that $\zeta(X_k) \in k[u]\cdot t$ and, more specifically, that 
\beq \label{phixk}
\zeta(X_k)t^{-1} \in \kk[u]  \mbox{  and has leading term $ \frac{1}{k!} u^{k} $.}
\eeq

We need a subsidiary lemma.  Recall that the underlying space of $R $ is the commutative ring $A$, with multiplication indicated by  $\ast$  in $R$ and juxtaposition  in $A$.  
When we write an expression like $X_1^j$, we mean the commutative power; we will write $X_1^{\ast j}$ to mean the noncommutative power.

\begin{lemma}\label{lem:psix}
For $0 \leq k \leq n$, the following hold.
\begin{enumerate}
\item For any $f \in R(1,a)$, we have
\[ X_k \ast f - X_k f \in \sum_{i=0}^{k-1} X_i \ast R(1,a) = \sum_{i=0}^{k-1} X_i A(1,a).\]
\item
\[ \sum_{i=0}^k X_i \ast R(1,a) = \sum_{i=0}^k X_i A(1,a).\]
\end{enumerate}
\end{lemma}

\begin{proof}  Again, we induct on $k$; the result is trivial for $k=0$.  Assume we know the result for $k$.

$(1)$.   We may assume that $f$ is $e$-homogeneous.  We have 
\[ X_{k+1} \ast f - X_{k+1}  f = \sum_{i=1}^{k+1} X_{k+1-i} \binom{\Gamma}{i} (f).  \]
Each  $X_{k+1-i} \binom{\Gamma}{i} (f)$ is a scalar multiple of $ X_{k+1-i} f$.  Thus the right hand side is in $\sum_{i=0}^k X_{i} A(1,a) = \sum_{i=0}^k X_i \ast R(1,a)$ by induction.

$(2)$ follows directly from $(1)$.
\end{proof}

Since $\zeta(X_0), \zeta(X_1) \in \kk[u][t; \sigma]$ we obtain that $\zeta(R(1,a))\subseteq \kk[u][t; \sigma]$.

Since $\zeta$ is graded, we have $\zeta(X_1^j) = f_j(u) t^{j}$.  
We have $\zeta(X_1^{\ast j}) = (ut)^j  = u(u-a)(u-2a)\cdots(u-(j-1)a) t^j$, which has leading term $u^jt^j$.
We immediately obtain from Lemma~\ref{lem:psix}(1) (and an elementary induction) that 
\beq\label{friday1}
f_j = u^j + \mbox{ (lower order terms)} \in \kk[u].
\eeq

Assume now that we have shown that \eqref{phixk} holds for $0, \dots, k-1$, and 
consider the image of $X_0^{k} X_k = X_0^{\ast (k)} \ast X_k$ under $\zeta$.   
Recall that
\[ Y_k = X_k + \sum_{p=1}^{k} \frac{(-1)^p X_{k-p} X_1^p}{p! X_0^p}.\]
Thus
\beq\label{friday2}
\zeta(X_0^{k} X_k) = \zeta\left(X_0^{k} Y_k + \sum_{p=1}^{k} \frac{(-1)^{p+1} X_0^{k-p} X_{k-p} X_1^p}{p! }\right).
\eeq
We know that $\zeta(X_0^{k}Y_k) = 0$ since $\zeta(Y_k) = 0$.
Applying \eqref{phixk}, Lemma~\ref{lem:psix}(1), and \eqref{friday1}, we obtain that
\[ \zeta(X_0^{k-p} X_{k-p} X_1^p) = \frac{1}{(k-p)!} \bigl( u^{k} + \mbox{lower terms} \bigr) t^{k+1}.\]
Thus \eqref{friday2} reduces to 
\[ \zeta(X_0^{k} X_k) = \left(  \Big(\sum_{p=1}^{k}  \frac{(-1)^{p+1}}{p! (k-p)!} \Big) u^{k} + \mbox{lower terms} \right) t^{k+1}.\]
Since commuting with $t$ does not effect the  leading term of a polynomial in $u$, it follows that
\[  \zeta(X_k) = t^{-k} \zeta(X_0^{k} X_k) = \left(  \Big(\sum_{p=1}^{k}  \frac{(-1)^{p+1}}{p! (k-p)!} \Big) u^k + \mbox{lower terms} \right) t,\]
and that $\zeta(X_k) t^{-1} \in \kk[u]$.
Finally, 
\[ \sum_{p=1}^{k}  \frac{(-1)^{p+1}}{p! (k-p)!}  = 
\sum_{p=0}^{k}  \frac{(-1)^{p+1}}{p! (k-p)!} + \frac{1}{k!} =  \frac{1}{k!},\]
as needed.  We have thus established \eqref{phixk} for $X_k$.

It follows from \eqref{phixk} that 
\beq \label{phiR}
\zeta\big(R(k,a)\big)_1 = \kk\cdot\{t, ut, \dots, u^{k}t\}
\eeq
 for all $k$.

To complete the proof of the proposition, we must identify the ring $S = \zeta(R) = \kk \ang{t, ut, \dots, u^{n}t} \subseteq \kk(u)[t; \sigma_a]$.
Denoting the point $[1:0] \in \PP^1$ by $\infty$, we may identify $S_1 $ with global sections of the sheaf $ \sO_{\PP^1}(n\infty)$, multiplied by $t$.  
It is then clear that $S$ is isomorphic to the $n$th Veronese of the {\em twisted homogeneous coordinate ring} (see \cite{AV}) $B(\PP^1, \sO(1), \sigma_{a/n})$.
We have that $ B(\PP^1, \sO(1), \sigma_{a/n})$ is isomorphic to the Jordan plane if $a \neq 0$ and to $\kk[X_0, X_1]$ if $a=0$ by \cite[Example~4.13]{Rogalski}.
Thus, no matter the value of $a$ we have $ S \cong V^{(n)}$.
\end{proof}

Note that we have constructed above the homomorphism $R(n,a) \to R(1,a/n)^{(n)}$ that was predicted at the end of Section~\ref{NOTATION}.
Taking associated graded rings, we obtain also the predicted Poisson homomorphism $A(n,a) \to A(1,a/n)^{(n)}$.
Further, if we let $1 \leq k \leq n$, then factoring out $X_0, \dots, X_{n-k}$ and applying this construction to the factor we obtain a surjection $R(n,a) \to R(1, (a+n-k)/k)^{(k)}$ and thus a $\PP^1$ of point modules corresponding to point modules of $R(1, (a+n-k)/k)$.
These modules are all predicted by the discussion in Section~\ref{NOTATION}.
In a sense the striking content of Theorem~\ref{thm:pone} is that there are no other point modules for $R(n,a)$.

We now prove Theorem~\ref{thm:pone}.

\begin{proof}[Proof of Theorem~\ref{thm:pone}]
By Theorem~\ref{antiiso}, it suffices to prove the result for right point modules.  
We prove the result by induction on $n$, first noting the result is trivial for $n=1$, since point modules over $R = R(1,a)$ are in bijection with codimension-1 subspaces of $R_1$ for any $a$.  Thus we may suppose that $n>1$.
Since $R(n,a)$ is a Zhang twist of $R(n,0)$ (Theorem \ref{thm:ONE}), by \cite{Zhang} it suffices to prove the result for $R = R(n,0)$.

Let $M = R/J$ be a point module, and recall the definition of the homomorphism $\zeta$ from just before Proposition~\ref{prop:pfive}.  We claim that $J = J_1 R$.
  If $X_0 \not \in J$ then by Lemma~\ref{lem:ptwo} $M$ is $X_0$-torsionfree and by Proposition~\ref{prop:pthree}, $M$ is also an $X_0$-torsionfree $T$-point module.
Thus $MY_k = 0$ for $k \in \{2, \dots, n\}$ by Proposition~\ref{prop:Tone}, so $M$ is annihilated by $\ker \zeta$.  
By Proposition~\ref{prop:pfive},  $R/\ker \zeta $ is isomorphic to $\kk[X,Y]^{(n)}$. The claim follows from the fact that if $J$ is a right ideal of $\kk[X,Y]^{(n)}$ such that $(\kk[X,Y]^{(n)})/J$ is a point module, then $J$ is  generated in degree one.

We may thus view the point scheme $X$ of $R$ as contained in $\PP(R_1^*)$, where $M = R/J$ corresponds to the codimension 1 subspace $J_1 \subseteq R_1$.
By slight abuse of notation, we regard a point module $M$ as  a closed point of the point scheme $X \subseteq \PP(R_1^*)$.
Let $Z(1)$ be the Zariski closure of $\{M = R/J \ | \ X_0 \not \in J\}$.
Then $\ker \zeta \subseteq J$ for all $M = R/J$ in $Z(1)$.
Thus $Z(1)$ is isomorphic to the point scheme of $R/\ker \zeta \cong \kk[X,Y]^{(n)}$, which is the image of the degree $n$ Veronese map from $\PP^1 \to \PP^{n} $; in other words, a degree $n$ rational normal curve in $\PP(\kk[X,Y]_{n}^{*}) = \PP(R_1^*)$.

It remains to consider the case of point modules annihilated by $X_0$.  By induction (using Proposition~\ref{prop:factX1} as usual), these form a bouquet of $n-1$ rational normal curves in $V(X_0)$, and the result follows.
\end{proof}

\begin{remark}\label{rem:pseven}
Let $a,b \in\kk$, and let $\phi_b$ be the automorphism of $R(n,a)$ defined in Lemma~\ref{lem:TWO}. 
It is easy to see that $\phi_b$ extends to an automorphism of $R(n,a)[X_0^{-1}] =:R^\circ(n,a)$.
By Corollary~\ref{cor:FIVE}, we have ${}^{\phi_b}R^\circ(n,a) \cong R^\circ(n, a+b)$.
Let $\ast_a$ denote multiplication in $R(n,a)$.
Since $[X_0, Y_k] = 0$ in $R^\circ(n, a+b)$, we have
\[ \phi_b(Y_k) \ast_a X_0 = \phi_b(X_0) \ast_a Y_k = X_0 \ast_a Y_k = Y_k \ast_a X_0,\]
so $\phi_b(Y_k) = Y_k$.
It follows that $\ker \zeta$ is $\phi_b$-invariant.
One can use this to give an alternate proof of the $X_0$-torsionfree case of Theorem~\ref{thm:pone}.
\end{remark}


\section{Primes of $R$ and Poisson primes of $A$}\label{PRIMES}

The results of the previous section construct surjections from $R(n,a)$ to $R(1,a')^{(n-k)}$ for all $0 \leq k < n$ (where $a'$ depends on $a$, $n$, and $k$).
The kernels of these maps are of course prime ideals, and are in some sense independent of $a$:  for example, the kernel of the map $R(n,a) \to R(1,a/n)^{(n)}$ is generated by $Y_2, \dots, Y_n$ no matter the value of $a$.
(This is a slight abuse of notation, since $Y_2, \dots, Y_n$ are in the localisation $R(n,a)[X_0^{-1}]$.)
We will see that, similar to the above, the $d$-graded prime spectrum of $R(n,a)$ is largely independent of $a$.
On the other hand, the ungraded primes of $R(n,a)$ depends very sensitively on $a$.

First, though, we explore the connection between primes of $R(n,a)$ and Poisson primes of $A(n,a)$.
It is well-known that there is often a close relationship between prime ideals of a noncommutative ring $R$ and Poisson primes of its semiclassical limit.
Fix $n$ and $ a$ and let $R = R(n,a)$ with multiplication $\ast_a$.
Let $A =A(n,a)$ with Poisson bracket $\{-,-\}_a$.  
We will show that $\Spec R = \Pspec A$ in the strongest possible sense.
That is, we prove:
\begin{theorem}\label{thm:spectra}
Let $P \subseteq A = R$.  
Then: 
\begin{enumerate}
\item $P \in \Spec R$ if and only if $P \in \Pspec A$, and further every prime ideal of $R$ is completely prime;
\item $P$ is a primitive ideal of $R$ if and only if $P$ is a Poisson primitive ideal of $A$.
\end{enumerate}
\end{theorem}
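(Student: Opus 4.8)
The plan is to reduce everything to one structural observation: the associative product $\ast$, the commutative product, and the Poisson bracket are each expressible in terms of the others using only the two derivations $\Delta$ and $\Gamma_a$. Concretely, \eqref{mult} writes $f\ast g=\sum_k\Delta^k(f)\binom{\Gamma_a}{k}(g)$, \eqref{star} inverts this to recover the commutative product, and \eqref{Poisson} reads $\{f,g\}=\Delta(f)\Gamma_a(g)-\Gamma_a(f)\Delta(g)$. From these I would extract the engine of the proof: for any $\kk$-subspace $P$ stable under both $\Delta$ and $\Gamma_a$, the three conditions ``$P$ is a two-sided ideal of $R$'', ``$P$ is an ideal of the commutative ring $A$'', and ``$P$ is a Poisson ideal of $A$'' are equivalent, each implication being a term-by-term check in the displayed formulas. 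A second, independent tool is that $\Gamma_a$ is diagonalisable on $A$ with eigenvalue $ad+\epsilon$ on the $(d,\epsilon)$-bidegree part; the associated (eigenvalue) filtration on $R$ has $\gr R\cong A$ as commutative rings, so for any $\Gamma_a$-stable ideal $P$ one gets $\gr(R/P)\cong A/P$, whence $R/P$ is a domain if and only if $A/P$ is. This delivers complete primeness essentially for free.

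Granting this, the theorem reduces to the claim that \emph{every} prime of $R$ and \emph{every} Poisson prime of $A$ is automatically $\Delta$- and $\Gamma_a$-stable. On the Poisson side this is clean: a Poisson prime is in particular a prime of the commutative ring $A$, so from $\{X_0,g\}=-aX_0\Delta(g)$ one cancels the regular element $X_0$ (when $X_0\notin P$ and $a\neq 0$) to get $\Delta(P)\subseteq P$, and then $\{X_1,g\}=X_0\Gamma_a(g)-(a+1)X_1\Delta(g)$ gives $\Gamma_a(P)\subseteq P$. Primes containing $X_0$ are handled by induction on $n$, passing to $A/\langle X_0\rangle\cong A(n-1,a+1)$ via Proposition~\ref{prop:factX1}; since the parameter increases to $a+1$, the degenerate value $a=0$ is exited after one step.

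The main obstacle is the corresponding stability statement on the ring side, where complete primeness cannot be assumed in advance. For a prime $P$ of $R$ with $X_0\notin P$ and $a\neq 0$, I would use that $\ast$-conjugation by the normal element $X_0$ is exactly the automorphism $\phi_a$ of Lemma~\ref{lem:TWO}, so that $\ad_{X_0}=-\,m_{X_0}\circ(\phi_a-\mathrm{id})$ with $m_{X_0}$ left $\ast$-multiplication. As a nonzero normal element is regular modulo a prime, cancelling $X_0$ gives $\phi_a(P)\subseteq P$, hence $\phi_a(P)=P$ since $\phi_a\in\Aut R$. Because $t\mapsto\phi_t(g)$ is polynomial in $t$ (as $\Delta$ is locally nilpotent) and vanishes modulo $P$ on the infinite set $a\ZZ$, it vanishes identically, and differentiating at $t=0$ yields stability under the generator of $\{\phi_t\}$, hence $\Delta(P)\subseteq P$. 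The identity $g\ast X_j=\phi_{z_j}(g)\,X_j$ then shows $P$ is a commutative ideal, after which $X_0\Gamma_a(g)=X_1\ast g-X_1g\in P$ and cancellation of $X_0$ give $\Gamma_a(P)\subseteq P$. The genuinely fiddly point is $a=0$, where $X_0$ is central and conjugation by it is trivial: here I would run the same argument with the auxiliary Poisson-normal elements of Proposition~\ref{prop:pnn} (the quadratic $G=X_0X_2-\tfrac12X_1^2$, or the normal $Y_k$ of Lemma~\ref{lem:fourprime} after inverting $X_0$), reverting to the inductive reduction of Proposition~\ref{prop:factX1} (which raises the parameter to $1\neq 0$) when these elements lie in $P$.

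Finally I would assemble the pieces. With stability in hand the engine identifies the underlying sets of ideals, and the $\Gamma_a$-eigenfiltration shows $R/P$ is a domain iff $A/P$ is; thus a Poisson prime of $A$ is a completely prime ideal of $R$, and conversely a prime $P$ of $R$ has $A/P$ reduced. To see that such a $P$ is completely prime and Poisson, I would pass to the finitely many minimal primes of $P$ in the commutative ring $A$: in characteristic zero these are Poisson, hence $\Delta,\Gamma_a$-stable, hence completely prime ideals of $R$ containing $P$, and the inclusion $\mathfrak q_i\ast\mathfrak q_j\subseteq\mathfrak q_i\mathfrak q_j$ (again from \eqref{mult} and stability) lets the $R$-primeness of $P$ force $P$ to equal one of them. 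This proves (1). For (2), once $\Spec R=\Pspec A$ as posets, (Poisson) primitivity is detected by the same data attached to $R/P$ and $A/P$ — equivalently, via the Dixmier--Moeglin characterisations established later in the paper — so the primitive and Poisson primitive loci coincide.
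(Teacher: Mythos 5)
Your route is genuinely different from the paper's: instead of localising at $X_0$ and invoking Jordan's correspondence between primes of an Ore extension $B[z;\delta]$ and Poisson primes of the Poisson--Ore extension (Proposition~\ref{prop:jordan}, applied via Lemma~\ref{lem:fourprime}), you argue directly from the deformation formula that the three ideal notions coincide on $(\Delta,\Gamma_a)$-stable subspaces and then try to show that every relevant prime is stable. For $a\neq 0$ and $X_0\notin P$ your stability arguments (conjugation by the normal element $X_0$ equals $\phi_a$, the polynomial-in-$t$ trick on the infinite set $a\ZZ$, recovery of the commutative product, and passage to minimal primes for complete primeness) are sound and recover by hand what the paper imports from \cite{Jordan} and \cite{Sigurdsson}.

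However, your central reduction --- that the theorem reduces to showing every prime of $R$ and every Poisson prime of $A$ is $\Delta$- and $\Gamma_a$-stable --- is false, and your treatment of the residual case does not work. Take $n=2$, $a=0$, and $P=(Y_2,\,X_1-\lambda)A^\circ\cap A$, one of the primes appearing in Theorem~\ref{thm:spectra2}(3): this is simultaneously a prime of $R$ and a Poisson prime of $A$, it does not contain $X_0$, and it is not $\Delta$-stable, since $\Delta(X_1-\lambda)=X_0\notin P$. Your proposed fallback, ``reverting to the inductive reduction of Proposition~\ref{prop:factX1},'' is unavailable here because that proposition factors out $X_0$, which is not in $P$; and no choice of auxiliary normal element ($G$ or the $Y_k$) can rescue the argument, because stability genuinely fails for such $P$. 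The correct handling of this case is the one built into Proposition~\ref{prop:jordan}: when $P$ contains $\delta(B)B=(Y_2,\dots,Y_n)B$ after inverting $X_0$, the quotient is commutative with trivial bracket, the two multiplications literally coincide there, and the equivalence holds for trivial reasons rather than via stability. You need to add that observation explicitly. Separately, your part (2) (``primitivity is detected by the same data'') is not an argument as written: primitive and Poisson primitive are defined quite differently, and the detour through the (P)DME of Section~\ref{DME} both requires checking non-circularity (those results use part (1) of this theorem) and is heavier than the paper's direct appeal to \cite[Corollary~4.4]{Jordan} for the $X_0\notin P$ case together with induction.
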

In the statement of Theorem~\ref{thm:spectra}, recall that an ideal $I$ of a ring $R$ is called \emph{left (resp. right) primitive} if it is the annihilator of a simple left (resp. right) $R$-module. Thanks to Theorem~\ref{antiiso} the two notions coincide for the ring $R(n,a)$ and we don't specify left or right for a primitive ideal in this article. 
An ideal $P$ of a Poisson ideal $A$ is called \emph{Poisson primitive} if it is the largest Poisson ideal contained in a maximal ideal of the commutative ring $A$.

We further give a stratification of $\Spec R = \Pspec A$, and show the strata are homeomorphic to commutative (projective, projective-over-affine, or affine) varieties.
We compute also the $d$-homogeneous primes of $R$, and show that (if $a\not\in \ZZ$) they do not depend on the precise value of $a$.

 David A. Jordan's work \cite{Jordan} on Poisson algebras and Ore extensions is crucial to this section.
The following is a slight strengthening of \cite[Theorem~3.6]{Jordan}.
\begin{proposition}\label{prop:jordan}
Let $B$ be a noetherian $\CC$-algebra that is a domain and let $\delta$ be a nonzero derivation of $B$.
Let $R = B[z; \delta]$, which we write as the left $B$-module $R = \bigoplus_{n \geq 0} B z^n$, with multiplication $\ast$ such that 
\beq\label{ghana}
z \ast b = b \ast z + \delta(b) \quad \mbox{ for $b \in B$}.
\eeq
Let $A$ be the Poisson-Ore extension $(B[z], \{-,-\}_\delta)$ (see \cite[Lemma~3.1]{Jordan}); as a ring $A = B[z]$, with Poisson bracket defined by 
\beq\label{PO}
 \{a z^m, bz^n\}_\delta = (ma \delta(b) - nb \delta(a))z^{m+n-1}.
 \eeq
Let $P \subseteq R = A$.  Then: 
$P \in \Spec R$ if and only if $P \in \Pspec A$. 
Further, if $P \in \Spec R = \Pspec A$, then either $P \supseteq \delta(B)$ or $P $ is generated by the $\delta$-invariant prime ideal $P \cap B$ of $B$.
\end{proposition}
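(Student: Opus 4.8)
The plan is to take the equivalence $P \in \Spec R \iff P \in \Pspec A$ from \cite[Theorem~3.6]{Jordan}, so that the only new content is the structural dichotomy. I would establish this working on the Poisson side, where the bracket is cleanest, and then observe that the resulting description of $P$ as a subset of $A = R$ is identical on the ring side. Throughout set $Q = P \cap B$. First I would record two routine facts: $Q$ is a prime ideal of $B$ (as the contraction of the prime $P$ along $B \hra A$), and $Q$ is $\delta$-invariant, since for $b \in Q$ we have $\delta(b) = \{z,b\}_\delta \in \{A, P\} \subseteq P$ while $\delta(b) \in B$, whence $\delta(b) \in Q$. Because $Q$ is $\delta$-invariant, the ideal it generates is $QA = \bigoplus_i Q z^i$; this is a Poisson ideal contained in $P$, and as a subset of $A = R$ it coincides with the ring-theoretic ideal $QR$ generated by $Q$ in $R$. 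Hence the phrase ``$P$ is generated by $P\cap B$'' is unambiguous, and it suffices to identify the common subset $P$.

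Next I would reduce to the case $Q = 0$. Passing to $\ov{A} = A/QA \cong (B/Q)[z]$, which is again a Poisson--Ore extension, now for the induced derivation $\ov{\delta}$ of the domain $\ov{B} = B/Q$, the image $\ov{P} = P/QA$ is a Poisson prime with $\ov{P} \cap \ov{B} = 0$. The dichotomy now splits on whether $\ov{\delta}$ vanishes. If $\ov{\delta} = 0$ then $\delta(B) \subseteq Q \subseteq P$, placing us in the first alternative $P \supseteq \delta(B)$. If $\ov{\delta} \neq 0$, I claim $\ov{P} = 0$, which gives $P = QA = QR$, the second alternative.

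The crux, and the main obstacle, is this last claim: over a domain $\ov{B}$ with $\ov{\delta} \neq 0$ there is no nonzero Poisson prime meeting $\ov{B}$ trivially. I would argue by a degree-drop in $z$. Suppose for contradiction $\ov{P} \neq 0$ and choose $f = \sum_{i=0}^d b_i z^i \in \ov{P}$ nonzero of minimal $z$-degree $d$, with $b_d \neq 0$; since $\ov{P} \cap \ov{B} = 0$ we have $d \geq 1$. Pick $b \in \ov{B}$ with $\ov{\delta}(b) \neq 0$. From \eqref{PO} one computes $\{f, b\}_{\ov{\delta}} = \ov{\delta}(b)\sum_{i=1}^{d} i\, b_i z^{i-1}$, an element of $\ov{P}$ whose coefficient of $z^{d-1}$ is $d\, b_d\, \ov{\delta}(b)$. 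As $\ov{B}$ is a domain and $\CC$ has characteristic zero, this leading coefficient is nonzero, so $\{f,b\}_{\ov{\delta}}$ is a nonzero element of $\ov{P}$ of $z$-degree $d-1 < d$, contradicting minimality. Hence $\ov{P} = 0$.

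Finally I would note that the same degree-drop works verbatim on the ring side, using the commutator $f \ast b - b \ast f$ in place of the bracket: the Ore relation \eqref{ghana} gives $[z^i, b]$ with leading term $i\,\delta(b) z^{i-1}$, so $[f,b]$ again has $z^{d-1}$-coefficient $d\,b_d\,\delta(b)$ and the identical contradiction ensues. Thus the dichotomy holds intrinsically for the common subset $P \subseteq A = R$. The only genuinely delicate point is this degree-drop computation, and its two hypotheses, that $\ov{\delta}$ be nonzero and that the characteristic be zero so that $d \neq 0$, are precisely what force the leading coefficient to survive.
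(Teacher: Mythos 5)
Your degree-drop argument for the dichotomy is correct, and it is in fact a self-contained replacement for the step the paper outsources to Jordan's Lemmas~3.2 and~3.3: reducing modulo $QA$ and then bracketing (or commutating) a minimal-degree element of $\ov{P}$ against some $b$ with $\ov{\delta}(b)\neq 0$ to force $\ov{P}=0$ is exactly the right mechanism, and your bookkeeping ($Q$ prime and $\delta$-invariant, $QA=QR$ as subsets, the leading coefficient $d\,b_d\,\ov{\delta}(b)$ nonzero because the base is a domain of characteristic zero) is sound.

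The gap is in the first assertion, $P\in\Spec R \iff P\in\Pspec A$. You dispose of it by citing \cite[Theorem~3.6]{Jordan}, but that theorem only produces \emph{a} homeomorphism $\Spec R\cong\Pspec A$; it does not say that the identity map on subsets of the common underlying space $A=R$ realises it. That identity-map statement is precisely the ``slight strengthening'' the proposition asserts, and it is what the paper's proof is actually devoted to. The paper argues in two cases: when $J:=\delta(B)B\subseteq P$, it observes from \eqref{ghana} that $R/J\ast R$ and $A/JA$ are the \emph{same} commutative ring with trivial bracket, so the primes containing $J$ on the two sides literally coincide; when $J\not\subseteq P$, it shows (via Jordan's lemmas --- or, equivalently, your degree-drop) that $P=QA=Q\ast R$ on both sides, and such an ideal is simultaneously prime in $R$ (since $(B/Q)[z;\ov{\delta}]$ is a domain) and Poisson prime in $A$ (since $(B/Q)[z]$ is a domain). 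Your machinery supplies the second case, but you never address the first case, and you never verify that the common description yields an ideal that is prime on one side whenever it is prime on the other. The fix is short --- essentially the two observations just listed --- but as written the equivalence, which is the main new content of the proposition relative to Jordan's published result, is assumed rather than proved.
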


For the proof of Proposition~\ref{prop:jordan}, recall that a {\em $\delta$-ideal} $I$ of $B$ is an ideal $I$ with $\delta(I)\subset I$.  
The $\delta$-ideal $I$ is {\em $\delta$-prime} if for all $\delta$-ideals $J, K$ of $B$, we have that $JK \subseteq I$ implies that $J\subseteq I $ or $K \subseteq I$.  Since $\operatorname{char} \CC = 0$, a $\delta$-prime ideal of $B$ is prime by \cite[Lemma~1.1]{Goodearl}.

\begin{proof}[Proof of Proposition~\ref{prop:jordan}]
This proof is largely a recapitulation of the proof of \cite[Theorem~3.6]{Jordan}, pointing out that the homeomorphism constructed there is in fact the identity map.
Let $J = \delta(B) B$.  For any $\delta$-ideal $Q$ of $B$, we have $Q \ast R = R \ast Q= QA = Q[z]$ (using the identification of $R$ with $ A$ as left $B$-modules).
In particular, this holds for $Q=J$.

Let $P \subseteq A = R$ and suppose that  either $P \in \Spec R$ or $P \in \Pspec A$.
If $J \subseteq P$  then $P \supseteq JA = J \ast R= R \ast J$.  
From \eqref{ghana}, the two multiplications on the graded vector space $A/JA = R/J \ast R$ are equal, and  the induced Poisson bracket on $A/JA$ is trivial.
Let $C = R/J \ast R = A/JA$.  
Then $J \subseteq P \in \Spec R$ holds if and only if $P/J\ast R = P/JA \in \Spec C = \Pspec C$, if and only if $J \subseteq P \in \Pspec A$.

Now suppose that $J \not \subseteq P$ and let $Q = P \cap B$; note $Q$ is  a $\delta$-prime ideal of $B$.  By \cite[Lemma~3.2]{Jordan}, if $P \in \Pspec A$ then $P = QA = Q \ast R$, and thus $P \in \Spec R$.
But by \cite[Lemma~3.3]{Jordan}, if $P \in \Spec R$ then $P = Q \ast R = QA$ and $P \in \Pspec A$.

Finally, we note that the results in \cite{Jordan} are stated for $\kk = \mathbb{C}$, but are valid over any field of characteristic 0.
\end{proof}

We now prove Theorem~\ref{thm:spectra}. Recall from Section \ref{FIRSTPROPS} that $R^{\circ}=R[X_0^{-1}]$ and $A^{\circ}=A[X_0^{-1}]$.

\begin{proof}[Proof of Theorem~\ref{thm:spectra}]
$(1)$.
If $n=1$ the result is well-known. So let $n>1$ and let $P\subseteq A = R$.

First suppose that $X_0 \not \in P$.
Let $P^{\circ}:=P[X_0^{-1}] = P + X_0^{-1} P + \dots \subseteq A^\circ = R^\circ$.
If $P \in \Spec R$ or $P \in \Pspec A$ then $P = P^{\circ} \cap A$.
So it suffices to prove that $P^{\circ} \in \Spec R^\circ $ $ \iff$ $P^{\circ} \in \Pspec A^\circ $.
This is an immediate application of Lemma~\ref{lem:fourprime} and Proposition~\ref{prop:jordan}.
That prime ideals of $R^\circ$ are completely prime is shown in  \cite{Sigurdsson} (see also \cite[Remark~3.7]{Jordan}) and the rest of $(1)$ follows immediately for $P$.

Now suppose that $X_0 \in P$.
If $P \in \Spec R$ or $P \in \Pspec A$  then $X_0 \ast R = X_0 A \subseteq P$, and  $(1)$ follows by induction, considering the image of $P$ in $\Pspec A/\ang{X_0} = \Spec R/ \ang{X_0}$.

For $(2)$ the case $X_0 \not \in P$  follows from \cite[Corollary~4.4]{Jordan}, applied to $P^{\circ} \subseteq R^\circ = A^\circ$, and the case $X_0\in P$ follows by induction, as above.
\end{proof}

We  turn now to describing the topological space $\Spec R =  \Pspec A$.  
We note that this space has a natural stratification:
for $0 \leq j \leq n+1$, let
\[ \Spec_j(R) = \{ P \in \Spec(R) \st X_{i} \in P \textrm{ if and only if } 0\leq i<j \}.\]
It is immediate that $\Spec R $ is the disjoint union of the $\Spec_i(R)$.
By Proposition~\ref{prop:factX1}, we have $\Spec_j R \cong \Spec_{j-1} R(n-1, a+1)$.
Thus to describe the primes of $R$ explicitly, it suffices by induction to describe the open stratum $\Spec_0 R$.
We have $\Spec_0(R) \cong \Spec R^\circ = \Pspec A^\circ$, using (the proof of) Theorem~\ref{thm:spectra}.

Before describing $\Pspec A^\circ = \Spec R^\circ$, we  establish some notation. 
 Let $K$ be a commutative ring.  By $\PP_K(2, \dots, n)$ we denote the weighted projective space $\PP(2, \dots n)$ with base $\Spec K$:  explicitly, 
$\PP_K(2, \dots, n) = \Proj K[Y_2, \dots, Y_n]$ (see \cite[page 76]{Hartshorne}), where $K$ is assumed to be concentrated in degree $0$, and $\deg Y_i = i$.

If $C$ is a graded ring, let $\Spec_{\gr}(C) $ be the set of prime graded ideals of $C$, under the Zariski topology; if $C$ has multiple gradings, say $d$ and $e$, we will write $\Spec_{d-\gr}(C)$ or $\Spec_{e-\gr}(C)$ to indicate which grading is being used.
Likewise, let $\Pspec_{d-\gr}(A^\circ) = \{ \mbox{ $d$-graded Poisson primes of $A^\circ$  } \}$.

For  $n \geq 2$ let $C(n) = \kk[Z_2, \dots, Z_n]$, graded with $\deg Z_i = i$.
Let $X(n) = \Spec_{\gr}C(n)$.
Note that $X(n) = \PP_\kk(2,\dots, n) \sqcup \{ C(n)_+ = \ang{Z_2, \dots, Z_n}\}$ and is $(n-2)$-dimensional.

The structure of $\Pspec A^\circ = \Spec R^\circ$ depends sensitively on the value of $a$, as shown in the next result.

\begin{theorem}\label{thm:spectra2}
Assume that $n \geq 2$.
\begin{enumerate}
\item If $a \not \in \QQ$, then $\Pspec A^\circ$ is homeomorphic to $X(n)$. 
Further, all primes of $R^\circ$ and Poisson primes of $A^\circ$ are $d$-graded.
\item If $a \in \QQ^\times$, then $\Pspec A^\circ$ is homeomorphic to the rational affine variety $\Spec Z$, where $Z$ is the Poisson centre of $A^\circ$, and has dimension $(n-1)$.  
Further, $\Pspec_{d-\gr}(A^\circ)$ is homeomorphic to $X(n)$.
\item If $a=0$, then  $\Pspec A^\circ$ is the disjoint union of a stratum homeomorphic to $\PP_{\CC[X_0^{\pm 1}]}(2,\dots, n)$ and $ \Pspec A^\circ/(Y_2, \dots, Y_n) \cong \AA^2 \ssm V(X_0) $ and has dimension $\max\{n-1,2\}$.
We have that $\Pspec_{d-\gr}(A^\circ)$ is  the disjoint union of a stratum homeomorphic to $\PP(2, \dots, n)$ and a stratum homeomorphic to $\AA^1$.
\item Further, as long as $a \neq 0$, then $\Pspec_{d-\gr}(A^\circ)$ does not depend on $a$:  that is, an ideal $P$ of $\kk[X_0, \dots, X_n]$ is a Poisson prime of some $A(n,a)^\circ$ (with $a \neq 0$) if and only if $P$ is a Poisson prime of all $A(n,a)^\circ$. 
\end{enumerate}
\end{theorem}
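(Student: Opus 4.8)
The plan is to transport the whole problem through the isomorphisms of Lemma~\ref{lem:fourprime}, which present $R^\circ\cong B[z;\delta]$ and $A^\circ\cong(B[z],\{-,-\}_\delta)$ as (Poisson-)Ore extensions of $B=\kk[X_0^{\pm1},Y_2,\dots,Y_n]$ by $\delta=X_0\Gamma_a$, and then to apply Proposition~\ref{prop:jordan}. That proposition identifies $\Pspec A^\circ=\Spec R^\circ$ with the disjoint union of the primes containing $J:=\delta(B)B$ and the primes $QA^\circ$ for $Q$ a $\delta$-prime of $B$. Since $X_0$ is a unit and $\car\kk=0$, a $\delta$-ideal of $B$ is the same as a $\Gamma_a$-invariant ideal, and as $\Gamma_a$ acts semisimply the $\delta$-prime ideals of $B$ are precisely the $e$-homogeneous primes of $B$. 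The first reduction is to compute $J$: from $\delta(X_0)=aX_0^2$ and $X_0$ a unit we get $J=B$ when $a\neq0$ (so \emph{no} prime contains $J$ and every prime is of the form $QA^\circ$), while $\delta(Y_j)=(a+j)X_0Y_j$ gives $J=\ang{Y_2,\dots,Y_n}$ when $a=0$.

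The graded part is uniform and disposes of (4) together with the $\Pspec_{d-\gr}$ claims in (1)--(2). Setting $W_j:=X_0^{-1}Y_j\in B$ one has $B=\kk[X_0^{\pm1}]\otimes\kk[W_2,\dots,W_n]$ with $X_0$ a unit of bidegree $(d,\epsilon)=(1,0)$ and $W_j$ of bidegree $(0,j)$; crucially $Y_j$, hence $W_j$ and $B$ itself, do \emph{not} depend on $a$, since only $\Delta$ enters their definition. A $d$-graded Poisson prime corresponds to a $d$-graded $\delta$-prime of $B$, i.e. to a prime that is both $d$-graded and $\Gamma_a$-invariant, i.e. $(d,\epsilon)$-bihomogeneous; these are exactly the extensions $Q'B$ of the $\epsilon$-graded primes $Q'$ of $\kk[W_2,\dots,W_n]\cong C(n)$. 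This yields a homeomorphism $\Pspec_{d-\gr}(A^\circ)\cong\Spec_{\gr}C(n)=X(n)$ for every $a\neq0$. Because the $W_j$ and the $\epsilon$-grading are $a$-independent and $\delta(Q')\subseteq X_0Q'\subseteq Q'B$ (the restriction of $\Gamma_a$ to $\kk[W_2,\dots,W_n]$ is the $a$-independent weight derivation), each ideal $Q'A^\circ$ is literally the same Poisson prime for all $a\neq0$, which is statement (4).

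For the full spectrum I would split into three cases according to the rank of the group generated by the $e$-degrees. If $a\notin\QQ$ that group has rank $2$, and from $e=da+\epsilon$ together with the irrationality of $a$ two monomials share an $e$-degree only if they share both $d$- and $\epsilon$-degree; hence the $e$-grading equals the $(d,\epsilon)$-bigrading, every $e$-homogeneous prime is $d$-graded, and (1) follows from the previous paragraph, with $\Pspec A^\circ\cong X(n)$ of dimension $n-2$. If $a\in\QQ^\times$ the group has rank $1$, so the $\Gamma_a$-action is a single $\kk^\times$-action on $\Spec B$ for which $X_0$ is a nowhere-vanishing semi-invariant of nonzero weight; thus the action has no fixed points, all orbits are closed, $\Spec B\to\Spec B_0$ is a geometric quotient, and $Q\mapsto Q\cap B_0$ is a homeomorphism from the $e$-homogeneous primes onto $\Spec B_0$. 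A short computation with the bracket shows the Poisson centre $Z$ of $A^\circ$ equals $B_0$, giving $\Pspec A^\circ\cong\Spec Z$; and $Z=B_0$ is the semigroup algebra on the degree-zero monomials $X_0^m\prod Y_j^{c_j}$, a finitely generated cancellative monoid of rank $n-1$, hence an affine toric (in particular rational) variety of dimension $n-1$. This is (2). Finally if $a=0$ then $X_0$ has $\epsilon$-weight $0$, so $B$ is non-negatively $\epsilon$-graded over $\kk[X_0^{\pm1}]$ with irrelevant ideal $J=\ang{Y_2,\dots,Y_n}$: the $\delta$-primes not containing $J$ give $\Proj B=\PP_{\kk[X_0^{\pm1}]}(2,\dots,n)$, while the primes containing $J$ are the Poisson primes of $A^\circ/J\cong\kk[X_0^{\pm1},X_1]$, which carries the trivial bracket (as $a=0$) and so equals $\Spec\kk[X_0^{\pm1},X_1]\cong\AA^2\ssm V(X_0)$; the $d$-graded refinement replaces these two strata by $\PP(2,\dots,n)$ and $\AA^1$, giving (3).

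The main obstacle is the case $a\in\QQ^\times$ of (2): one must show rigorously that the $e$-homogeneous primes of $B$ are exactly the extensions of the primes of $B_0$. The clean route is the fixed-point-free $\kk^\times$-quotient argument above, but making it airtight requires verifying that the action genuinely has no fixed points (this is where $X_0$ being a \emph{unit} is used), that the quotient is geometric so that invariant primes correspond bijectively and homeomorphically to $\Spec B_0$, and the identification $Z=B_0$ of the Poisson centre; equivalently one gives the purely algebraic argument that for a $\ZZ$-graded domain with a homogeneous unit of nonzero degree the chart $D_+(X_0)$ exhausts the graded spectrum and is isomorphic to $\Spec B_0$. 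Checking that $Z$ is toric of dimension $n-1$ (hence rational) is then routine semigroup bookkeeping, and everything else — the identifications with $X(n)$, the $a=0$ strata, and (4) — reduces to the (bi)graded bookkeeping together with Lemma~\ref{lem:fourprime} and Proposition~\ref{prop:jordan}.
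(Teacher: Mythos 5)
Your proposal is correct, and its skeleton coincides with the paper's: both pass through Lemma~\ref{lem:fourprime} and Jordan's result (Proposition~\ref{prop:jordan}), identify $\delta$-prime ideals of $B$ with $e$-graded primes (characteristic zero plus semisimplicity of $\Gamma_a$, with $X_0$ a unit), reduce the $d$-graded claims and part (4) to the $a$-independent ring $\kk[W_2,\dots,W_n]\cong C(n)$ exactly as in Proposition~\ref{prop:specgr} (the paper writes $Z_i=Y_iX_0^{-1}$ for your $W_i$, and uses that $B=C[X_0^{\pm 1}]$ is strongly $d$-graded), deduce (1) from the observation that irrationality of $a$ forces $e$-homogeneous elements to be $(d,\epsilon)$-bihomogeneous (Proposition~\ref{prop:allgraded}), and treat $a=0$ by the identical splitting along $B_+=\delta(B)B=\ang{Y_2,\dots,Y_n}$. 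The one genuine divergence is case (2), $a\in\QQ^\times$. There the paper proves a purely algebraic statement (Lemma~\ref{lem1}): for a $\ZZ$-graded localised polynomial ring with a homogeneous unit of nonzero degree, $Q_0\mapsto\sqrt{Q_0B}$ and $Q\mapsto Q\cap B_0$ are mutually inverse homeomorphisms between $\Spec B_0$ and $\Spec_{\gr}B$, with rationality of $Z$ coming from its being a (normal) semigroup algebra. You instead run a fixed-point-free $\kk^\times$-action/geometric-quotient argument; this is sound as you set it up (the nowhere-vanishing semi-invariant unit $X_0$ of nonzero weight kills fixed points, so all orbits are closed, the GIT quotient is geometric, and connectedness of $\kk^\times$ makes irreducible components invariant, yielding the prime-to-prime correspondence), but it implicitly requires $\kk$ algebraically closed, whereas the paper works over an arbitrary field of characteristic zero; so the ``purely algebraic'' fallback you flag --- which is precisely the paper's Lemma~\ref{lem1} --- is not merely an equivalent convenience but the version needed at the paper's level of generality. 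Your identification of the Poisson centre ($\{f,X_0\}=aX_0\Delta(f)$ and $\{f,X_1\}=-X_0\Gamma_a(f)$ give $Z=B^{\Delta,\Gamma_a}=B(0)$) agrees with the paper's appeal to the $A^\circ$-extension of Proposition~\ref{prop:pnn}, and your toric bookkeeping for $Z$ (Gordan's lemma, group of differences of rank $n-1$) recovers the same dimension and rationality that the paper gets from Lemma~\ref{lem1}(2)--(3); what the geometric route buys is conceptual transparency over $\ov{\kk}$, what the paper's route buys is field-independence and the explicit inverse $\sqrt{Q_0B}$ used again in Corollary~\ref{cor:glasgow} and in Section~\ref{EXAMPLES}.
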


We immediately obtain:
\begin{corollary}\label{cor:specsum}
Let $n \geq 1$ and let $a\in \kk$.
Then $\Spec R(n,a) \cong \Pspec A(n,a)$ is a union of quasiprojective rational varieties and has dimension:
\begin{itemize}
\item $\max \{ n-2,1\}$ if $a \not \in \QQ$;
\item $\max \{n-1, 1\}$ if $a \in \QQ$ and $(n,a) \not\in \{ (2,-1), (1,0)\}$;
\item $2$ if $(n,a) \in \{ (2,-1), (1,0)\}$.
\end{itemize}
\end{corollary}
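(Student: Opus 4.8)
The plan is to read the corollary off from Theorem~\ref{thm:spectra2} by slicing the space into the strata $\Spec_j R(n,a)$ and computing the dimension and geometric type of each. The homeomorphism $\Spec R(n,a) \cong \Pspec A(n,a)$ is Theorem~\ref{thm:spectra}, so we may work on whichever side is convenient. First I would record the inductive reduction: applying Proposition~\ref{prop:factX1} $j$ times identifies $\Spec_j R(n,a)$ with the open stratum $\Spec_0 R(n-j,a+j) \cong \Spec R(n-j,a+j)^\circ = \Pspec A(n-j,a+j)^\circ$. The point is that the integer shift $a \mapsto a+j$ preserves whether the parameter lies in $\QQ$, lies in $\QQ^\times$, or equals $0$, so every stratum with $n-j\geq 2$ falls squarely under one of the three cases of Theorem~\ref{thm:spectra2}; its rationality and quasiprojectivity are furnished there as well, so the assertion that $\Spec R(n,a)$ is a union of quasiprojective rational varieties follows at once from the stratification.

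The next step is to dispose of the two base strata not covered by Theorem~\ref{thm:spectra2}, namely those with $n-j\in\{0,1\}$, by a direct computation from Lemma~\ref{lem:fourprime} and Proposition~\ref{prop:jordan}. When $n-j=0$ the stratum is $\Spec R(0,b)^\circ = \Spec \kk[X_0^{\pm 1}]$, a rational affine variety of dimension $1$; this accounts for the persistent $\max\{-,1\}$ appearing in every line of the formula. When $n-j=1$ and $b\neq 0$ the localised Poisson--Jordan plane has $\Spec\kk[X_0^{\pm 1}]$ as its base of $\delta$-invariant primes, and one checks that $\ang{0}$ is the only such prime, so the stratum is a single point of dimension $0$; when $b=0$, however, $R(1,0)$ is the commutative polynomial ring and the stratum is $\AA^2\ssm V(X_0)$, of dimension $2$.

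With every stratum accounted for, $\dim \Spec R(n,a)$ is the maximum of the stratum dimensions, and I would verify each line of the display by this bookkeeping. For $a\notin\QQ$ one has $a+j\neq 0$ for all $j$, so no dimension-$2$ stratum can occur; the top stratum contributes $n-2$ and the $\Spec\kk[X_0^{\pm 1}]$ stratum contributes $1$, giving $\max\{n-2,1\}$. For $a\in\QQ$ the top stratum contributes $n-1$ (when $a\neq 0$), so the generic value is $\max\{n-1,1\}$. The only way to exceed this is for some shifted parameter $a+j$ to vanish while the corresponding value $n-j$ is small enough that $\Spec_0 R(n-j,0)$ has dimension $2$ yet $\max\{n-1,1\}<2$; this forces $n\leq 2$, and matching the three regimes of Theorem~\ref{thm:spectra2} against the shifted parameters $a+j$ then singles out the exceptional pairs, including the genuinely commutative case $(1,0)$ where $R$ is a polynomial ring.

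The main obstacle will be precisely this last point: tracking the maximum over all $n+2$ strata and isolating exactly which small pairs $(n,a)$ carry an anomalous $2$-dimensional stratum. One must confirm that for $n\geq 3$ any dimension-$2$ stratum produced by a vanishing shifted parameter is already dominated by the top stratum of dimension $n-1\geq 2$, so that the exceptions truly collapse to $n\leq 2$; the remaining work is the careful enumeration, for $n\in\{1,2\}$, of the shifts $j$ with $a+j=0$ and the resulting commutative strata, which is exactly where the short list of exceptional pairs must be pinned down.
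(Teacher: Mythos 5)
Your strategy is exactly the paper's: the printed proof is the single line ``Combine Theorem~\ref{thm:spectra2}, Proposition~\ref{prop:factX1}, and Example~\ref{exn=2}'', and your reduction $\Spec_j R(n,a)\cong\Spec_0 R(n-j,a+j)\cong\Pspec A(n-j,a+j)^\circ$ together with the direct treatment of the strata with $n-j\in\{0,1\}$ is precisely the intended bookkeeping. Every step you actually carry out is correct, including the identification of the $n-j=0$ stratum with $\Spec\kk[X_0^{\pm1}]$ and of the $n-j=1$ strata with a point (for $b\neq 0$) or $\AA^2\ssm V(X_0)$ (for $b=0$).

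The problem is the step you postpone, and it is not as routine as you suggest: completing ``the careful enumeration for $n\in\{1,2\}$'' does not reproduce the exceptional set in the statement. Take $(n,a)=(2,0)$. The top stratum $\Spec_0 R(2,0)\cong\Pspec A(2,0)^\circ$ falls under case (3) of Theorem~\ref{thm:spectra2}, which assigns it dimension $\max\{n-1,2\}=2$; concretely, in $A(2,0)$ the only nonzero bracket of generators is $\{X_1,X_2\}=2X_0X_2-X_1^2$, so every maximal ideal $\ang{X_0-\alpha,X_1-\beta,X_2-\gamma}$ with $2\alpha\gamma=\beta^2$ and $\alpha\neq 0$ is a Poisson maximal ideal --- a two-parameter family of Poisson primes avoiding $X_0$. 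Hence $\dim\Spec R(2,0)\geq 2$, while the second bullet of the corollary gives $\max\{1,1\}=1$. So the list of exceptional pairs that your (and the paper's) method actually produces is $\{(1,0),(2,0),(2,-1)\}$, not $\{(1,0),(2,-1)\}$: the statement as printed is inconsistent with Theorem~\ref{thm:spectra2}(3), and no bookkeeping will ``pin down'' the shorter list. You should either record $(2,0)$ as an additional exception or show that the top stratum of $\Spec R(2,0)$ is not two-dimensional (it is). For $n\geq 3$ your domination argument is sound: a stratum with $a+j=0$ has dimension $\max\{n-j-1,2\}$ (or $2$ when $n-j=1$, or $1$ when $n-j=0$), all of which are at most $n-1$ once $n\geq 3$, so the anomalies are indeed confined to $n\leq 2$.
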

\begin{proof}
Combine Theorem~\ref{thm:spectra2}, Proposition~\ref{prop:factX1}, and Example~\ref{exn=2}.
\end{proof}

\begin{example}
In the case $a \in \QQ^\times$, it is not necessarily true that Poisson primes of $A^\circ$ are centrally generated.  For example, let $n=2$ and $a = -7/4$. Recall that $Y_2 = X_2 - \frac{X_1^2}{2X_0}$.
Then $\Delta(Y_2) = 0$ and $\Gamma(Y_2) = (1/4) Y_2$.  By Proposition~\ref{prop:pnn}, 
 $Y_2$ is Poisson normal in $A^\circ$.

We will see that the Poisson centre of $A^\circ$ is $Z = \CC[X_0 Y_2^7]$, so the Poisson ideal $\ang{Y_2}$ of $A^\circ$ is not centrally generated. However, $Y_2$ is the unique Poisson prime of $A^\circ$ with $\ang{Y_2}\cap Z = \ang{X_0 Y_2^7}$.
\end{example}

The  situation in the example is typical.  Theorem~\ref{thm:spectra2}(2) will follow from:
\begin{proposition}\label{prop:homeo2}
Assume that $a \in \QQ^\times$.
The map 
\begin{align*}
\phi:  \Pspec A^\circ & \to \Spec Z \\
P & \mapsto P \cap Z
\end{align*}
is a homeomorphism.
The inverse map is defined by
\begin{align*}
\psi:  \Spec Z & \to \Pspec A^\circ  \\
Q_0 & \mapsto \sqrt{Q_0 A^\circ}.
\end{align*}
Further,  $\Spec Z$ is a rational variety of dimension $n-1$.
\end{proposition}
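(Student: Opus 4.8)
The plan is to reduce the statement to a transparent piece of commutative algebra about graded rings possessing a homogeneous unit of nonzero degree, the grading being the one coming from the eigenvalues of $\Gamma_a$.

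First I would pin down $Z$ and set up the grading. Recall from Lemma~\ref{lem:fourprime} that $A^\circ = B[X_1;\delta]$ is a Poisson--Ore extension, where $B = (A^\circ)^\Delta = \kk[X_0^{\pm 1}, Y_2, \dots, Y_n]$ by Lemma~\ref{lem:four} (and $B$ is Poisson-commutative) and $\delta = X_0\Gamma_a$. A direct computation with \eqref{Poisson}, testing against $X_0$ and $X_1$ and using $a\neq 0$, shows that $f$ is Poisson central iff $\Delta(f)=\Gamma_a(f)=0$; hence $Z = B^{\Gamma_a}$ is exactly the zero-eigenspace of $\Gamma_a$ acting on $B$. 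Because $a=p/q\in\QQ^\times$, the eigenvalues of $\Gamma_a$ on $B$ (namely $e(X_0)=a$ and $e(Y_j)=a+j$) are commensurable, so after clearing denominators $B$ becomes a genuine $\ZZ$-graded ring in which $X_0$ is a homogeneous unit of degree $p\neq 0$ and $Z=B_0$. This commensurability is precisely where the hypothesis $a\in\QQ^\times$ (rather than $a\notin\QQ$) enters.

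Next I would reduce $\Pspec A^\circ$ to homogeneous primes of $B$. Since $X_0$ is a unit and $\Gamma_a(X_0)=aX_0$ is again a unit, the ideal $\delta(B)B=\Gamma_a(B)B$ is all of $B$; thus in Proposition~\ref{prop:jordan} the alternative ``$P\supseteq\delta(B)$'' never occurs, and $P\mapsto P\cap B$ is an inclusion-preserving bijection from $\Pspec A^\circ$ onto the set of $\Gamma_a$-stable primes of $B$, with inverse $\mf p\mapsto\mf p A^\circ$. As $\Gamma_a$ is semisimple with distinct eigenvalues, $\Gamma_a$-stable primes are exactly the homogeneous primes. This bijection carries $P\cap Z$ to $\mf p\cap Z$ and, being compatible with extension and radicals, carries $\sqrt{\mf q A^\circ}$ to $\sqrt{\mf q B}$; so the whole proposition comes down to showing that $\mf p\mapsto\mf p\cap Z$ and $\mf q\mapsto\sqrt{\mf q B}$ are inverse homeomorphisms between the homogeneous primes of $B$ and $\Spec Z$.

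For this commutative statement the two easy halves are: the degree-$0$ projection $B\to Z$ is a $Z$-module retraction (Reynolds operator), giving $\sqrt{\mf q B}\cap Z=\mf q$; and for a homogeneous prime $\mf p$ the relation $f^{p}X_0^{-e}\in\mf p\cap Z$ for homogeneous $f\in\mf p$ of degree $e$ gives $\mf p=\sqrt{(\mf p\cap Z)B}$. Together these make contraction injective on homogeneous primes and show that the two composites are the identity. The main obstacle is that $\sqrt{\mf q B}$ is genuinely prime (note $\mf q B$ itself need not be, as the example $Z=\CC[X_0Y_2^7]$ shows). I would prove this through the Veronese subring $B^{(p)}=\bigoplus_k B_{kp}$: since $X_0$ is a unit of degree $p$ one gets $B^{(p)}=Z[X_0^{\pm 1}]$, and since $Y_j^{\,p}\in B^{(p)}$ each $Y_j$ is integral over $B^{(p)}$, so $B$ is module-finite over the normal domain $B^{(p)}$ (normality of $Z=B^{\kk^\times}$ being inherited from that of $B$). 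Going-down for this integral extension over a normal domain forces every minimal prime over $\mf q B$ to contract to $\mf q$, and then $\mf p=\sqrt{(\mf p\cap Z)B}$ shows all these minimal primes coincide, so $\sqrt{\mf q B}$ is prime. Both maps preserve inclusions, and contraction is continuous and (via the bijection) closed, so this is a homeomorphism. Finally, the $\kk^\times$-action generated by $\Gamma_a$ has one-dimensional orbits (no fixed points, as $X_0\neq 0$ has nonzero weight), whence $\dim Z=\Kdim B-1=n-1$; and $\Frac Z=\Frac(B)^{\kk^\times}$ is the invariant field of a torus acting by monomials on the rational field $\kk(X_0,Y_2,\dots,Y_n)$ with $X_0$ of nonzero weight, hence is rational, making $\Spec Z$ a rational variety of dimension $n-1$.
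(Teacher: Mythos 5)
Your argument is correct, and its skeleton coincides with the paper's: identify $Z$ as $B^{\Gamma_a}=B_0$ for $B=(A^\circ)^\Delta=\kk[X_0^{\pm1},Y_2,\dots,Y_n]$, invoke Proposition~\ref{prop:jordan} together with the observation that $\delta(B)B=B$ (since $\delta(X_0)=aX_0^2$ is a unit) to reduce $\Pspec A^\circ$ to the $e$-graded primes of $B$, and then settle a purely commutative statement about the degree-zero subring of a $\ZZ$-graded ring containing a homogeneous unit of nonzero degree. Where you diverge is in the proof of that commutative statement, which is the paper's Lemma~\ref{lem1}. The paper proves that $\sqrt{Q_0B}$ is prime by a power trick through $B'=\bigoplus_k B_{ka_0}\cong Z[X_0^{\pm1}]$, and obtains uniqueness of the graded prime contracting to $Q_0$ by localising at $Z\ssm Q_0$ and showing the resulting graded quotient is a Laurent polynomial ring $\Frac(Z/Q_0)[x^{\pm1}]$ over a field, hence has no nontrivial graded ideals; this localisation also yields the codimension formula $\Kdim(B/N)=1+\Kdim(Z/Q_0)$ used elsewhere in the paper. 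You instead establish that $B$ is module-finite over the normal domain $B^{(p)}=Z[X_0^{\pm1}]$ and apply going-down to force every minimal prime of $\mf q B$ to contract to $\mf q$, then use the identity $\mf p=\sqrt{(\mf p\cap Z)B}$ (valid for homogeneous primes — and minimal primes over the graded ideal $\mf q B$ are indeed graded, a point worth stating) to conclude they all coincide; the Reynolds operator handles $\sqrt{\mf q B}\cap Z=\mf q$. Both routes are sound: yours makes the normality of $Z$ and the integrality of $B$ over $Z[X_0^{\pm1}]$ explicit, while the paper's extracts more quantitative information from the localisation. Your closing argument for $\Kdim Z=n-1$ and the rationality of $\Frac Z$ (via the saturated kernel of the weight map on the monomial lattice) is equivalent to the paper's appeal to $Z$ being a normal semigroup algebra, for which Remark~\ref{n-3} exhibits explicit generators.
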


As an immediate consequence, we have:
\begin{corollary}\label{cor:one}
If $a\in \QQ^\times$, the map $P \mapsto P[X_0^{-1}] \cap Z$ induces a homeomorphism between $\Pspec(A) \ssm V(X_0) \cong \Pspec(A^\circ)$ and $\Spec(Z)$.
\qed
\end{corollary}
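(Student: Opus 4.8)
The plan is to work in the localisation $A^\circ$, where by Lemma~\ref{lem:four} the subring $B=(A^\circ)^\Delta=\kk[X_0^{\pm1},Y_2,\dots,Y_n]$ is a Laurent polynomial ring, and by Lemma~\ref{lem:fourprime} we have $A^\circ=B[X_1;\delta]$ with $\delta=X_0\Gamma_a$. Since $a\in\QQ^\times$, the $e$-grading makes $B$ a $\ZZ$-graded domain (writing $a=p/q$, the weights $a,a+2,\dots,a+n$ all lie in $\tfrac1q\ZZ$), and $X_0$ is an \emph{invertible} homogeneous element of nonzero weight $a$. First I would identify the Poisson centre: by Proposition~\ref{prop:pnn} an element of $A^\circ$ is Poisson central if and only if it lies in $\ker\Delta\cap\ker\Gamma_a$, so $Z=B\cap\ker\Gamma_a=B_0$ is exactly the $e$-degree-zero part of $B$.

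Next I would reduce the whole statement to a question about graded primes of $B$. Because $X_0$ is an invertible homogeneous element and $a\neq0$, the ideal $\delta(B)B$ contains $\delta(X_0)=aX_0^2$ and hence is all of $B$; thus the first alternative of Proposition~\ref{prop:jordan} never occurs, and every Poisson prime $P$ of $A^\circ$ has the form $P=(P\cap B)A^\circ$ with $P\cap B$ a $\delta$-prime of $B$. As $\delta=X_0\Gamma_a$ with $X_0$ a unit, a $\delta$-stable ideal is the same as a $\Gamma_a$-stable ideal, and (since $\car\kk=0$ and $\Gamma_a$ is the Euler operator of the $e$-grading) this is the same as an $e$-homogeneous ideal. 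Hence $P\mapsto P\cap B$ is a bijection from $\Pspec A^\circ$ onto the set of $e$-graded primes of $B$; it carries $\sqrt{Q_0A^\circ}$ to $\sqrt{Q_0B}$ and intertwines the two contraction maps to $Z$ (using $A^\circ=B[X_1]$ and $\sqrt{I[X_1]}=\sqrt I[X_1]$). It therefore suffices to prove that $Q\mapsto Q\cap Z$ and $Q_0\mapsto\sqrt{Q_0B}$ are mutually inverse homeomorphisms between the graded primes of $B$ and $\Spec Z$.

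Here the homogeneous unit $X_0$ does the work. For any $e$-homogeneous $f\in B$ of weight $\lambda$, commensurability of $\lambda$ with $a$ gives integers $k>0,m$ with $k\lambda+ma=0$, so $f^kX_0^m\in B_0=Z$; applying this to a set of homogeneous generators of a graded prime $Q$ shows $Q\subseteq\sqrt{(Q\cap Z)B}$, while the reverse inclusion is immediate, so $\sqrt{(Q\cap Z)B}=Q$. This proves $Q\mapsto Q\cap Z$ is injective with left inverse $\sqrt{(-)B}$. For surjectivity I would first check $Q_0B\cap Z=Q_0$ by taking degree-zero parts, and then invoke graded lying-over along the finite integral extension $Z[X_0^{\pm1}]\subseteq B$ (each $Y_j$ has a power landing in $Z[X_0^{\pm1}]$) to produce a graded prime $Q$ of $B$ with $Q\cap Z=Q_0$; by the identity just proved $Q=\sqrt{Q_0B}$, so $\sqrt{Q_0B}$ is prime and the two maps are mutually inverse bijections. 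Both are continuous, hence homeomorphisms, and translating back we obtain $\psi(Q_0)=\sqrt{Q_0A^\circ}$ as claimed.

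Finally, $\Spec Z=\Spec B^{\mb G_m}$, where $\mb G_m$ acts on the rational variety $\Spec B\cong\mb G_m\times\AA^{n-1}$ through the $e$-grading; since $a\neq0$ this action has one-dimensional generic orbits, giving $\dim Z=n-1$, and its field of invariants $\Frac Z=\Frac(B)^{\mb G_m}$ is rational by the standard rationality of quotients of rational varieties by tori. The main obstacle is the primeness of $\sqrt{Q_0B}$: this is where the invertible homogeneous element $X_0$ and the graded lying-over for the integral extension $Z[X_0^{\pm1}]\subseteq B$ are essential, and it is also the step that genuinely uses $a\in\QQ$ (for irrational $a$ the centre collapses to $\kk$ and this correspondence fails).
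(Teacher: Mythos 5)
Your argument is correct and follows essentially the same route as the paper: identify $Z=B_0$ for the $e$-grading on $B=(A^\circ)^\Delta$, use Jordan's correspondence (Proposition~\ref{prop:jordan}, noting $\delta(B)B=B$ since $\delta(X_0)=aX_0^2$ is a unit) to identify $\Pspec A^\circ$ with the $e$-graded primes of $B$, and then match graded primes of $B$ with $\Spec Z$ using the invertible homogeneous element $X_0$. The only, immaterial, difference is in the last step: the paper's Lemma~\ref{lem1} localises at $Z\ssm Q_0$ and shows the localised graded quotient is a Laurent ring over a field, so that $\sqrt{Q_0B}$ is the unique graded prime over $Q_0$, whereas you prove the identity $\sqrt{(Q\cap Z)B}=Q$ directly via the commensurability trick $f^kX_0^m\in Z$ and then invoke lying-over for the finite extension $Z[X_0^{\pm1}]\subseteq B$ --- the same ingredients, slightly rearranged.
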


Proposition~\ref{prop:homeo2} will follow from a general lemma on gradings of localised polynomial rings.

\begin{lemma}\label{lem1}
Let $B = \CC[X_0^{\pm1}, X_1, \dots, X_m]$, where $\deg X_i = a_i\in \ZZ$.  Assume that  $a_0 \neq 0$.
Write the $\ZZ$-grading on $B$ as $B=\bigoplus_{n \in \ZZ} B_n$, and let $Z = B_0$.

Let $Q_0 \in \Spec Z $ and let $N = \sqrt{Q_0B}$.  Then:
\begin{enumerate}
\item $N$ is prime;
\item $\Kdim (B/N) = 1+ \Kdim (Z/Q_0) $.
\item The map $Q_0 \mapsto \sqrt{Q_0B}$ gives a homeomorphism $\eta:  \Spec Z\to \Spec_{\gr}(B)$.  The inverse map is given by $Q\mapsto Q \cap Z$.
Further, $\Spec Z$  is a rational variety of dimension $n-1$:  that is, the fraction field $\Frac(Z)$ of $Z$ is isomorphic to $\CC(t_0, \dots, t_{n-2})$.  
\end{enumerate}
\end{lemma}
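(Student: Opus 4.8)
The plan is to leverage the one invertible generator $X_0$, whose degree $a_0$ is nonzero, and to reduce every assertion to elementary bookkeeping with the grading. First I would normalise: dividing every degree by $d=\gcd(a_0,\dots,a_m)$ alters neither $Z=B_0$ nor the collection of graded ideals, so I may assume $\gcd(a_0,\dots,a_m)=1$; and after possibly swapping $X_0\leftrightarrow X_0^{-1}$ I may assume $a_0>0$. Two structural facts then drive the proof. (a) The projection $\pi_0\colon B\to B_0=Z$ onto degree $0$ is $Z$-linear and splits $Z\hookrightarrow B$, so that $I_0B\cap Z=I_0$ for every ideal $I_0\subseteq Z$. (b) For homogeneous $b\in B$ of degree $n$ one has $b^{a_0}X_0^{-n}\in B_0=Z$, because multiplication by the unit $X_0^{-k}$ carries $B_{ka_0}$ isomorphically onto $Z$; in particular $B_{ka_0}=ZX_0^k$, the $a_0$-th Veronese $\bigoplus_kB_{ka_0}$ is the Laurent ring $Z[X_0^{\pm1}]$, and $B$ is module-finite and integral over it since $X_i^{a_0}\in ZX_0^{a_i}$.

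The engine for part (1) and the bijection in (3) is the equivalence
\[ b\in\sqrt{Q_0B}\iff b^{a_0}X_0^{-n}\in Q_0 \qquad(\text{$b$ homogeneous of degree $n$, }Q_0\in\Spec Z), \]
whose reverse implication is $b^{a_0}=X_0^{n}\bigl(b^{a_0}X_0^{-n}\bigr)\in Q_0B$, and whose forward implication raises $b$ to a power, clears $X_0$, and intersects with $Z$ via (a). Testing this on a product $bb'$ and using that $Q_0$ is prime shows that the graded radical ideal $\sqrt{Q_0B}$ is prime, which is part (1). The same power-trick shows that every homogeneous element of a graded prime $Q$ lies in $\sqrt{(Q\cap Z)B}$ (since then $b^{a_0}X_0^{-n}\in Q\cap Z$), giving $\sqrt{(Q\cap Z)B}=Q$; and (a) gives $\sqrt{Q_0B}\cap Z=Q_0$ for prime $Q_0$. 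Hence $\eta\colon Q_0\mapsto\sqrt{Q_0B}$ and contraction $Q\mapsto Q\cap Z$ are mutually inverse. Contraction is continuous automatically, and $\eta$ is continuous because the displayed equivalence rewrites $\eta^{-1}\bigl(V(I)\bigr)$, for a graded ideal $I$, as the closed subset of $\Spec Z$ cut out by the elements $b^{a_0}X_0^{-\deg b}$ running over homogeneous generators $b$ of $I$. This yields the homeomorphism.

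For part (2) I would pass to $B/N$, which by (b) is again a $\ZZ$-graded domain carrying an invertible degree-$a_0$ element, has degree-$0$ part $Z/Q_0$, Veronese $(B/N)^{(a_0)}\cong(Z/Q_0)[\overline{X_0}^{\pm1}]$, and is module-finite over it; hence $\Kdim(B/N)=\Kdim (Z/Q_0)[\overline{X_0}^{\pm1}]=\Kdim(Z/Q_0)+1$, with $Q_0=0$ recording $\Kdim Z=m$. For rationality I compute inside $\Frac(B)=\CC(X_0,\dots,X_m)$: as $\gcd(a_i)=1$ there are integers $c_i$ with $\sum c_ia_i=1$, so $u:=\prod_iX_i^{c_i}$ has degree $1$, and a direct check gives $\Frac(Z)=\Frac(B)_0=\CC(s_0,\dots,s_m)$ with $s_i:=X_iu^{-a_i}$, subject to the single multiplicative relation $\prod_is_i^{c_i}=1$. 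Because $\gcd(c_i)\mid\sum c_ia_i=1$, the vector $(c_0,\dots,c_m)$ is primitive and extends to a $\ZZ$-basis; the associated unimodular monomial change of coordinates turns the relation into the vanishing of one coordinate, exhibiting $\Frac(Z)$ as the function field of a rank-$m$ torus, i.e. $\CC(t_0,\dots,t_{m-1})$, purely transcendental of degree $m$ (this is the $n-1$ of the statement).

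The only genuinely delicate point is the presence of the radical: when $a_0>1$ the extension $Q_0B$ need not itself be prime — for instance $Q_0=(X_1^{2}X_0^{-1})$ extends to the non-radical ideal $(X_1^{2})$ — and it is precisely the $a_0$-th power equivalence above, made possible by the invertibility of $X_0$, that controls $\sqrt{Q_0B}$ and forces the bijection. Everything else is routine manipulation with the grading.
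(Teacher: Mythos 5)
Your proof is correct, and for parts (1) and (3) it takes a genuinely different route from the paper's. Your engine is the explicit homogeneous-element criterion $b\in\sqrt{Q_0B}\iff b^{a_0}X_0^{-\deg b}\in Q_0$, from which primality of $N$, the identity $\sqrt{(Q\cap Z)B}=Q$ for graded primes $Q$, and the continuity of $\eta$ all follow by direct manipulation (you do implicitly invoke the standard facts that the radical of a graded ideal is graded and that primality of a graded ideal may be tested on homogeneous elements; these deserve a sentence, as does the reduction of closed sets of $\Spec_{\gr}(B)$ to those cut out by graded ideals). The paper instead localises at $\sC=Z\ssm Q_0$ and shows that the graded quotient $B\sC^{-1}/N\sC^{-1}$ is a Laurent polynomial ring over the field $\Frac(Z/Q_0)$, hence has no nontrivial graded ideals; this simultaneously identifies $N$ as the unique graded prime contracting to $Q_0$ and yields the bijection, with continuity deduced from the fact that $\eta$ and its inverse preserve inclusions. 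Both arguments pivot on the Veronese subring $B'=\bigoplus_k B_{ka_0}\cong Z[X_0^{\pm1}]$, and your proof of (2) via module-finiteness of $B$ over $B'$ is essentially the paper's. For the rationality of $\Spec Z$ the paper simply cites that $Z$ is a normal affine semigroup algebra, whereas you compute $\Frac(Z)$ explicitly as the function field of the rank-$m$ lattice of degree-zero Laurent monomials; your version is longer but self-contained, makes the pure transcendence visible, and produces explicit generators. Your closing example $Q_0=\langle X_1^2X_0^{-1}\rangle$, showing that $Q_0B$ itself need not be radical when $a_0>1$, correctly isolates why the radical in the statement cannot be dropped.
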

\begin{proof}
$(1)$.  Let $B' = \bigoplus_{k \in \ZZ} B_{ka_0} \cong Z[X_0^{\pm 1}]$.  
Suppose that $N$ is not prime; then there are $x,y \not\in N$ with $xy \in N$, so $(xy)^m \in Q_0B$ and $(xy)^{mp} \in Q_0B'$ for some $m, p \in \ZZ_{\geq 1}$.   
Thus either $x^{mp}$ or $y^{mp}$ is in $Q_0B$, since $Q_0B' \subseteq Q_0B$ is a prime ideal of $B'$.
This is a contradiction.

$(2)$.  Since $B$ is module-finite over $B'$, therefore $B/Q_0B$ is module-finite over $B'/Q_0B' \cong (Z/Q_0)[X_0^{\pm 1}]$.
Thus $\Kdim B/N = \Kdim B/Q_0B = \Kdim B'/Q_0B' = 1+ \Kdim Z/Q_0$.

$(3)$.  
 Let $\sC = Z \ssm Q_0$ and let $T = B \sC^{-1}/N \sC^{-1}$.  Since $N$ is graded and prime, $T$ is a graded domain.
Let $d = \min \{ k \in \ZZ_{\geq 0} \st T_k \neq 0\}$ and let $0\neq x \in T_d$.  

We claim that $T \cong \Frac(Z/Q_0)[x^{\pm1}]$.
Certainly $T_0 = Z \sC^{-1}/Q_0 \sC^{-1} \cong \Frac(Z/Q_0)$. 
Now,  $X_0$ and $X_0^{-1}$ map to nonzero elements of $T$ under the natural map $B \to T$.  By abuse of notation, let  $X_0$ also denote the image of $X_0$ in $T$.  
A straightforward combinatorial argument  shows that $T = \bigoplus_{k \in \ZZ} T_{kd}$.  In particular, $a_0 = d\ell$ for some $\ell$ and so $x^\ell (X_0)^{-1} \in T_0$ is invertible. 
Thus $x$ is invertible in $T$.  It follows that $T_{dk} = T_0 x^k$ for all $k$, completing the proof of the claim.

Since $T \cong \Frac(Z/Q_0)[x^{\pm1}]$ is a Laurent polynomial ring over a field, it has no non trivial graded ideals.
Now, $\Spec_{\gr} T$ is in bijection with 
\[\{ Q \in \Spec_{\gr} B \st Q \supseteq N, Q \cap \sC = \emptyset\} =
\{ Q \in \Spec_{\gr} B \st Q \cap Z = Q_0 \},\]
where we have used part $(1)$ of the lemma.
Thus there is only one such $Q$, namely $N$.

Define $\theta:  \Spec_{\gr} B \to \Spec Z$ by $\theta(Q) = Q \cap Z$.  The argument above shows that if $Q \in \Spec_{\gr} B$, then $Q = \eta \theta (Q)$.  
Since $\theta\eta$ is easily seen to be the identity on $\Spec Z$, therefore $\theta = \eta^{-1}$.  
As $\eta$ and $\theta$ clearly preserve inclusions, they are continuous and thus homeomorphisms.

Since $Z$ is a normal semigroup algebra, $\Spec Z$ is  rational.
\end{proof}

We next give an explicit characterisation of Poisson primitive ideals of $A^\circ$ if $a\in \QQ$.  
By Theorem~\ref{thm:spectra} these are the same as the primitive ideals of $R^\circ$.
Recall from the end of Section \ref{FIRSTPROPS} that we define a grading $A^\circ=\bigoplus_{e\in\kk}A^\circ(e)$ by setting $A^\circ(e)$ to be the $e$-eigenspace of $\Gamma_a$.
Moreover notice that $(A^\circ)^{\Delta}$ is an $e$-graded subalgebra of $A^\circ$. 

\begin{corollary}\label{cor:glasgow}
If $a \in \QQ^\times$ then $\Pspec A^\circ$ is homeomorphic to $\Spec Z$.  Furthermore, a Poisson prime $P$ of $\Pspec A^\circ$ is Poisson primitive if and only if $P \cap Z$ is a maximal ideal of $Z$.
\end{corollary}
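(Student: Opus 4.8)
The first assertion is exactly the homeomorphism $\phi\colon \Pspec A^\circ \to \Spec Z$, $P \mapsto P \cap Z$, established in Proposition~\ref{prop:homeo2}, so the plan is to show that under $\phi$ the Poisson primitive ideals correspond precisely to the maximal ideals of $Z$. Two standing facts will be used throughout: that $A^\circ$ is a finitely generated (hence Jacobson) $\kk$-algebra, being a localisation of a polynomial ring; and that in characteristic zero the largest Poisson ideal contained in a prime ideal is again Poisson prime (the Poisson analogue of \cite[Lemma~1.1]{Goodearl}, proved by testing the defining condition on Poisson ideals). In particular every Poisson primitive ideal genuinely lies in $\Pspec A^\circ$.

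For the forward implication I would start from a Poisson primitive $P = \mathcal P(\mathfrak m)$, the Poisson core of a maximal ideal $\mathfrak m$ of $A^\circ$, and exploit the centrality of $Z$: because $\{z,-\}=0$ for every $z \in Z$, the ideal $(\mathfrak m \cap Z)A^\circ$ is a Poisson ideal contained in $\mathfrak m$, hence contained in $P$. Combined with $P \subseteq \mathfrak m$ this squeezes $P \cap Z = \mathfrak m \cap Z$. It then remains to see that $\mathfrak m \cap Z$ is maximal in $Z$, which follows from the Nullstellensatz: $A^\circ/\mathfrak m$ is a finite field extension of $\kk$, and the image of $Z$ in it is a finite-dimensional $\kk$-domain, hence a field.

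For the converse I would take a Poisson prime $P$ with $Q_0 := P \cap Z$ maximal and choose any maximal ideal $\mathfrak m \supseteq P$ of $A^\circ$ (one exists since $P$ is proper). Since $Z/Q_0$ is a field sitting inside $A^\circ/P$ and $\mathfrak m/P$ is a proper ideal, one has $(\mathfrak m/P)\cap(Z/Q_0)=0$, whence $\mathfrak m \cap Z = Q_0$. The Poisson core $\mathcal P(\mathfrak m)$ is then Poisson primitive and, by the forward computation, satisfies $\mathcal P(\mathfrak m) \cap Z = \mathfrak m \cap Z = Q_0 = P \cap Z$; injectivity of $\phi$ forces $P = \mathcal P(\mathfrak m)$, so $P$ is Poisson primitive.

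The crux — and the only step requiring genuine care — is the identity $P \cap Z = \mathfrak m \cap Z$ for $P = \mathcal P(\mathfrak m)$, which is where the centrality of $Z$ does the essential work, squeezing the Poisson ideal $(\mathfrak m\cap Z)A^\circ$ between $Q_0 A^\circ$ and $\mathfrak m$. The hypothesis $a \in \QQ^\times$ enters only through Proposition~\ref{prop:homeo2}, which supplies the homeomorphism $\phi$ and in particular its injectivity, used to conclude $P = \mathcal P(\mathfrak m)$ in the converse. The two remaining inputs — that contractions of maximal ideals of $A^\circ$ to $Z$ stay maximal, and that Poisson cores of maximal ideals are Poisson prime — are routine consequences of the Nullstellensatz and of the characteristic-zero theory of Poisson ideals, respectively.
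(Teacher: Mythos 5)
Your argument is correct, but it characterises the Poisson primitive ideals by a genuinely different route from the paper. The paper's proof stays inside the machinery of Section~\ref{PRIMES}: it uses Proposition~\ref{prop:jordan} to identify $\Pspec A^\circ$ with the $\delta$-prime (equivalently, $e$-graded prime) ideals of $B=(A^\circ)^\Delta$, invokes \cite[Corollary~4.4]{Jordan} to translate ``Poisson primitive in $A^\circ$'' into ``$\delta$-primitive in $B$'', observes that $\delta$-primitive means maximal $e$-graded, and then applies Lemma~\ref{lem1} to match maximal graded ideals of $B$ with maximal ideals of $Z=B_0$. You instead argue directly in $A^\circ$ with Poisson cores: for $P=\mathcal P(\mathfrak m)$ the centrality of $Z$ squeezes $(\mathfrak m\cap Z)A^\circ$ between $P$ and $\mathfrak m$ to give $P\cap Z=\mathfrak m\cap Z$, the Nullstellensatz (Zariski's lemma, valid over any base field of characteristic zero, as the paper does not assume $\kk$ algebraically closed here) shows this contraction is maximal, and the converse follows from injectivity of $\phi$ applied to $P$ and $\mathcal P(\mathfrak m)$. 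Both proofs lean on the same homeomorphism $\phi$ from Proposition~\ref{prop:homeo2}, so neither is independent of the Jordan/grading machinery; but your treatment of primitivity avoids the citation to \cite[Corollary~4.4]{Jordan} entirely and is essentially formal --- it would work for any finitely generated Poisson $\kk$-algebra on which $P\mapsto P\cap Z$ is injective on Poisson primes --- whereas the paper's version is shorter given that Jordan's results and Lemma~\ref{lem1} are already in place and keeps the whole section uniformly phrased in terms of the $e$-graded commutative ring $B$. The two facts you flag as routine (that Poisson cores of primes are Poisson prime, and that contractions of maximal ideals to $Z$ remain maximal) are indeed standard and your sketches of them are sound.
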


\begin{proof}
Let $B = \kk[X_0^{\pm1}, Y_2, \dots, Y_n] = (A^\circ)^{\Delta}$.
Let $Z$ be the Poisson centre of $A^\circ$.
By Proposition~\ref{prop:pnn}, $Z = B^{\Gamma_a}=B(0)$. 
Since $a \neq 0$, $X_0 \in \delta(B)$ so no nontrivial ideal of $A^\circ$ can contain $\delta(B)$.  
By Proposition~\ref{prop:jordan},  there is an inclusion-preserving bijection $\rho$ between $\Pspec A^\circ$ and the set of $\delta$-prime ideals of $B$, defined by $\rho(P) = P\cap B$.  
Note that a $\delta$-prime of $B$ is the same as an $e$-graded prime of $B$.  Thus by Lemma~\ref{lem1} the map $\phi = \theta\rho:  \Spec A^\circ \to \Spec Z$ is a homeomorphism. 
The inverse to $\phi$ is 
\[ \psi = \rho^{-1} \eta:  Q_0 \mapsto A^\circ(\sqrt{Q_0B}) = \sqrt{Q_0A^\circ}.\]
By  \cite[Corollary~4.4]{Jordan}, $P \in \Pspec A^\circ$ is Poisson primitive if and only if $P \cap B$ is a {\em $\delta$-primitive} ideal of $B$, i.e. $P \cap B$ is the largest $\delta$-stable ideal of $B$ contained in some maximal ideal $M$.
Since an ideal of $B$ is $\delta$-stable $\iff$ it is $\Gamma_a$-stable,  a $\delta$-primitive ideal of $B$ is a maximal  $e$-graded ideal of $B$, and by Lemma~\ref{lem1} these are precisely ideals of the form $\psi(M_0)$ for $M_0 = P \cap B \cap Z = P\cap Z \in \operatorname{maxspec} Z$.
\end{proof}

We next assume that $a \neq 0$, and consider $d$-graded Poisson primes of $A^\circ$, which by Theorem~\ref{thm:spectra} may be identified with $d$-graded primes of $R^\circ$.
For $2 \leq i \leq n$, let $Z_i = Y_iX_0^{-1}$ and $C= \kk[Z_2, \dots, Z_n]$.
The $e$-grading on $A^\circ$ restricts to $C$, with $e(Z_i) = i$.

\begin{proposition}\label{prop:specgr}
If $a \neq 0$, then $\Pspec_{d-\gr}(A^\circ) $ is homeomorphic to $\Spec_{e-\gr}(C) \cong X(n)$.
Further, as long as $a \neq 0$, then $\Pspec_{d-\gr}(A^\circ)$ does not depend on $a$ in the sense of Theorem~\ref{thm:spectra2}(4). 
\end{proposition}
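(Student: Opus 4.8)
The plan is to transfer the problem from $A^\circ$ down to the ring $C$ in two stages, exploiting first the Poisson--Ore structure of $A^\circ$ and then the Laurent-polynomial structure of $B=(A^\circ)^\Delta$.

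\emph{Reduction to $B$.} By Lemma~\ref{lem:fourprime}, $A^\circ=B[X_1;\delta]$ is a Poisson--Ore extension with $B=\kk[X_0^{\pm1},Y_2,\dots,Y_n]$ and $\delta=X_0\Gamma_a$. Since $a\neq 0$ we have $\delta(X_0)=aX_0^2$, so $\delta(B)B=B$ and no proper Poisson prime of $A^\circ$ can contain $\delta(B)$. Hence, exactly as in the proof of Corollary~\ref{cor:glasgow}, Proposition~\ref{prop:jordan} gives an inclusion-preserving bijection $\rho\colon P\mapsto P\cap B$ from $\Pspec A^\circ$ onto the set of $\delta$-prime ideals of $B$, with inverse $Q\mapsto QA^\circ$. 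Because $X_0$ is a unit, an ideal of $B$ is $\delta$-stable iff it is $\Gamma_a$-stable iff it is $e$-graded, so the $\delta$-primes of $B$ are precisely its $e$-graded primes. Finally, as $B$ is a $d$-graded subring of $A^\circ$ and $P=(P\cap B)A^\circ$, one sees that $P$ is $d$-graded iff $P\cap B$ is; thus $\rho$ restricts to a bijection from $\Pspec_{d-\gr}(A^\circ)$ onto the primes of $B$ that are simultaneously $d$- and $e$-graded.

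\emph{Reduction to $C$.} Since $d(Z_i)=0$ and $d(X_0)=1$, we have $B=C[X_0^{\pm1}]$ with $C=B_0$ its $d$-degree-zero part, the $d$-grading being by powers of $X_0$. As in Lemma~\ref{lem1}, because $X_0$ is invertible every $d$-graded ideal $I$ of $B$ equals $(I\cap C)[X_0^{\pm1}]$ and is prime iff $I\cap C$ is prime; expanding an element of $I$ into its $\Gamma_a$-eigencomponents shows that $I$ is moreover $e$-graded iff $I\cap C$ is $e$-graded. Hence $I\mapsto I\cap C$ is a bijection from the simultaneously $d$- and $e$-graded primes of $B$ onto $\Spec_{e-\gr}(C)$, with inverse $Q\mapsto Q[X_0^{\pm1}]$. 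Composing with $\rho$ yields the bijection $\Phi\colon\Pspec_{d-\gr}(A^\circ)\to\Spec_{e-\gr}(C)$, $P\mapsto P\cap C$, with inverse $Q\mapsto QA^\circ$. A direct computation gives $\Gamma_a(Z_i)=iZ_i$, so the $e$-grading on $C$ is the weighting $\deg Z_i=i$; thus $C=C(n)$ and $\Spec_{e-\gr}(C)=X(n)$ by definition.

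\emph{Homeomorphism and independence of $a$.} All of the maps above preserve inclusions in both directions, so, just as in Lemma~\ref{lem1}, they are continuous and $\Phi$ is a homeomorphism. For the independence statement, note that $\Delta$, and therefore each $Y_j$ and each $Z_j$, does not involve $a$; the commutative ring underlying $A^\circ$ is $\kk[X_0^{\pm1},X_1,\dots,X_n]$ regardless of $a$; and the identity $\Gamma_a(Z_i)=iZ_i$ shows the $e$-grading on $C$, and hence the set $\Spec_{e-\gr}(C)$, is the same for all $a$. Since $\Phi^{-1}(Q)=QA^\circ$ is an ideal of this fixed ring depending only on $Q$, the set $\Pspec_{d-\gr}(A^\circ)=\{QA^\circ : Q\in\Spec_{e-\gr}(C)\}$ is independent of $a$, which is the sense of Theorem~\ref{thm:spectra2}(4).

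I expect the main work to lie in the second reduction: correctly disentangling the three gradings present on $B$ --- the $a$-dependent $e$-grading, the $d$-grading, and the grading by powers of $X_0$ --- and checking that ``$d$-graded'' picks out exactly the $X_0$-power grading so that intersecting with $C=B_0$ is legitimate, while the $a$-dependence of the $e$-grading cancels on $C$ because $\Gamma_a(Z_i)=iZ_i$. The homeomorphism and the first reduction are close recapitulations of Corollary~\ref{cor:glasgow} and Lemma~\ref{lem1}.
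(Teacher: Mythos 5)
Your proof is correct and follows essentially the same route as the paper's: reduce $\Pspec A^\circ$ to the $e$-graded primes of $B$ via Proposition~\ref{prop:jordan} (as in Corollary~\ref{cor:glasgow}), then use $B=C[X_0^{\pm1}]$ to descend the $d$-graded ones to $\Spec_{e-\gr}(C)$, with the $a$-independence coming from $\Gamma_a(Z_i)=iZ_i$. You simply spell out in more detail the steps the paper dismisses as ``clear.''
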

\begin{proof}
Note that $C$ is precisely $B_0 = \{ b\in B \ | \ \mbox{ $b$ is $d$-homogeneous of degree $0$ } \}$. 
As in the proof of Corollary~\ref{cor:glasgow}, there is an inclusion-preserving bijection between $\Pspec(A^\circ)$ and $\Spec_{e-\gr}(B)$, and it is clear that this bijection takes $d$-graded primes to $d$-graded primes:  in other words, $\Pspec_{d-\gr}(A^\circ)$ is homeomorphic to the set of $(d,e)$-bigraded primes of $B$, via the map $P \mapsto P \cap B$.
Since   $B=C[X_0^{\pm1}]$ is strongly $d$-graded, we have $\Spec_{d-\gr}(B) \cong \Spec C$.
Thus $\Pspec_{d-\gr}(A^\circ)$ is homeomorphic to $\Spec_{e-\gr}(C)$ via $P \mapsto P \cap C$.  
But $\Spec_{e-\gr}(C)$ is homeomorphic by definition to $X(n)$.

For the final statement, note that the definition of $C$, the $e$-grading on $C$, and the restriction homeomorphism $\Pspec_{d-\gr}(A^\circ) \to \Spec_{e-\gr} (C ) $ do not depend on the value of $a$.
\end{proof}

The result above is particularly strong if $a \not\in \QQ$, since we then have:
\begin{proposition}\label{prop:allgraded}
If $a \not \in \QQ$, then all Poisson primes of $A^\circ$ are $d$-graded.
\end{proposition}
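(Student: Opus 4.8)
The plan is to establish the result in two moves: first show that, for any $a\neq 0$, every Poisson prime of $A^\circ$ is graded for the $\Gamma_a$-eigenvalue grading $A^\circ=\bigoplus_e A^\circ(e)$; then use the irrationality of $a$ to upgrade ``$e$-graded'' to ``$d$-graded''. Since $a\not\in\QQ$ forces $a\neq 0$, the first move applies.

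First I would invoke the Poisson--Ore description from Lemma~\ref{lem:fourprime}: $A^\circ\cong(B[z],\{-,-\}_\delta)$ with $B=(A^\circ)^\Delta=\kk[X_0^{\pm1},Y_2,\dots,Y_n]$, $z=X_1$, and $\delta=X_0\Gamma_a$. Because $a\neq 0$, we have $\delta(X_0)=aX_0^2\in\delta(B)$, which is a unit of $B$, so $\delta(B)B=B$ and no proper ideal of $A^\circ$ can contain $\delta(B)$. Consequently the first alternative of Proposition~\ref{prop:jordan} is vacuous, and exactly as in the correspondence set up in the proofs of Corollary~\ref{cor:glasgow} and Proposition~\ref{prop:specgr}, every Poisson prime $P$ of $A^\circ$ satisfies $P=(P\cap B)A^\circ$ where $Q:=P\cap B$ is a $\delta$-prime ideal of $B$.

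Next, since $X_0$ is a unit in $B$, an ideal of $B$ is $\delta$-stable if and only if it is $\Gamma_a$-stable, i.e.\ if and only if it is homogeneous for the $e$-grading; hence $Q$ is $e$-graded. As $z=X_1$ is $e$-homogeneous (with $e(X_1)=a+1$) and $P=QA^\circ=\bigoplus_{m\geq 0}Qz^m$ as a left $B$-module, each summand $Qz^m$ is $e$-graded, so $P$ is $e$-graded. This step uses only $a\neq 0$ and shows that all Poisson primes of $A^\circ$ are $e$-graded. Finally I would exploit $a\not\in\QQ$: recall from the end of Section~\ref{FIRSTPROPS} that an element of $(A^\circ)^\epsilon_d$ lies in $A^\circ(e)$ for $e=da+\epsilon$. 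The map $\ZZ^2\to\kk$, $(d,\epsilon)\mapsto da+\epsilon$, is injective precisely because $a$ is irrational, since $da+\epsilon=d'a+\epsilon'$ forces $(d-d')a=\epsilon'-\epsilon\in\ZZ$ and hence $d=d'$, $\epsilon=\epsilon'$. Thus each component $A^\circ(e)$ is concentrated in a single $d$-degree, so every $e$-homogeneous element is $d$-homogeneous and every $e$-graded ideal is $d$-graded. Applying this to $P$ finishes the argument.

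The proof is largely bookkeeping on top of machinery already in place, so the genuine content is small; the one point that really requires $a\not\in\QQ$ — and which I regard as the crux — is the injectivity of $(d,\epsilon)\mapsto da+\epsilon$, making the $\Gamma_a$-eigenvalue grading refine the degree grading. I expect no serious obstacle beyond being careful that the correspondence $P\mapsto P\cap B$ of Proposition~\ref{prop:jordan} captures \emph{all} Poisson primes (which it does, the alternative $P\supseteq\delta(B)$ being impossible here). It is worth emphasising that for $a\in\QQ^\times$ this last step fails: the map $(d,\epsilon)\mapsto da+\epsilon$ is no longer injective, $e$-graded ideals can mix $d$-degrees, and this is exactly the mechanism behind the non-graded primes counted in Theorem~\ref{thm:spectra2}(2).
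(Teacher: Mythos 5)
Your proof is correct and follows essentially the same route as the paper's: both pass to the homeomorphism $\Pspec(A^\circ)\cong\Spec_{e-\gr}(B)$ coming from Proposition~\ref{prop:jordan} (the alternative $P\supseteq\delta(B)$ being impossible since $\delta(B)$ contains a unit), and then use the irrationality of $a$ to see that the $e$-grading refines the $d$-grading. The paper phrases this last step as the semigroup $\ZZ a+\NN$ of $e$-values being isomorphic to $\ZZ\oplus\NN$, which is precisely your injectivity of $(d,\epsilon)\mapsto da+\epsilon$.
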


\begin{proof}
We have seen that $\Pspec(A^\circ) \cong \Spec_{e-\gr}(B)$, where $B = \kk[X_0^{\pm1}, Y_2, \dots, Y_n]$.
Let $\mb S = \{ e(b) \ | \ b \in B\}$, which is clearly equal to $\ZZ a + \NN$.
Since $a \not\in \QQ$, we have $\mb S \cong \ZZ \oplus \NN$ as a semigroup, and it follows that if $b\in B$ is $e$-homogeneous then  $b$ is  $d$-homogeneous.
\end{proof}

We now combine the previous results to prove Theorem~\ref{thm:spectra2}.
\begin{proof}[Proof of Theorem~\ref{thm:spectra2}]
$(1)$ follows from Propositions~\ref{prop:specgr} and \ref{prop:allgraded}, and $(2)$ is Corollary~\ref{cor:glasgow} and Proposition~\ref{prop:specgr} again.

For $(3)$, let $B_+ = \sum B Y_i$; we have $B_+ = \delta(B)B$.  Note that by Proposition~\ref{prop:jordan}, $\Pspec A^\circ$ is equal to the disjoint union of:
\beq\label{aa}
\{ P = QA^\circ \ | \ \mbox{ $Q \not\supseteq B_+ $ is a $\delta$-prime ideal of $B$} \}
\eeq
and
\beq \label{bb}
\{ P \ | \ P \mbox{ is a prime ideal of $A^\circ$ with } P \supseteq B_+\}.
\eeq
As we have repeatedly seen above, $\delta$-prime ideals of $B$ are the same as $e$-graded ideals:  that is, 
\eqref{aa} is homeomorphic to
\[ \{ Q \in \Spec_{e-\gr} B \ | \ Q \not\supseteq B_+\}\cong \Proj B = \PP_{\CC[X_0^{\pm1}]}(2, \dots, n).\]
On the other hand, \eqref{bb} is clearly homeomorphic to the  spectrum of $A^\circ/(Y_2, \dots, Y_n) \cong \kk[X_0^{\pm 1}, X_1]$ (with trivial Poisson bracket).

Finally,  $(4)$ follows from Proposition~\ref{prop:specgr}.
\end{proof}

\begin{corollary}\label{cor:specgr}
If $a \not \in \{ -n +1, \dots, -1, 0\}$ then $\Pspec_{d-\gr}A(n,a) = \Spec_{d-\gr} R(n,a)$ does not depend on $a$. In particular $\Spec R(n,a)\cong\Spec R(n,b)$ for any $a,b\notin\QQ$. 
\end{corollary}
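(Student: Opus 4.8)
The plan is to prove Corollary~\ref{cor:specgr} by induction on $n$, using the stratification $\Spec R(n,a) = \bigsqcup_{j=0}^{n+1} \Spec_j R(n,a)$ together with the facts, established above, that each stratum is governed by a smaller algebra. Throughout, ``does not depend on $a$'' should be read in the literal sense of Theorem~\ref{thm:spectra2}(4): the set of $d$-graded primes of $\kk[X_0,\dots,X_n]$ that occurs is unchanged as $a$ varies over the allowed range. Since $\Spec_{d-\gr} R(n,a) = \Pspec_{d-\gr} A(n,a)$ as sets of ideals by Theorem~\ref{thm:spectra}(1), it is enough to track the $d$-graded Poisson primes of $A(n,a)$.

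For the inductive step I would assume $n \geq 2$ and that the result holds for $n-1$, and split the $d$-graded stratification as
\[ \Spec_{d-\gr} R(n,a) = \Spec_{0,d-\gr} R(n,a) \ \sqcup\ \bigsqcup_{j \geq 1} \Spec_{j,d-\gr} R(n,a). \]
For the open stratum one has $\Spec_{0,d-\gr} R(n,a) \cong \Pspec_{d-\gr} A(n,a)^\circ$ (via $P \mapsto P[X_0^{-1}]$, from the proof of Theorem~\ref{thm:spectra}), which by Proposition~\ref{prop:specgr} is independent of $a$ as soon as $a \neq 0$; crucially the homeomorphism realising this is the restriction map $P \mapsto P \cap C$, manifestly independent of $a$. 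For the higher strata, Proposition~\ref{prop:factX1} provides $d$-graded-preserving homeomorphisms $\Spec_{j,d-\gr} R(n,a) \cong \Spec_{j-1,d-\gr} R(n-1,a+1)$, realised by killing $X_0$ and relabelling $X_{i+1} \mapsto X_i$, a correspondence of ideals not involving $a$. Reassembling, $\bigsqcup_{j \geq 1} \Spec_{j,d-\gr} R(n,a) \cong \Spec_{d-\gr} R(n-1,a+1)$, which by the induction hypothesis is independent of $a$ provided $a+1 \notin \{-(n-1)+1, \dots, 0\}$, i.e. provided $a \notin \{-(n-1), \dots, -1\}$. Combined with the $a \neq 0$ condition coming from the open stratum, this yields exactly $a \notin \{-n+1, \dots, -1, 0\}$.

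For the base case $n=1$ the hypothesis reads $a \neq 0$, so $R(1,a)$ is a Jordan plane (Example~\ref{exn=2}); a direct check shows its only $d$-graded primes are $0$, $\langle X_0 \rangle$ and $\langle X_0, X_1 \rangle$, a set independent of $a \neq 0$, whereas for $a=0$ the ring is a polynomial ring with many more graded primes. This same degeneration, appearing in the stratum $\Spec_{n-1} R(n,a) \cong \Spec_0 R(1, a+n-1)$, is precisely what forces the endpoint $a \neq -(n-1)$ of the excluded set. The main obstacle — and the only genuinely delicate point — will be to maintain literal equality of ideal-sets rather than mere homeomorphism throughout: I must check that every identification used (Proposition~\ref{prop:factX1}, the localisation $\Spec_0 R \cong \Spec R^\circ$, and the restriction map of Proposition~\ref{prop:specgr}) is induced by an $a$-independent operation on ideals of the fixed ring $\kk[X_0,\dots,X_n]$, so that the independence statements compose correctly.

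For the final assertion, suppose $a, b \notin \QQ$; then neither lies in the integer set $\{-n+1,\dots,0\}$, so the first part shows $\Spec_{d-\gr} R(n,a)$ and $\Spec_{d-\gr} R(n,b)$ coincide. It then remains to see that \emph{every} prime of $R(n,a)$ is already $d$-graded when $a \notin \QQ$. I would prove this by the same stratification: the open stratum is $\Pspec A(n,a)^\circ$, all of whose primes are $d$-graded by Proposition~\ref{prop:allgraded} (as $a \notin \QQ$), while each higher stratum reduces via Proposition~\ref{prop:factX1} to $R(n-1,a+1)$ with $a+1 \notin \QQ$, so an easy induction gives $\Spec R(n,a) = \Spec_{d-\gr} R(n,a)$. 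Hence $\Spec R(n,a)$ and $\Spec R(n,b)$ are the same space, and in particular $\Spec R(n,a) \cong \Spec R(n,b)$, as claimed.
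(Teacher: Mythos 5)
Your proposal is correct and follows essentially the same route as the paper, whose proof is the one-line instruction ``combine Theorem~\ref{thm:spectra2}(4), Proposition~\ref{prop:factX1}, and induction; the second assertion follows from Proposition~\ref{prop:allgraded}'' — your argument is simply a careful unpacking of that, with the right bookkeeping of where each excluded value $a=0,-1,\dots,-(n-1)$ arises. The extra care you take to check that the identifications are literal, $a$-independent operations on ideals is a welcome sharpening but not a departure from the paper's method.
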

\begin{proof}
Combine Theorem~\ref{thm:spectra2}(4), Proposition~\ref{prop:factX1}, and induction. The second assertion follows from Proposition~\ref{prop:allgraded}.
\end{proof}


\section{Dixmier-Moeglin equivalences and skewfields}
\label{DME}

	In this section we show that the algebra $R(n,a)$ satisfies the Dixmier-Moeglin equivalence (DME). Recall that primitive ideals are annihilators of simple modules. In particular they are not easily distinguished among the primes. The DME  characterises them with algebraic and topological properties. A prime ideal $P$ in a noetherian ring $R$ is said {\em rational} provided that the field $Z(\F R/P)$ is algebraic over the ground field, and is said {\em locally closed} if the point $\{P\}$ is locally closed in $\Spec R$ (with respect to the Zariski topology). We say that the {\em Dixmier-Moeglin equivalence} holds for a given noetherian algebra if the sets of primitive ideals, locally closed ideals and rational ideals coincide. This idea originated in the work of Dixmier and Moeglin who showed that for any finite dimensional complex Lie algebras, theses sets are equal. 

Thanks to our result on the DME we prove a transfer result which says that the Poisson algebra $A(n,a)$ satisfies a similar equivalence, the so-called Poisson Dixmier-Moeglin equivalence (PDME), which we will recall later. To prove our results on the DME we relate $R(n,a)$ to the enveloping algebra of a solvable Lie algebra sitting inside the localisation $R(n,a)^\circ$. With the Gelfand-Kirillov conjecture \cite[Section 5]{GK} in mind, this motivated us to investigate  the skewfield of fractions of $R(n,a)$ at the end of this section.

Let $R=R(n,a)$ and recall that $R^{\circ}=R[X_0^{-1}]$. We denote by $T$ the subalgebra of $R^{\circ}$ generated by $X_0,X_1,Y_2,\dots,Y_n$. Setting $C:=\kk[X_0,Y_2,\dots,Y_n]$ we have that $T=C[X_1;\de]$, where $\de=X_0\Gamma_a$ is a derivation of the commutative ring $C$. Moreover we set $Y_0:=X_0$ and we denote by $\mathfrak{g}_a$ the $n$-dimensional solvable Lie algebra with basis elements $Y_0,X,Y_2,\dots,Y_n$ and Lie brackets 
\begin{equation}
\label{lie}
[X,Y_i]=(a+i)Y_i \qquad [Y_i,Y_j]=0 \qquad\text{for all }i,j.
\end{equation} 
In particular sending $Y_i$ to $Y_i$ and $X$ to $Y_0^{-1}X_1$ we obtain the isomorphism 
\begin{equation}
\label{uiso}
U(\mathfrak{g}_a)[Y_0^{-1}]\cong T[Y_0^{-1}]\cong R^{\circ}.
\end{equation}

We now state the main result of this section.

\begin{theorem}
\label{thm:DME}
The algebra $R(n,a)$ satisfies the DME for any $a\in\kk$ and $n\geq1$.
\end{theorem}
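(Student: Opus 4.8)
The plan is to reduce the DME, as is standard, to a single implication and then run an induction on $n$ powered by the localisation isomorphism \eqref{uiso} and the decomposition of $\Spec R$ by the normal element $X_0$. In any affine noetherian $\kk$-algebra satisfying the Nullstellensatz the implications \emph{locally closed} $\Rightarrow$ \emph{primitive} $\Rightarrow$ \emph{rational} hold automatically (the first because such rings are Jacobson, the second because the endomorphism rings of simple modules are algebraic over $\kk$). The algebras $R=R(n,a)$, $R^\circ$ and $U(\mathfrak{g}_a)$ are all affine and noetherian, and satisfy the Nullstellensatz (classical for $U(\mathfrak{g}_a)$, and inherited by $R$ and $R^\circ$ through the $\epsilon$-filtration and the localisation). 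Hence the only thing left to prove is that every \emph{rational} prime is \emph{locally closed}; once this is shown, the cycle locally closed $\Rightarrow$ primitive $\Rightarrow$ rational $\Rightarrow$ locally closed forces the three sets to coincide.

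The key point is that both rationality and local closedness transfer transparently across the splitting of $\Spec R$ by $X_0$. If $X_0 \notin P$, then $R^\circ/P^\circ \cong (R/P)[X_0^{-1}]$, so $\Frac(R/P)=\Frac(R^\circ/P^\circ)$ and $P$ is rational in $R$ iff $P^\circ$ is rational in $R^\circ$; if $X_0 \in P$, then $R/P \cong R(n-1,a+1)/\overline{P}$ by Proposition~\ref{prop:factX1}, so rationality is read off the quotient. For local closedness the argument is purely topological: $\Spec_0(R)=\{P : X_0\notin P\}\cong \Spec R^\circ$ is \emph{open} in $\Spec R$ while $V(\ang{X_0})\cong \Spec R(n-1,a+1)$ is \emph{closed}, and a point of an open (respectively closed) subspace $U\subseteq X$ is locally closed in $X$ exactly when it is locally closed in $U$. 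I will apply these same two compatibilities with $X_0$ replaced by $Y_0$ to handle the open stratum.

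I first establish the DME for $R^\circ$. By \eqref{uiso} we have $R^\circ \cong U(\mathfrak{g}_a)[Y_0^{-1}]$, where $\mathfrak{g}_a$ is the finite-dimensional completely solvable Lie algebra \eqref{lie}: the elements $Y_0,Y_2,\dots,Y_n$ span an abelian ideal on which $X$ acts diagonally with eigenvalues $a+i\in\kk$. By the Dixmier--Moeglin equivalence for enveloping algebras of finite-dimensional solvable Lie algebras, $U(\mathfrak{g}_a)$ satisfies the DME; in particular every rational prime of $U(\mathfrak{g}_a)$ is locally closed. Since $Y_0$ is normal in $U(\mathfrak{g}_a)$, the space $\Spec R^\circ$ is the open locus $\{Q : Y_0\notin Q\}$ of $\Spec U(\mathfrak{g}_a)$, and the transfer statements above (applied to $Y_0$) show that a rational prime of $R^\circ$ is rational in $U(\mathfrak{g}_a)$, hence locally closed there, hence locally closed in the open subspace $\Spec R^\circ$. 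Together with the automatic implications this gives the DME for $R^\circ$. Now I induct on $n$: the base case $n=1$ is the Jordan plane (or polynomial ring) of Example~\ref{exn=2}, for which the DME is known. For $n>1$ let $P\in\Spec R$ be rational. If $X_0\notin P$ then $P^\circ$ is rational in $R^\circ$, hence locally closed in the open set $\Spec_0(R)$, hence locally closed in $\Spec R$; if $X_0\in P$ then $\overline{P}$ is rational in $R(n-1,a+1)$, hence locally closed by induction, hence locally closed in the closed set $V(\ang{X_0})\cong\Spec R(n-1,a+1)$ and so in $\Spec R$. Thus every rational prime of $R$ is locally closed, which completes the proof.

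I expect the main obstacle to lie not in any single hard computation but in pinning down that the soft transfer arguments genuinely apply: that the decomposition of $\Spec R$ by $X_0$ is exactly the homeomorphic match-up $\Spec_0(R)\cong\Spec R^\circ$ and $V(\ang{X_0})\cong\Spec R(n-1,a+1)$ furnished by Proposition~\ref{prop:factX1} and the localisation theory, and that the Nullstellensatz — hence the Jacobson property and the two easy implications — really holds for all three algebras over an arbitrary field of characteristic zero. The one genuinely external input is Dixmier's DME for $U(\mathfrak{g}_a)$, on which the entire open-stratum argument rests.
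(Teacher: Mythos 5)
Your proof is correct and follows essentially the same route as the paper's: reduce via the noncommutative Nullstellensatz to showing rational $\Rightarrow$ locally closed, induct on $n$, split according to whether $X_0\in P$, handle the closed stratum through $R/\ang{X_0}\cong R(n-1,a+1)$ and the open stratum through $R^\circ\cong U(\mathfrak{g}_a)[Y_0^{-1}]$ together with the Dixmier--Moeglin equivalence for solvable enveloping algebras. The only cosmetic differences are that you justify the transfer of local closedness by the general topology of open/closed subspaces where the paper argues directly with intersections of primes (its Lemmas~\ref{quo} and~\ref{loc}), and that over a not-necessarily-algebraically-closed field of characteristic zero the DME for $U(\mathfrak{g}_a)$ should be attributed to Irving--Small rather than to Dixmier.
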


The proof of Theorem \ref{thm:DME} relies on the following lemmas.

\begin{lemma} 
\label{quo}
Let $P\in\Spec R$ and suppose that $X_0\in P$. Then $P$ is locally closed in $\Spec R$ if and only if $P/\langle X_0 \rangle$ is locally closed in $\Spec R/\langle X_0 \rangle$.
\end{lemma}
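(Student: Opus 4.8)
The plan is to reduce the statement to a general topological fact about local closedness in closed subspaces, after identifying the relevant closed subspace of $\Spec R$ with $\Spec R/\langle X_0\rangle$.

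First I would recall that, by Proposition~\ref{prop:factX1}, $X_0$ is normal in $R$, so that $\langle X_0\rangle = X_0 \ast R = R \ast X_0$ is a genuine two-sided ideal and the closed set $V(\langle X_0\rangle) = \{ Q \in \Spec R \mid X_0 \in Q\}$ makes sense. As for any two-sided ideal, the canonical surjection $\pi \colon R \to R/\langle X_0\rangle$ induces an inclusion-preserving bijection between the prime ideals of $R$ containing $X_0$ and the prime ideals of $R/\langle X_0\rangle$, sending $Q \mapsto Q/\langle X_0\rangle$. Since this bijection takes $V(I)$ to $V(I/\langle X_0\rangle)$ for each ideal $I \supseteq \langle X_0\rangle$, it is a homeomorphism $V(\langle X_0\rangle) \cong \Spec R/\langle X_0\rangle$ for the Zariski topologies, carrying our point $P$ to $P/\langle X_0\rangle$.

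Next I would establish the topological lemma: if $Y$ is a closed subspace of a topological space $X$ and $p \in Y$, then $\{p\}$ is locally closed in $X$ if and only if $\{p\}$ is locally closed in $Y$. The key observation is that, because $Y$ is closed and contains $p$, the closure of $\{p\}$ taken in $X$ already lies in $Y$, so it coincides with the closure of $\{p\}$ taken in $Y$; call this common set $Z$. Local closedness of $\{p\}$ in $X$ (respectively in $Y$) means precisely that $\{p\}$ is open in $Z$ for the subspace topology induced from $X$ (respectively from $Y$), and these two subspace topologies on $Z \subseteq Y \subseteq X$ agree by transitivity of the subspace topology. Hence the two notions of local closedness coincide.

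Finally I would assemble these pieces. Applying the topological lemma with $X = \Spec R$ and the closed subspace $Y = V(\langle X_0\rangle)$, which contains $P$ since $X_0 \in P$, the point $\{P\}$ is locally closed in $\Spec R$ if and only if it is locally closed in $V(\langle X_0\rangle)$. Transporting along the homeomorphism $V(\langle X_0\rangle) \cong \Spec R/\langle X_0\rangle$ from the first step then shows this holds if and only if $\{P/\langle X_0\rangle\}$ is locally closed in $\Spec R/\langle X_0\rangle$, which is the claim. The argument is essentially formal; the only point demanding care is the topological lemma, and there the subtlety is merely to verify that the closures and the induced subspace topologies match up, which I expect to be the main (though mild) obstacle.
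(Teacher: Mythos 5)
Your proof is correct and is essentially the paper's argument made explicit: the paper's one-line proof via the isomorphism $\frac{R/\langle X_0\rangle}{P/\langle X_0\rangle}\cong R/P$ rests on exactly the same two facts you spell out, namely that $V(I)\cong\Spec(R/I)$ as topological spaces and that local closedness of a point is detected in any closed subspace containing it. Your topological lemma and its verification are both sound, so no changes are needed.
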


\begin{proof}
This follows from the isomorphism $\frac{R/\langle X_0 \rangle}{P/\langle X_0 \rangle}\cong\frac{R}{P}$.
\end{proof}

For $P\in\Spec R$ with $X_0\notin P$, we set $P^{\circ}:=P[X_0^{-1}]\in\Spec R^{\circ}$.

\begin{lemma}
\label{loc}
Let $P\in\Spec R$ and suppose that $X_0\notin P$. Then $P$ is locally closed in $\Spec R$ if and only if $P^{\circ}$ is locally closed in $\Spec R^{\circ}$.
\end{lemma}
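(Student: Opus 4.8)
The plan is to reduce the statement to a purely topological fact about local closedness in open subspaces, once $\Spec R^\circ$ is identified with an open subset of $\Spec R$.

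First I would recall that, since $X_0$ is a normal element of $R$ (Proposition~\ref{prop:factX1}), the localisation $R^\circ = R[X_0^{-1}]$ is an Ore localisation, and contraction along the localisation map gives an inclusion-preserving bijection
\[ \Spec R^\circ \longrightarrow U := \{ P \in \Spec R : X_0 \notin P \}, \qquad P^\circ \mapsto P^\circ \cap R = P, \]
with inverse $P \mapsto P^\circ = P[X_0^{-1}]$. Because this bijection and its inverse both preserve inclusions, they are continuous for the Zariski topologies, so it is a homeomorphism; moreover $U = \Spec R \setminus V(\langle X_0 \rangle)$ is open in $\Spec R$. This is standard for localisation at a normal element in a noetherian ring; see \cite{MR}.

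Next I would establish the topological principle that drives the lemma: if $X$ is a topological space, $U \subseteq X$ is open, and $x \in U$, then $\{x\}$ is locally closed in $U$ if and only if $\{x\}$ is locally closed in $X$. Writing local closedness as $\{x\} = O \cap C$ with $O$ open and $C$ closed, the forward direction follows by intersecting with $U$; conversely, if $\{x\} = O \cap C$ with $O$ open in $U$ and $C$ closed in $U$, then $O$ is also open in $X$ (as $U$ is open) and $C = \bar C \cap U$ for some $X$-closed $\bar C$, whence $\{x\} = O \cap \bar C$ because $O \subseteq U$.

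Combining these with $X = \Spec R$ and $x = P$, local closedness of $P$ in $\Spec R$ is equivalent to local closedness of $P$ in $U$, which under the homeomorphism above is exactly local closedness of $P^\circ$ in $\Spec R^\circ$, as required. I do not expect a genuine obstacle: the topological lemma is elementary, so the only point requiring care is the standard verification that localisation at the normal element $X_0$ yields a homeomorphism onto the open set $U$, with the bijection on primes respecting the Zariski topology in both directions.
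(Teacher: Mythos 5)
Your proof is correct, but it takes a genuinely different route from the paper's. The paper reduces to the case $P=\ang{0}$ (by passing to $R/P$) and uses the ideal-theoretic characterisation of locally closed points: $\ang{0}$ is locally closed if and only if $\mathcal{I}_1:=\bigcap\{Q\in\Spec R : Q\neq\ang{0}\}$ is nonzero. It then compares $\mathcal{I}_1$ with the corresponding intersection $\mathcal{I}_2$ in $R^{\circ}$ by an explicit element argument: a nonzero $U$ in the intersection of the nonzero primes not containing $X_0$ gives the element $UX_0$, which lies in \emph{every} nonzero prime of $R$; this step leans on the normality of $X_0$ and on the fact that primes of $R$ are completely prime (Theorem~\ref{thm:spectra}). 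Your argument instead packages everything into two general facts: localisation at the normal element $X_0$ identifies $\Spec R^{\circ}$ homeomorphically with the open set $\Spec R\ssm V(\ang{X_0})$, and local closedness of a point can be tested in any open neighbourhood. This is softer and more general --- it uses nothing specific to $R(n,a)$, in particular no complete primality, and would apply to any Ore localisation at a normal element --- whereas the paper's argument is self-contained at the level of ideals. The one place where your justification is slightly loose is the claim that an inclusion-preserving bijection of spectra with inclusion-preserving inverse is automatically a homeomorphism: this is false for general posets of primes, but it does hold here (for noetherian rings every closed set is a finite union of sets $V(Q)$ with $Q$ prime, and these are order-theoretically determined), and in any case the continuity of contraction and extension for this localisation can be verified directly via $c^{-1}(V(I))=V(IR^{\circ})$ and $V(J)\cap U = U\cap V(J\cap R)$. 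So there is no gap, only a standard verification to record.
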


\begin{proof}
It is enough to prove the lemma for $P=\ang{0}$. 
Recall that $\ang{0}$ is locally closed if and only if $\ang{0} \neq \bigcap \{ Q \in \Spec R \st  Q \neq \ang{0}\}$. 
We set
\[\mathcal{I}_1=\bigcap_{\ang{0}\neq Q\in\Spec R}Q \quad \text{and} \quad \mathcal{I}_2=\bigcap_{\ang{0}\neq T\in\Spec R^{\circ}}T.\]
Suppose that $\mathcal{I}_1\neq \ang{0}$ and let $\ang{0}\neq U\in \mathcal{I}_1$. Then $U\in\mathcal{I}_2$ and $\mathcal{I}_2\neq 0$. Reciprocally if $\mathcal{I}_2\neq \ang{0}$, then there exists $U$ nonzero inside $\bigcap_{\ang{0}\neq Q\in\Spec R\ \text{and }X_0\notin Q}Q$ (recall that prime ideals are completely prime in $R$, see assertion (1) of Theorem \ref{thm:spectra}). Then $UX_0$ belongs to any nonzero $Q\in\Spec R$ and $\mathcal{I}_1\neq \ang{0}$.
\end{proof}

We can now prove Theorem \ref{thm:DME}.

\begin{proof}[Proof of Theorem \ref{thm:DME}]
We proceed by induction on $n$. It is well-known that the Jordan plane $R(1,a)$ ($a\neq0$) and the commutative polynomial ring $R(1,0)$ satisfy the DME, so that the base case $n=1$ is true. Suppose that $R(n-1,b)$ satisfies the DME for any $b\in\kk$ and $n>1$. 
By \cite[II.7.17]{BG} the algebra $R$ satisfies the (noncommutative) Nullstellensatz. Then by \cite[II.7.15]{BG} we have the implications locally closed $\implies$ primitive $\implies$ rational. It remains to prove that rational implies locally closed.

	Let $P\in\Spec R$ be rational. Suppose first that $X_0\in P$. Then
\[Z\left( \F \frac{R/\langle X_0 \rangle}{P/\langle X_0 \rangle}\right)\cong Z\left( \F \frac{R}{P}\right)\]
and $P/\langle X_0 \rangle$ is rational in $R/\langle X_0 \rangle$. Since $R/\langle X_0 \rangle\cong R(n-1,a+1)$ satisfies the DME by the induction hypothesis, the prime $P/\langle X_0 \rangle$ is locally closed. We conclude that $P$ is locally closed by Lemma \ref{quo}.

	Suppose now that $X_0\notin P$. Then 
	\[Z\left(\F \frac{R^{\circ}}{P^{\circ}}\right)\cong Z\left( \F \frac{R}{P}\right)\]
and $P^{\circ}$ is rational in $R^{\circ}$. Since the algebra $U(\mathfrak{g}_a)$ satisfies the DME over $\kk$ by \cite{IS}, the localisation $U(\mathfrak{g}_a)[X_0^{-1}]\cong R^{\circ}$ satisfies the DME. Then $P^{\circ}$ is locally closed in $\Spec R^{\circ}$ and we conclude that $P$ is locally closed in $\Spec R$ by Lemma \ref{loc}.
\end{proof}

The second main theorem of the section is that $A(n,a)$ satisfies the Poisson Dixmier-Moeglin equivalence, which we define here. Recall from Section~\ref{PRIMES} that a Poisson primitive ideal is by definition the largest Poisson ideal contained inside a maximal ideal. Let $A$ be a Poisson $\kk$-algebra and $P\in\Pspec(A)$. The ideal $P$ is said \emph{locally closed} if the point $\{P\}$ is a locally closed point of $\Pspec(A)$ and is said \emph{Poisson rational} provided the field $Z_P\big(\F(A/P)\big)$ is algebraic over the ground field $\kk$. We say that the \emph{Poisson Dixmier-Moeglin equivalence} holds for the Poisson algebra $A$ if the sets of Poisson primitive ideals, of locally closed Poisson ideals and of Poisson rational ideals coincide. Our proof proceeds via a transfer result for the PDME.  

\begin{theorem}
\label{pdme}
The algebra $A(n,a)$ satisfies the PDME for any $a\in\kk$ and $n\geq1$.
\end{theorem}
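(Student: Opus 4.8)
The plan is to deduce the PDME for $A = A(n,a)$ from the DME for $R = R(n,a)$ (Theorem~\ref{thm:DME}) by a transfer argument built on Theorem~\ref{thm:spectra}. Recall that Theorem~\ref{thm:spectra} identifies $\Spec R$ and $\Pspec A$ as the \emph{same} set of ideals, carrying the same Zariski topology, and shows that $P$ is primitive in $R$ if and only if $P$ is Poisson primitive in $A$. Since being locally closed is a purely topological condition on the common space $\Spec R = \Pspec A$, a point is locally closed as a prime of $R$ exactly when it is locally closed as a Poisson prime of $A$. Combining these facts with the DME for $R$ gives, for every $P \in \Spec R = \Pspec A$,
\[
P \text{ Poisson primitive in } A \iff P \text{ primitive in } R \iff P \text{ locally closed}.
\]
Thus the Poisson primitive ideals and the locally closed Poisson primes already coincide, and the whole theorem reduces to the single assertion that $P$ is Poisson rational in $A$ if and only if $P$ is rational in $R$; granting this, the DME for $R$ closes the remaining gap, since then $P$ Poisson rational $\iff P$ rational in $R$ $\iff P$ locally closed.

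First I would record the Poisson analogues of Lemmas~\ref{quo} and~\ref{loc}, which are immediate from isomorphisms already in hand. For $X_0 \in P$, the isomorphism $A/\ang{X_0} \cong A(n-1,a+1)$ of Proposition~\ref{prop:factX1}(2) gives $\F(A/P) \cong \F\bigl((A/\ang{X_0})/(P/\ang{X_0})\bigr)$; for $X_0 \notin P$ the element $X_0$ is regular modulo the completely prime $P$, so $\F(A/P) = \F(A^\circ/P^\circ)$. Each of these is a Poisson isomorphism and hence preserves the Poisson centre of the fraction field, while the corresponding statements for $R$ preserve the centre of the fraction skewfield. Consequently the rationality transfer may be proved by induction on $n$, along the same case division as in the proof of Theorem~\ref{thm:DME}: the case $X_0 \in P$ reduces to $R(n-1,a+1)$ and $A(n-1,a+1)$ and closes by the inductive hypothesis, while the base case and the case $X_0 \notin P$ are handled after localising at $X_0$.

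For the localised case I would work inside $R^\circ = B[z;\delta]$ and $A^\circ = (B[z], \{-,-\}_\delta)$, where $B = (A^\circ)^\Delta = \kk[X_0^{\pm 1}, Y_2, \dots, Y_n]$ and $\delta = X_0\Gamma_a$ (Lemma~\ref{lem:fourprime}). Proposition~\ref{prop:jordan} already matches $\Spec R^\circ$ with $\Pspec A^\circ$ and shows that every such prime is governed by its contraction to the $\delta$-prime $P^\circ \cap B$. Applying Jordan's analysis of rational ideals to $B[z;\delta]$ and, in parallel, to the associated Poisson-Ore extension $(B[z],\{-,-\}_\delta)$ (the rationality counterpart of the primitivity statement \cite[Corollary~4.4]{Jordan}) yields matching descriptions of rational and Poisson rational ideals purely in terms of the $\delta$-action on $B$, from which the equivalence of rationality in $R^\circ$ and Poisson rationality in $A^\circ$ follows. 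As an independent check, the isomorphism $R^\circ \cong U(\mathfrak{g}_a)[X_0^{-1}]$ of~\eqref{uiso} gives the DME for $R^\circ$ from \cite{IS}, and the explicit stratification of $\Pspec A^\circ$ in Theorem~\ref{thm:spectra2} together with Corollary~\ref{cor:glasgow} pins down the locally closed and Poisson primitive points directly.

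I expect the genuine difficulty to be precisely this rationality transfer: relating the centre $Z(\F(R/P))$ of the noncommutative fraction skewfield to the Poisson centre $Z_P(\F(A/P))$ of the commutative fraction field, and showing that one is algebraic over $\kk$ exactly when the other is. The topological and primitivity halves come for free from Theorem~\ref{thm:spectra}, but rationality is a field-theoretic invariant of the quotient that the passage from commutator to Poisson bracket does not obviously preserve. Making this precise — most cleanly in the localisation, where both centres are computed from the $\Gamma_a$-invariants of $B$ via the $\delta$-Ore / Poisson-Ore dictionary — is the crux of the argument.
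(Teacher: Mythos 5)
Your reduction is sound as far as it goes: Theorem~\ref{thm:spectra} does identify $\Spec R$ and $\Pspec A$ as the same topological space with the same primitive (respectively Poisson primitive) points, so combined with Theorem~\ref{thm:DME} the locally closed Poisson primes and the Poisson primitive ideals already coincide, and everything hinges on the implication ``Poisson rational $\implies$ locally closed''. The gap is that you never actually close this implication. You propose to prove that $P$ is Poisson rational in $A$ if and only if $P$ is rational in $R$ by appealing to ``the rationality counterpart of \cite[Corollary~4.4]{Jordan}'', but you do not exhibit such a result, and what it would require is precisely the field-theoretic comparison of $Z(\F(R/P))$ with $Z_P\big(\F(A/P)\big)$ that you yourself flag as ``the crux''. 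Announcing the crux is not the same as proving it: without a concrete argument that the $\delta$-Ore / Poisson--Ore dictionary identifies the two centres prime by prime (not just for $P=\ang{0}$, where Theorems~\ref{thm:skewfield} and~\ref{thm:7.5} do the computation), the proof is incomplete.

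The paper avoids the rationality transfer altogether. Given a Poisson rational $P$, it invokes \cite[Theorem~8.3]{BLLM} to conclude that only finitely many Poisson primes have height $ht(P)+1$; this finiteness is a purely set-theoretic statement about $\Pspec A = \Spec R$ and so transfers for free through the identification of the two spectra. It then proves that $R$ is catenary (Lemma~\ref{catenary}, via normal separation, using the solvable Lie algebra model of $R^\circ$) and shows (Lemma~\ref{ratt}) that in a catenary noetherian algebra of finite GK dimension, finiteness of the set of primes one level above $P$ forces $P$ to be locally closed; the easy implications on the Poisson side come from \cite[Propositions~1.7, 1.10]{Oh}. If you want to salvage your route, you would need to locate or prove the rationality statement for Ore versus Poisson--Ore extensions over $B$ and handle the induction through $A/\ang{X_0}$ carefully; otherwise the BLLM finiteness criterion together with catenarity is the missing ingredient.
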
 

 Recall that we set $A^{\circ}:=A[X_0^{-1}]$. We first prove two lemmas.  
 An algebra is {\em catenary} if for every pair of distinct prime ideals $P\subset Q$ all saturated chains of prime ideals from $P$ to $Q$ have the same length.

\begin{lemma}
\label{ratt}
Let $S$ be a catenary and noetherian $\kk$-algebra with finite GK dimension and let $P\in\Spec S$. Then 
\[\left\{ Q\in\Spec S \ |\ ht(Q)=ht(P)+1 \right\}\text{ is finite}\quad\implies\quad P\text{ is locally closed,}\]
where $ht(P)$ is the height of $P$, i.e. the supremum of the length of chains of prime ideals descending from $P$. 
\end{lemma}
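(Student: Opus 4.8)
The plan is to combine the standard criterion for a point of $\Spec S$ to be locally closed with the fact that $S/P$ is a prime ring. \emph{First} I would record the criterion: the point $\{P\}$ is locally closed precisely when it is open in its closure $V(P)=\{Q\in\Spec S : Q\supseteq P\}$, i.e.\ when $\{Q\in\Spec S : Q\supsetneq P\}$ is closed in $V(P)$. Since the closure of this set is $V(I)$ for $I=\bigcap_{Q\supsetneq P}Q$, and $P\in V(I)$ if and only if $I=P$ (note $I\supseteq P$ always), the criterion becomes: $P$ is locally closed if and only if
\[ \bigcap_{Q\in\Spec S,\; Q\supsetneq P} Q \;\supsetneq\; P. \]
(If no prime strictly contains $P$ the intersection is $S$ and $\{P\}=V(P)$ is closed, so $P$ is locally closed.) Thus the whole problem reduces to showing this intersection strictly contains $P$.

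\emph{Second}, I would cut the intersection down to primes minimal over $P$. Finite GK-dimension forces every prime to have finite height, so every $Q\supsetneq P$ contains a prime that is minimal over $P$ (minimal among primes strictly containing $P$); hence $\bigcap_{Q\supsetneq P}Q=\bigcap_{Q\ \mathrm{minimal\ over\ }P}Q$. Using catenarity I claim each such minimal $Q$ has $\operatorname{ht}(Q)=\operatorname{ht}(P)+1$: prepending a saturated chain from a minimal prime up to $P$ to the saturated step $P\subsetneq Q$ gives $\operatorname{ht}(Q)\ge\operatorname{ht}(P)+1$, and catenarity together with finiteness of the dimension yields the reverse inequality. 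Consequently the primes minimal over $P$ lie inside the set $\{Q:\operatorname{ht}(Q)=\operatorname{ht}(P)+1\}$, which is finite by hypothesis; write them as $Q_1,\dots,Q_r$.

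\emph{Finally}, I would pass to the prime noetherian ring $T=S/P$. If $r=0$ we are already done. Otherwise the images $\overline{Q_i}=Q_i/P$ are exactly the minimal nonzero primes of $T$, and $\bigcap_i Q_i\supsetneq P$ is equivalent to $\bigcap_i\overline{Q_i}\neq 0$ in $T$. Here the prime-ring structure finishes things cleanly: the product of ideals satisfies $\overline{Q_1}\cdots\overline{Q_r}\subseteq\bigcap_i\overline{Q_i}$, and if this intersection were $0$ then $\overline{Q_1}\cdots\overline{Q_r}\subseteq 0$, whence primeness of $T$ (the zero ideal is prime) forces some $\overline{Q_i}=0$, contradicting that $\overline{Q_i}$ is a nonzero prime. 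Therefore $\bigcap_i\overline{Q_i}\neq 0$ and $P$ is locally closed.

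\emph{The main obstacle} is the height identity $\operatorname{ht}(Q)=\operatorname{ht}(P)+1$ for $Q$ minimal over $P$: the inequality $\ge$ is immediate, but the reverse needs the dimension theory, since a maximal chain descending from $Q$ need not pass through $P$. This is exactly where catenarity and finiteness of GK-dimension (providing a well-behaved height function) are used; once this is in hand, the finiteness hypothesis delivers only finitely many primes minimal over $P$, and the prime-ring product argument is routine.
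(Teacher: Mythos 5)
Your proposal is correct and follows essentially the same route as the paper: reduce to the primes minimal over $P$ (which by catenarity have height $ht(P)+1$ and are therefore finitely many), and observe that in the prime ring $S/P$ the product, hence the intersection, of these finitely many nonzero primes is nonzero, so $\{P\}$ is open in its closure. The paper phrases this by first passing to $S/P$ and invoking DCC on primes from finite GK dimension, leaving the nonvanishing of $\bigcap_i \overline{Q_i}$ implicit, whereas you make that last step explicit; the content is the same.
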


\begin{proof}
Since $S$ is catenary we can assume that $P=\ang{0}$. Suppose that $S$ has only finitely many height one prime ideals, namely, $P_1,\dots,P_{\ell}$. Since $S$ has finite GK dimension, it satisfies the DCC on prime ideals. In particular any nonzero $P\in\Spec S$ contains one of the $P_i$  and therefore $\{\langle0\rangle\}=\Spec S\setminus V(\cap_i P_i)$ is open in its closure.
\end{proof}

\begin{lemma}
\label{catenary}
The ring $R=R(n,a)$ is catenary for any $a\in\kk$ and $n\in\ZZ_{>0}$.
\end{lemma}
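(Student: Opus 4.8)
The plan is to prove, by induction on $n$, the sharper statement that every saturated inclusion $P \subsetneq P'$ of prime ideals of $R = R(n,a)$ satisfies $\operatorname{GKdim}(R/P') = \operatorname{GKdim}(R/P) - 1$. Granting this, telescoping along a saturated chain $P = P_0 \subsetneq \cdots \subsetneq P_m = Q$ shows that its length equals $\operatorname{GKdim}(R/P) - \operatorname{GKdim}(R/Q)$, which depends only on $P$ and $Q$; hence $R$ is catenary. The base case $n = 1$ is clear, since $R(1,a)$ is either the Jordan plane or $\kk[X_0,X_1]$, both of which are catenary. For $n > 1$ I will analyse a saturated pair $P \subsetneq P'$ according to the position of the normal element $X_0$, using Proposition~\ref{prop:factX1} and the isomorphism \eqref{uiso} exactly as in the proof of Theorem~\ref{thm:DME}.

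I treat first the two cases already visible in that proof. If $X_0 \in P$ then $X_0 \in P'$, and the pair descends to a saturated pair of primes of $R/\langle X_0\rangle \cong R(n-1,a+1)$; since the quotient rings $R/P$ and $R/P'$ are unchanged, the required GK dimension drop is exactly the inductive hypothesis. If instead $X_0 \notin P'$ (and hence $X_0 \notin P$), I localise to obtain a saturated pair $P^\circ \subsetneq (P')^\circ$ in $R^\circ \cong U(\mathfrak{g}_a)[X_0^{-1}]$. As $U(\mathfrak{g}_a)$ is the enveloping algebra of a finite-dimensional solvable Lie algebra, it is catenary and satisfies Tauvel's height formula (see, e.g., \cite{MR}), and these properties pass to the localisation $R^\circ$. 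Since $X_0$ is a normal regular element, localisation leaves $\operatorname{GKdim}(R/P)$ and $\operatorname{GKdim}(R/P')$ unchanged, so the drop holds back in $R$.

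The remaining, and genuinely delicate, case is the crossing step, where $X_0 \notin P$ but $X_0 \in P'$; this is precisely the configuration that never arose in the per-prime arguments of Theorem~\ref{thm:DME}, and it is the main obstacle. Passing to the noetherian domain $\overline{R} = R/P$, the image $\overline{X_0}$ is a normal regular element and, by saturation of the pair, $\overline{P'} = P'/P$ is a minimal prime over $\overline{X_0}\,\overline{R}$. The noncommutative principal ideal theorem (\cite[Theorem~4.1.11]{MR}) then gives $\operatorname{ht}(\overline{P'}) = 1$ together with the inequality $\operatorname{GKdim}(R/P') \le \operatorname{GKdim}(R/P) - 1$. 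What is left to rule out is that $\overline{P'}$ is a low-dimensional component of the hyperplane section $\overline{R}/\overline{X_0}\,\overline{R}$, which would make the drop strictly larger than $1$. This is where the homological input already established for $R$ in the proof of Theorem~\ref{thm:ASreg} is decisive: $R$, and hence $\overline{R}$, is Auslander-Gorenstein and Cohen-Macaulay, and the Cohen-Macaulay property forces the section $\overline{R}/\overline{X_0}\,\overline{R}$ to be equidimensional. This yields $\operatorname{GKdim}(R/P') = \operatorname{GKdim}(R/P) - 1$ and closes the induction.

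I expect the equidimensionality of the crossing step to be the hard part, since it is exactly the point where the chain-theoretic nature of catenarity goes beyond the localise-and-quotient bookkeeping that sufficed for the Dixmier-Moeglin equivalence. One could alternatively dispense with the case analysis altogether by citing the general principle that a finitely generated noetherian algebra which is Auslander-Gorenstein and Cohen-Macaulay is automatically catenary; the inductive argument above is in effect a self-contained verification of this principle for the family $R(n,a)$, built from the two structural facts that $R/\langle X_0\rangle \cong R(n-1,a+1)$ and $R^\circ \cong U(\mathfrak{g}_a)[X_0^{-1}]$.
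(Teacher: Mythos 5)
Your strategy---proving directly, by induction, that every saturated inclusion of primes drops GK dimension by exactly one---is genuinely different from the paper's, which instead verifies that $\Spec R$ has \emph{normal separation} and then invokes the catenarity theorem \cite[II.9.5]{BG}. The first two cases of your induction are sound in outline, but the crossing case ($X_0\notin P$, $X_0\in P'$) contains a genuine gap. The problem is the assertion that ``$R$, and hence $\overline{R}=R/P$, is Auslander-Gorenstein and Cohen-Macaulay.'' Neither property passes to prime factor rings: already in the commutative setting a prime quotient of a polynomial ring need not be Cohen-Macaulay (e.g.\ $\kk[s^4,s^3t,st^3,t^4]$), and the same failure occurs for the Auslander-Gorenstein and GK-Cohen-Macaulay conditions. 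Without it, nothing forces the section $\overline{R}/\overline{X_0}\,\overline{R}$ to be equidimensional, so the principal ideal theorem only yields $\operatorname{GKdim}(R/P')\le \operatorname{GKdim}(R/P)-1$, not the equality your telescoping argument needs. The ``general principle'' you offer as an alternative is also not a theorem as stated: \cite[II.9.5]{BG} requires normal separation of $\Spec R$ \emph{in addition to} the Auslander-Gorenstein and Cohen-Macaulay hypotheses, and that extra hypothesis is exactly what remains to be supplied.

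The irony is that the configuration you single out as the hard one is the trivial one from the paper's point of view: if $X_0\notin P$ and $X_0\in P'$, then the image of $X_0$ is itself a nonzero normal element of $R/P$ lying in $P'/P$, which is all that normal separation demands. The paper runs essentially your three-case induction, but only to establish normal separation (quoting \cite[Theorem 12.19]{GW} for $U(\mathfrak{g}_a)$ in the localised case), and then lets \cite[II.9.5]{BG} do the dimension bookkeeping. I would restructure your argument along those lines; as written, the crossing case does not close.
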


\begin{proof}
Thanks to \cite[II.9.5]{BG}, Lemma \ref{cor:noeth} and the proof of Theorem \ref{thm:ASreg}, we only need to prove that $\Spec R$ has normal separation, that is, for every distinct pair $P\subset Q$ of comparable primes in $R$ the ideal $Q/P$ of $R/P$ contains a nonzero normal element of $R/P$.

 We proceed by induction on $n$. Let $P\subset Q$ be a pair of comparable primes in $R$. The proof split into three cases. 

First assume that $X_0\in P$, so that $X_0\in Q$. Then $P/\langle X_0 \rangle \subset Q/\langle X_0 \rangle $ inside $R/\langle X_0 \rangle\cong R(n-1,a+1)$, and we are done by induction.

Next assume that $X_0\notin Q$, so that $X_0\notin P$. Then $P^{\circ}\subset Q^{\circ}$ inside $R^{\circ}\cong U(\mathfrak{g}_a)[X_0^{-1}]$. Since $\mathfrak{g}_a$ is solvable $\Spec U(\mathfrak{g}_a)$ has normal separation by \cite[Theorem 12.19]{GW}. It is then easy to see that $\Spec U(\mathfrak{g}_a)[X_0^{-1}]\cong R^{\circ}$ has normal separation. Hence there exists a nonzero normal element $U$ in $Q^{\circ}/P^{\circ}\cong Q/P[X_0^{-1}]$ (where we denote again by $X_0$ its image in the quotient $Q/P$). In particular there is an integer $\ell\geq0$ such that 
$UX_0^{\ell}$ is a nonzero normal element in $Q/P$ (recall that $X_0$ is normal in $R$).

Finally suppose that $X_0 \notin P$ and $X_0\in Q$. Then $X_0\in Q\setminus P$ and is normal modulo $P$ as it is already normal in $R$. 
\end{proof}

We now prove Theorem \ref{pdme}.

\begin{proof}[Proof of Theorem \ref{pdme}]
By \cite[Propositions 1.7, 1.10]{Oh} we have the implications Poisson locally closed $\implies$ Poisson primitive $\implies$ Poisson rational. Let $P$ be a Poisson rational ideal of $A$. Then by \cite[Theorem 8.3]{BLLM} the set $\left\{ Q\in\Pspec A \ |\ ht(Q)=ht(P)+1 \right\}$ is finite. Since $\Pspec A= \Spec R$, the set
\[\left\{ Q\in\Spec R \ |\ ht(Q)=ht(P)+1 \right\}=\left\{ Q\in\Pspec A \ |\ ht(Q)=ht(P)+1 \right\}\]
is finite. By Lemma \ref{ratt} we conclude that $P$ is locally closed in $\Spec R$, hence $P$ is locally closed in $\Pspec A$ since $\Pspec A= \Spec R$.
\end{proof}





To end the section we investigate the structure of the skewfield of fractions $\F R$ of the noetherian domain $R$. 

Recall the definition of the solvable Lie algebra $\mf g_a$ from \eqref{lie}. By equation (\ref{uiso}), $R^{\circ}$ is isomorphic to a localisation of the enveloping algebra  $U(\mathfrak{g}_a)$. Then by \cite{Bo,Jo,MC}, the skewfield $\F R$ is isomorphic to a Weyl skewfield, when $\kk$ is algebraically closed and when the Lie algebra $\mathfrak{g}_a$ is algebraic. From \cite[Section 8]{GK} we note that this algebra is algebraic if and only if $a\in\Q$. In that case we provide an explicit description of this Weyl skewfield, and we show that $\F R$ is not isomorphic to a Weyl skewfield when $a\notin \Q$. Moreover we prove these results over a field of characteristic 
zero that is not necessarily algebraically closed. Recall that $R^{\circ}=B[X_1;X_0\Gamma_a]$, where $B=\C[X_0^{\pm1},Y_2,\dots,Y_n]$. 
  
For any field $K$ and $\varepsilon, a \in K$, let $\mathfrak{h}_{\varepsilon, a}(K)$ be the $3$-dimensional solvable Lie algebra over $K$ with basis $\{x,y,z\}$ and Lie bracket $[x,y]=\varepsilon y$, $[x,z]=az$ and $[y,z]=0$.

\begin{proposition}\label{prop:7.1}
We have $\F R\cong \F U(\mathfrak{g}_{a})\cong\F U(\mathfrak{h}_{\epsilon,a}(K))$
where $K=\big(Q_{gr}(B)\big)_0$ is a field of transcendence degree $n-2$ over $\C$, and where $\varepsilon=2$ if $n=2$ and $\varepsilon=1$ if $n>2$.
\end{proposition}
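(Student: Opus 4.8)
The statement contains two isomorphisms, and the plan is to dispose of the first formally and reduce the second to a grading computation. The first isomorphism $\F R\cong\F U(\mathfrak g_a)$ is immediate from \eqref{uiso}: the localisation $R^\circ=R[X_0^{-1}]$ inverts the normal regular element $X_0$, and under \eqref{uiso} it also inverts the normal regular element $Y_0$ of $U(\mathfrak g_a)$. Since $R$ and $U(\mathfrak g_a)$ are noetherian domains, inverting an Ore set of regular elements leaves the fraction skewfield unchanged, so $\F R=\F R^\circ=\F U(\mathfrak g_a)$. Everything thus reduces to proving $\F U(\mathfrak g_a)\cong\F U(\mathfrak h_{\varepsilon,a}(K))$. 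To set this up I would write $U(\mathfrak g_a)=S[X;\Gamma_a]$, where $S=\C[Y_0,Y_2,\dots,Y_n]$ is the enveloping algebra of the abelian ideal of $\mathfrak g_a$ and $\Gamma_a=\ad(X)$ acts diagonally by $\Gamma_a(Y_i)=(a+i)Y_i$ (from \eqref{lie}). Inverting the commutative Ore set $S\setminus\{0\}$ then gives $\F U(\mathfrak g_a)=\F\big(F[X;\Gamma_a]\big)$ with $F=\Frac(S)=\Frac(B)=\C(Y_0,Y_2,\dots,Y_n)$.

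Next I would exploit the $(d,\epsilon)$-bigrading on $F=\Frac(B)$, in which $Y_0=X_0$ has bidegree $(1,0)$ and $Y_j$ has bidegree $(1,j)$; here $\Gamma_a$ acts on a bihomogeneous monomial of bidegree $(p,q)$ by the scalar $ap+q$, so every bihomogeneous element is a $\Gamma_a$-eigenvector. Put $z:=Y_0$, so $\Gamma_a(z)=az$. For $y$ I would take a Laurent monomial of $d$-degree $0$ and smallest positive $\epsilon$-degree: when $n>2$ set $y:=Y_3Y_2^{-1}$, which has bidegree $(0,1)$ and $\Gamma_a(y)=y$, forcing $\varepsilon=1$; when $n=2$ the only option is $y:=Y_2Y_0^{-1}$, of bidegree $(0,2)$ with $\Gamma_a(y)=2y$, forcing $\varepsilon=2$. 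Let $K=(Q_{gr}(B))_0$ be the bidegree-$(0,0)$ subfield of $F$. The monomials in $Y_0,Y_2,\dots,Y_n$ form a free abelian group of rank $n$, and the bidegree map onto $\Z^2$ (onto $\Z\times2\Z$ when $n=2$) has kernel of rank $n-2$; hence $\td_\C K=n-2$, and since a bidegree-$(0,0)$ monomial has $\Gamma_a$-eigenvalue $a\cdot0+0=0$ we get $\Gamma_a(K)=0$, so $K$ is central in $F[X;\Gamma_a]$.

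To finish I would check that $F=K(y,z)$: every bihomogeneous element of bidegree $(p,q)$ equals $k\,z^{p}y^{q/\varepsilon}$ for a unique $k\in K$ (the integrality of $q/\varepsilon$ being exactly the computation of the bidegree image above), so $F=K(y,z)$, and comparing transcendence degrees gives $\td_K F=2$, whence $y,z$ are algebraically independent over $K$ and $F=K(y,z)$ is a rational function field in two variables. Consequently $\Gamma_a$ is the $K$-linear derivation of $K(y,z)$ determined by $\Gamma_a(y)=\varepsilon y$ and $\Gamma_a(z)=az$, so that $F[X;\Gamma_a]=K(y,z)[X;\Gamma_a]$ is precisely the Ore localisation of $U(\mathfrak h_{\varepsilon,a}(K))=K[y,z][x;\partial']$ (with $\partial'(y)=\varepsilon y$, $\partial'(z)=az$, $x\leftrightarrow X$) at $K[y,z]\setminus\{0\}$. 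Passing to fraction skewfields yields $\F U(\mathfrak g_a)=\F\big(F[X;\Gamma_a]\big)=\F U(\mathfrak h_{\varepsilon,a}(K))$, completing the proof. The one genuinely delicate point is the bigrading bookkeeping of the middle paragraph: choosing $y$ of minimal positive weight with vanishing total degree (which is what produces the case split $\varepsilon=1$ versus $\varepsilon=2$) while simultaneously pinning down $\td_\C K=n-2$ and the equality $F=K(y,z)$; the remaining steps are the routine fact that localising a noetherian domain at an Ore set of regular elements preserves the fraction skewfield.
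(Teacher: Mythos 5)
Your proof is correct and follows essentially the same route as the paper's: both exploit the $\Z^2$-bigrading on $B$ to realise the coefficient field as a rank-two (Laurent) extension of the degree-$(0,0)$ subfield $K$, choose generators of bidegrees $(1,0)$ and $(0,\varepsilon)$ with $\varepsilon$ the minimal positive weight in total degree zero, and identify the resulting Ore extension with a localisation of $U(\mathfrak{h}_{\varepsilon,a}(K))$. The only cosmetic differences are that you route explicitly through $U(\mathfrak{g}_a)$ and invert all nonzero elements of the commutative coefficient ring rather than only the bihomogeneous ones, and you perform the substitution $X\mapsto X_0^{-1}X_1$ at the outset instead of setting $X_1'=X_1X_0^{-1}$ at the end.
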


\begin{proof}
We define a $\Z^2$-grading $f$ on $B$ as follows. Set $f(X_1^{\pm1})=(\pm1,0)$ and $f(Y_i)=(1,i)$ for all $i=2,\dots,n$. This is a combination of the $d$-grading and the $\epsilon$-grading. Note that if $u\in B$ has degree $(0,0)$, then $u\in\ker\Gamma_a$. We now form the graded quotient ring $E:=Q_{gr}(B)$ of $B$ by inverting all its homogeneous elements
\[E=B[h^{-1}\ |\ h\ f\text{-homogeneous}].\]
Note that $E$ is also $\Z^2$-graded by a grading denoted $f$ again. It is a standard fact that $E\cong K[s^{\pm1},t^{\pm1}]$, where $K=E_{(0,0)}$ is a field such that
\[\td_\C K=\td_\C B -2=n-2,\]
and where $0\neq s\in E_{(1,0)}$ and $0\neq t\in E_{(0,\varepsilon)}$. By definition $\varepsilon:=\min\{\alpha\geq 1\ |\ E_{(0,\alpha)}\neq 0\}$.
 For instance we can choose $s=X_0$ and $t=Y_2X_0^{-1}$ when $n=2$, or $t=Y_3Y_2^{-1}$ when $n>2$. Thus we have
\[\F R=\F E[X_1;X_0\Gamma_a]=\F K[s^{\pm1},t^{\pm1}][X_1;X_0\Gamma_a].\]
For $u\in K$ we have $X_1u-uX_1=\Gamma_a(u)=0$ since $f(u)=(0,0)$, and $u$ commutes also with $s$ and $t$ since $E$ is commutative. 
Moreover we get
\begin{align*}
& X_1s-sX_1=X_0\Gamma_a(X_0)=aX_0^2 \\
& X_1t-tX_1=X_0\Gamma_a(t)=\varepsilon X_0t.
\end{align*}
We conclude by setting $X_1':=X_1X_0^{-1}$ that $X_1's-sX_1'=aX_0=as$ and $ X_1't-tX_1'=\varepsilon t$. Since $s,t\in B$ we have $st=ts$ and the result follows by setting $x:=X_1'$, $y:=t$ and $z:=s$.
\end{proof}

\begin{rem}
\label{n-3}
Generators of the field $K$ can be obtained by solving the system of equations
\[\begin{cases} u_0+u_2+\cdots+u_n=0, \\ 2u_2+3u_3+\cdots+nu_n=0. \end{cases}\]
Setting $Y_0:=X_0$ and $Z_i:=Y_iY_{n-1}^{-(n-i)}Y_{n}^{n-i-1}$ for $i\in\{0,2,\dots,n-2\}$ we have $K=\C\left(Z_0,Z_2,\dots,Z_{n-2}\right)$.
Note that $K$ is purely transcendental over $\kk$ of transcendence degree $n-2$. 
\end{rem}

\begin{theorem}
\label{thm:skewfield}
When $a\in\Q$ the skewfield $\F R$ is isomorphic to the first Weyl skewfield $D_1(F)$ over a field $F$ of transcendence degree $n-1$ over $\kk$. When $a\notin \Q$, we have $Z(\F R)=K$, and $\F R$ is not isomorphic to any Weyl skewfield.
\end{theorem}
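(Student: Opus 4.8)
The plan is to run the entire argument inside the Ore presentation $\F R \cong \F\big(E[x;\delta]\big)$ furnished by Proposition~\ref{prop:7.1}, where $E := \F\big(K[y^{\pm1},z^{\pm1}]\big)$ is the rational function field in the two commuting variables $y=t$, $z=s$, and $\delta := \ad_x|_E$ is the derivation with $\delta(y)=\varepsilon y$ and $\delta(z)=az$. The pivotal first step is the identification $Z(\F R)=E^{\delta}$, where $E^{\delta}=\{e\in E : \delta(e)=0\}$. Since $\car\kk=0$ and $\delta\neq 0$, the ring $E[x;\delta]$ is simple: if $0\neq f=\sum_{i=0}^m a_i x^i$ has minimal $x$-degree in a nonzero ideal, then $[f,e]$ has degree $\leq m-1$ with leading coefficient $m\,a_m\,\delta(e)$, so minimality forces $m\,\delta(e)=0$ for all $e$, hence $m=0$ and $f$ is a unit. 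The same leading-coefficient computation gives $C_{E[x;\delta]}(E)=E$; I would then upgrade this to $C_{\F R}(E)=E$, i.e. $E$ is a maximal subfield of $\F R$ (this is where the outerness of $\delta$, equivalently $\delta\neq 0$, is used, and it can also be read off from \cite{Bo,Jo,MC}). Consequently $Z(\F R)\subseteq C_{\F R}(E)=E$, while $Z(\F R)\cap E=E^{\delta}$ and conversely $E^{\delta}\subseteq Z(\F R)$ because any $\delta$-constant commutes with both $E$ and $x$; this yields the equality.

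The second step computes $E^{\delta}$ via the weight functional $L:\Z^2\to\kk$, $L(\alpha,\beta)=\varepsilon\alpha+a\beta$, so that $\delta(y^{\alpha}z^{\beta})=L(\alpha,\beta)\,y^{\alpha}z^{\beta}$ and $\delta$ is diagonal on the monomial basis of $K[y^{\pm1},z^{\pm1}]$ with these eigenvalues. A coprimality argument shows any $\delta$-constant $P/Q\in E$ is a quotient of two monomials of equal $L$-weight, hence lies in the subfield generated by the weight-zero monomials. Since $\varepsilon\in\{1,2\}\subseteq\Q$, the lattice $\ker L\cap\Z^2$ is nonzero if and only if $a\in\Q$. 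If $a\notin\Q$, then $\varepsilon$ and $a$ are $\Q$-linearly independent, $L$ is injective on $\Z^2$, the only weight-zero monomial is $1$, and $E^{\delta}=K$; thus $Z(\F R)=K$, of transcendence degree $n-2$ over $\kk$ by Proposition~\ref{prop:7.1}. If $a=p/q$ in lowest terms, then $\ker L\cap\Z^2$ is infinite cyclic, generated by a primitive monomial $w$ (for instance $w=y^{p/g}z^{-\varepsilon q/g}$ with $g=\gcd(p,\varepsilon q)$), and $E^{\delta}=K(w)$, of transcendence degree $n-1$ over $\kk$.

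In the rational case I would produce Weyl generators. Choose a monomial $m$ with $\delta(m)=\eta m$ for the positive generator $\eta$ of $L(\Z^2)=\eta\Z$; then $\{w,m\}$ is a $\Z$-basis of the monomial lattice, so $K[y^{\pm1},z^{\pm1}]=K[w^{\pm1},m^{\pm1}]$ with $w$ central and $\delta(m)=\eta m$. Setting $F:=E^{\delta}=K(w)$, $Q:=m$ and $P:=\eta^{-1}m^{-1}x$, one computes $[P,Q]=\eta^{-1}(m^{-1}xm-x)=\eta^{-1}\cdot\eta=1$; and since $F$, $P$, $Q$ generate $x$, $y^{\pm1}$, $z^{\pm1}$, they generate $\F R$ as a skewfield. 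The subalgebra $F\langle P,Q\rangle$ is therefore a domain quotient of the simple ring $A_1(F)$, hence equal to $A_1(F)$, so $\F R\cong D_1(F)$ with $\td_\kk F=n-1$, as claimed (this also covers $a=0$, where one may take $w=z$).

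For the irrational case I would show $\F R$ is not a Weyl skewfield by a parity argument. If $\F R\cong D_m(F')$ for some field $F'$ and $m\geq 1$, then $F'\cong Z\big(D_m(F')\big)\cong Z(\F R)=K$, so $\td_\kk F'=n-2$; comparing Gelfand--Kirillov transcendence degrees (an isomorphism invariant of division rings, equal to $n+1$ for $\F R$ since $R$ is a finitely generated domain of GK-dimension $n+1$, and equal to $\td_\kk F'+2m$ for $D_m(F')$) gives $(n-2)+2m=n+1$, i.e. $2m=3$, which is impossible. Hence $\F R$ is not isomorphic to any Weyl skewfield. The main obstacle is the first step, namely the center computation $Z(\F R)=E^{\delta}$, and specifically the passage from the Ore extension $E[x;\delta]$ to its skewfield of fractions through the maximality $C_{\F R}(E)=E$; the remaining steps are elementary lattice and weight bookkeeping.
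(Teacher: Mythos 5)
Your route is genuinely different from the paper's: the paper disposes of the whole theorem in three lines by quoting the classical description of $\F U(\mathfrak{h}_{\varepsilon,a}(\kk))$ from \cite{GK} and \cite{Richard} and then base-changing to $K$ via $U(\mathfrak{h}_{\varepsilon, a}(K))\cong U(\mathfrak{h}_{\varepsilon, a}(\kk))\otimes_\kk K$, whereas you reprove that classical fact directly over $K$. Your rational case is complete and correct: the identification $Z(\F R)=E^{\delta}$, the lattice computation of $E^{\delta}$, and the explicit Weyl pair $(P,Q)$ with $[P,Q]=1$ all check out, and what you gain over the paper is an explicit description of the generators of $D_1(F)$ inside $\F R$. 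For the centre you do not even need the centralizer statement $C_{\F R}(E)=E$, which you only assert: if $q\in Z(\F R)$ then $\{g\in E[x;\delta] \ : \ qg\in E[x;\delta]\}$ is a nonzero two-sided ideal, hence all of $E[x;\delta]$ by the simplicity you established, so $q\in E[x;\delta]$ and your leading-coefficient computation finishes the job.

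The genuine gap is in the irrational case, in the single sentence asserting $\on{GKtd}(\F R)=n+1$ ``since $R$ is a finitely generated domain of GK-dimension $n+1$''. That hypothesis only yields the upper bound $\on{GKtd}(\F R)\leq \on{GKdim}R=n+1$ (clear a common denominator for a finite-dimensional subspace of $\F R$); there is no soft argument bounding the Gelfand--Kirillov transcendence degree of a quotient division ring from below by the GK-dimension of a generating subalgebra, and the obvious lower bound coming from the commutative subfield $E$ only gives $\on{GKtd}(\F R)\geq \td_\kk E=n$. This matters: with $\on{GKtd}(\F R)$ only pinned to $\{n,\,n+1\}$, your parity equation $(n-2)+2m=\on{GKtd}(\F R)$ still admits the solution $m=1$, i.e.\ it fails to exclude $\F R\cong D_1(K)$, which is precisely the isomorphism that must be ruled out. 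The repair is to establish the lower bound $\on{GKtd}(\F R)\geq n+1$ by citing the Gelfand--Kirillov computation of the transcendence degree of $\F U(\mathfrak{g})$ for a finite-dimensional solvable Lie algebra (applied to $\mathfrak{g}_a$, of dimension $n+1$, via \eqref{uiso}), or equivalently the statement in \cite{Richard} that $\F U(\mathfrak{h}_{\varepsilon,a}(K))$ has transcendence degree $3$ over its centre when $a\notin\Q$ --- which is exactly the citation the paper's own proof leans on. With that input your parity argument closes; without it, the irrational half of the theorem is not proved.
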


\begin{proof}
The  skewfield of the enveloping algebra of the Lie algebra $\mathfrak{h}_{\varepsilon,a}(\kk)$ is either isomorphic to the first Weyl skewfield over a field of transcendence degree 1 over $\kk$ when $a$ is rational, or has a trivial centre when $a$ is irrational. This is a classical fact that can be found in \cite[Section 8]{GK} and \cite[Proposition 1.2.4.1 and Remark after]{Richard}. The result follows from Proposition~\ref{prop:7.1} and the isomorphism $U(\mathfrak{h}_{\varepsilon, a}(K))\cong U(\mathfrak{h}_{\varepsilon, a}(\kk))\otimes_\kk K$.
\end{proof}

\begin {rem}
\label{ff}

$(1)$ Let $a\in\Q$ and suppose that $a=p/q\in\Q$ with $\gcd(p,q)=1$ (when $a=0$ we have $p=0$ and we set $q=1$). Setting $Z:=X_0^{q}Y_2^{p}Y_3^{-p}$ we obtain
\[F=\C\left(Z,Z_0,Z_1,\dots,Z_{n-2}\right)\]
where the $Z_i$'s are defined in Remark \ref{n-3}. 

$(2)$ For $a,b\in\Q$ it is clear that $\F R(n,a)\cong\F R(n,b)$ since both are isomorphic to a Weyl skewfield over a field of transcendence degree $n-2$. When $a\notin \Q$ and $b=\pm a+n$ for some $n\in\Z$, it is easy to verify that the skewfields $\F R(n,a)$ and $\F R(n,b)$ are isomorphic. However it remains unclear whether or not this condition is also necessary for an isomorphism $\F R(n,a)\cong\F R(n,b)$ when $a,b\notin\Q$.
\end{rem}

Using similar methods we can prove the following result about the Poisson structure of the field $\F(A)$. For a Lie algebra $\mathfrak{g}$ we denote by $S(\mathfrak{g})$ its symmetric algebra that we endow with the so-called Kirillov-Kostant-Souriau Poisson bracket, that is the Poisson bracket obtained by extending by bi-derivation and bi-linearity the Lie bracket of $\mathfrak{g}$ inside $S(\mathfrak{g})$
\[\{X,Y\}:=[X,Y]_{\mathfrak{g}}\]
for any $X,Y\in\mathfrak{g}$.

\begin{theorem}\label{thm:7.5}
As Poisson algebras, we have $\F A\cong\F S(\mathfrak{g}_{a})\cong \F S(\mathfrak{h}_{\varepsilon,a}(K))$. Moreover, if $a\in\Q$ then $\F A$ is isomorphic to the field of fractions of the Poisson algebra $\mathcal{P}=F[X,Y]$, where $\{X,Y\}=1$ and where $F$ is the field described in Remark \ref{ff}. When $a\notin\Q$, the Poisson centre of $\F A$ is $K$ and $\F A$ is not isomorphic to $\F \mathcal{P}$.
\end{theorem}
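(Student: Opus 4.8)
The plan is to run the Poisson analogue of Proposition~\ref{prop:7.1} and Theorem~\ref{thm:skewfield}, replacing commutators by Poisson brackets throughout. The key input is Lemma~\ref{lem:fourprime}, which exhibits $A^{\circ}$ as the Poisson-Ore extension $(B[X_1],\{-,-\}_\delta)$ with $B=\C[X_0^{\pm1},Y_2,\dots,Y_n]$ and $\delta=X_0\Gamma_a$; note that $B$ is Poisson-commutative and that $\{X_1,b\}=\delta(b)$ for $b\in B$ by \eqref{PO}. Since $\F A=\F A^{\circ}$, I would first establish the Poisson form of \eqref{uiso}. Setting $Y_0:=X_0$ and $X:=X_0^{-1}X_1$ inside $\F A$ and computing brackets from \eqref{Poisson} gives $\{X,Y_i\}=(a+i)Y_i$ and $\{Y_i,Y_j\}=0$ for all $i,j$, which are exactly the Kirillov-Kostant-Souriau relations of $\mathfrak{g}_a$. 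Hence the Poisson subalgebra generated by $Y_0,X,Y_2,\dots,Y_n$ is a homomorphic image of $S(\mathfrak{g}_a)$ which, after inverting $Y_0$, recovers all of $A^{\circ}$; thus $A^{\circ}\cong S(\mathfrak{g}_a)[Y_0^{-1}]$ as Poisson algebras and $\F A\cong\F S(\mathfrak{g}_a)$.

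Next I would obtain $\F A\cong\F S(\mathfrak{h}_{\varepsilon,a}(K))$ by repeating the proof of Proposition~\ref{prop:7.1} verbatim in the Poisson category. I equip $B$ with the same $\Z^2$-grading $f$ and form the Poisson graded quotient ring $E=Q_{gr}(B)$; because $B$ is Poisson-commutative so is $E\cong K[s^{\pm1},t^{\pm1}]$, with $K=E_{(0,0)}$ Poisson-central and $\td_\C K=n-2$. Then $\F A=\F(E[X_1],\{-,-\}_\delta)$, and the bracket computations of Proposition~\ref{prop:7.1} go through with commutators replaced by $\{-,-\}$: writing $x:=X_1X_0^{-1}$, $y:=t$, $z:=s$ one finds $\{x,y\}=\varepsilon y$, $\{x,z\}=az$, $\{y,z\}=0$, while $x,y,z$ Poisson-commute with $K$. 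These are precisely the relations defining $\mathfrak{h}_{\varepsilon,a}(K)$, giving the claimed Poisson isomorphism and, combined with the first paragraph, the chain $\F A\cong\F S(\mathfrak{g}_a)\cong\F S(\mathfrak{h}_{\varepsilon,a}(K))$.

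For the rational case I would use the Poisson base-change $S(\mathfrak{h}_{\varepsilon,a}(K))\cong S(\mathfrak{h}_{\varepsilon,a}(\kk))\otimes_\kk K$ (with $K$ Poisson-central) and then prove by hand the Poisson analogue of the classical Gelfand-Kirillov fact quoted in Theorem~\ref{thm:skewfield}. Concretely, write $a/\varepsilon=p/q$ in lowest terms; the monomial $w=y^{p}z^{-q}$ is Poisson-central since $\{x,w\}=(\varepsilon p-qa)w=0$, and by B\'ezout I choose integers $i,j$ with $qi+pj=1$, so that $v:=y^{i}z^{j}$ satisfies $\{x,v\}=(\varepsilon/q)v$. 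Because the integer matrix with rows $(i,j)$ and $(p,-q)$ has determinant $-(qi+pj)=-1$, we have $K(v,w)=K(y,z)$; setting $Y:=v$ and $X:=\tfrac{q}{\varepsilon}xv^{-1}$ gives $\{X,Y\}=1$ together with $\F A=K(w)(X,Y)$, using $x=\tfrac{\varepsilon}{q}XY$. Thus $\F A\cong\F\mathcal{P}$ with $\mathcal{P}=F[X,Y]$, $\{X,Y\}=1$ and $F=K(w)$ of transcendence degree $n-1$; its explicit generators are read off from Remarks~\ref{n-3} and \ref{ff}.

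Finally, for $a\notin\Q$ I would compute the Poisson centre of $\F A\cong\F S(\mathfrak{h}_{\varepsilon,a}(K))$ directly: a Poisson-central element has no $x$-dependence and is a rational function of monomials $y^{m}z^{k}$ with $\varepsilon m+ak=0$, and since $a/\varepsilon\notin\Q$ this forces $m=k=0$, so $Z(\F A)=K$, of transcendence degree $n-2$. As the Poisson-Weyl field $\F\mathcal{P}$ has Poisson centre $F$ of transcendence degree $n-1$, and a Poisson isomorphism preserves the Poisson centre, $\F A\not\cong\F\mathcal{P}$. I expect the hard part to be the rational case: unlike Theorem~\ref{thm:skewfield}, where the Weyl structure of $\F U(\mathfrak{h}_{\varepsilon,a}(\kk))$ is imported from \cite{GK,Richard}, here I must supply the Poisson-Weyl normalisation myself, and the delicate point is the B\'ezout choice of $v$ guaranteeing $K(v,w)=K(y,z)$, so that $F(X,Y)$ is all of $\F A$ rather than a proper (index-$q$) subfield.
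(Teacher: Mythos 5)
Your proposal is correct, and it follows exactly the route the paper intends: Theorem~\ref{thm:7.5} is stated with no written proof beyond the remark that it follows ``using similar methods,'' i.e.\ by transcribing Proposition~\ref{prop:7.1} and Theorem~\ref{thm:skewfield} into the Poisson category, which is what you do. Your bracket computations check out: $\{X_0^{-1}X_1,Y_i\}=(a+i)Y_i$ because $B=(A^\circ)^\Delta$ is Poisson-commutative and $\{X_1,b\}=X_0\Gamma_a(b)$, and the passage to $E=Q_{gr}(B)\cong K[s^{\pm1},t^{\pm1}]$ with $K$ Poisson-central goes through verbatim. The one place where you genuinely add something is the rational case: the paper's noncommutative analogue imports the Weyl structure of $\F U(\mathfrak{h}_{\varepsilon,a}(\kk))$ from \cite{GK} and \cite{Richard}, whereas you construct the Poisson--Weyl normalisation by hand, and your key step is sound --- the monomial substitution $(y,z)\mapsto(v,w)=(y^iz^j,\,y^pz^{-q})$ is given by a matrix of determinant $-(qi+pj)=-1$, hence an automorphism of $K[y^{\pm1},z^{\pm1}]$, so $F(X,Y)$ with $F=K(w)$ really is all of $\F A$ and not a proper subfield. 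Two small points you should make explicit in a final write-up: in the irrational case, the claim that a rational function killed by the derivation $\varepsilon y\partial_y+az\partial_z$ lies in $K$ deserves one line (write $f=g/h$ in lowest terms and compare eigenvalues of the numerator and denominator, using that $\varepsilon m+ak$ separates monomials when $a/\varepsilon\notin\QQ$); and the identification of your $F=K(w)$ with the explicit field of Remark~\ref{ff} requires fixing the same choices of $s$ and $t$ as in Proposition~\ref{prop:7.1}. Neither is a gap in substance.
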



\section{Examples of spectra}\label{EXAMPLES}

 In this final section we study $\Pspec A(n,a)$ and $\Spec R(n,a)$ for small values of $n$. Since these two spectra are equal it is enough to describe $\Pspec A(n,a)$.
  Because we will give explicit description of these spectra we assume that $\kk$ is algebraically closed in this section.
 \begin{example}\label{n=2}
Let $n=1$ and consider $A= A(1,a)$.  
If $a=0$ then $A(1,a)=\C[X_0,X_1]$ with trivial Poisson bracket and thus
\[\Pspec A=\Spec \C[X_0,X_1].\] 
Suppose that $a\neq0$. Then $A$ is isomorphic to the Poisson-Jordan plane (see Example \ref{exn=2}). It is well-known that its Poisson spectrum is
\[\{\langle 0 \rangle,\langle X_0\rangle,\langle X_0,X_1-\la\rangle\ |\ \la\in\C\}\]
and that the $d$-graded primes are $\langle X_0\rangle,\langle X_0,X_1\rangle$. Moreover only $\langle X_0\rangle$ is not Poisson primitive.
\end{example}

 Assume that $n\geq2$ and that $a \in \QQ^\times$. We denote by $\Pspec_1 A(n,a)$ the set of Poisson prime ideals of $A$ that contain $X_0$ and by $\Pspec_0 A(n,a)$ the set of Poisson prime ideals of $A$ that do not contain $X_0$. 
 Since $A(n,a)/\ang{X_0} \cong A(n-1,a+1)$ by Proposition \ref{prop:factX1} there is a homeomorphism:
\[\Pspec_1 A(n,a) \cong\Pspec A(n-1,a+1).\]
On the other hand by Theorem \ref{thm:spectra2} there is a homeomorphism
\[\Pspec_0 A(n,a)\cong\Spec(Z),\]
where $Z$ is the Poisson centre of $A^\circ = A(n,a)[X_0^{-1}]$. Thus to describe $\Pspec A(n,a)$ completely we must study the ring $Z$.
We know that $\Kdim Z = n-1$, so we first construct $n-1$ algebraically independent elements of $Z$.

Write $a=p/q$ with $\gcd(p,q)=1$ and $-p>0$. 
For $i=2,\dots,n$ let $d_i=\gcd(p,i)>0$, and  set $u_i=\frac{p+iq}{d_i}$ and $v_i=\frac{-p}{d_i}$. Note that $\gcd(u_i,v_i)=1$.
Finally we set
\begin{equation}
\label{yi'}
Y_i'=X_0^{u_i}Y_i^{v_i}.
\end{equation}
By construction $e(Y_i')=0$, and we have $Z':=\C[Y_2',\dots,Y_n']\subseteq Z$.

 We can be more precise
\begin{lemma}\label{lem:freeZ}
$Z$ is a free $Z'$-module with basis
\begin{equation}
\label{s}
S=\left\{X_0^{s}Y_2^{s_2}\cdots Y_n^{s_n}\ |\ 0\leq s_i<v_i,\ i=2,\dots,n\ \text{and}\ ps+\sum_{i=2}^{n}u_id_is_i=0\right\}.
\end{equation}
In particular its rank is $|S|\leq \prod_{i=2}^{n}v_i$.
\end{lemma}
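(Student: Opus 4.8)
The plan is to realise $Z$ as the $e$-degree-zero part of a Laurent polynomial ring and then exploit the resulting monomial grading. First I would recall, from the computation in the proof of Corollary~\ref{cor:glasgow} (which rests on Proposition~\ref{prop:pnn}), that the Poisson centre is $Z = B(0) = B\cap\ker\Gamma_a$, the $e$-homogeneous component of degree $0$ of $B=\kk[X_0^{\pm1},Y_2,\dots,Y_n]$. By Lemma~\ref{lem:four}, $B$ is a Laurent polynomial ring whose monomials $X_0^s Y_2^{s_2}\cdots Y_n^{s_n}$ (with $s\in\ZZ$, $s_i\in\NN$) are $e$-homogeneous, with $e$-degree $a(s+\sum s_i)+\sum_{i} i s_i$; hence $Z$ is the $\kk$-span of those monomials of $e$-degree $0$. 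Clearing denominators with $a=p/q$, the defining condition becomes exactly $ps+\sum_{i=2}^n(p+iq)s_i=0$, that is $ps+\sum_{i=2}^n u_id_is_i=0$. A direct check gives $e(Y_i')=0$, so $Z'\subseteq Z$ and every monomial $\prod_i (Y_i')^{q_i}$ is an $e$-degree-zero monomial of $B$.

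The heart of the argument is a division-algorithm decomposition of $e$-degree-zero monomials. Given such a monomial $w=X_0^s\prod_{i=2}^n Y_i^{s_i}$, I would write each exponent as $s_i=q_iv_i+r_i$ with $0\le r_i<v_i$ and $q_i=\lfloor s_i/v_i\rfloor\ge 0$ (recall $v_i=-p/d_i>0$ by the hypothesis $-p>0$). Since $Y_i'=X_0^{u_i}Y_i^{v_i}$, this yields
\[
 w=\Bigl(\prod_{i=2}^n (Y_i')^{q_i}\Bigr)\cdot X_0^{\,s-\sum_i q_iu_i}\prod_{i=2}^n Y_i^{r_i}.
\]
I would then observe that the residual factor $m:=X_0^{\,s-\sum_i q_iu_i}\prod_i Y_i^{r_i}$ automatically lies in $S$: its $Y_i$-exponents satisfy $0\le r_i<v_i$, its $X_0$-exponent is an integer (being a $\ZZ$-linear combination of integers), and it has $e$-degree $0$ because $w$ and each $Y_i'$ do, which is precisely the congruence condition cutting out $S$. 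Uniqueness of the presentation is immediate: the $Y_i$-exponents $s_i=q_iv_i+r_i$ recover $(q_i,r_i)$ from $w$ by the division algorithm, after which the $X_0$-exponent of $m$ is forced.

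From this I would conclude that $(q_2,\dots,q_n,m)\mapsto \bigl(\prod_i (Y_i')^{q_i}\bigr)m$ is a bijection from $\NN^{n-1}\times S$ onto the set of $e$-degree-zero monomials of $B$, distinct pairs giving distinct monomials (again read off from the $Y_i$-exponents). Consequently $Z=\bigoplus_{m\in S} Z'm$, and each summand $Z'm$ is a free $Z'$-module of rank one, since $Z'=\kk[Y_2',\dots,Y_n']$ is a genuine polynomial ring: the monomials $\prod_i (Y_i')^{q_i}$ have $Y_i$-exponent $q_iv_i$, so the $Y_i'$ are algebraically independent. This exhibits $S$ as a free $Z'$-basis of $Z$. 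The bound $|S|\le\prod_{i=2}^n v_i$ follows because $S$ injects into the box $\{(s_2,\dots,s_n):0\le s_i<v_i\}$, with strictness in general since not every residue tuple makes the forced $X_0$-exponent integral.

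I expect the only delicate point to be bookkeeping rather than genuine difficulty: one must verify that the residual $X_0$-power remains integral (so that $m$ really lies in $B$, hence in $S$) and that the degree-zero condition transfers correctly through the division step. Everything else is a standard lattice-point decomposition of the degree-zero part of a graded Laurent polynomial ring.
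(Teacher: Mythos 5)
Your proposal is correct and follows essentially the same route as the paper: identify $Z$ with the span of the $e$-degree-zero monomials of $B$, apply the division algorithm $s_i=q_iv_i+r_i$ to peel off powers of the $Y_i'$, and observe that the residual monomial lands in $S$. You spell out the uniqueness of the decomposition and the resulting linear independence in more detail than the paper (which simply asserts that the elements of $S$ are $Z'$-independent), but the underlying argument is identical.
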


\begin{proof}
By the extension of Proposition~\ref{prop:pnn} to $A^\circ$ we have $Z=(A^\circ)^{\Gamma_a, \Delta} = B_0$ and so
\begin{align*}
Z&=\text{Span}\left\{X_0^{s}Y_2^{s_2}\cdots Y_n^{s_n}\ |\ s\in\Z,\ s_2,\dots,s_n\in\N,\ \text{and}\ as+\sum_{i=2}^{n}(a+i)s_i=0\right\}\\
&=\text{Span}\left\{X_0^{s}Y_2^{s_2}\cdots Y_n^{s_n}\ |\ s\in\Z,\ s_2,\dots,s_n\in\N,\ \text{and}\ ps+\sum_{i=2}^{n}u_id_is_i=0\right\}.
\end{align*}
Let $M=X_0^{s}Y_2^{s_2}\cdots Y_n^{s_n}\in Z$ and for $i=2,\dots,n$ set $s_i=v_i s_i'+\varepsilon_i$, where $s_i'\geq0$ and $0\leq\varepsilon_i<v_i$. Then we rewrite
\[M=\big(Y_2'^{s_2'}\cdots Y_n'^{s_n'}\big)\big(X_0^{s-\Omega}Y_2^{\varepsilon_2}\cdots Y_n^{\varepsilon_n}\big),\]
where $\Omega=\sum_{i=2}^n s_i'u_i$. 
Since $M, Y_2'^{s_2'}\cdots Y_n'^{s_n'}\in Z'$ we have $X_0^{s-\Omega}Y_2^{\varepsilon_2}\cdots Y_n^{\varepsilon_n}\in Z$. In particular this shows that a generating set of $Z$ as a module over $Z'$ is given by the set $S$. The result follows since the elements of $S$ are linearly independent over $Z'$.
\end{proof}

Note that it is possible that $S=\{1\}$. For example, let $n=2$ and set $M=X_0^{s}Y_2^{s_2}\in S$. Then $ps+u_2d_2s_2=0$, i.e. $v_2s=u_2s_2$ and  $v_2$ must divide $s_2$ since $\gcd(u_2,v_2)=1$. This implies that $s_2=0$ since $0\leq s_2<v_2$. Therefore $Z=Z'=\C[Y_2']$ when $n=2$.

We next work out $\Pspec A(2,a)$ explicitly.
 By Example~\ref{n=2}, for $a,b\in\Q\ssm\{-1\}$ the sets $\Pspec_1 A(2,a)$ and $\Pspec_1 A(2,b)$ are homeomorphic. More precisely we have
\[\Pspec_1 A(2,a)\cong\Pspec_1 A(2,b)\cong\{\langle X_0\rangle,\langle X_0,X_1\rangle,\langle X_0,X_1,X_2-\mu\rangle\ |\ \mu\in\C\}.\]
The following result explicitly describes the stratum $\Pspec_0 A(2,a)$.

\begin{proposition}\label{prop:Pspec_0}
For $a\in\Q^{\times}$ we have $\Pspec_0 A(2,a)=\{\langle 0\rangle,\langle X_0Y_2\rangle,P_\la\ |\ \la\in\C^{\times}\}$, where
\begin{align*}
P_\la=\left\{
\begin{array}{ll}
\langle X_0^{u_2}Y_2^{v_2}-\la\rangle \qquad\qquad\quad    &-1\leq a<0,\\
\langle (X_0Y_2)^{v_2}-\la X_0^{v_2-u_2}\rangle\qquad\qquad  &a<-1\text{ or }a>0.
\end{array}
\right.
\end{align*}
The ideal $P_\lambda$ is Poisson maximal if and only if $-1 < a < 0$.
\end{proposition}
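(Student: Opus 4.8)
The plan is to transport the computation of $\Spec Z$ through the homeomorphism of Corollary~\ref{cor:one} and then clear denominators. Since $n=2$ we have $Z=\CC[Y_2']$ (established just after Lemma~\ref{lem:freeZ}), a polynomial ring in one variable, so as $\kk$ is algebraically closed $\Spec Z=\{\langle 0\rangle\}\cup\{\langle Y_2'-\lambda\rangle : \lambda\in\CC\}$. By Corollary~\ref{cor:one} the map $P\mapsto P[X_0^{-1}]\cap Z$ is a homeomorphism $\Pspec_0 A(2,a)\to \Spec Z$ whose inverse sends $Q_0\mapsto \sqrt{Q_0A^\circ}\cap A$. The prime $\langle 0\rangle$ pulls back to $\langle 0\rangle$; and since $X_0$ is a unit in $A^\circ$ we have $\sqrt{\langle Y_2'\rangle A^\circ}=\langle Y_2\rangle A^\circ$, which contracts to $\langle X_0Y_2\rangle$ because $X_0Y_2=X_0X_2-\tfrac12 X_1^2$ is an irreducible quadric in the UFD $A$ that does not divide $X_0$ and generates $\langle Y_2\rangle A^\circ$ after inverting $X_0$. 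This accounts for $\langle 0\rangle$ and $\langle X_0Y_2\rangle$, so it remains to identify $P_\lambda$ for $\lambda\in\CC^\times$.

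Fix $\lambda\neq 0$ and let $F_\lambda$ be the displayed generator: $F_\lambda=X_0^{u_2}Y_2^{v_2}-\lambda=X_0^{u_2-v_2}(X_0Y_2)^{v_2}-\lambda$ when $-1\le a<0$, and $F_\lambda=(X_0Y_2)^{v_2}-\lambda X_0^{v_2-u_2}=X_0^{v_2-u_2}\bigl(X_0^{u_2}Y_2^{v_2}-\lambda\bigr)$ when $a<-1$ or $a>0$. The dividing line is exactly the sign of $u_2-v_2=\tfrac{2(p+q)}{d_2}$, whose sign is that of $q(a+1)$; this is $\ge 0$ precisely when $-1\le a<0$, so in each regime the chosen exponent of $X_0$ is a genuine nonnegative integer and $F_\lambda\in A$. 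In $A^\circ$ the element $F_\lambda$ differs from $Y_2'-\lambda$ by the unit $X_0^{\pm(u_2-v_2)}$, so $\langle F_\lambda\rangle A^\circ=\langle Y_2'-\lambda\rangle A^\circ$, and the latter is already prime because $\gcd(u_2,v_2)=1$ makes $X_0^{u_2}Y_2^{v_2}-\lambda$ an irreducible binomial in $A^\circ=\kk[X_0^{\pm1},X_1,Y_2]$. Hence it suffices to show that $\langle F_\lambda\rangle$ is a prime of $A$ meeting $\{X_0^m\}$ trivially: then $\langle F_\lambda\rangle=\langle F_\lambda\rangle A^\circ\cap A=\langle Y_2'-\lambda\rangle A^\circ\cap A=P_\lambda$ by the standard localisation identity for a domain.

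For primality I would view $F_\lambda$ as a polynomial in $X_2$ over the field $\kk(X_0,X_1)$ and substitute $u=X_0X_2-\tfrac12 X_1^2$, a linear change of the variable $X_2$ legitimate since $X_0\neq 0$ in the field. Both forms of $F_\lambda$ then become, up to the unit $X_0^{u_2-v_2}$, the binomial $u^{v_2}-\lambda X_0^{v_2-u_2}$; since $\gcd(v_2-u_2,v_2)=\gcd(u_2,v_2)=1$, the $X_0$-adic valuation $v_2-u_2$ of $\lambda X_0^{v_2-u_2}$ is coprime to $v_2$, so this element is not a $p$-th power in $\kk(X_0,X_1)$ for any prime $p\mid v_2$ (nor lies in $-4\,\kk(X_0,X_1)^4$ if $4\mid v_2$), and the classical binomial criterion gives irreducibility over $\kk(X_0,X_1)$. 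Evaluating $F_\lambda$ at $X_0=0$ gives a nonzero residue ($-\lambda$, or $-\tfrac12 X_1^2-\lambda$ when $a=-1$, or $(-\tfrac12 X_1^2)^{v_2}$ in the second regime), so $X_0\nmid F_\lambda$; as the leading coefficient in $X_2$ is a power of $X_0$, this shows $F_\lambda$ is primitive over $\kk[X_0,X_1]$ and meets $\{X_0^m\}$ trivially. Gauss's lemma then upgrades irreducibility over $\kk(X_0,X_1)$ to irreducibility of $F_\lambda$ in $A$, giving primality. This substitution-plus-Gauss argument is the main obstacle, but it treats both regimes uniformly.

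Finally, for Poisson maximality I would use the stratification $\Spec R=\Spec_0\sqcup\Spec_1$. A Poisson prime $Q\supsetneq P_\lambda$ either omits $X_0$, forcing $Q\cap Z\supsetneq\langle Y_2'-\lambda\rangle$, hence $Q\cap Z=Z$ and $Q=A$ by the homeomorphism — impossible; or contains $X_0$, whence $Q\supseteq\langle F_\lambda,X_0\rangle$. The residue of $F_\lambda$ modulo $X_0$ computed above is a nonzero \emph{constant} exactly when $-1<a<0$, so there $\langle F_\lambda,X_0\rangle=A$ by the Nullstellensatz and $P_\lambda$ is Poisson maximal. When $a=-1$ (residue $-\tfrac12X_1^2-\lambda$) or $a<-1$ or $a>0$ (residue $(-\tfrac12X_1^2)^{v_2}$) the residue is a non-unit, and one exhibits a Poisson prime strictly above $P_\lambda$: the ideal $\langle X_0,X_1-c\rangle$ with $c^2=-2\lambda$ in the first case and $\langle X_0,X_1\rangle$ in the others. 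Each contains $X_0$ and $F_\lambda$, and is Poisson by a direct check against \eqref{pb3} (equivalently, it reduces modulo $X_0$ to a prime of the quotient $A/\langle X_0\rangle\cong A(1,a+1)$, whose bracket makes $\langle X_1\rangle$, and all of $\kk[X_1,X_2]$ when $a=-1$, Poisson). Thus $P_\lambda$ fails to be maximal off the range $-1<a<0$, yielding the stated equivalence.
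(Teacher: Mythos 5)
Your proof is correct and follows essentially the same route as the paper's: transport $\Spec Z=\Spec\kk[Y_2']$ through the homeomorphism of Corollary~\ref{cor:one}, split on the sign of $u_2-v_2=\tfrac{2}{d_2}(p+q)$ to clear denominators in $Y_2'-\la$, and decide (non)maximality by reducing the generator modulo $X_0$ and exhibiting the Poisson primes $\langle X_0,X_1-c\rangle$ or $\langle X_0,X_1\rangle$ above $P_\la$. The only substantive difference is that you explicitly verify the contraction $\sqrt{(Y_2'-\la)A^\circ}\cap A=\langle F_\la\rangle$ via the binomial irreducibility criterion over $\kk(X_0,X_1)$ together with Gauss's lemma --- a step the paper's proof asserts without justification --- and that verification is carried out correctly.
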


\begin{proof}
Let $A = A(2,a)$  and let $A^\circ = A[X_0^{-1}]$.  
Let $Z$ be the Poisson centre of $A^\circ$.
Recall from the discussion after Lemma~\ref{lem:freeZ} that $Z=\C[Y_2']$ where $Y_2'=X_0^{u_2}Y_2^{v_2}$. We denote by $\varphi$ the bijection between $\Spec Z$ and $\Pspec_0 A$ sending $Q\in\Spec Z$ to $\varphi(Q)=\sqrt{QA^\circ}\cap A$. It is clear that $\varphi(Y_2'Z)= \langle X_0Y_2\rangle$. If $\la\neq0$, the ideal $(Y_2'-\la)A^\circ$ of $A^\circ$ is prime and so $\varphi((Y_2'-\la)Z)=\big((Y_2'-\la)A^\circ\big)\cap A$. Since $Y_2'=X_0^{u_2-v_2}(X_0Y_2)^{v_2}$, we have
\begin{align}\label{chris}
P_\la:=\varphi((Y_2'-\la)Z)=\left\{
\begin{array}{ll}
\langle Y_2'-\la\rangle \qquad\qquad\quad    &u_2-v_2\geq0\\
\langle (X_0Y_2)^{v_2}-\la X_0^{v_2-u_2}\rangle\qquad\qquad  &\text{else}.
\end{array}
\right.
\end{align}
But $u_2-v_2=\frac{2}{d_2}(p+q)$ and we have:
\[(u_2-v_2)\geq0\iff -1\leq a<0.\]
Thus $\Pspec A$ is as described.

If $-1<a<0$ then $u_2-v_2 >0 $ and $P_\lambda = \ang{X_0^{u_2-v_2}(X_0Y_2)^{v_2} - \lambda}$ is comaximal with $X_0$ and thus with all primes in $\Pspec_1(A)$.  If $a=-1$ then $P_\lambda = \ang{X_0X_2-X_1^2/2 - \lambda}$ is contained in the Poisson ideal $\ang{X_0, X_1^2/2 - \lambda}$.  If $a<-1$ or $a>0$ then $v_2-u_2 > 0 $ and $P_\lambda$ is clearly contained in $\ang{X_0, X_1}$.
\end{proof}

We deduce the following result.

\begin{theorem}\label{thm:spectra3}
Let $a\in\Q^{\times}\ssm\{-1\}$. Then we have
\begin{align*}
\Pspec A(2,a)=\{\langle 0\rangle,\langle X_0\rangle,\langle X_0,X_1\rangle,\langle X_0,X_1,X_2-\mu\rangle,P_\la,\langle X_0Y_2\rangle\ |\ \mu\in\C,\la\in\C^{\times}\}
\end{align*}
where the ideals $P_\la$ are described in Proposition \ref{prop:Pspec_0}. Moreover only $\langle 0\rangle$ and $\langle X_0,X_1\rangle$ are not Poisson primitive. Further, $\Pspec A(2,a)$ is homeomorphic to $\Pspec A(2,b)$ if and only if $(a^2+a)/(b^2+b) > 0$.
\end{theorem}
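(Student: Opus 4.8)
The plan is to pin down the homeomorphism type of $\Pspec A(2,a)$ in terms of the single sign invariant $\operatorname{sign}(a^2+a)=\operatorname{sign}(a(a+1))$, by exploiting the decomposition of the space into the closed stratum $\Pspec_1 A(2,a)=V(X_0)$ and its open complement $\Pspec_0 A(2,a)$. Throughout I assume $a,b\in\QQ^\times\ssm\{-1\}$, so that $a^2+a\neq 0$ and $b^2+b\neq 0$. First I would record that each stratum is, as a topological space, independent of $a$. By Proposition~\ref{prop:factX1} and Example~\ref{n=2}, since $a+1\neq 0$ the closed stratum $\Pspec_1 A(2,a)\cong\Pspec A(1,a+1)$ is the Poisson--Jordan-plane spectrum $\{\langle X_0\rangle,\langle X_0,X_1\rangle\}\cup\{\langle X_0,X_1,X_2-\mu\rangle:\mu\in\CC\}$, with $\langle X_0,X_1\rangle$ the generic point of a line $\overline{\{\langle X_0,X_1\rangle\}}$ of closed points $\langle X_0,X_1,X_2-\mu\rangle$. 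By Theorem~\ref{thm:spectra2}(2) together with the identity $Z=\CC[Y_2']$ established after Lemma~\ref{lem:freeZ}, the open stratum $\Pspec_0 A(2,a)\cong\Spec\CC[Y_2']\cong\AA^1$, with closed points $\langle X_0Y_2\rangle$ (the value $Y_2'=0$) and $P_\lambda$ ($Y_2'=\lambda\neq 0$), and generic point $\langle 0\rangle$.

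The only data tying the two strata together are the sets $\overline{\{x\}}\cap\Pspec_1$ for $x\in\Pspec_0$, and I would compute these explicitly. Trivially $\overline{\{\langle 0\rangle\}}\cap\Pspec_1=\Pspec_1$. Working in $A(2,a)/\langle X_0\rangle\cong A(1,a+1)$, the element $X_0Y_2=X_0X_2-\tfrac{1}{2}X_1^2$ maps to $-\tfrac{1}{2}X_1^2$, so $\overline{\{\langle X_0Y_2\rangle\}}\cap\Pspec_1=V(X_1)=\overline{\{\langle X_0,X_1\rangle\}}$ for every such $a$. Finally, Proposition~\ref{prop:Pspec_0} records exactly the behaviour of the $P_\lambda$: when $-1<a<0$ each $P_\lambda$ is Poisson maximal, so $\overline{\{P_\lambda\}}\cap\Pspec_1=\emptyset$, whereas when $a<-1$ or $a>0$ we have $P_\lambda\subseteq\langle X_0,X_1\rangle$ and hence $\overline{\{P_\lambda\}}\cap\Pspec_1=\overline{\{\langle X_0,X_1\rangle\}}$. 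Thus all three pieces of gluing data depend on $a$ only through the sign of $a(a+1)$.

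For the ``if'' direction I would use the general fact that a Noetherian topological space is determined by the subspace topologies on a chosen open set and its closed complement, together with the sets $\overline{\{x\}}\cap(\text{closed complement})$ for $x$ in the open set: every closed set is a finite union of closures of points, and each such closure is reconstructed from exactly this data. Since, within each sign regime, the subspace topologies ($\AA^1$ and the Poisson--Jordan spectrum) and all the gluing sets above coincide, the bijection $\Psi$ fixing $\langle 0\rangle,\langle X_0\rangle,\langle X_0,X_1\rangle,\langle X_0Y_2\rangle$ and sending $P_\lambda\mapsto P_\lambda$ and $\langle X_0,X_1,X_2-\mu\rangle\mapsto\langle X_0,X_1,X_2-\mu\rangle$ is a homeomorphism $\Pspec A(2,a)\to\Pspec A(2,b)$ whenever $a(a+1)$ and $b(b+1)$ share a sign, i.e.\ whenever $(a^2+a)/(b^2+b)>0$.

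For the ``only if'' direction I would separate the two regimes by a homeomorphism invariant: the cardinality of the set of non-closed points. When $-1<a<0$ the $P_\lambda$ are maximal, so the non-closed points are precisely $\langle 0\rangle$, $\langle X_0\rangle$, $\langle X_0Y_2\rangle$ and $\langle X_0,X_1\rangle$ (using $\langle X_0Y_2\rangle\subsetneq\langle X_0,X_1\rangle$), giving exactly four; when $a<-1$ or $a>0$ every $P_\lambda$ also has a proper closure, yielding uncountably many non-closed points. Since homeomorphisms preserve closed points, and hence the number of non-closed points, the two spaces cannot be homeomorphic across the two regimes, which combined with the previous paragraph yields the stated equivalence. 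The main obstacle is the bookkeeping in the ``if'' direction---certifying that the gluing data $\overline{\{x\}}\cap\Pspec_1$ is genuinely $a$-independent within each sign regime---but this is exactly what Proposition~\ref{prop:Pspec_0} already supplies, so no essentially new computation is needed.
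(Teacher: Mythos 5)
Your treatment of the set-theoretic description and of the homeomorphism classification is correct, and it follows the same route as the paper's proof: the paper likewise reads $\Pspec_1A(2,a)$ off Example~\ref{n=2} via $A(2,a)/\ang{X_0}\cong A(1,a+1)$, reads $\Pspec_0A(2,a)$ off Proposition~\ref{prop:Pspec_0}, and then settles the homeomorphism question by asserting that ``the topology is governed by inclusions'' and exhibiting the two inclusion diagrams. What you supply beyond the paper is the justification left implicit there: the open/closed reconstruction lemma (note this needs the space to be \emph{sober} as well as Noetherian, so that every irreducible closed set is a point closure --- automatic here since $\Pspec A$ carries the subspace topology from $\Spec A$ of a Noetherian ring), an explicit verification that the gluing data $\overline{\{x\}}\cap\Pspec_1$ depends only on the sign of $a(a+1)$, and a concrete invariant (the number of non-closed points, $4$ versus infinitely many) separating the two regimes. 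One small correction: ``uncountably many'' should be ``infinitely many,'' since this section only assumes $\kk$ algebraically closed and $\kk^\times$ could be countable; the argument is unaffected, as $4\neq\infty$.

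There is, however, one assertion of the theorem you do not address: that exactly $\ang{0}$ and $\ang{X_0,X_1}$ fail to be Poisson primitive. (The paper's written proof is also silent on this, but it is part of the statement and should be dispatched.) It follows quickly from your setup. By Theorem~\ref{pdme}, Poisson primitive is equivalent to locally closed. The ideals $\ang{0}$ and $\ang{X_0,X_1}$ are not locally closed because $\bigcap_{\lambda}P_\lambda=\ang{0}$ and $\bigcap_\mu\ang{X_0,X_1,X_2-\mu}=\ang{X_0,X_1}$, so neither point is open in its closure. Every other point of either diagram is open in its closure: the maximal ideals trivially, and $\ang{X_0}$, $\ang{X_0Y_2}$, and (in the regime $a<-1$ or $a>0$) the $P_\lambda$ because in each case the set of primes strictly containing the given one is exactly $V(X_0,X_1)$, which is closed. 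Alternatively, on the open stratum you can quote Corollary~\ref{cor:glasgow}: $P\in\Pspec_0$ is Poisson primitive iff $P\cap Z$ is maximal in $Z=\kk[Y_2']$, which holds for $\ang{X_0Y_2}$ and the $P_\lambda$ but not for $\ang{0}$. Adding a sentence to this effect completes the proof.
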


\begin{proof}
The description of $\Pspec A(2,a)$ is immediate from Proposition~\ref{prop:Pspec_0}.
Pictorially, $\Pspec A(2,a)$ is given by the following diagrams, where  lines represent inclusion of ideals.

\begin{tabular}{ccc}
{		\begin{tikzpicture}
	\node (a) at (0,0){$\langle 0 \rangle$};
	\node (b1) at (0,1.5){$\langle X_0Y_2 \rangle$};
	\node (b2) at (-3,1.5){$\langle X_0\rangle$};
	\node (b3) at (3,1.5){$P_\la$};
	\node (c) at (0,3){$\langle X_0,X_1 \rangle$};
	\node (d) at (0,4.5){$\langle X_0,X_1,X_2-\mu\rangle$};
	\draw [-] (a) to (b1); \draw [-] (a) to (b2); \draw [-] (a) to (b3);
	\draw [-] (b1) to (c); \draw [-] (b2) to (c); 
	\draw [-] (c) to (d);
	\node (e) at (0,-1) {$-1<a<0$};
		\end{tikzpicture}}
&&
{		\begin{tikzpicture}
	\node (a) at (0,0){$\langle 0 \rangle$};
	\node (b1) at (0,1.5){$\langle X_0Y_2 \rangle$};
	\node (b2) at (-3,1.5){$\langle X_0\rangle$};
	\node (b3) at (3,1.5){$P_\la$};
	\node (c) at (0,3){$\langle X_0,X_1 \rangle$};
	\node (d) at (0,4.5){$\langle X_0,X_1,X_2-\mu\rangle$};
	\draw [-] (a) to (b1); \draw [-] (a) to (b2); \draw [-] (a) to (b3);
	\draw [-] (b1) to (c); \draw [-] (b2) to (c); \draw [-] (b3) to (c);
	\draw [-] (c) to (d);
		\node (e) at (0,-1) {$a<-1$ or $a>0$};
		\end{tikzpicture}.}
\end{tabular}

Note that despite the fact that the algebras $A(2,a)$ are not isomorphic for different values of $a$ (see Theorem \ref{thm:isom}), their spectra fall generically into only two non homeomorphic families. 

Since the topology on $\Pspec A(2,a)$ is governed by inclusions, it is clear that $\Pspec A(2,a)$ and $\Pspec A(2,b)$ are homeomorphic if and only if $a,b$ fall into the same case of \eqref{chris}, which is if and only if $(a^2+a)/(b^2+b) >0$.\end{proof}		

\begin{example}
\label{exinc}
Let $a=1/2$. Then $p=-1$, $q=-2$, $d_2=1$, $u_2=-5$ and $v_2=1$. Thus
\[P_\la=\langle X_0(X_2-\la X_0^5)-1/2 X_1^2\rangle\subseteq\langle X_0,X_1\rangle.\]
On the other hand when $a=-1/2$ we have $p=-1$, $q=2$, $d_2=1$, $u_2=3$ and $v_2=1$. Thus
\[P_\la=\langle X_0^2(X_0X_2-1/2 X_1^2)-\la\rangle,\]
which is Poisson maximal. Therefore the two spectra are not homeomorphic.
\end{example}

\begin{remark}\label{rem:viva}
From Theorem~\ref{thm:spectra2}, when $a \not\in \QQ$ then 
\[\Pspec A(2,a) = \{ \ang{0}, \ang{X_0}, \ang{X_0Y_2}, \ang{X_0, X_1}, \ang{X_0, X_1, X_2 - \mu}\ |\ \mu \in \kk\}.\]
Note that these are the prime ideals that are Poisson for all values of $a$.  

For all $a \neq -1,0$ there are five $d$-graded Poisson primes of $A(2,a)$: the ideals 
\[ \{ \ang{0}, \ang{X_0}, \ang{X_0Y_2}, \ang{X_0, X_1}, \ang{X_0, X_1, X_2 } \}.\]
\end{remark}

For the remainder of this section, we will study $\Pspec A(3,a)$ for various values of $a$.
	Again, we are most interested in $\Pspec_0 A(3,a)$ which is homeomorphic to $\Spec Z$ as above.
	From Lemma~\ref{lem:freeZ} we know that $Z$ is a free module over $Z'=\C[Y_2',Y_3']$ with basis
\[S=\{X_0^{s}Y_2^{s_2}Y_3^{s_3}\ |\ 0\leq s_i<v_i,\ i=2,3\ \text{and}\ ps+u_2d_2s_2+u_3d_3s_3=0\}.\]

\begin{proposition}\label{prop:al=1}
The cardinality of $S$ is equal to  $\al:=\frac{-p}{d_2d_3}$.
\end{proposition}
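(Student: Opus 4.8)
The plan is to compute $|S|$ directly, first eliminating the exponent $s$ and then reinterpreting the surviving condition as the kernel of a homomorphism of finite abelian groups.

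First I would record that, since $u_id_i = p+iq$, a monomial $X_0^sY_2^{s_2}Y_3^{s_3}$ lies in $S$ exactly when $0\le s_i<v_i$ and $ps = -\bigl((p+2q)s_2+(p+3q)s_3\bigr)$. As $p\neq 0$, for each pair $(s_2,s_3)$ in the box $0\le s_2<v_2$, $0\le s_3<v_3$ there is \emph{at most one} integer $s$ solving this equation, and it exists if and only if $p$ divides $(p+2q)s_2+(p+3q)s_3$. Writing $P:=-p>0$ and reducing modulo $p$, using $(p+2q)s_2+(p+3q)s_3 = p(s_2+s_3)+q(2s_2+3s_3)$, this divisibility becomes $q(2s_2+3s_3)\equiv 0 \pmod{P}$; since $\gcd(p,q)=1$ it is equivalent to $2s_2+3s_3\equiv 0 \pmod{P}$. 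Hence $|S|$ equals the number of pairs $(s_2,s_3)$ with $0\le s_2<v_2$, $0\le s_3<v_3$ and $2s_2+3s_3\equiv 0\pmod{P}$.

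Next I would package this congruence as a kernel. Because $d_2=\gcd(P,2)$ divides $2$ and $d_3=\gcd(P,3)$ divides $3$, both $2v_2=(2/d_2)P$ and $3v_3=(3/d_3)P$ are multiples of $P$. Therefore the assignment $(\bar s_2,\bar s_3)\mapsto \overline{2s_2+3s_3}$ defines a well-defined group homomorphism $\chi:(\Z/v_2\Z)\times(\Z/v_3\Z)\to \Z/P\Z$, and since the box is a full set of coset representatives for the source group, $|S|=|\ker\chi|$. I would then compute this order via surjectivity: the image of $\chi$ is the subgroup of $\Z/P\Z$ generated by $\chi(\bar 1,\bar 0)=\bar 2$ and $\chi(\bar 0,\bar 1)=\bar 3$, and as $3-2=1$ lies in it, $\chi$ is onto. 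Consequently
\[
|S|=|\ker\chi|=\frac{v_2v_3}{P}=\frac{(P/d_2)(P/d_3)}{P}=\frac{P}{d_2d_3}=\frac{-p}{d_2d_3}=\alpha,
\]
where along the way one notes that $\alpha$ is indeed an integer, since $d_2\in\{1,2\}$ and $d_3\in\{1,3\}$ are coprime divisors of $p$.

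The argument is elementary and I expect no serious obstacle; the only point needing care is the well-definedness of $\chi$, which is precisely where the specific values $d_i=\gcd(p,i)$ and the $\epsilon$-weights $2$ and $3$ enter. The surjectivity of $\chi$ is what forces $|S|=\alpha$ rather than something larger, and it holds for $n=3$ only because $\gcd(2,3)=1$ makes the weights $2,3$ generate $\Z/P\Z$; in higher dimensions the analogous count would be genuinely more delicate.
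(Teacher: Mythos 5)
Your proof is correct, but it takes a genuinely different route from the paper's. You eliminate $s$ first, reduce the membership condition to the single congruence $2s_2+3s_3\equiv 0\pmod{P}$ (using $\gcd(p,q)=1$ to strip off the factor $q$), and then count solutions as the kernel of the well-defined surjection $\chi:(\Z/v_2\Z)\times(\Z/v_3\Z)\to\Z/P\Z$; surjectivity via $\bar 3-\bar 2=\bar 1$ gives $|S|=v_2v_3/P=\alpha$ at once. The paper instead first observes that the defining equation forces $d_3\mid s_2$ and $d_2\mid s_3$ (using $\gcd(d_2,u_3d_3)=1$ and its symmetric counterpart), substitutes $s_2=k_2d_3$, $s_3=k_3d_2$ to reduce to the Diophantine equation $-\alpha s+u_2k_2+u_3k_3=0$ with $0\le k_i<\alpha$, and solves it explicitly with B\'ezout coefficients, showing each $k_3\in\{0,\dots,\alpha-1\}$ determines a unique $k_2$ and $s$. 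Your argument is shorter and conceptually cleaner for the pure count, and it makes transparent exactly where the integrality of $\alpha$ and the surjectivity come from. What the paper's computation buys in exchange is an explicit parametrisation of the monomials in $S$: the authors immediately reuse the B\'ezout recipe after the proposition to list the basis elements in the concrete examples (e.g.\ $a=-5/4$ and $a=-24/5$), which your kernel argument does not directly provide. One minor caveat: your closing remark that the count is ``genuinely more delicate'' in higher dimensions undersells your own method, since for $n\ge 3$ the weights still include the consecutive pair $2,3$ and the same surjectivity argument applies; but this aside does not affect the correctness of the proof for $n=3$.
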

\begin{proof}
Since $p=-d_2v_2=-d_3v_3$ the equation $ps+u_2d_2s_2+u_3d_3s_3=0$ implies that $d_2$ divides $s_3$ (since $\gcd(d_2,u_3d_3)=1$) and $d_3$ divides $s_2$ (since $\gcd(d_3,u_2d_2)=1$). Set $s_2:=k_2d_3$ and $s_3:=k_3d_2$. Then the equation $ps+u_2d_2s_2+u_3d_3s_3=0$ becomes
\begin{equation}
\label{dio}
-\al s+u_2k_2+u_2k_2=0.
\end{equation}
To solve this Diophantine equation we first set $t=-\al s+u_2k_2$ and we solve $t+u_3k_3=0$. We get 
\[\begin{cases} t=-u_3k, \\ k_3=k, \end{cases}\]
for $k\in\Z$. We now solve the Diophantine equation $-\al s+u_2k_2=t=-u_3k$. Since $\gcd(-\al,u_2)=1$ there exist $m,n\in\Z$ such that $-\al m+u_2n=1$. The solution of the equation $-\al s+u_2k_2=-u_3k$ is then
\[\begin{cases} s=-mu_3k+u_2\ell, \\ k_2=-nu_3k+\al \ell, \end{cases}\]
for $k,\ell\in\Z$, and the solution of (\ref{dio}) is
\begin{equation}
\label{soldio}
\begin{cases} s=-mu_3k+u_2\ell, \\ k_2=-nu_3k+\al \ell , \\k_3=k, \end{cases}
\end{equation}
for $k,\ell\in\Z$.

For $i=2,3$ we have $0\leq k_i<\al=\frac{-p}{d_2d_3}$ since $0\leq s_i<v_i$. Fix $k_3\in\{0,\dots,\al-1\}$. Since $k_2=-nu_3k_3+\al \ell$ for some $\ell\in\Z$, there exists a unique $\ell\in\Z$ such that $k_2\in\{0,\dots,\al-1\}$. The integer $s$ is uniquely determined by $k_2$ and $k_3$, so we conclude that to each $k_3\in\{0,\dots,\al-1\}$ corresponds a unique monomial $X_0^{s}Y_2^{s_2}Y_3^{s_3}\in S$. 
\end{proof}

Since $\al=1\iff S=\{1\}$ we deduce the following corollary.

\begin{corollary}
We have $Z=Z'$ if and only if $\al=1$ if and only if $p\in\{-1,-2,-3,-6\}$. \qed
\end{corollary}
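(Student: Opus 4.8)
The plan is to split the three-way equivalence into two independent pieces: a module-theoretic reduction that identifies the first two conditions, and a short number-theoretic computation that identifies the third.

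For the equivalence $Z = Z' \iff \alpha = 1$ I would invoke Lemma~\ref{lem:freeZ}, which exhibits $Z$ as a free $Z'$-module with basis $S$. The monomial $1 = X_0^0 Y_2^0 Y_3^0$ always lies in $S$ (the defining equation $ps + u_2 d_2 s_2 + u_3 d_3 s_3 = 0$ is satisfied with all exponents zero, and $0 < v_i$ since $v_i = -p/d_i > 0$). A free module of rank $> 1$ over the domain $Z'$ cannot coincide with its rank-one submodule $Z' = Z'\cdot 1$, so $Z = Z'$ holds exactly when $|S| = 1$, i.e. when $S = \{1\}$. By Proposition~\ref{prop:al=1} we have $|S| = \alpha$, giving $Z = Z' \iff \alpha = 1$, as already observed in the remark preceding the statement.

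For the equivalence $\alpha = 1 \iff p \in \{-1,-2,-3,-6\}$ I would unwind the definition of $\alpha$. Writing $m := -p > 0$, we have $d_2 = \gcd(p,2) = \gcd(m,2) \in \{1,2\}$ and $d_3 = \gcd(p,3) = \gcd(m,3) \in \{1,3\}$, so $\alpha = m/(d_2 d_3)$. The key point is the elementary identity
\[ \gcd(m,2)\,\gcd(m,3) = \gcd(m,6), \]
which holds because $2$ and $3$ are coprime. Hence $\alpha = 1$ if and only if $m = \gcd(m,6)$, that is if and only if $m \mid 6$; since $m$ is a positive divisor of $6$ this means $m \in \{1,2,3,6\}$, i.e. $p \in \{-1,-2,-3,-6\}$.

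There is no genuine obstacle here, as both halves reduce to results already established. The only step meriting a line of justification is the multiplicativity $\gcd(m,2)\gcd(m,3) = \gcd(m,6)$; if one prefers to avoid quoting it, I would instead run through the four cases for $d_2 d_3 \in \{1,2,3,6\}$ directly, checking in each case that $\alpha = 1$ forces $m$ to equal exactly that value and that the corresponding $m$ does satisfy the $\gcd$ constraints, recovering the list $\{1,2,3,6\}$.
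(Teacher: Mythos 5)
Your proof is correct and follows exactly the route the paper intends: the corollary is stated with an immediate \qed, relying on the remark that $\alpha=1\iff S=\{1\}$ together with Lemma~\ref{lem:freeZ} and Proposition~\ref{prop:al=1}, and your write-up simply supplies the omitted details (the rank argument for $Z=Z'\iff|S|=1$ and the identity $\gcd(m,2)\gcd(m,3)=\gcd(m,6)$). Nothing is missing.
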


From the proof of Proposition \ref{prop:al=1} we observe that
\[S=\{X_0^{s}Y_2^{d_3k_2}Y_3^{d_2k_3}\ |\ s\in\Z,\ 0\leq k_i<\al,\ \text{and}\ -\al s+u_2k_2+u_3k_3=0\},\]
and that a recipe for finding these basis elements consists of computing $m,m'$ such that $-\al m+u_2m'=1$, plugging these values into (\ref{soldio}) and, for each $0\leq k_3<\al$, finding the unique $l\in\Z$ such that $k_2=-m'u_3k_3+\al l\in\{0,\dots,\al-1\}$.
We conclude this section with a couple of examples with $\al\neq1$ which illustrate possibilities for the ring structure of $Z$. But first we describe the $d$-graded Poisson prime that are common to $A(3,a)$ for any generic $a$. By Theorem~\ref{thm:spectra2} we have $\Pspec_{\gr} A^\circ\cong \PP_k(2,3) \sqcup \{ (Y_2, Y_3) \}$. In particular, for any $[\alpha: \beta]  \in \PP^1$, the element
\[ F_{[\alpha:\beta] }= \alpha (Y_2 X_0^{-1})^3 + \beta (Y_3 X_0^{-1})^2\]
is Poisson central.  
Multiplying by $X_0^6$, we obtain a pencil of Poisson normal sextic elements of $A(4,a)$
\begin{multline*} 
X_0^6F_{[\alpha:\beta]} = 
\alpha(X_0^3 X_2^3 - \frac{3}{2} X_0^2 X_1^2 X_2^2 + \frac{3}{4} X_0 X_1^4 X_2- \frac{1}{8} X_1^6) \\
+ \beta(X_0^4 X_3^2 + X_0^2X_1^2 X_2^2+ \frac{1}{9} X_1^6-2 X_0^3X_1 X_2X_3+\frac{2}{3} X_0^2X_1^3 X_3-\frac{2}{3} X_0 X_1^4 X_2).
\end{multline*}
Therefore the $d$-graded Poisson prime ideals that are common to $A(3,a)$ for any $a\neq -2,-1,0$ are
\begin{align*} 
\{\ang{0}, \ang{X_0}, \ang{X_0Y_2}, \ang{X_0^2Y_3},&\ \ang{ X_0^6F_{[\alpha:\beta]}}, \ang{X_0, X_1}, \ang{X_0Y_2,X_0^2Y_3},\\
		&\ang{X_0, X_1, X_2 }, \ang{X_0, X_1, X_2, X_3}\ |\ [\alpha:\beta]\in\PP^1 \}.
\end{align*}
Among them the Poisson primitive ideals are $\ang{X_0, X_1}, \ang{X_0Y_2,X_0^2Y_3}$ and $\ang{X_0, X_1, X_2 }, \ang{X_0, X_1, X_2, X_3}$.

\begin{example}
For $a=-5/4$ (the algebra in Example~\ref{pym}) we have $\al=5$, $Y_2'=X_0^{3}Y_2^{5}$ and $Y_3'=X_0^{7}Y_3^{5}$. Moreover
\[S=\{(X_0^2Y_2Y_3)^i\ |\ i=0,\dots,4\},\]
with $(X_0^2Y_2Y_3)^5=Y_2'Y_3'$. Thus $Z\cong \C[A,B,C]/\langle C^5=AB\rangle$.
\end{example}


\begin{example}
Choose $a=-24/5$. We have $\al=4$, $Y_2'=X_0^{-7}Y_2^{12}$ and $Y_3'=X_0^{-3}Y_2^{8}$. 
We have $S=\{1,C,D,E\}$, where $C:=X_0^{-4}Y_2^{3}Y_3^{6}$, $D:=X_0^{-5}Y_2^{6}Y_3^{4}$ and $E:=X_0^{-6}Y_2^{9}Y_3^{2}$. We further set $A:=Y_2'$ and $B:=Y_3'$. 
We have
\[Z\cong\frac{\C[A,B,C,D,E]}{\langle C^2=BD,D^2=AB=CE,CD=BE,E^2=AD,DE=AC\rangle}.\]
 \end{example}


\bibliographystyle{amsalpha}


\begin{thebibliography}{ATVdB90}

\bibitem[AS87]{AS}
M.~Artin and W.~F.~Schelter, \emph{Graded algebras of global
  dimension {$3$}}, Adv. Math. \textbf{66} (1987), no.~2, 171--216.
  \MR{917738}

\bibitem[ATVdB90]{ATV}
M.~Artin, J.~Tate, and M.~Van~den Bergh, \emph{Some algebras associated to
  automorphisms of elliptic curves}, The {G}rothendieck {F}estschrift, {V}ol.\
  {I}, Progr. Math., vol.~86, Birkh\"auser Boston, Boston, MA, 1990,
  pp.~33--85. \MR{1086882}

\bibitem[AVdB90]{AV}
M.~Artin and M.~Van~den Bergh, \emph{Twisted homogeneous coordinate rings}, J.
  Algebra \textbf{133} (1990), no.~2, 249--271. \MR{1067406}

\bibitem[AZ01]{AZ}
M.~Artin and J.~J. Zhang, \emph{Abstract {H}ilbert schemes}, Algebr. Represent.
  Theory \textbf{4} (2001), no.~4, 305--394. \MR{1863391}

\bibitem[BG02]{BG}
K.~A. Brown and K.~R. Goodearl, \emph{Lectures on algebraic quantum groups},
  Advanced Courses in Mathematics. CRM Barcelona, Birkh\"auser Verlag, Basel,
  2002. \MR{1898492}

\bibitem[BGR73]{Bo}
W.~Borho, P.~Gabriel, and R.~Rentschler, \emph{Primideale in
  {E}inh\"ullenden aufl\"osbarer {L}ie-{A}lgebren ({B}eschreibung durch
  {B}ahnenr\"aume)}, Lecture Notes in Mathematics, Vol. 357, Springer-Verlag,
  Berlin-New York, 1973. \MR{0376790}

\bibitem[Bj{\"o}89]{Bjork}
J.~Bj{\"o}rk, \emph{The {A}uslander condition on {N}oetherian rings},
  S\'eminaire d'{A}lg\`ebre {P}aul {D}ubreil et {M}arie-{P}aul {M}alliavin,
  39\`eme {A}nn\'ee ({P}aris, 1987/1988), Lecture Notes in Math., vol. 1404,
  Springer, Berlin, 1989, pp.~137--173. \MR{1035224}

\bibitem[BLLM]{BLLM}
J.~Bell, S.~Launois, O.~Leon~Sanchez, and R.~Moosa, \emph{Poisson algebras via
  model theory and differential-algebraic geometry}, to appear in Journal of
  the European Mathematical Society.

\bibitem[BZ17]{BZ}
J.~Bell and J.~J. Zhang, \emph{An isomorphism lemma for graded rings}, 
  Proc. Amer. Math. Soc. \textbf{145}, 2017, no.~3, 989--994. \MR{3589298}

\bibitem[CGG89]{CGG}
V.~Coll, M.~Gerstenhaber, and A.~Giaquinto, \emph{An explicit deformation
  formula with noncommuting derivations}, Ring theory 1989 ({R}amat {G}an and
  {J}erusalem, 1988/1989), Israel Math. Conf. Proc., vol.~1, Weizmann,
  Jerusalem, 1989, pp.~396--403. \MR{1029329}

\bibitem[Fre13]{Freudenburg}
G.~Freudenburg, \emph{Foundations of invariant theory for the down operator},
  J. Symbolic Comput. \textbf{57} (2013), 19--47. \MR{3066449}

\bibitem[Gin06]{Ginzburg}
V.~Ginzburg, \emph{{C}alabi-{Y}au algebras}, arxiv:math/0612139, 2006.

\bibitem[GK66]{GK}
I.~M.~Gelfand and A.~A.~Kirillov, \emph{Sur les corps li\'es aux alg\`ebres
  enveloppantes des alg\`ebres de {L}ie}, Inst. Hautes \'Etudes Sci. Publ.
  Math. (1966), no.~31, 5--19. \MR{0207918}

\bibitem[Goo06]{Goodearl}
K.~R.~Goodearl, \emph{A {D}ixmier-{M}oeglin equivalence for {P}oisson algebras
  with torus actions}, Algebra and its applications, Contemp. Math., vol. 419,
  Amer. Math. Soc., Providence, RI, 2006, pp.~131--154. \MR{2279114}

\bibitem[GW89]{GW}
K.~R. Goodearl and R.~B. Warfield, Jr., \emph{An introduction to noncommutative
  {N}oetherian rings}, London Mathematical Society Student Texts, vol.~16,
  Cambridge University Press, Cambridge, 1989. \MR{1020298}

\bibitem[Har77]{Hartshorne}
R.~Hartshorne, \emph{Algebraic geometry}, Springer-Verlag, New
  York-Heidelberg, 1977, Graduate Texts in Mathematics, No. 52. \MR{0463157}

\bibitem[IS80]{IS}
R.~S. Irving and L.~W. Small, \emph{On the characterization of primitive
  ideals in enveloping algebras}, Math. Z. \textbf{173} (1980), no.~3,
  217--221. \MR{592369}

\bibitem[Jor14]{Jordan}
D.~A. Jordan, \emph{Ore extensions and {P}oisson algebras}, Glasgow Math. J.
  \textbf{56} (2014), no.~2, 355--368. \MR{3187902}

\bibitem[Jos77]{Jo}
A.~Joseph, \emph{A generalization of the {G}elfand-{K}irillov conjecture},
  Amer. J. Math. \textbf{99} (1977), no.~6, 1151--1165. \MR{0460397}

\bibitem[KRS05]{KRS}
D.~S. Keeler, D.~Rogalski, and J.~T. Stafford, \emph{Na\"\i ve noncommutative
  blowing up}, Duke Math. J. \textbf{126} (2005), no.~3, 491--546. \MR{2120116}

\bibitem[LBS93]{LBS}
L.~Le~Bruyn and S.~P.~Smith, \emph{Homogenized {${\germ s}{\germ l}(2)$}},
  Proc. Amer. Math. Soc. \textbf{118} (1993), no.~3, 725--730. \MR{1136235}

\bibitem[Lev92]{Lev}
T.~Levasseur, \emph{Some properties of noncommutative regular graded
  rings}, Glasgow Math. J. \textbf{34} (1992), no.~3, 277--300. \MR{1181768}

\bibitem[McC74]{MC}
J.~C.~McConnell, \emph{Representations of solvable {L}ie algebras and the
  {G}elfand-{K}irillov conjecture}, Proc. London Math. Soc. (3) \textbf{29}
  (1974), 453--484. \MR{0357529}

\bibitem[MR01]{MR}
J.~C.~McConnell and J.~C.~Robson, \emph{Noncommutative {N}oetherian rings},
  revised ed., Graduate Studies in Mathematics, vol.~30, American Mathematical
  Society, Providence, RI, 2001, With the cooperation of L. W. Small.
  \MR{1811901}

\bibitem[Oh99]{Oh}
S.~Q.~Oh, \emph{Symplectic ideals of {P}oisson algebras and the {P}oisson
  structure associated to quantum matrices}, Comm. Algebra \textbf{27} (1999),
  no.~5, 2163--2180. \MR{1683857}

\bibitem[Pym15]{Pym}
B.~Pym, \emph{Quantum deformations of projective three-space}, Adv. Math.
  \textbf{281} (2015), 1216--1241. \MR{3366864}

\bibitem[Ric02]{Richard}
L.~Richard, \emph{Equivalence rationnelle et homologie de {H}ochschild pour
  certaines alg\`{e}bres polynomiales classiques et quantiques}, Ph.D. thesis,
  Universit\'e Blaise Pascal, 2002.

\bibitem[Rog14]{Rogalski}
D.~Rogalski, \emph{An introduction to noncommutative projective geometry},
  arxiv: 1403.3065, 2014.

\bibitem[RRZ14]{RRZ}
M.~Reyes, D.~Rogalski, and J.~J.~Zhang, \emph{Skew {C}alabi-{Y}au
  algebras and homological identities}, Adv. Math. \textbf{264} (2014),
  308--354. \MR{3250287}

\bibitem[Shi05]{Shirikov}
E.~N.~Shirikov, \emph{Two-generated graded algebras}, Algebra Discrete
  Math. (2005), no.~3, 60--84. \MR{2237896}

\bibitem[Sig84]{Sigurdsson}
G.~Sigurdsson, \emph{Differential operator rings whose prime factors have
  bounded {G}oldie dimension}, Arch. Math. (Basel) \textbf{42} (1984), no.~4,
  348--353. \MR{753356}

\bibitem[ST01]{SheltonTingey}
B.~Shelton and C.~Tingey, \emph{On {K}oszul algebras and a new
  construction of {A}rtin-{S}chelter regular algebras}, J. Algebra \textbf{241}
  (2001), no.~2, 789--798. \MR{1843325}

\bibitem[vdE93]{vdEssen}
A.~van~den~Essen, \emph{An algorithm to compute the invariant ring of a
  {${\bf G}\sb a$}-action on an affine variety}, J. Symbolic Comput.
  \textbf{16} (1993), no.~6, 551--555. \MR{1279532}

\bibitem[Zha96]{Zhang}
J.~J. Zhang, \emph{Twisted graded algebras and equivalences of graded
  categories}, Proc. London Math. Soc. (3) \textbf{72} (1996), no.~2, 281--311.
  \MR{1367080}

\end{thebibliography}

\providecommand{\bysame}{\leavevmode\hbox to3em{\hrulefill}\thinspace}
\providecommand{\MR}{\relax\ifhmode\unskip\space\fi MR }
\providecommand{\MRhref}[2]{%
  \href{http://www.ams.org/mathscinet-getitem?mr=#1}{#2}
}
\providecommand{\href}[2]{#2}

\end{document}